\documentclass[a4paper,11pt]{article}
\textwidth380pt
\hoffset-40pt
\voffset+0pt
\headsep-20pt
\textheight510pt

\usepackage{amsmath, amsfonts, amscd, amssymb, amsthm, enumerate, xypic}

\def\bfB{\mathbf{B}}
\def\bfC{\mathbf{C}}

\newcommand{\simt}{\underset{2}{\sim}}
\newcommand{\Mat}{\operatorname{M}}
\newcommand{\OS}{\operatorname{OS}}
\newcommand{\ROS}{\operatorname{ROS}}
\newcommand{\BM}{\operatorname{BM}}
\newcommand{\LRBM}{\operatorname{LRBM}}
\newcommand{\FRBM}{\operatorname{FRBM}}
\newcommand{\LLDR}{\operatorname{LLDR}}
\newcommand{\DR}{\operatorname{DR}}

\newcommand{\Mata}{\operatorname{A}}

\newcommand{\id}{\operatorname{id}}
\newcommand{\GL}{\operatorname{GL}}
\newcommand{\Ortho}{\operatorname{O}}
\newcommand{\GO}{\operatorname{GO}}
\newcommand{\Pf}{\operatorname{Pf}}
\newcommand{\Ker}{\operatorname{Ker}}

\newcommand{\Vect}{\operatorname{span}}
\newcommand{\im}{\operatorname{Im}}
\newcommand{\urk}{\operatorname{urk}}

\newcommand{\trk}{\operatorname{trk}}

\newcommand{\rk}{\operatorname{rk}}

\renewcommand{\setminus}{\smallsetminus}


\def\K{\mathbb{K}}

\def\N{\mathbb{N}}

\renewcommand{\L}{\mathbb{L}}


\def\calA{\mathcal{A}}
\def\calB{\mathcal{B}}
\def\calC{\mathcal{C}}

\def\calH{\mathcal{H}}

\def\calL{\mathcal{L}}
\def\calM{\mathcal{M}}

\def\calP{\mathcal{P}}

\def\calS{\mathcal{S}}
\def\calT{\mathcal{T}}

\def\calV{\mathcal{V}}
\def\calW{\mathcal{W}}


\def\lcro{\mathopen{[\![}}
\def\rcro{\mathclose{]\!]}}

\theoremstyle{definition}
\newtheorem{Def}{Definition}[section]
\newtheorem{Not}[Def]{Notation}

\theoremstyle{plain}
\newtheorem{theo}{Theorem}[section]
\newtheorem{prop}[theo]{Proposition}
\newtheorem{cor}[theo]{Corollary}
\newtheorem{lemme}[theo]{Lemma}

\theoremstyle{plain}
\newtheorem{conj}{Conjecture}[section]

\theoremstyle{remark}
\newtheorem{Rems}{Remarks}[section]
\newtheorem{Rem}[Rems]{Remark}

\title{Local linear dependence seen through duality II}
\author{Cl\'ement de Seguins Pazzis\footnote{Universit\'e de Versailles Saint-Quentin-en-Yvelines, Laboratoire de Math\'ematiques
de Versailles, 45 avenue des Etats-Unis, 78035 Versailles cedex, France}
\footnote{e-mail address: dsp.prof@gmail.com}}

\begin{document}

\thispagestyle{plain}

\maketitle

\begin{abstract}
A vector space $\calS$ of linear operators between finite-dimensional vector spaces $U$ and $V$ is called locally linearly dependent
(in abbreviate form: LLD) when every vector $x \in U$ is annihilated by a non-zero operator in $\calS$.
By a duality argument, one sees that studying LLD operator spaces amounts to studying vector spaces of matrices with rank
less than the number of columns, or, alternatively, vector spaces of non-injective operators.

In this article, this insight is used to obtain classification results for LLD spaces of small dimension or large essential range
(the essential range being the sum of all the ranges of the operators in $\calS$). We show that such classification theorems
can be obtained by translating into the context of LLD spaces Atkinson's classification of primitive spaces
of bounded rank matrices; we also obtain a new classification theorem for such spaces that covers a range of dimensions for the essential
range that is roughly twice as large as that in Atkinson's theorem. In particular, we obtain a classification of all $4$-dimensional
LLD operator spaces for fields with more than $3$ elements
(beforehand, such a classification was known only for algebraically closed fields and in the context of primitive spaces of matrices of bounded rank).

These results are applied to obtain improved upper bounds for the maximal rank in a minimal LLD operator space.
\end{abstract}

\vskip 2mm
\noindent
\emph{AMS Classification:} 47L05; 15A03; 15A30.

\vskip 2mm
\noindent
\emph{Keywords:} Local linear dependence; Spaces of operators; Bounded rank; Alternating maps

\tableofcontents

\section{Introduction}

Throughout the paper, all the vector spaces are assumed to be finite-dimensional.

\subsection{Local linear dependence}

Let $U$ and $V$ be finite-dimensional vector spaces over a field $\K$, and
$\calS$ be a linear subspace of the space $\calL(U,V)$ of all linear maps from $U$ to $V$.
We say that $\calS$ is \textbf{locally linearly dependent} (LLD)
when every vector $x \in U$ is annihilated by some non-zero operator $f \in \calS$.
Given a positive integer $c$, we say that $\calS$ is $c$-locally linearly dependent ($c$-LLD)
when, for every vector $x \in U$, the linear subspace $\{f \in \calS : f(x)=0\}$ has dimension at least $c$.

Alternatively, a family $(f_1,\dots,f_n)$ is called LLD when, for every
$x \in U$, the family $(f_1(x),\dots,f_n(x))$ is linearly dependent in $V$.
Obviously, this property is satisfied if and only if either $f_1,\dots,f_n$ are linearly dependent in $\calL(U,V)$
or $\Vect(f_1,\dots,f_n)$ is an LLD operator space.
Moreover, if some linear subspace $W$ of $V$ contains the image of each $f_i$ and $\dim W<n$,
then $f_1,\dots,f_n$ are obviously LLD.

The following example plays a central part in this article:
let $\varphi : U \times U \rightarrow V$ be an alternating bilinear map (with $U$ non-zero), and assume that $\varphi$
is fully-regular, that is $V=\Vect \{\varphi(x,y) \mid (x,y)\in U^2\}$ and $\varphi(x,-) \neq 0$ for all non-zero vector $x \in U$.
Then, the linear subspace $\calS_\varphi:=\{\varphi(x,-)\mid x \in U\}$ of $\calL(U,V)$ is LLD as,
for every non-zero vector $x \in U$, one has $\varphi(x,x)=0$ with $\varphi(x,-) \neq 0$.
We shall say that $\calS_\varphi$ is an operator space of the alternating kind.
An obvious example is the one of the standard pairing $\varphi : U \times U \rightarrow U \wedge U$.

Two operator spaces $\calS \subset \calL(U,V)$ and $\calS' \subset \calL(U',V')$ are called \textbf{equivalent}, and we write
$\calS \sim \calS'$, when there are two isomorphisms $F : U \overset{\simeq}{\rightarrow} U'$ and $G : V' \overset{\simeq}{\rightarrow} V$
such that $\calS=\{G \circ g \circ F \mid g \in \calS'\}$, in which case we have a uniquely defined isomorphism
$H : \calS \overset{\simeq}{\rightarrow} \calS'$ such that
$$\forall f \in \calS, \quad f=G \circ H(f) \circ F.$$
The corresponding notion for spaces of rectangular matrices is the standard equivalence relation, where
two matrix subspaces $\calM \subset \Mat_{m,n}(\K)$ and $\calM'\subset \Mat_{p,q}(\K)$ are equivalent
if and only if $m=p$, $n=q$ and there are non-singular matrices $P \in \GL_m(\K)$ and $Q \in \GL_n(\K)$ such that
$\calM=P\calM'Q$. Obviously, the operator spaces $\calS$ and $\calS'$ are equivalent if and only if
they are represented (in arbitrary bases of $U$, $V$, $U'$ and $V'$) by equivalent matrix spaces, or, alternatively,
if and only if there are choices of bases of $U$, $V$, $U'$ and $V'$ for which the same space of matrices
represents both $\calS$ and $\calS'$.

Note that if $\calS \sim \calS'$ and $\calS$ is $c$-LLD, then $\calS'$ is also $c$-LLD.
Moreover, if $\calS$ is minimal among the $c$-LLD subspaces of $\calL(U,V)$ and $\calS \sim \calS'$, then
$\calS'$ is minimal among the $c$-LLD subspaces of $\calL(U',V')$.
By the classification of minimal $c$-LLD operator spaces, we mean their determination up to equivalence.

\subsection{The duality argument}\label{dualityargumentsection}

Let $\calS$ be a $n$-dimensional linear subspace of $\calL(U,V)$.
The adjoint map of the natural embedding $\calS \hookrightarrow \calL(U,V)$
is
$$x \in U \longmapsto  \bigl[f \mapsto f(x)\bigr] \in \calL(\calS,V),$$
and we denote its image by
$$\widehat{\calS}:=\bigl\{f \mapsto f(x) \mid x \in U\bigr\} \subset \calL(\calS,V).$$
One sees that $\calS$ is LLD if and only if
$\widehat{\calS}$ is \textbf{defective}, i.e.\ no operator in $\widehat{\calS}$ is injective, that is to say
$$\forall \varphi \in \widehat{\calS}, \quad \rk \varphi <n.$$
Similarly, $\calS$ is $c$-LLD if and only if $\widehat{\calS}$ is $c$-\textbf{defective},
i.e.\ the kernel of every operator in $\widehat{\calS}$ has dimension at least $c$, i.e.\
$$\forall \varphi \in \widehat{\calS}, \quad \rk \varphi \leq n-c.$$
By choosing respective bases of $\calS$ and $V$,
one represents $\widehat{\calS}$ as a linear subspace $\mathcal{M}$ of $\Mat_{m,n}(\K)$
(the space of $m \times n$ matrices with entries in $\K$), where $m:=\dim V$.
For $\calS$ to be LLD (respectively, $c$-LLD), it is necessary and sufficient that $\rk M \leq n-1$
(respectively, $\rk M \leq n-c$) for all $M \in \calM$.
Thus, the study of LLD operator spaces is tightly connected to the one of linear subspaces of matrices with rank bounded above,
a theory which has been developed in the last sixty years  (see e.g.\ \cite{AtkinsonPrim,AtkLloydPrim,Flanders})
and for which we now have efficient computational tools and classification theorems.

\subsection{Reduced operator spaces}

When studying LLD operator spaces, only \emph{reduced} ones matter:
given a linear subspace $\calS$ of $\calL(U,V)$, the
\textbf{kernel} of $\calS$ is defined as $\underset{f \in \calS}{\bigcap} \Ker f$, i.e.\ the set of common zeros of the
operators in $\calS$;
the \textbf{essential range} of $\calS$ is defined as $\underset{f \in \calS}{\sum} \im f$;
we say that $\calS$ is \textbf{reduced} when its kernel is $\{0\}$ and its essential range is $V$.

Denote by $U_0$ the kernel of $\calS$ and by $V_0$ its essential range.
For every operator $f \in \calS$, the inclusions $U_0 \subset \Ker f$ and $\im f \subset V_0$ show that $f$ induces a
linear operator
$$\overline{f} : [x] \in U/U_0 \longmapsto f(x) \in V_0.$$
We see that
$$\overline{\calS}:=\Bigl\{\overline{f} \;\mid\; f \in \calS\Bigr\}$$
is a linear subspace of $\calL(U/U_0,V_0)$ and $f \mapsto \overline{f}$ is a rank-preserving isomorphism from $\calS$ to $\overline{\calS}$.
Moreover, $\overline{\calS}$ is reduced, and we call it the \textbf{reduced space attached to $\calS$.}
One checks that $\calS$ is $c$-LLD if and only if $\overline{\calS}$ is $c$-LLD.

Given a linear subspace $\calS'$ of $\calL(U',V')$
with kernel $U'_0$ and essential range $V'_0$,
one checks that
$$\calS \sim \calS' \; \Leftrightarrow \; \begin{cases}
\overline{\calS} \sim \overline{\calS'} \\
\dim U=\dim U' \\
\dim V=\dim V'.
\end{cases}$$
It ensues that the classification of minimal $c$-LLD operator spaces amounts to the one of
minimal reduced $c$-LLD operator spaces.

\subsection{Main goals}

In a previous work \cite{dSPLLD1}, we have shown how the above duality argument reduces the study of LLD operator spaces
to the one of spaces of non-injective operators. This was used to rediscover and expand the theory
of the minimal rank in LLD operator spaces and in non-reflexive operator spaces.
In this article, we use the duality argument to obtain significant advances in the following problems on LLD spaces:
\begin{enumerate}[(1)]
\item Classify minimal LLD operator spaces \emph{up to equivalence}. Prior to this article,
such classifications were known only for $2$-dimensional and $3$-dimensional LLD spaces,
and for $n$-dimensional minimal LLD spaces with an essential range of
dimension $\dbinom{n}{2}$ and, in each case, provided that the underlying field be of large enough cardinality \cite{ChebotarSemrl}.

\item Give an upper bound on the maximal rank in a minimal LLD operator space.
\end{enumerate}

Recently, an intriguing connection between LLD operator spaces and spaces of nilpotent matrices \cite{dSPAtkinsontoGerstenhaber}
has been uncovered: this gives a strong motivation for studying Problem (1), as it is expected that
classification results for minimal LLD spaces
can lead to a greater understanding of spaces of nilpotent matrices with a dimension close to the critical one.

Problem (1) will be our main point of focus: with the duality argument,
we shall show that studying it essentially amounts to studying so-called \emph{semi-primitive} operator spaces,
a notion that is closely connected to Atkinson and Lloyd's theory of primitive operator spaces \cite{AtkinsonPrim,AtkLloydPrim} later rediscovered
by Eisenbud and Harris in the more restrictive setting of algebraically closed fields \cite{EisenbudHarris}.
Most of our study revolves around obtaining classification results for semi-primitive operator spaces that
connect such spaces -- when their essential range is large enough -- to operator spaces of the alternating kind.
Our results on Problem (2) will, for the most part, be obtained as corollaries of our classification results for Problem (1).

The following theorem, which is already known (see \cite{ChebotarSemrl}), will be reproved in the course of the article
(this is Corollary \ref{rangecor} from Section \ref{essentialrangesection}):

\begin{theo}\label{majodimintro}
Let $\calS \subset \calL(U,V)$ be a minimal reduced LLD operator space with $\# \K \geq \dim \calS$. Then,
$\dim V \leq \dbinom{\dim \calS}{2}$.
\end{theo}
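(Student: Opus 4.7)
Let $n:=\dim\calS$ and $m:=\dim V$. The plan is to translate the statement across the duality of Section \ref{dualityargumentsection} and then bound $m$ via Atkinson's theorem on primitive bounded-rank matrix spaces. Since $\calS$ is LLD, every $\varphi\in\widehat{\calS}\subset\calL(\calS,V)$ has rank at most $n-1$; fixing bases, I represent $\widehat{\calS}$ as a subspace $\calM\subset\Mat_{m,n}(\K)$ of matrices of rank $\leq n-1$. Reducedness of $\calS$ translates into two properties of $\calM$: no nonzero vector of $\K^n$ lies in the common kernel of all $M\in\calM$, and the subspace spanned by the columns of matrices in $\calM$ is the whole of $\K^m$.

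I next translate minimality. For $x\in U$, write $\varphi_x:=[f\mapsto f(x)]\in\widehat{\calS}$. A hyperplane $\calH\subset\calS$ is LLD iff $\ker\varphi_x\cap\calH\neq\{0\}$ for every $x\in U$, so minimality of $\calS$ supplies, for each hyperplane $\calH\subset\calS$, some $x\in U$ with $\ker\varphi_x\cap\calH=\{0\}$. Since $\calS$ is LLD, $\varphi_x$ is non-injective, forcing $\ker\varphi_x$ to be a line not contained in $\calH$ and $\rk\varphi_x=n-1$. Consequently, the kernel lines of rank-$(n-1)$ matrices of $\calM$ cannot all lie in a single hyperplane of $\K^n$: they span $\K^n$, and a straightforward inductive extraction then provides $M_1,\dots,M_n\in\calM$, each of rank $n-1$, whose kernels form a basis of $\K^n$.

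To finish, I would apply Atkinson's bound. The two reducedness properties above, combined with the spanning property of the kernel lines of rank-$(n-1)$ elements, should force $\calM$ to be \emph{primitive} in the Atkinson-Lloyd sense \cite{AtkinsonPrim,AtkLloydPrim}: no base change brings $\calM$ into a non-trivial block-triangular form. Under the hypothesis $\#\K\geq n$, the Atkinson bound for primitive spaces of matrices of rank at most $r=n-1$ then gives $m\leq\binom{r+1}{2}=\binom{n}{2}$, as claimed.

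The main obstacle is the last step, where one must verify carefully that minimality and reducedness combined really enforce primitivity in Atkinson's strict sense, ruling out every non-trivial block-triangular reduction of $\calM$. Should this check prove delicate, an alternative is to place $M_1,\dots,M_n$ in standard form (with $\ker M_i=\K e_i$) and bound the span of their non-zero columns directly via the rank constraints on linear combinations $\sum_i\lambda_i M_i$; this concrete approach mirrors the extremal alternating-kind example $\calS_\varphi$, in which the analogous column span is exactly $U\wedge U$, of dimension $\binom{n}{2}$.
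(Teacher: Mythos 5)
Your overall strategy (dualize, then invoke an Atkinson-type dimension bound) is the paper's, and the first half is sound: the translation of reducedness, and the observation that minimality forces, for each hyperplane $\calH$ of $\calS$, a vector $x$ with $\Ker \varphi_x$ a line transverse to $\calH$, are both correct. The gap is the step you yourself flag as the ``main obstacle'': minimality and reducedness do \emph{not} make $\calM=\widehat{\calS}$ primitive in the Atkinson--Lloyd sense, and no amount of care will verify that check. Minimality of $\calS$ translates exactly into the \emph{column} condition (C) on $\calM$ (no change of basis of $\K^n$ followed by deletion of a column lowers the upper-rank), i.e.\ into \emph{semi-primitivity}; it says nothing about the row condition (D), which concerns codimension-one quotients of $V$ and is an independent hypothesis. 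Concretely, the $4\times 4$ space of type D0 in Section \ref{classn=4section} is semi-primitive but not primitive, and its dual is a minimal reduced $4$-dimensional LLD space; more generally, the second alternative of Theorem \ref{classtheo2intro} produces minimal LLD spaces whose duals fail primitivity. So an appeal to a bound stated only for primitive spaces does not apply directly.

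There are two ways to close the gap, both present in the paper. The route the paper takes: semi-primitivity plus $\#\K>\urk\calM$ implies the \emph{column property}, and the Atkinson--Lloyd bound (Theorem \ref{rangelemma}) is valid for every reduced space with the column property, giving $m\leq\dbinom{r+1}{2}\leq\dbinom{n}{2}$ with $r=\urk\widehat{\calS}\leq n-1$. Alternatively, if you insist on using the bound for primitive spaces, you must first pass through Proposition \ref{reductiontoprimitive}: a semi-primitive space is equivalent to one with a $p\times n$ block of arbitrary rows sitting above a primitive space of upper-rank $r-p$, and then $m\leq p+\dbinom{r-p+1}{2}\leq\dbinom{r+1}{2}$. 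Your closing ``alternative'' (normalizing $M_1,\dots,M_n$ with $\Ker M_i=\K e_i$ and bounding the column span by hand) is only a heuristic pointing at the extremal example $U\wedge U$; as written it is not an argument.
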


In \cite{ChebotarSemrl}, Chebotar and \v Semrl went on to classify the spaces for which $V$ has the critical dimension
$\dbinom{\dim \calS}{2}$. We shall obtain this as a very special case of the following two classification results:
the first one will be deduced from a generalized version of Atkinson's classification theorem of \cite{AtkinsonPrim}
(which we will reprove); the second one is entirely new.

\begin{theo}[First classification theorem for LLD spaces with large essential range]\label{classtheo1intro}
Let $\calS \subset \calL(U,V)$ be a minimal reduced LLD operator space with dimension $n$.
Assume that $\dim V >1+\dbinom{n-1}{2}$ and $\# \K \geq n \geq 3$. Then, $\calS$ is of the alternating kind.
\end{theo}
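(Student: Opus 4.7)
The plan is to reduce the problem to a classification statement about primitive spaces of bounded-rank matrices through the duality argument of Section~\ref{dualityargumentsection}, and then invoke the generalized Atkinson-type classification theorem advertised in the introduction.

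Set $n := \dim \calS$ and $m := \dim V$. By duality, $\widehat{\calS} \subset \calL(\calS,V)$ is defective, so once bases are chosen it is represented by a subspace $\calM \subset \Mat_{m,n}(\K)$ all of whose matrices have rank at most $n-1$. The reducedness of $\calS$ makes the evaluation map $x \mapsto [f \mapsto f(x)]$ an injection $U \hookrightarrow \calL(\calS,V)$, hence $\dim \calM = \dim U$; moreover, because the essential range of $\calS$ equals $V$, the rows of the matrices in $\calM$ jointly span $\K^m$. The minimality of $\calS$ should translate into a primitivity (in fact, semi-primitivity, in the terminology announced in Section~1.4) condition on $\calM$, since any non-trivial block-reduction of $\calM$ would produce a proper LLD subspace of $\calS$ contradicting minimality.

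The core of the argument is then to apply a classification theorem for primitive (or semi-primitive) spaces $\calM \subset \Mat_{m,n}(\K)$ of matrices of rank at most $n-1$, under the hypotheses $m > 1+\binom{n-1}{2}$ and $\card \K \geq n$: this should force $\calM$ to be equivalent to the matrix space of a fully-regular alternating bilinear form on an $n$-dimensional space. Transporting this conclusion back through the duality yields $\dim U = n$ and exhibits a fully-regular alternating bilinear map $\varphi : U \times U \to V$ such that $\calS = \calS_\varphi$; that is, $\calS$ is of the alternating kind.

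The main obstacle is the classification theorem itself. The threshold $1+\binom{n-1}{2}$ lies strictly between Atkinson's original bound for primitive bounded-rank spaces and the critical value $\binom{n}{2}$ attained by the alternating example, the gap being exactly $n-2$, so one cannot simply quote the classical statement from \cite{AtkinsonPrim}: a sharper argument, presumably inductive on $n$ and exploiting the existence of many rank-$(n-1)$ elements of $\calM$ guaranteed by $\card \K \geq n$, is needed. Once this refined classification is in place, the reduction back to the LLD setting via duality is essentially cosmetic.
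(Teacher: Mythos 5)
Your reduction is indeed the route the paper takes: Theorem \ref{classtheo1intro} is exactly Corollary \ref{classcor1}, which the paper obtains from the matrix-side statement (Theorem \ref{classtheo1}) via the adjunction functor, using that minimality of $\calS$ among LLD spaces makes $\widehat{\calS}$ semi-primitive (hence, when $\# \K>\urk$, a space with the column property), and that a space of the alternating kind is equivalent to its own dual operator space, so the conclusion for $\widehat{\calS}$ transports back to $\calS$. Two small slips in this part: it is the \emph{column} spaces of the matrices of $\calM$ that must sum to $\K^m$ (the rows live in $\K^n$), and the conclusion should read $\calS\sim\calS_\varphi$ rather than $\calS=\calS_\varphi$; you also leave implicit the self-duality $\calS_\varphi\sim\widehat{\calS_\varphi}$ that makes the transport back legitimate.

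The genuine gap is that the entire mathematical content of the theorem is concentrated in the classification statement you defer: that a reduced subspace of $\Mat_{m,n}(\K)$ with the column property, upper-rank $r$, $m>1+\binom{r}{2}$ and $\# \K>r\geq 2$ must satisfy $r=n-1$ and be of the alternating kind. You correctly observe that Atkinson's original theorem cannot simply be quoted at this threshold, but you offer no argument for the refined version beyond the guess that it is ``presumably inductive on $n$''. The paper's proof of this step is not a routine induction: it combines the Atkinson--Lloyd bound $m\leq 1+\binom{r}{2}$ (valid when $n>r+1$) to force $r=n-1$; Lemma \ref{finemaxdim}, resting on the thin decomposition lemma, to force $\dim\calS e_n=r$ and $\urk B(\calS)=r-1$ in the $(r,r)$-decomposition; and then the reduction lemma (Proposition \ref{reductionlemma}), whose proof uses the Flanders--Atkinson lemma, a polarization argument producing the alternating structure, and the generic-colinearity Lemma \ref{genericcolinearity}. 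Without some version of this machinery, the proposal establishes nothing beyond the correct but essentially formal duality reduction.
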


\begin{theo}[Second classification theorem for LLD spaces with large essential range]\label{classtheo2intro}
Let $\calS \subset \calL(U,V)$ be a minimal reduced LLD operator space with dimension $n$.
Assume that $\dim V >3+\dbinom{n-2}{2}$ and $\# \K \geq n \geq 3$. Then:
\begin{itemize}
\item Either $\calS$ is of the alternating kind;
\item Or there is a rank $1$ operator $g \in \calS$
such that, for the canonical projection $\pi : V \twoheadrightarrow V/\im g$, the operator space
$\{\pi \circ f \mid f \in \calS\}$ is an LLD subspace of $\calL(U, V/\im g)$ of the alternating kind
(with dimension $n-1$).
\end{itemize}
\end{theo}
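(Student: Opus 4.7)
The plan is to exploit the duality argument to recast the problem for the semi-primitive operator space $\widehat{\calS} \subset \calL(\calS,V)$, and then to isolate within $\widehat{\calS}$ a single ``rank $1$ ingredient'' whose removal yields a space covered by Theorem~\ref{classtheo1intro}. Throughout, set $m := \dim V$. Via the duality set-up of Section~\ref{dualityargumentsection}, minimality and reducedness of $\calS$ force $\widehat{\calS}$ to be a semi-primitive defective subspace of $\calL(\calS,V)$: each of its operators has rank at most $n-1$, and it satisfies the irreducibility conditions that the paper will have formalized in the sections preceding this theorem. In matrix terms this is a semi-primitive subspace of $\Mat_{m,n}(\K)$ of rank at most $n-1$, and the hypothesis $m > 3 + \dbinom{n-2}{2}$ places us just outside the validity range of the generalized Atkinson classification underlying Theorem~\ref{classtheo1intro} (which required $m > 1 + \dbinom{n-1}{2}$).

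Next I would appeal to a refined Atkinson-type structural theorem for this slightly weaker essential-range bound. The expected dichotomy is: either $\widehat{\calS}$ has the purely alternating shape, in which case $\calS$ is of the alternating kind and we are done; or $\widehat{\calS}$ decomposes with exactly one extra rank $1$ summand, providing a rank $1$ operator $g \in \calS$ that is, moreover, unique up to scalar among the operators in $\calS$ whose image lies in a distinguished line of $V$.

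In the second case, set $\pi : V \twoheadrightarrow V/\im g$ and $\calT := \{\pi \circ f \mid f \in \calS\}$. The kernel of $f \mapsto \pi \circ f$ contains $g$, and the uniqueness of $g$ from the previous step should force this kernel to equal $\K g$, whence $\dim \calT = n-1$. A direct verification should show that $\calT$ is reduced, minimal, and LLD (a non-zero operator annihilating $x \in U$ is either $g$, whose projection is the zero operator of $\calT$, or projects to a non-zero element of $\calT$ annihilating $x$). Since $\dim(V/\im g) = m-1 > 2 + \dbinom{n-2}{2} > 1 + \dbinom{(n-1)-1}{2}$ and $\dim \calT = n-1$, Theorem~\ref{classtheo1intro} applies to $\calT$ (with the boundary value $n=3$ handled separately through the known classification of $2$-dimensional minimal LLD spaces), yielding that $\calT$ is of the alternating kind, as required.

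The hard part will be the structural step: producing the refined generalization of Atkinson's theorem that isolates a single rank $1$ block in the non-alternating case. The uniqueness-up-to-scalar of $g$ is particularly delicate, since the dimension count in the final step collapses without it; I anticipate this requires an induction on $n$ together with a careful analysis of the extremal Atkinson--Lloyd components of $\widehat{\calS}$, and this is presumably where the extra assumption $\# \K \geq n$ is exploited to avoid small-field pathologies. The passage-to-quotient argument, though conceptually transparent, still requires a careful check that reducedness and minimality of $\calS$ descend to $\calT$.
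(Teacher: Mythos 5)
Your high-level strategy matches the paper's: dualize to a semi-primitive matrix space, prove a structural dichotomy at the bound $m>3+\binom{n-2}{2}$, and in the non-alternating branch pass to the quotient by the image of a rank $1$ operator and invoke the first classification theorem. But as written the proposal has two genuine gaps.

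First, the entire mathematical content of the statement is the ``refined Atkinson-type structural theorem'' that you explicitly defer. In the paper this is Theorem \ref{classtheo2}, and its proof is the bulk of the work: the dichotomy is driven by whether some column direction $x$ satisfies $\dim \widehat{\calS}x=1$ (equivalently, whether $\calS$ contains a rank $1$ operator). If yes, Lemma \ref{1decomplemma} plus the dimension bound of Theorem \ref{rangelemma} force a $(1,r)$-decomposition whose lower space falls under Theorem \ref{classtheo1}. If no, one passes to the $r$-reduced $(r,r)$-decomposition, applies the Flanders--Atkinson lemma to a generic matrix, and runs a case analysis on $\urk B(\calS)\in\{r-1,\le r-2\}$ using the thin decomposition lemma (Lemma \ref{finemaxdim}), the reduction lemma (Proposition \ref{reductionlemma}) with its polarization of quadratic identities, and the generic colinearity lemma (Lemma \ref{genericcolinearity}). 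Your speculation that this goes by ``induction on $n$ and analysis of the extremal Atkinson--Lloyd components'' does not describe a workable argument; none of the actual machinery is identified.

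Second, your final step applies Theorem \ref{classtheo1intro} to $\calT=\{\pi\circ f\mid f\in\calS\}$, which requires $\calT$ to be a \emph{minimal} reduced LLD space. Minimality of $\calT$ does not descend from minimality of $\calS$: an LLD hyperplane of $\calT$ pulls back only to a hyperplane $\calS_0\subset\calS$ with the property that each $x$ is sent by some nonzero $f\in\calS_0$ into $\im g$, not to $0$. Dually, minimality of $\calT$ is semi-primitivity of the lower space $H(\calS')$, and the paper stresses that semi-primitivity is precisely the property \emph{not} inherited by lower spaces (only the column property is, by Lemma \ref{inductioncolumnlemma}); this is why Theorem \ref{classtheo1} is stated and proved for reduced spaces with the column property rather than for semi-primitive spaces, and why the whole argument must be run at the matrix level with that weaker hypothesis. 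Your ``careful check that reducedness and minimality descend to $\calT$'' is therefore not a routine verification but the very obstruction the column-property formalism was introduced to bypass. (A smaller point: the independence of $\pi\circ f_1,\dots,\pi\circ f_{n-1}$, hence $\dim\calT=n-1$, comes for free from the reducedness of the alternating-kind lower space in the structural theorem, not from a separate uniqueness argument for $g$.)
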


Theorem \ref{classtheo1intro} is Corollary \ref{classcor1} from Section \ref{classtheo1Section}, while Theorem \ref{classtheo2intro}
is Corollary \ref{classcor2} from Section \ref{classtheo2Section}.

It is possible to obtain a classification theorem with the improved lower bound $6+\dbinom{n-3}{2}$,
but both its statement and its proof are way more technical than the ones of the above theorems:
this will be the topic of a separate article for which the present one will serve as a foundation.

\subsection{Additional definition and notation}

Throughout the article, the set of non-negative integers is denoted by $\N$ (following the French convention).

Given positive integers $m$ and $n$, one denotes by $\Mat_{m,n}(\K)$ the vector space of all matrices with
$m$ rows, $n$ columns and entries in $\K$. In particular, one sets $\Mat_n(\K):=\Mat_{n,n}(\K)$.
One defines $\Mata_n(\K)$ as the linear subspace of alternating matrices of $\Mat_n(\K)$.

\begin{Def}
Let $\calS$ be a linear subspace of $\calL(U,V)$.
The maximal rank of an operator in $\calS$ is called the \textbf{upper-rank} of $\calS$
and denoted by $\urk(\calS)$.

For $x \in \calS$, one defines the space $\calS x:=\{f(x) \mid f \in \calS\}$; one sets
$$\trk(\calS):=\max_{x \in U} \,\dim \calS x,$$
called the \textbf{transitive rank} of $\calS$. \\
For linear subspaces of matrices, we define the same notation by identifying $\Mat_{m,n}(\K)$
with $\calL(\K^n,\K^m)$ in the standard way.
\end{Def}

It is essential to note the following connection between transitive ranks and upper-ranks:
$$\trk \calS=\urk \widehat{\calS} \quad \text{and} \quad \urk \calS=\trk \widehat{\calS}.$$

\begin{Def}
Let $U_1,\dots,U_n,V$ be vector spaces, and
$f : U_1 \times \cdots \times U_n \rightarrow V$ be an $n$-linear map.
We say that $f$ is \textbf{regular with respect to the $i$-th argument}
when, for every non-zero vector $x_i \in U_i$, the map
$$f(-,\dots,-,x_i,-,\dots,-) : U_1 \times \cdots \times U_{i-1} \times U_{i+1}\times \cdots U_n
\longrightarrow V$$
is non-zero, that is $x_i \in U_i \mapsto f(-,\dots,-,x_i,-,\dots,-)$ is one-to-one. \\
In the special case when $n=2$, we say that $f$ is \textbf{left-regular} (respectively, \textbf{right-regular})
when it is regular with respect to the first argument (respectively, to the second argument). \\
We say that $f$ is \textbf{essentially surjective} when
$$\Vect \bigl\{f(x_1,\dots,x_n) \mid (x_1,\dots,x_n) \in U_1\times \cdots \times U_n\bigr\}=V.$$
We say that $f$ is \textbf{fully-regular} when it is essentially surjective and regular with respect to all its arguments.
\end{Def}

\begin{Rem}
Note that $f$ is essentially surjective if and only if the
$(n+1)$-linear form $\widetilde{f} : (x_1,\dots,x_n,\varphi) \in U_1\times \cdots \times U_n \times V^\star \longmapsto
\varphi(f(x_1,\dots,x_n)) \in \K$
induced by $f$ is regular with respect to the $(n+1)$-th argument. Therefore, $f$ is fully-regular if and only if
$\widetilde{f}$ is regular with respect to every single argument.
\end{Rem}

\vskip 3mm
At some point, we shall need to consider quadratic forms. Given scalars $a_1,\dots,a_n$ in $\K$,
the quadratic form $(x_1,\dots,x_n) \mapsto \underset{k=1}{\overset{n}{\sum}} a_k x_k^2$ on $\K^n$ is denoted by $\langle a_1,\dots,a_n\rangle$.
If $\K$ has characteristic $2$, then $[a,b]$ denotes the quadratic form $(x,y) \mapsto ax^2+xy+by^2$ on $\K^2$.
In any case, one sets
$$\K^{[2]}:=\bigl\{x^2 \mid x \in \K\bigr\} \quad \text{and} \quad  (\K^*)^{[2]}:=\bigl\{x^2 \mid x \in \K^*\bigr\}.$$

\subsection{Structure of the article}

In Section \ref{preliminarysection}, we shall remind the reader of some important key lemmas
that were reproved in \cite{dSPLLD1}.

The connection between primitive spaces of bounded rank matrices and LLD operator spaces
is fully established in Section \ref{LLDvsprimitive} with the help of categorical structures.
In that section, we shall also properly define semi-primitive matrix spaces, together with
the so-called \emph{column property} for matrix spaces with bounded rank. The column property is very important
as it is satisfied by semi-primitive spaces while being preserved by specific block extractions, thus allowing inductive proofs;
on the contrary, the semi-primitivity assumption does not behave well with such extractions.
We shall review some results of Atkinson and Lloyd on spaces with the column property,
which yields interesting consequences on LLD operator spaces, most notably Theorem \ref{majodimintro}.

Afterwards, we will tackle classification theorems for semi-primitive spaces: they will be obtained
as corollaries of classification theorems for spaces with the column property.
The first step consists of a systematic study of the operator space $\calS_\varphi$ associated with a fully-regular
alternating bilinear map $\varphi : U \times U \rightarrow V$. We shall rediscover
results from earlier works of Atkinson \cite{AtkinsonPrim} and Eisenbud and Harris \cite{EisenbudHarris}
with a more systematic approach. In particular, we will give sufficient conditions for such mappings to yield semi-primitive spaces
(or primitive spaces), and we will study the equivalence between such operator spaces and its relationship with the congruence
of alternating bilinear maps.  Interestingly, some of the results of this section are merely obtained as corollaries
to general results on semi-primitive operator spaces!

The next section is devoted to the statement and the proofs of the two main classification theorems
for spaces of matrices with the column property and to their applications to minimal LLD operator spaces with large essential range.
In Sections \ref{reductionlemmasection} and \ref{classtheo1Section}, we rediscover the basic tools for proving Atkinson's classification theorem and then
we generalize it to encompass spaces of matrices with the column property. This is applied
(Section \ref{classn=4section}) to provide a complete classification of all $4$-dimensional minimal LLD operator spaces,
provided that the underlying field has at least $4$ elements.
Before this article, such a classification was not known, and the corresponding classification of
primitive spaces of matrices with rank at most $3$ was known only for algebraically closed fields (see \cite{AtkinsonPrim} and \cite{EisenbudHarris}).
Section \ref{classtheo2Section} is devoted to a second order classification
theorem for minimal LLD operator spaces with large essential range: this theorem essentially doubles the range of dimensions
for which a classification is known and yields Theorem \ref{classtheo2intro} as a special case.

In the last section, we apply the previous classification theorems to obtain a significant improvement over
Meshulam and \v Semrl's estimate
for the maximal rank in a minimal LLD space \cite{MeshulamSemrlLAA}. With the above duality argument, this amounts
to estimate the transitive rank
of a semi-primitive space of bounded rank matrices; a key argument is that we have a very low estimate
for that transitive rank when the matrix space under consideration is of the alternating kind.

\section{Preliminary results from matrix theory}\label{preliminarysection}

Here, we review some useful technical lemmas on spaces of bounded rank matrices. They are all proved in Section 2 of \cite{dSPLLD1}.

\subsection{The generic rank lemma}

\begin{Def}
Let $\calS$ be a linear subspace of $\Mat_{m,n}(\K)$.
Given a basis $(A_1,\dots,A_s)$ of $\calS$ and independent indeterminates $\mathbf{x}_1,\dots,\mathbf{x}_s$,
the matrix $\mathbf{x}_1 A_1+\cdots+\mathbf{x}_s A_s$ of $\Mat_{m,n}\bigl(\K[\mathbf{x}_1,\dots,\mathbf{x}_s]\bigr)$
is called a \textbf{generic matrix} of $\calS$.

If we only assume that $A_1,\dots,A_s$ span $\calS$, then $\mathbf{x}_1 A_1+\cdots+\mathbf{x}_s A_s$
is called a \textbf{semi-generic matrix} of $\calS$.
\end{Def}

\noindent Note that the entries of $\mathbf{x}_1 A_1+\cdots+\mathbf{x}_s A_s$ are $1$-homogeneous polynomials
 in $\K[\mathbf{x}_1,\dots,\mathbf{x}_s]$.

\begin{lemme}[Generic rank lemma, Lemma 2.1 of \cite{dSPLLD1}]\label{genericrank}
Let $\calS$ be a linear subspace of $\Mat_{m,n}(\K)$ with $\# \K>\urk(\calS)$, and $\mathbf{A}$ be a semi-generic matrix
of $\calS$.
Then, $\urk(\calS)=\rk \mathbf{A}$.
\end{lemme}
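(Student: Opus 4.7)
The plan is to prove the two inequalities $\urk(\calS) \leq \rk \mathbf{A}$ and $\rk \mathbf{A} \leq \urk(\calS)$ separately; only the second will appeal to the cardinality hypothesis.

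For the first inequality, I would pick $M \in \calS$ realising the upper-rank, write $M = c_1 A_1 + \cdots + c_s A_s$ with $c_i \in \K$, and observe that the substitution $\mathbf{x}_i \mapsto c_i$ transforms $\mathbf{A}$ into $M$. Since any nonzero $k \times k$ minor of $\mathbf{A}$, viewed as a polynomial in the $\mathbf{x}_i$'s, specialises to the corresponding $k \times k$ minor of $M$, the rank cannot increase under such a substitution, giving $\rk \mathbf{A} \geq \rk M = \urk(\calS)$.

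For the reverse inequality I would reason by contradiction. Set $r := \urk(\calS)$ and suppose $\rk \mathbf{A} \geq r+1$; then some $(r+1) \times (r+1)$ minor of $\mathbf{A}$ is a nonzero polynomial $P \in \K[\mathbf{x}_1,\dots,\mathbf{x}_s]$, homogeneous of degree $r+1$ because each entry of $\mathbf{A}$ is a linear form in the $\mathbf{x}_i$. But for every $(c_1,\dots,c_s) \in \K^s$ the scalar $P(c_1,\dots,c_s)$ is the corresponding $(r+1) \times (r+1)$ minor of $c_1 A_1 + \cdots + c_s A_s \in \calS$, whose rank is at most $r$, so that minor vanishes; hence $P$ must vanish identically on $\K^s$.

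The main obstacle is to contradict this last fact from the hypothesis $\#\K \geq r+1 = \deg P$, which is exactly one unit short of what a textbook Schwartz--Zippel bound would need. I would close the gap through the homogeneity-refined statement: \emph{a nonzero homogeneous polynomial of degree $d \leq \#\K$ in $s \geq 1$ variables over $\K$ cannot vanish identically on $\K^s$}. This I would prove by induction on $s$; the base case $s = 1$ is immediate since $P$ is then a nonzero monomial. For the inductive step, expand $P = \sum_{k=0}^d P_k(\mathbf{x}_1,\dots,\mathbf{x}_{s-1})\, \mathbf{x}_s^k$ with $P_k$ homogeneous of degree $d-k$: for each fixed $(c_1,\dots,c_{s-1}) \in \K^{s-1}$, the univariate specialisation has degree at most $d \leq \#\K$ and vanishes on $\K$, hence equals (in the finite case) a scalar multiple of $\prod_{c \in \K}(\mathbf{x}_s - c)$. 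Comparing the coefficients of $\mathbf{x}_s^d$ and of $\mathbf{x}_s$ and exploiting that the latter coefficient $P_1$ is homogeneous of positive degree $d-1$ and hence vanishes at the origin of $\K^{s-1}$, one deduces that the leading coefficient $P_d$ (which is a constant) must be zero; the remaining $P_k$ then vanish identically on $\K^{s-1}$ and are homogeneous of degree at most $d$, so the induction on $s$ closes the argument and contradicts the nonzeroness of $P$.
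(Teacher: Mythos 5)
Your proposal is correct, but note that there is no proof in this paper to compare it with: Lemma \ref{genericrank} is imported verbatim from Lemma 2.1 of \cite{dSPLLD1}, and Section \ref{preliminarysection} explicitly defers all the preliminary proofs to Section 2 of that reference. Judged on its own, your argument is complete, and it isolates the one genuinely delicate point. The specialisation inequality $\urk(\calS)\le \rk \mathbf{A}$ is routine, and for the converse you correctly observe that a nonzero $(r+1)\times(r+1)$ minor of $\mathbf{A}$ is homogeneous of degree $r+1$, which the hypothesis only bounds by $\# \K$ rather than by $\# \K-1$, so the naive vanishing bound for polynomials of degree less than $\# \K$ falls exactly one short. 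Your homogeneity-refined vanishing lemma is true and your induction does establish it: in the only nontrivial case $d=q:=\# \K$, each univariate specialisation must be a constant multiple of $\mathbf{x}_s^{q}-\mathbf{x}_s$, that multiple equals the constant $P_d$ (read off the coefficient of $\mathbf{x}_s^{q}$), and evaluating the coefficient of $\mathbf{x}_s$ at the origin of $\K^{s-1}$, where the positively graded $P_1$ vanishes, forces $P_d=0$; then every $P_k$ vanishes on $\K^{s-1}$ and the induction hypothesis applies to these homogeneous polynomials of degree at most $d$. An equivalent way to see this key step is that a homogeneous polynomial of degree $q$ reduces, modulo the relations $\mathbf{x}_i^{q}=\mathbf{x}_i$ cutting out the functions on $\K^{s}$, to the sum of a linear form and an already-reduced degree-$q$ part, which cannot cancel each other; either way, this use of homogeneity is precisely what makes the sharp hypothesis $\# \K>\urk(\calS)$ sufficient.
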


\subsection{The Flanders-Atkinson lemma}

The following lemma was first discovered by Atkinson \cite{AtkinsonPrim}, extending a result of Flanders \cite{Flanders}
(see \cite{dSPLLD1} for a simplified proof).
It will be one of our main computational tools:

\begin{lemme}[Flanders-Atkinson lemma, Lemma 2.3 of \cite{dSPLLD1}]\label{Flanders}
Let $(m,n)\in (\N \setminus \{0\})^2$ and let $r \in \lcro 1,\min(m,n)\rcro$ be such that $\# \K > r$. \\
Set $J_r:=\begin{bmatrix}
I_r & [0]_{r \times (n-r)} \\
[0]_{(m-r) \times r} & [0]_{(m-r) \times (n-r)}
\end{bmatrix}$
and consider an arbitrary matrix
$M=\begin{bmatrix}
A & C \\
B & D
\end{bmatrix}$ of $\Mat_{m,n}(\K)$ with the same decomposition pattern. \\
Assume that $\urk(\Vect(J_r,M)) \leq r$.
Then,
$$D=0 \quad \text{and} \quad \forall k \in \N, \; BA^kC=0.$$
\end{lemme}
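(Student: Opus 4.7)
The plan is to exploit the rank constraint on the one-parameter family $J_r + \mathbf{t}M$ treated as a matrix over the rational function field $\K(\mathbf{t})$, and then to combine a Schur complement reduction with a formal power series expansion.

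First, I would establish that $\rk_{\K(\mathbf{t})}(J_r + \mathbf{t}M) \leq r$. Since by hypothesis $\urk(\Vect(J_r, M)) \leq r < \#\K$, the generic rank lemma (Lemma \ref{genericrank}) applied to $\calS := \Vect(J_r, M)$ with spanning family $(J_r, M)$ yields that the rank of the semi-generic matrix $\mathbf{x}_1 J_r + \mathbf{x}_2 M$ over $\K(\mathbf{x}_1, \mathbf{x}_2)$ equals $\urk(\calS) \leq r$. Specializing $\mathbf{x}_1 \mapsto 1$ and renaming $\mathbf{x}_2$ as $\mathbf{t}$ can only decrease the rank (any minor that vanishes generically vanishes after specialization), yielding the desired bound.

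Next, I would perform a Schur complement reduction over $\K(\mathbf{t})$. The top-left block $I_r + \mathbf{t}A$ is invertible over $\K(\mathbf{t})$, because its determinant is a nonzero element of $\K[\mathbf{t}]$ (it equals $1$ at $\mathbf{t}=0$). Multiplying
$$J_r + \mathbf{t}M = \begin{bmatrix} I_r + \mathbf{t}A & \mathbf{t}C \\ \mathbf{t}B & \mathbf{t}D \end{bmatrix}$$
on the left and right by the appropriate invertible block-triangular matrices produces the block-diagonal form
$$\begin{bmatrix} I_r + \mathbf{t}A & 0 \\ 0 & \mathbf{t}D - \mathbf{t}^2\,B\,(I_r + \mathbf{t}A)^{-1}\,C \end{bmatrix}.$$
The left block contributes rank $r$ and the total rank is at most $r$, so the Schur complement vanishes. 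Dividing by the nonzero element $\mathbf{t}$ of $\K(\mathbf{t})$ gives the matrix identity
$$D = \mathbf{t}\,B\,(I_r + \mathbf{t}A)^{-1}\,C \quad \text{in} \ \Mat_{m-r,n-r}\bigl(\K(\mathbf{t})\bigr).$$

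Finally, I would expand both sides as formal power series in $\mathbf{t}$. The rational matrix $(I_r + \mathbf{t}A)^{-1}$ is regular at $\mathbf{t} = 0$ and admits the geometric series expansion $\sum_{k \geq 0} (-1)^k \mathbf{t}^k A^k$ in $\Mat_r\bigl(\K[[\mathbf{t}]]\bigr)$. Substituting yields
$$D = \sum_{k=0}^{\infty} (-1)^k \mathbf{t}^{k+1}\,B A^k C$$
as an identity in $\Mat_{m-r,n-r}\bigl(\K[[\mathbf{t}]]\bigr)$. Comparing coefficients of successive powers of $\mathbf{t}$ gives $D = 0$ from the constant term, and $B A^k C = 0$ from the coefficient of $\mathbf{t}^{k+1}$, for every $k \in \N$. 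The main technical subtlety is the very first step, namely securing the rank bound over $\K(\mathbf{t})$ when $\K$ need not be large; the hypothesis $\#\K > r$ is precisely what makes the generic rank lemma applicable, and everything afterwards is formal.
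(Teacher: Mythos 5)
Your proof is correct, and it is essentially the argument the paper relies on: the article does not reprove this lemma but defers to the simplified proof in \cite{dSPLLD1}, which proceeds exactly as you do, namely by passing to the rank bound over $\K(\mathbf{t})$ via the generic rank lemma (this is the only place the hypothesis $\#\K>r$ enters), taking the Schur complement with respect to the invertible block $I_r+\mathbf{t}A$, and expanding $(I_r+\mathbf{t}A)^{-1}$ as a formal power series to extract $D=0$ and $BA^kC=0$ coefficient by coefficient. No gaps.
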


\subsection{The decomposition lemma}

The following decomposition principle for upper-ranks will be used frequently in the rest of the article:

\begin{lemme}[Lemma 2.4 of \cite{dSPLLD1}]\label{decompositionlemma}
Let $\calS$ be a linear subspace of $\Mat_{m,n}(\K)$ and assume that there exists a pair
$(r,s)$, with $1 \leq r\leq m$ and $1 \leq s\leq n$, such that
every matrix of $\calS$ splits up as
$$M=\begin{bmatrix}
[?]_{r \times s} & C(M) \\
B(M) & [0]_{(m-r) \times (n-s)}
\end{bmatrix}, \quad \text{with $B(M) \in \Mat_{m-r,s}(\K)$ and $C(M) \in \Mat_{r,n-s}(\K)$.}$$
Consider a semi-generic matrix of $\calS$ of the form
$$\mathbf{M}=\begin{bmatrix}
[?]_{r \times s} & \mathbf{C} \\
\mathbf{B} & [0]_{(m-r) \times (n-s)}
\end{bmatrix}.$$
Then, $\rk \mathbf{B}=\urk B(\calS)$, $\rk \mathbf{C}=\urk C(\calS)$ and
$$\urk(\calS) \geq \urk B(\calS) +\urk C(\calS) .$$
\end{lemme}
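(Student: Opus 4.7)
The plan is to combine the generic rank lemma (Lemma \ref{genericrank}) with an elementary rank inequality for block matrices of the form $\begin{bmatrix} * & C \\ B & 0 \end{bmatrix}$.

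First I note that $\mathbf{B}$ and $\mathbf{C}$ are themselves semi-generic matrices of, respectively, $B(\calS)$ and $C(\calS)$. Indeed, if $(A_1,\dots,A_s)$ is the spanning family of $\calS$ that gives $\mathbf{M}=\sum_{i=1}^s \mathbf{x}_i\, A_i$, then by linearity of the block decomposition $B(A_1),\dots,B(A_s)$ span $B(\calS)$ and $\mathbf{B}=\sum_i \mathbf{x}_i\, B(A_i)$, and similarly for $\mathbf{C}$. Applying the generic rank lemma to each of the subspaces $B(\calS)$, $C(\calS)$ and $\calS$ itself then yields the three equalities
$$\rk \mathbf{B}=\urk B(\calS),\quad \rk \mathbf{C}=\urk C(\calS),\quad \rk \mathbf{M}=\urk \calS,$$
so the claimed inequality reduces to showing $\rk \mathbf{M}\geq \rk \mathbf{B}+\rk \mathbf{C}$, a purely linear-algebraic statement over the fraction field $\K(\mathbf{x}_1,\dots,\mathbf{x}_s)$.

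For this last inequality, I set $u:=\rk \mathbf{B}$ and $v:=\rk \mathbf{C}$ and exhibit $u+v$ linearly independent rows of $\mathbf{M}$. I pick $u$ linearly independent rows of $\mathbf{B}$, which extend to rows of $\mathbf{M}$ by appending the zero block in the last $n-s$ columns; and I pick $v$ rows from the top block of $\mathbf{M}$ along which the corresponding rows of $\mathbf{C}$ are linearly independent. Any linear relation among these $u+v$ rows, when projected onto the last $n-s$ columns, must annihilate a linearly independent family of $v$ rows of $\mathbf{C}$; hence only the bottom rows can contribute to the relation, and projection onto the first $s$ columns then forces their coefficients to vanish as well.

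The main thing to watch out for is not a deep obstacle but the implicit cardinality context: Lemma \ref{genericrank} requires $\#\,\K>\urk(-)$, so the three equalities above hold without further ado only in that regime (otherwise $\rk \mathbf{B}$, $\rk \mathbf{C}$, $\rk \mathbf{M}$ should be read as ranks over $\K(\mathbf{x}_1,\dots,\mathbf{x}_s)$ and matched to the upper ranks after extending scalars to an infinite field). The remainder of the argument, namely the block-row rank inequality, is entirely field-free.
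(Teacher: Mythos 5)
This lemma is imported from \cite{dSPLLD1} (the paper explicitly declines to reprove the results of Section \ref{preliminarysection}), so there is no in-paper proof to compare against; your argument --- observing that $\mathbf{B}$ and $\mathbf{C}$ are semi-generic matrices of $B(\calS)$ and $C(\calS)$, invoking the generic rank lemma three times, and then proving $\rk \mathbf{M}\geq \rk\mathbf{B}+\rk\mathbf{C}$ by exhibiting $u+v$ independent rows --- is the natural route and is correct whenever $\# \K>\urk(\calS)$ (which also licenses the applications to $B(\calS)$ and $C(\calS)$, since $B(M)$ and $C(M)$ are submatrices of $M$ and hence $\urk B(\calS)\leq \urk(\calS)$ and $\urk C(\calS)\leq\urk(\calS)$). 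The row-counting step is sound: projecting a putative relation onto the last $n-s$ columns kills the coefficients of the chosen top rows, and projecting onto the first $s$ columns then kills those of the bottom rows.

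The one point to sharpen is your closing caveat, which is more than a presentational issue. Reading the ranks of $\mathbf{B}$ and $\mathbf{C}$ over $\K(\mathbf{x}_1,\dots,\mathbf{x}_s)$ turns the first two conclusions into tautologies about the function field, but it does \emph{not} rescue the third conclusion, which concerns ranks over $\K$ and genuinely fails over small fields. Take $\K=\F_2$, $m=n=4$, $r=s=2$, and $\calS=\Vect(M_1,M_2)$ where all unspecified blocks are zero, $B(M_1)=I_2$, $C(M_1)=\bigl[\begin{smallmatrix}1&0\\0&0\end{smallmatrix}\bigr]$, $B(M_2)=\bigl[\begin{smallmatrix}1&0\\0&0\end{smallmatrix}\bigr]$ and $C(M_2)=I_2$: then $\urk B(\calS)=\urk C(\calS)=2$, yet $\rk M_1=\rk M_2=3$ and $\rk(M_1+M_2)=2$, so $\urk(\calS)=3<4$. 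Hence the statement as transcribed here is missing a hypothesis of the form $\# \K>\urk(\calS)$, and your proof should be presented as a proof of the lemma under that hypothesis rather than apologizing for needing it; note that every invocation of the lemma in this article does take place in that regime.
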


\section{Minimal LLD spaces versus primitive spaces of matrices of bounded rank}\label{LLDvsprimitive}

Here, we uncover a deep connection between minimal LLD operator spaces
and the primitive spaces of matrices of bounded rank studied by Atkinson and Lloyd \cite{AtkinsonPrim,AtkLloydPrim}.

We remind the reader that all the vector spaces that we consider are assumed to be finite-dimensional.

\subsection{The category of operator spaces}

We define the category of operator spaces, denoted by $\OS$, as follows:
\begin{itemize}
\item An object of $\OS$ is a triple $(U,V,\calS)$, where $U$ and $V$ are vector spaces (over $\K$)
and $\calS$ is a linear subspace of $\calL(U,V)$;
\item A morphism from $(U,V,\calS)$ to $(U',V',\calS')$ is a triple $(f,g,h)\in \calL(U',U) \times \calL(V,V') \times \calL(\calS',\calS)$
such that the following diagram is commutative for every $s' \in \calS'$:
$$\xymatrix{
U \ar[d]_{h(s')} & U' \ar[l]_{f}  \ar[d]^{s'} \\
V \ar[r]_g & V'.
}$$
The composition of morphisms is deduced from the one of linear maps.
\end{itemize}
Thus, two operator spaces $\calS \subset \calL(U,V)$ and $\calS' \subset \calL(U',V')$ are equivalent if and only if
the objects $(U,V,\calS)$ and $(U',V',\calS')$ of $\OS$ are isomorphic.

\subsection{The duality argument refined}

The category of bilinear maps, denoted by $\BM$, is defined as follows:
\begin{itemize}
\item An object of $\BM$ is a $4$-tuple $(U,V,W,B)$, where $U,V,W$ are vector spaces and $B : U \times V \rightarrow W$
is a bilinear map;
\item A morphism from $(U,V,W,B)$ to $(U',V',W',B')$
is a triple $(f,g,h) \in \calL(U',U) \times \calL(V',V) \times \calL(W,W')$ such that
$\forall (x,y) \in U' \times V', \; B'(x,y)=h\bigl(B(f(x),g(y))\bigr)$, i.e.\ the following diagram is commutative:
$$\xymatrix{
U \times V  \ar[d]_B & U' \times V' \ar[l]_{f\times g} \ar[d]^{B'} \\
W \ar[r]_h & W'.
}\quad $$
Again, the composition of morphisms is the obvious one.
\end{itemize}

Two bilinear maps $B : U \times V \rightarrow W$ and $B' : U' \times V' \rightarrow W'$ are called \textbf{equivalent}, and we write
$B \sim B'$, when the objects $(U,V,W,B)$ and $(U',V',W',B')$ are isomorphic in $\BM$.
This means that there are isomorphisms
$f : U' \overset{\simeq}{\rightarrow} U$, $g : V' \overset{\simeq}{\rightarrow} V$
and $h : W \overset{\simeq}{\rightarrow} W'$ such that
$\forall (x,y) \in U' \times V', \; B'(x,y)=h\bigl(B(f(x),g(y))\bigr)$.

\vskip 3mm
The transposition functor $T : \BM \rightarrow \BM$ assigns to every
object $(U,V,W,B)$ the object $(V,U,W,B')$, where $B' : (x,y) \mapsto B(y,x)$,
and assigns to every morphism $(f,g,h)$ the morphism $(g,f,h)$.

\begin{Not}
We denote by $\LRBM$ the full subcategory of $\BM$ in which the objects are the
$4$-tuples $(U,V,W,B)$ in which $B$ is left-regular.
\end{Not}

A linear subspace $\calS$ of $\calL(U,V)$ always gives rise to a bilinear mapping
$$B_\calS : (f,x) \in \calS \times U \longmapsto f(x) \in V,$$
and we note that:
\begin{enumerate}[(i)]
\item $B_\calS$ is always left-regular;
\item $B_\calS$ is right-regular if and only if the kernel of $\calS$ is $\{0\}$;
\item $B_\calS$ is essentially surjective if and only if the essential range of $\calS$ is $V$.
\end{enumerate}
In particular, $\calS$ is reduced if and only if $B_\calS$ is fully-regular.

We extend the above construction to a functor
$$B : \OS \longrightarrow \LRBM$$
by mapping the morphism $(U,V,\calS) \overset{(f,g,h)}{\longrightarrow} (U',V',\calS')$
to the morphism $(\calS,U,V,B_\calS) \overset{(h,f,g)}{\longrightarrow} (\calS',U',V',B_{\calS'})$.

\paragraph{}
Conversely, let $U$, $V$ and $W$ be vector spaces, and $B : U \times V \rightarrow W$ be a left-regular bilinear map.
Then, $x \in U \mapsto B(x,-) \in \calL(V,W)$ defines an isomorphism $\alpha_B$ from $U$ to a linear subspace $\calS_B$ of $\calL(V,W)$.
Given a morphism $(U,V,W,B) \overset{(f,g,h)}{\longrightarrow} (U',V',W',B')$ in the category $\LRBM$,
we define a morphism $\calS_{(f,g,h)} : (V,W,\calS_B) \rightarrow (V',W',\calS_{B'})$ in $\OS$ as $(g,h,\varphi)$, where the linear map
$\varphi : \calS_{B'} \rightarrow \calS_B$ is defined as $\alpha_B \circ f \circ (\alpha_{B'})^{-1}$.
One checks that this defines a functor
$$\calS : \LRBM \longrightarrow \OS.$$
Finally, one checks that $\calS \circ B$ is the identity functor on $\OS$,
while $B \circ \calS$ is naturally equivalent to the identity functor on $\LRBM$ through the natural equivalence
$(U,V,W,B) \mapsto (\alpha_B,\id_V,\id_W)$.
From here, we conclude:

\begin{prop}
Two operator spaces $\calS$ and $\calS'$ are equivalent if and only if $B_\calS \sim B_{\calS'}$.
Two left-regular bilinear maps $B$ and $B'$ are equivalent if and only if $\calS_B \sim \calS_{B'}$.
\end{prop}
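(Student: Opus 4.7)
The proposition is essentially a formal consequence of the categorical setup already established in the excerpt, so the plan is to exploit the two facts stated just above the proposition: that $\calS\circ B$ equals the identity functor on $\OS$, and that $B\circ \calS$ is naturally equivalent to the identity functor on $\LRBM$ via $(U,V,W,B)\mapsto(\alpha_B,\id_V,\id_W)$. Together these say that $B$ and $\calS$ form an equivalence of categories between $\OS$ and $\LRBM$. Since equivalence of operator spaces is by definition isomorphism in $\OS$, and equivalence of bilinear maps is by definition isomorphism in $\LRBM$, the proposition will follow once one notes that any equivalence of categories (indeed any functor) carries isomorphic objects to isomorphic ones.

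Concretely, I would argue the two assertions as follows. For the first, suppose $\calS\sim\calS'$; then $(U,V,\calS)$ and $(U',V',\calS')$ are isomorphic in $\OS$, so applying the functor $B$ yields an isomorphism in $\LRBM$ between $(\calS,U,V,B_\calS)$ and $(\calS',U',V',B_{\calS'})$, which means $B_\calS\sim B_{\calS'}$. Conversely, if $B_\calS\sim B_{\calS'}$, apply the functor $\calS$ to get an isomorphism in $\OS$ between $\calS(B_\calS)$ and $\calS(B_{\calS'})$; but $\calS\circ B$ is the identity functor, so these are exactly $(U,V,\calS)$ and $(U',V',\calS')$, hence $\calS\sim\calS'$.

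For the second assertion the argument is symmetric, but with one small twist coming from the fact that $B\circ \calS$ is only naturally equivalent to $\id_{\LRBM}$ rather than equal to it. If $B\sim B'$, apply $\calS$ to obtain $\calS_B\sim\calS_{B'}$. Conversely, if $\calS_B\sim\calS_{B'}$, apply $B$ to obtain $B\circ \calS(B)\sim B\circ \calS(B')$ in $\LRBM$, and then compose with the natural isomorphisms $(\alpha_B,\id_V,\id_W)$ and $(\alpha_{B'},\id_{V'},\id_{W'})$ furnished by the explicit natural equivalence between $B\circ \calS$ and $\id_{\LRBM}$ to conclude $B\sim B'$.

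There is no real obstacle here: everything is formal once one is willing to verify that $\calS\circ B=\id_{\OS}$ and that the prescribed natural transformation $B\circ\calS\Rightarrow \id_{\LRBM}$ is indeed a natural equivalence, which are the checks the author has already asserted just before the statement. The only thing to be careful about is to not confuse the two meanings of the symbol $\calS$ (an operator space on the one hand, and the functor $\LRBM\to\OS$ on the other); beyond that the proof is simply the observation that equivalences of categories preserve and reflect isomorphisms.
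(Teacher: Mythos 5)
Your proposal is correct and follows exactly the route the paper intends: the paper gives no explicit proof, simply writing ``From here, we conclude'' after asserting that $\calS\circ B=\id_{\OS}$ and that $B\circ\calS$ is naturally equivalent to $\id_{\LRBM}$, which are precisely the two facts you invoke. Your spelled-out version, including the extra care needed in the converse direction of the second assertion where one composes with the natural isomorphisms $(\alpha_B,\id_V,\id_W)$, is a faithful elaboration of the same argument.
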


Thus, classifying reduced operator spaces up to equivalence amounts to classifying fully-regular
bilinear maps up to equivalence.

\begin{Def}
We denote by $\ROS$ the full subcategory of $\OS$ in which the objects are the
triples $(U,V,\calS)$ such that $\calS$ is reduced. \\
We denote by $\FRBM$ the full subcategory of $\LRBM$ in which the objects
are the $4$-tuples $(U,V,W,B)$ such that $B$ is fully-regular.
\end{Def}

Restricting the functors $B$ and $\calS$ gives rise to functors $\ROS \rightarrow \FRBM$ and $\FRBM \rightarrow \ROS$
that are mutual inverses up to a natural equivalence.
The composite of the three functors $\ROS \rightarrow \FRBM$, $T : \FRBM \rightarrow \FRBM$ and
$\FRBM \rightarrow \ROS$ yields the \textbf{adjunction functor}:
$$\widehat{(-)} : \ROS \longrightarrow \ROS.$$
In short, for every object $(U,V,\calS)$ of $\ROS$, one has

$$\widehat{(U,V,\calS)}=(\calS,V,\widehat{\calS}), \quad \text{where
$\widehat{\calS}=\bigl\{f \mapsto f(x) \mid x \in U\bigr\}$,}$$
as in Section \ref{dualityargumentsection}. One checks that $\widehat{(-)}$ is an involution up to a natural equivalence, to the effect that
$\widehat{\widehat{\calS}} \sim \calS$ for every reduced operator space $\calS$;
moreover, given two reduced operator spaces $\calS$ and $\calS'$, one has
$$\calS \sim \calS' \;\Leftrightarrow\; \widehat{\calS} \sim \widehat{\calS'}.$$

We finish by interpreting the $c$-LLD condition for reduced operator spaces.

\begin{Def}
Let $B : U \times V \rightarrow W$ be a bilinear map.
We say that $B$ is \textbf{$c$-right-defective} (respectively, right-defective)
when the operator space $\bigl\{B(-,y)\mid y \in V\bigr\} \subset \calL(U,W)$ is $c$-defective (respectively, defective).
We define left-defectiveness likewise.
\end{Def}

A linear subspace $\calS$ of $\calL(U,V)$ is $c$-LLD if and only if the attached bilinear map
$B_\calS$ is $c$-right-defective. Conversely, given a $c$-right-defective and fully-regular bilinear map $B$,
the operator space $\calS_B$ is $c$-LLD, while $\calS_{T(B)}$ is reduced and $c$-defective.

Given a non-negative integer $c$, denote by $\LLDR(c)$ the full subcategory of $\ROS$ in which the objects are the $c$-LLD spaces,
and by $\DR(c)$ the full subcategory of $\ROS$ in which the objects are the $c$-defective spaces.
Then, one checks that the adjunction functor defines two functors, one from $\LLDR(c)$ to $\DR(c)$, and the other one
from $\DR(c)$ to $\LLDR(c)$, and that they are mutual inverses up to a natural equivalence.
From this, we draw a key principle:
\begin{center}
Classifying, up to equivalence, reduced $c$-LLD operator spaces between finite-dimensional
vector spaces amounts to classifying, up to equivalence, reduced $c$-defective
operator spaces between finite-dimensional vector spaces.
\end{center}

\subsection{Semi-primitive operator spaces}

Let $U$ and $V$ be finite-dimensional vector spaces, and $\calS$ be a linear subspace of $\calL(U,V)$.
Denote by $c$ the defectiveness index of $\calS$, so that $\calS$ is $c$-defective but not $(c+1)$-defective
(with the convention that every operator space is $0$-defective).

Consider the following conditions:
\begin{enumerate}[(i)]
\item There is no linear hyperplane $U'$ of $U$ such that $\{f_{|U'} \mid f \in \calS\}$ is a $c$-defective subspace of $\calL(U',V)$.
\item There is no surjective linear mapping $\pi : V \twoheadrightarrow V'$, with $\dim V'=\dim V-1$, such that $\{\pi \circ f\mid f \in \calS\}$
is a $(c+1)$-defective subspace of $\calL(U,V')$.
\end{enumerate}

\begin{Def}
We say that $\calS$ is \textbf{primitive} when it is reduced and satisfies conditions (i) and (ii) above. \\
We say that $\calS$ is \textbf{semi-primitive} when it is reduced and satisfies condition (i) above.
\end{Def}

Note that semi-primitivity and primitivity are both invariant under equivalence.

We define the notion of a semi-primitive/primitive $c$-defective subspace of $\Mat_{m,n}(\K)$ by referring
to the semi-primitivity/primitivity of the subspace of $\calL(\K^n,\K^m)$ associated with it in the canonical bases.
These conditions are invariant under the equivalence of matrix spaces, i.e.\ it is invariant by left and right-multiplication by
non-singular square matrices.
Another way to put things is to say that a subspace $\calS$ of $\Mat_{m,n}(\K)$ is primitive
if and only if it satisfies the following conditions:
\begin{itemize}
\item[(A)] $\calS$ is inequivalent to a space of matrices of the form
$\begin{bmatrix}
[?]_{m \times (n-1)} & [0]_{m \times 1}
\end{bmatrix}$;
\item[(B)] $\calS$ is inequivalent to a space of matrices of the form
$\begin{bmatrix}
[?]_{(m-1) \times n} \\
[0]_{1 \times n}
\end{bmatrix}$;
\item[(C)] $\calS$ is inequivalent to a space $\calS'$ of matrices of the form
$M=\begin{bmatrix}
H(M) & [?]_{m \times 1}
\end{bmatrix}$, where $H(\calS')$ is a subspace of $\Mat_{m,n-1}(\K)$ with $\urk H(\calS')<\urk \calS$;
\item[(D)] $\calS$ is inequivalent to a space $\calS'$ of matrices of the form
$M=\begin{bmatrix}
H(M) \\
[?]_{1 \times n}
\end{bmatrix}$, where $H(\calS')$ is a subspace of $\Mat_{m-1,n}(\K)$ with $\urk H(\calS')<\urk \calS$.
\end{itemize}
Moreover, $\calS$ is semi-primitive if and only if it satisfies conditions (A), (B) and (C);
$\calS$ is reduced if and only if it satisfies conditions (A) and (B) alone.
Remark also that $\calS$ is primitive if and only if both $\calS$ and $\calS^T$ are semi-primitive.

\begin{Rem}
Let $\calS$ be a semi-primitive subspace of $\Mat_{m,n}(\K)$, with $r:=\urk \calS$ and $n>0$.
Then, $\calS$ is defective; if $r=n$ indeed, then deleting the last column in the matrices of $\calS$ yields a space
in which every matrix has rank less than $r$. \\
It follows that if $\calS$ is primitive, then either $n=m=0$ or $r<\min(m,n)$.
\end{Rem}

\begin{Rem}
Let $\calV$ be a subspace of $\Mat_{m,n}(\K)$ with $\urk \calV=1$.
By a classical theorem of Schur, one of the spaces $\calV$ of $\calV^T$ is equivalent to a space
of matrices with all columns zero starting from the second one.
Therefore, if $\calV$ is reduced, then $m=1$ or $n=1$.
In particular, if we have a semi-primitive subspace $\calV$ of $\Mat_{m,n}(\K)$ with upper-rank $1$, then $m=1$ and $n\geq 2$.
\end{Rem}

Primitive matrix spaces with bounded rank were introduced by Atkinson and Lloyd \cite{AtkLloydPrim}
and rediscovered later by Eisenbud and Harris \cite{EisenbudHarris}.
They can be considered as the elementary pieces upon which
all the defective spaces are built, according to the first statement in Theorem 1 of \cite{AtkLloydPrim}.
The next result relates semi-primitive matrix spaces to primitive ones.

\begin{prop}\label{reductiontoprimitive}
Let $\calS$ be a semi-primitive linear subspace of $\Mat_{m,n}(\K)$ with upper-rank $r$.
Then, there are integers $p \in \lcro 0,r\rcro$ and $q \in \lcro 0,n\rcro$ such that
$q>r-p$, together with a space $\calS'\sim \calS$ of $\Mat_{m,n}(\K)$ in which every matrix splits up as
$$M=\begin{bmatrix}
[?]_{p \times q} & [?]_{p \times (n-q)} \\
H(M) & [0]_{(m-p) \times (n-q)}
\end{bmatrix},$$
and $H(\calS')$ is a primitive subspace of $\Mat_{m-p,q}(\K)$ with upper-rank $r-p$.
\end{prop}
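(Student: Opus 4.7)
The plan is to argue by induction on $r=\urk\calS$, the base case being covered by the observation that when $\calS$ is primitive the choice $p=0$, $q=n$, $H(\calS')=\calS$ works: the first of the Remarks following the definition of primitivity gives $n>r$, so $q>r-p$ holds. Otherwise $\calS$ is semi-primitive but not primitive, so condition (D) fails: unpacking this, there is a surjection $\pi:V\twoheadrightarrow W$ with $\dim W=m-1$ such that $\urk(\pi\circ\calS)\leq r-1$, or equivalently the line $L:=\Ker\pi$ satisfies $L\subset\im M$ for every $M\in\calS$ of rank $r$. Splitting $V=W\oplus L$ and placing the $L$-component as the top row by equivalence, we obtain $\calS\sim\calS_1$ whose matrices read
\[
\begin{bmatrix} r_M^T \\ A_M \end{bmatrix},
\]
with bottom block $\calT_1=\{A_M\}=\pi\circ\calS\subset\Mat_{m-1,n}(\K)$. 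The numerical key is $\urk\calT_1=r-1$ \emph{exactly}: the containment $L\subset\im M$ makes the projection of any rank-$r$ matrix drop to rank $r-1$, while projections of lower-rank matrices have rank $<r$.

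Next, I clean up $\calT_1$ on the column side. Its essential range is the image of the essential range of $\calS$ (namely $V$) under $V\twoheadrightarrow W$, hence equals $W\simeq\K^{m-1}$. Setting $k_1:=\dim\Ker\calT_1$ and placing $\Ker\calT_1$ in the last $k_1$ coordinates, the leading $(m-1)\times(n-k_1)$ block defines a reduced space $\calT_2\subset\Mat_{m-1,n-k_1}(\K)$ of upper-rank $r-1$. The same column change transforms $\calS_1$ into a space $\calS_2$ whose matrices read
\[
\begin{bmatrix} s_M^T & t_M^T \\ B_M & [0]_{(m-1)\times k_1} \end{bmatrix}, \qquad B_M\in\calT_2.
\]

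The main obstacle is to upgrade $\calT_2$ from merely reduced to \emph{semi-primitive}, so that the induction hypothesis (at upper-rank $r-1$) can be applied. Only condition (C) is non-obvious. Suppose for contradiction that $\calT_2$ fails (C): after a further column change, $\calT_2$ splits as $[H''(M)\mid c_M]$ with $\urk H''\leq r-2$. Pulling this change back to $\calS_2$ and moving the distinguished column to the far right, the first $n-1$ columns of the resulting space take the shape
\[
\begin{bmatrix} \ast & \ast \\ H''(M) & [0] \end{bmatrix},
\]
whose rank is at most $1+\rk H''(M)\leq r-1$. This would exhibit $\calS$ as equivalent to a space $[G\mid\ast]$ with $\urk G<r=\urk\calS$, contradicting (C) for $\calS$. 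The numerical key is that the $+1$ contribution from the extracted row together with the $\leq r-2$ bound coming from the hypothetical (C)-failure of $\calT_2$ just fit under $r$.

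Finally, the induction hypothesis applied to $\calT_2$ yields integers $p'\in\lcro 0,r-1\rcro$ and $q'\in\lcro 0,n-k_1\rcro$ with $q'>(r-1)-p'$, together with a reduction $\calT_2\sim\calT_3$ in the target form and $H(\calT_3)\subset\Mat_{m-1-p',q'}(\K)$ primitive of upper-rank $(r-1)-p'$. Substituting $\calT_3$ in place of $B_M$ inside $\calS_2$ produces $\calS\sim\calS_4$ in the target form with $p:=1+p'$ and $q:=q'$: one has $p\in\lcro 0,r\rcro$ and $q\in\lcro 0,n\rcro$, the required inequality $q>r-p$ rewrites as $q'>(r-1)-p'$ (which holds by the induction hypothesis), and $H(\calS_4)=H(\calT_3)$ is primitive in $\Mat_{m-p,q}(\K)$ of upper-rank $(r-1)-p'=r-p$.
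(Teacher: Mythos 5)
Your proof is correct, but it is organized differently from the paper's. The paper proceeds in one shot: it takes the \emph{maximal} integer $p$ for which $\calS$ is equivalent to a space whose bottom $(m-p)\times n$ block has upper-rank at most $r-p$, column-reduces that block once, and then checks conditions (A)--(D) for the resulting core directly --- condition (D) being exactly where the maximality of $p$ is used. You instead induct on $r$, peeling off a single row at each step via the failure of condition (D), and you must then verify that the peeled, column-reduced space $\calT_2$ is again \emph{semi-primitive} so that the induction hypothesis applies; your verification of (C) for $\calT_2$ (re-attaching the deleted row and moving the distinguished column to the right to contradict (C) for $\calS$) is essentially the same computation the paper performs when checking (C) for its core. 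What your route buys is that condition (D) never has to be checked for any intermediate space --- the recursion simply bottoms out when the space becomes primitive --- and the inequality $q>r-p$ is carried explicitly through the induction, whereas the paper leaves it implicit (there it follows from the remark that a nontrivial primitive space satisfies $\urk<\min(m,n)$). What the paper's route buys is brevity: semi-primitivity need not be shown to be inherited one row at a time, and the bookkeeping of discarded columns is done only once. Both arguments share the same degenerate edge case (e.g.\ a semi-primitive subspace of $\Mat_{1,n}(\K)$, where everything collapses and $q>r-p$ cannot hold); that is a defect of the statement rather than of either proof.
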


\begin{proof}
The proof is essentially similar to that of the first statement of Theorem 1 of \cite{AtkLloydPrim}.
One takes the maximal integer $p$ for which $\calS$ is equivalent to a subspace $\calS'$ of $\Mat_{m,n}(\K)$
in which every matrix $M$ splits up as
$$M=\begin{bmatrix}
[?]_{p \times n} \\
Y(M)
\end{bmatrix}$$
and $\calT:=Y(\calS')\subset \Mat_{m-p,n}(\K)$ has upper-rank at most $r-p$. Obviously, $\urk \calT=r-p$.
Denoting by $t$ the dimension of the intersection of all the kernels of the matrices in $\calT$, we see that no generality is lost in
assuming that every matrix $N$ of $\calT$ splits up as
$$N=\begin{bmatrix}
A(N) & [0]_{(m-p) \times t}
\end{bmatrix} \quad \text{with $A(N) \in \Mat_{m-p,n-t}(\K)$,}$$
and no non-zero vector belongs to the kernel of every matrix of $A(\calT)$.
Obviously, $\urk A(\calT)=r-p$. Let us prove that $A(\calT)$ is primitive.

Obviously, $A(\calT)$ satisfies condition (A) in the definition of a primitive space.
As $\calS$ is reduced, one finds that the sum of all column spaces of the matrices in $\calT$ is $\K^{m-p}$, whence
$A(\calT)$ satisfies condition (B) in the definition of a primitive space.

Assume now that $A(\calT)$ does not satisfy condition (C). Then, without loss of generality, we
may assume that deleting the first column of every matrix of $A(\calT)$ yields a matrix whose
rank is less than $r-p$. Then, deleting the first column of every matrix of $\calS'$ yields a matrix
whose rank is less than $r$, contradicting the assumption that $\calS$ be semi-primitive.
Thus, $A(\calT)$ satisfies condition (C).

Assume finally that $A(\calT)$ does not satisfy condition (D). Then, no generality is lost in assuming that
deleting the first row of every matrix of $A(\calT)$ yields a matrix with rank less than $r-p$, in which case,
from every $M \in \calS'$, deleting the $p+1$ first rows yields a matrix whose rank is less than $r-p$,
contradicting the definition of $p$.

Therefore, $A(\calT)$ is primitive.
\end{proof}

We conclude this paragraph with an important statement that links semi-primitive $c$-defective operator
spaces to reduced minimal $c$-LLD operator spaces:

\begin{prop}
Let $\calS$ be a reduced LLD subspace of $\calL(U,V)$, and denote by $c$ the greatest integer for which $\calS$ is $c$-LLD.
Then, $\widehat{\calS}$ is semi-primitive if and only if $\calS$ is minimal among $c$-LLD subspaces of $\calL(U,V)$.
\end{prop}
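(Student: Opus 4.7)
The plan is to unpack both sides of the equivalence through the duality framework and observe that they amount to the same condition in disguise. First, I would note that since $\calS$ is reduced, so is $\widehat{\calS}$ (this is the involution $\widehat{(-)} : \ROS \to \ROS$ discussed above), so conditions (A) and (B) are automatic for $\widehat{\calS}$ and semi-primitivity for $\widehat{\calS}$ reduces to checking condition (i) from the definition. Moreover, since $c$ is the greatest integer for which $\calS$ is $c$-LLD, the duality principle yields that $\widehat{\calS}$ is $c$-defective but not $(c+1)$-defective, so $c$ is precisely the defectiveness index of $\widehat{\calS}$ that appears in that definition.

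Next, I would translate condition (i) into a statement about subspaces of $\calS$. The domain of the operators in $\widehat{\calS} \subset \calL(\calS,V)$ is $\calS$ itself, so a linear hyperplane of the domain is exactly a codimension-$1$ subspace $H$ of $\calS$. For $x \in U$, the evaluation operator $\varphi_x : f \mapsto f(x)$ restricted to $H$ has kernel $\{f \in H : f(x)=0\}$, so the restricted space $\{\varphi_x|_H \mid x \in U\}$ is $c$-defective in $\calL(H,V)$ if and only if for every $x \in U$ one has $\dim \{f \in H : f(x)=0\} \geq c$, that is, if and only if $H$ is itself a $c$-LLD subspace of $\calL(U,V)$. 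Consequently, $\widehat{\calS}$ fails to be semi-primitive if and only if $\calS$ contains a $c$-LLD hyperplane.

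Finally, I would relate this to minimality. The key trivial observation is that if $\calT \subset \calT'$ are subspaces of $\calS$, then $\{f \in \calT : f(x)=0\} \subset \{f \in \calT' : f(x)=0\}$ for every $x \in U$, so if $\calT$ is $c$-LLD then so is $\calT'$. Hence $\calS$ is non-minimal among $c$-LLD subspaces of $\calL(U,V)$ if and only if some proper subspace of $\calS$ is $c$-LLD, if and only if some hyperplane of $\calS$ is $c$-LLD (picking any hyperplane of $\calS$ that contains a proper $c$-LLD subspace). Combining this with the previous paragraph yields the announced equivalence.

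There is no real obstacle here; the argument is a short chain of equivalences and the only subtlety is bookkeeping the duality correctly and verifying that $\widehat{\calS}$ is automatically reduced, so that condition (i) is the only nontrivial requirement for semi-primitivity.
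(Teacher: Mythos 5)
Your proposal is correct and follows essentially the same route as the paper's proof: both reduce semi-primitivity of $\widehat{\calS}$ to condition (i) (using that $\widehat{\calS}$ is reduced), identify hyperplanes of the domain $\calS$ with hyperplanes of the operator space, and observe that the restricted space being $c$-defective is exactly the hyperplane being $c$-LLD. Your explicit justification that minimality is equivalent to no \emph{hyperplane} being $c$-LLD (since the $c$-LLD property passes to larger subspaces) is a detail the paper leaves implicit, but the argument is the same.
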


\begin{proof}
Indeed, $\calS$ is minimal among $c$-LLD spaces if and only if no linear hyperplane $\calT$ of $\calS$ is $c$-LLD;
for any linear hyperplane $\calT$ of $\calS$, the condition that $\calT$ is not $c$-LLD
amounts to the property that, for some $x \in U$, the intersection of the kernel of
$\widehat{x} : f \mapsto f(x)$ with $\calT$ has dimension less than $c$,
i.e.\ $\bigl\{\widehat{x}_{|\calT} \mid x \in U\bigr\}$ is not $c$-defective.
As we know that $\widehat{\calS}$ is reduced, this condition means that $\widehat{\calS}$ is semi-primitive.
\end{proof}

\subsection{The column property for matrix spaces}

An important tool in the study of semi-primitive operator spaces is the so-called
\emph{column property}, which is weaker than semi-primitivity provided that the underlying field is large enough.
The transposed counterpart of this property was originally introduced by Atkinson and Lloyd \cite{AtkLloydPrim}, and
the notion itself is implicit in recent works of Chebotar, Meshulam and \v Semrl \cite{ChebotarSemrl,MeshulamSemrlLAA}.

\begin{Def}
Let $(r,s) \in \lcro 0,m\rcro \times \lcro 1,n\rcro$.
A linear subspace $\calS$ of $\Mat_{m,n}(\K)$ is called \textbf{$(r,s)$-decomposed} when every matrix of $\calS$
splits up as
$$M=\begin{bmatrix}
[?]_{r \times s} & C(M) \\
B(M) & [0]_{(m-r) \times (n-s)}
\end{bmatrix}, \; \text{where $B(M) \in \Mat_{m-r,s}(\K)$ and $C(M) \in \Mat_{r,n-s}(\K)$.}$$
The space $B(\calS)$ is then called the \textbf{lower space} of $\calS$ (as an $(r,s)$-decomposed space). \\
If $r>0$, the space $\calS$ is called $r$-\textbf{reduced} when $\urk \calS=r$ and $\calS$ contains
$$\begin{bmatrix}
I_r & [0]_{r \times (n-r)} \\
[0]_{(m-r) \times r} & [0]_{(m-r) \times (n-r)}
\end{bmatrix}.$$
\end{Def}

By the Flanders-Atkinson lemma, every $r$-reduced space is $(r,r)$-decomposed provided that $\# \K>r$.

\begin{Def}
A linear subspace $\calS$ of $\Mat_{m,n}(\K)$ has the \textbf{column property} when, for every
$(r,s) \in \lcro 0,m\rcro \times \lcro 1,n\rcro$ and every $(r,s)$-decomposed subspace $\calS' \sim \calS$,
the lower space of $\calS'$ is defective.
This property is obviously preserved by replacing $\calS$ with an equivalent space.

An operator space $\calS \subset \calL(U,V)$ has the column property when every
matrix space which represents it has the column property.
\end{Def}

In other words, the operator space $\calS$ has the column property if and only if, for every vector space $V'$ and every linear
surjection $\pi : V \twoheadrightarrow V'$, the reduced space associated with $\{\pi \circ f \mid f \in \calS\}$ is zero or defective. The special case $\pi=\id_V$ shows that an operator space with the column property is defective whenever its source space is non-zero.

\paragraph{}
The following two lemmas are essential to our study. In \cite{AtkLloydPrim}, they were obtained
in the course of larger proofs without being formally stated:

\begin{lemme}
Let $\calS$ be a semi-primitive subspace of $\Mat_{m,n}(\K)$, with
$\# \K >\urk(\calS)$ and $n>0$. Then, $\calS$ has the column property.
\end{lemme}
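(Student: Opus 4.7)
The plan is to deduce the column property from semi-primitivity by a single direct rank calculation with a semi-generic matrix, exploiting that semi-primitivity (via condition (i), or equivalently condition (C) in the matrix setting) says that restricting the operators of $\calS$ to any linear hyperplane of the source preserves the upper-rank. I fix an $(r,s)$-decomposed representative $\calS' \sim \calS$; since semi-primitivity is invariant under equivalence I replace $\calS$ by $\calS'$ and assume $\calS$ itself is $(r,s)$-decomposed, setting $r_0 := \urk \calS$. If $m-r<s$, then $B(\calS) \subset \Mat_{m-r,s}(\K)$ has upper-rank at most $m-r<s$ and is already defective, so I may assume $m-r \geq s$ and argue by contradiction, supposing $\urk B(\calS) = s$.

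Next, I take a semi-generic matrix
$$\mathbf{M} = \begin{bmatrix} \mathbf{A} & \mathbf{C} \\ \mathbf{B} & 0 \end{bmatrix}$$
of $\calS$. The decomposition lemma yields $\rk \mathbf{B} = \urk B(\calS) = s$ and $\rk \mathbf{C} = \urk C(\calS)$, and the generic rank lemma yields $\rk \mathbf{M} = r_0$; all of this is legitimate because $\# \K > r_0$. Since $\mathbf{B}$ has $s$ linearly independent columns over $\K(\mathbf{x})$, the first $s$ columns of $\mathbf{M}$ are also linearly independent, and no non-trivial linear combination of them coincides with a linear combination of the last $n-s$ columns (whose bottom blocks vanish) because $\rk \mathbf{B} = s$. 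Thus the column space of $\mathbf{M}$ splits as a direct sum of the span of its first $s$ columns and the span of its last $n-s$ columns, producing $r_0 = \rk \mathbf{M} = s + \rk \mathbf{C}$.

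Now fix any $i \in \lcro 1, s\rcro$ and delete the $i$-th column from every matrix: the resulting $\mathbf{M}^{(i)}$ is a semi-generic matrix of the restricted operator space $\calS^{(i)} \subset \Mat_{m,n-1}(\K)$ corresponding to restriction of $\calS$ to the hyperplane $U' = \{x \in \K^n \mid x_i = 0\}$. Its lower block $\tilde{\mathbf{B}}$, obtained from $\mathbf{B}$ by removing one column, still has $s-1$ linearly independent columns (a subset of a linearly independent set of size $s$), so the same direct-sum analysis yields $\rk \mathbf{M}^{(i)} = (s-1) + \rk \mathbf{C} = r_0 - 1$. Because $\urk \calS^{(i)} \leq r_0 < \# \K$, the generic rank lemma applies once more and gives $\urk \calS^{(i)} = r_0 - 1$. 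But semi-primitivity of $\calS$ applied to the hyperplane $U'$ forces $\urk \calS^{(i)} = \urk \calS = r_0$, a contradiction. Hence $\urk B(\calS) < s$, i.e.\ $B(\calS)$ is defective.

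The only step that is not an immediate invocation of a prior lemma or definition is the rank identity $\rk \mathbf{M} = s + \rk \mathbf{C}$; but this is a straightforward direct-sum computation whose sole ingredient is $\rk \mathbf{B} = s$, so I expect no genuine obstacle in the argument.
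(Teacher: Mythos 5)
Your proof is correct and follows essentially the same route as the paper: assume the lower space $B(\calS')$ is non-defective, deduce (via the decomposition/generic rank machinery) that deleting one of the first $s$ columns strictly drops the upper-rank, and contradict condition (i) of semi-primitivity. The paper gets there slightly more economically by citing Lemma \ref{decompositionlemma} for $\urk C(\calS') \leq \urk\calS - s$ and then bounding each column-deleted matrix by $(s-1)+\urk C(\calS')$, whereas you recompute the generic rank exactly as $s+\rk\mathbf{C}$ — but this is the same argument in a more explicit form.
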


\begin{proof}
Assume, for some $(r,s) \in \lcro 0,m\rcro \times \lcro 1,n\rcro$, that some space $\calS'\sim \calS$ is $(r,s)$-decomposed; let us
write every matrix  $M \in \calS'$ as
$$M=\begin{bmatrix}
[?]_{r \times s} & C(M) \\
B(M) & [0]_{(m-r) \times (n-s)}
\end{bmatrix}.$$
Assume that $B(\calS')$ is non-defective, to the effect that $\urk B(\calS')=s$.
Then, Lemma \ref{decompositionlemma} yields $\urk C(\calS') \leq \urk \calS-s$.
This would contradict the assumption that $\calS$ be semi-primitive,
and more precisely that it should satisfy condition (C) (to see this, simply delete the first column).
\end{proof}

\begin{lemme}\label{inductioncolumnlemma}
Let $(r,s) \in \lcro 0,m\rcro \times \lcro 1,n\rcro$ and $\calS$ be an $(r,s)$-decomposed
subspace of $\Mat_{m,n}(\K)$ with the column property. Then, the lower space of $\calS$ has the column property.
\end{lemme}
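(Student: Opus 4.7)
The plan is to argue by contradiction: suppose the lower space $\calB := B(\calS) \subset \Mat_{m-r,s}(\K)$ fails the column property. Then there exist $(r',s') \in \lcro 0,m-r\rcro \times \lcro 1,s\rcro$, invertible matrices $P \in \GL_{m-r}(\K)$ and $Q \in \GL_s(\K)$, and a space $\calT := P\calB Q$ which is $(r',s')$-decomposed and whose lower space $B'(\calT) \subset \Mat_{m-r-r',s'}(\K)$ is non-defective, i.e.\ $\urk B'(\calT) = s'$.

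The key move is to promote $P$ and $Q$ to block-diagonal matrices acting on $\calS$. Set
$$\tilde P := \begin{bmatrix} I_r & 0 \\ 0 & P \end{bmatrix} \in \GL_m(\K), \qquad \tilde Q := \begin{bmatrix} Q & 0 \\ 0 & I_{n-s} \end{bmatrix} \in \GL_n(\K),$$
and let $\calS'' := \tilde P\, \calS\, \tilde Q \sim \calS$. For $M \in \calS$ written as $\begin{bmatrix} X & Y \\ B(M) & 0 \end{bmatrix}$ with the $(r,s)$-pattern, a direct block multiplication gives
$$\tilde P M \tilde Q = \begin{bmatrix} XQ & Y \\ PB(M)Q & 0 \end{bmatrix},$$
so the lower block of every element of $\calS''$ (viewed as an $(r,s)$-decomposed space) is precisely an arbitrary element of $\calT$.

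Next I would refine the block partition. Using row heights $(r,\,r',\,m-r-r')$ and column widths $(s',\,s-s',\,n-s)$, each element of $\calS''$ takes the form
$$\begin{bmatrix} (XQ)_1 & (XQ)_2 & Y \\ A & D & 0 \\ B' & 0 & 0 \end{bmatrix},$$
where $\begin{bmatrix} A & D \\ B' & 0 \end{bmatrix}$ is the $(r',s')$-decomposition of the corresponding element $PB(M)Q$ of $\calT$. The bottom-right $(m-(r+r')) \times (n-s')$ block of this display is $\begin{bmatrix} 0 & 0 \end{bmatrix}$, so $\calS''$ is $(r+r',s')$-decomposed, and its lower space is exactly $B'(\calT)$. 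Since $\urk B'(\calT) = s'$, the lower space is non-defective, contradicting the column property of $\calS$ (applied to the equivalent space $\calS''$ and the decomposition pair $(r+r',s') \in \lcro 0,m\rcro \times \lcro 1,n\rcro$).

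There is no serious obstacle here: the argument is essentially a bookkeeping exercise showing that the column property is transmitted from $\calS$ to $\calB$ because any bad decomposition of (an equivalent of) $\calB$ can be ``lifted'' to a bad decomposition of (an equivalent of) $\calS$ via block-diagonal change of basis. The only point requiring attention is checking that the lifted pair $(r+r',s')$ still falls inside the legal range $\lcro 0,m\rcro \times \lcro 1,n\rcro$, which is immediate from $r' \leq m-r$ and $1 \leq s' \leq s \leq n$.
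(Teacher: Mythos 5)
Your proof is correct and takes essentially the same route as the paper's: both lift a decomposition of (an equivalent copy of) the lower space to an $(r+r',s')$-decomposition of (an equivalent copy of) $\calS$, the only cosmetic differences being that you argue by contradiction and spell out, via the block-diagonal matrices $\tilde P$ and $\tilde Q$, the step the paper compresses into ``without loss of generality, we may assume that $\calT'=\calT$''.
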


\begin{proof}
The proof is given in the course of that of Lemma 6 of \cite{AtkinsonPrim}. We restate the argument as it is
both short and crucial to our study. \\
Let us write every matrix $M$ in $\calS$ as
$$M=\begin{bmatrix}
[?]_{r \times s} & [?]_{r \times (n-s)} \\
B(M) & [0]_{(m-r)\times (n-s)}
\end{bmatrix} \quad \text{with $B(M) \in \Mat_{m-r, s}(\K)$.}$$
Set $\calT:=B(\calS)$.
Let $(p,q) \in \lcro 0,m-r\rcro \times \lcro 1,s\rcro$ and $\calT' \sim \calT$ be a $(p,q)$-decomposed
linear subspace of $\Mat_{m-r,s}(\K)$. Without loss of generality, we may assume that $\calT'=\calT$, and then we
write every $N \in \calT$ as
$$N=\begin{bmatrix}
[?]_{p \times q} & [?]_{p \times (s-q)} \\
C(N) & [0]_{(m-r-p)\times (s-q)}
\end{bmatrix} \quad \text{with $C(N) \in \Mat_{m-r-p,q}(\K)$.}$$
Then, $\calS$ is $(r+p,q)$-decomposed with lower space $C(\calT)$, and hence
$C(\calT)$ is defective. Therefore, $\calT$ has the column property.
\end{proof}

This last lemma may fail if the column property is replaced with the semi-primitivity condition.
The fact that the column property is inherited by extracting lower spaces
is the prime motivation for introducing this property.
We finish by noting that the column property is really weaker than the semi-primitivity condition.
Consider indeed a reduced subspace $\calS \subset \Mat_{m,n}(\K)$ with the column property and $\urk \calS<m$, and
define $\calT$ as the space of all matrices of $\Mat_{m,n+1}(\K)$ of the form
$\begin{bmatrix}
S & Y
\end{bmatrix}$ with $S \in \calS$ and $Y \in \K^m$.
Then, $\calT$ is not semi-primitive (it fails to satisfy condition (C) in the definition of a semi-primitive space
as, obviously, $\urk \calT=\urk \calS+1$),
whereas $\calT$ is reduced and has the column property, which follows from the
observation that $\calT x=\K^m$ for all $x \in \K^{n+1} \setminus (\K^n \times \{0\})$ together with the assumption that
$\calS$ is reduced and has the column property.

\subsection{Two decomposition lemmas}

Here, we obtain two new decomposition principles for reduced spaces with the column property.
They will allow us to have a better grasp of such spaces in specific situations.

\begin{lemme}\label{1decomplemma}
Let $\calS$ be a reduced linear subspace of $\Mat_{m,n}(\K)$ with the column property and $m \geq 2$.
Set $r:=\urk(\calS)$ and assume that $\# \K>r$ and that there is a vector $x \in \K^n$ such that $\dim \calS x=1$.
Then, there is an integer $q \in \lcro r,n-1\rcro$ and a space
$\calS' \sim \calS$ such that every matrix $M$ of $\calS'$ splits up as
$$M=\begin{bmatrix}
[?]_{1 \times q} & [?]_{1 \times (n-q)} \\
H(M) & [0]_{(m-1) \times (n-q)}
\end{bmatrix},$$
and $H(\calS')$ is a reduced subspace of $\Mat_{m-1,q}(\K)$ with the column property and upper-rank $r-1$.
In particular, $\calS$ is non-primitive.
\end{lemme}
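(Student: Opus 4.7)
The plan is to obtain the stated block structure through two successive changes of basis, then verify the required properties of $H(\calS')$ using the column property together with the decomposition lemma (Lemma \ref{decompositionlemma}).

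For the first reduction, note that reducedness forces $\calS x \neq \{0\}$, so $\calS x$ is a line. Choosing suitable bases I may therefore assume $x=e_n$ and $\calS x=\K e_1$; then every matrix of $\calS$ has its last column in $\K e_1$, so $\calS$ is $(1,n-1)$-decomposed. By the column property and Lemma \ref{inductioncolumnlemma}, the lower space $B(\calS) \subset \Mat_{m-1,n-1}(\K)$ is defective and itself has the column property. For the second reduction, set $U_0 := \bigcap_{B \in B(\calS)} \Ker B$ and $q := n-1-\dim U_0$. A basis change on the first $n-1$ coordinates puts $U_0 = \Vect(e_{q+1},\dots,e_{n-1})$, so columns $q+1,\dots,n-1$ of every $B \in B(\calS)$ vanish; combined with the first reduction this yields the announced shape of $\calS'$, with $H(M) \in \Mat_{m-1,q}(\K)$.

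It remains to verify that $H(\calS')$ is reduced, has the column property, has upper-rank $r-1$, and that $q \geq r$. The kernel of $H(\calS')$ in $\K^q$ vanishes by construction of $q$, while its essential range equals the image of the column space of $\calS$ (equal to $\K^m$ by reducedness) in the quotient $\K^m/\K e_1$, hence equals $\K^{m-1}$; so $H(\calS')$ is reduced. Since $B(\calS')$ is obtained from $H(\calS')$ by padding with zero columns, any decomposition of an equivalent of $H(\calS')$ extends to a decomposition with identical lower space of an equivalent of $B(\calS')$; hence $H(\calS')$ inherits the column property from $B(\calS')$. For the upper-rank, I apply Lemma \ref{decompositionlemma} to the $(1,q)$-decomposition of $\calS'$, yielding $r = \urk \calS' \geq \urk H(\calS') + \urk C(\calS')$ with $C(\calS') \subset \Mat_{1,n-q}(\K)$. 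Reducedness prevents $C(\calS')$ from being zero (otherwise the last $n-q$ columns would vanish entirely), so $\urk C(\calS') = 1$ and $\urk H(\calS') \leq r-1$; the converse inequality holds because deleting one row from a matrix of rank $r$ drops its rank by at most $1$. Finally, $H(\calS')$ being reduced with the column property and $q \geq 1$ is defective, so $\urk H(\calS') < q$ and the equality $\urk H(\calS')=r-1$ gives $q \geq r$.

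Non-primitivity follows by permuting the first row of $\calS'$ to the bottom: the resulting space consists of matrices of the form $\begin{bmatrix} H(M) & [0] \\ \ast & \ast \end{bmatrix}$, fitting condition (D) from the definition of primitivity with lower-rank space of upper-rank $r-1 < r$. The main delicate point is the upper-rank computation, where one must carefully ensure $\urk C(\calS')=1$ (not $0$) by exploiting reducedness; the hypothesis $\#\K > r$ is needed precisely to invoke the decomposition lemma.
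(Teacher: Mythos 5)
Your proof is correct and follows essentially the same route as the paper: pass to a $(1,q)$-decomposition (your explicit $q=n-1-\dim\bigcap_{B}\Ker B$ is exactly the paper's minimal $q$), get the column property of $H(\calS')$ from Lemma \ref{inductioncolumnlemma}, the bound $\urk H(\calS')\leq r-1$ from Lemma \ref{decompositionlemma} together with reducedness, the reverse bound by deleting a row, and $q\geq r$ from defectiveness of the lower space. The verification details (reducedness of $H(\calS')$, non-nullity of $C(\calS')$, condition (D) for non-primitivity) all match the paper's argument.
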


\begin{proof}
By assumption, the space $\calS$ is equivalent to a $(1,n-1)$-decomposed space.
Consider the minimal integer $q < n$ for which $\calS$ is equivalent to
a $(1,q)$-decomposed space; denote then by $\calS'$ such a subspace
and split every $M \in \calS'$ up as
$$M=\begin{bmatrix}
[?]_{1 \times q} & [?]_{1 \times (n-q)} \\
H(M) & [0]_{(m-1) \times (n-q)}
\end{bmatrix}, \quad \text{with $H(M) \in \Mat_{m-1,q}(\K)$.}$$
Since $\calS$ is reduced, the sum of all column spaces of the matrices of $H(\calS)$ is $\K^{m-1}$.
If some non-zero vector $y \in \K^q$ were annihilated by all the matrices $H(M)$, then
$\calS$ would be equivalent to a $(1,q-1)$-decomposed space, contradicting the minimality of $q$.
Thus, $H(\calS')$ is reduced. Moreover, Lemma \ref{inductioncolumnlemma} shows that $H(\calS')$ has the column property.
Finally, Lemma \ref{decompositionlemma} shows that $\urk H(\calS') \leq \urk(\calS)-1$ as $\calS$
is reduced, while obviously $\urk(\calS) \leq 1+\urk H(\calS')$.
Thus, $\urk H(\calS')=r-1$; since $\calS'$ has the column property, $H(\calS')$ is defective and hence $q \geq r$.

Deleting the first row of every matrix of $\calS'$ yields a matrix with rank at most $q-1$, whence $\calS$ is non-primitive.
\end{proof}

The next lemma can be useful in the special case of spaces that are $1$-defective but not $2$-defective:

\begin{lemme}[Thin decomposition lemma]\label{finedecomp}
Let $\calS$ be a reduced subspace of $\Mat_{m,n}(\K)$ with the column property,
set $r:=\urk(\calS)$ and assume that $r=n-1$ and $\# \K>r$.
Let $A \in \calS$ be with rank $r$. Let $x \in \Ker A \setminus \{0\}$, and set $p:=\dim \calS x$.
Then, there is space $\calT \sim \calS$, integers $s \in \lcro 0,m-r\rcro$, and
$t \in \lcro 0,r-p\rcro$, and linearly independent matrices $B_1,\dots,B_s$ of $\Mata_p(\K)$ such that:
\begin{enumerate}[(i)]
\item $\calT$ is $r$-reduced;
\item Every $M \in \calT$ splits up as
$$M=
\begin{bmatrix}
[?]_{p \times p} & [?]_{p \times t} & [?]_{p \times (r-p-t)} & C(M) \\
[?]_{(r-p) \times p} & [?]_{(r-p) \times t} & [?]_{(r-p) \times (r-p-t)} & [0]_{(r-p) \times 1} \\
R(M) & [?]_{s \times t} & [?]_{s \times (r-p-t)} & [0]_{s \times 1} \\
[0]_{(m-r-s) \times p} & H(M) & [0]_{(m-r-s) \times (r-p-t)} & [0]_{(m-r-s) \times 1}
\end{bmatrix};$$

\item For every $M$ in $\calT$, one has
$$R(M)=\begin{bmatrix}
C(M)^T B_1 \\
\vdots \\
C(M)^T B_s
\end{bmatrix};$$

\item If $t>0$, then the space $H(\calT) \subset \Mat_{m-r-s,t}(\K)$ is reduced, has the column property and
$\urk H(\calT) < t$.
\end{enumerate}
\end{lemme}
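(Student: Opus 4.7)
Step 1: Change bases of $\K^n$ and $\K^m$ so that $A$ becomes $J_r$ and $x$ becomes $e_n$; this is possible since $\Ker A=\K x$ has dimension $n-r=1$. The resulting equivalent space, call it $\calT$, is $r$-reduced, and Lemma \ref{Flanders} applied to $(J_r,M)$ for each $M\in\calT$ yields the block form
$$M=\begin{bmatrix} A(M) & C(M) \\ B(M) & 0\end{bmatrix}$$
with $C(M)\in\K^r$, $B(M)\in\Mat_{m-r,r}(\K)$ and $B(M)C(M)=0$; polarizing (that is, expanding $B(M+tM')C(M+tM')=0$ and collecting the coefficient of $t$) gives $B(M)C(M')+B(M')C(M)=0$ for all $M,M'\in\calT$. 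Since $\calT e_n$ corresponds to the last column of such an $M$, one has $\dim C(\calT)=p$, and a row operation on rows $1,\dots,r$ compensated by its inverse on columns $1,\dots,r$ (so as to preserve $J_r$) brings $C(\calT)$ inside $\K^p\times\{0\}^{r-p}$, so that $C(M)$ has the shape $\bigl[\begin{smallmatrix}c(M)\\ 0\end{smallmatrix}\bigr]$ with $c(M)\in\K^p$.

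Step 2 is the conceptual heart. Split $B(M)=[B_1(M)\mid B_2(M)]$ with $B_1(M)\in\Mat_{m-r,p}(\K)$. The polarized relation reduces to $B_1(M)c(M')+B_1(M')c(M)=0$, whence $B_1(M)c(M)=0$ and, crucially, $c(M)=0$ forces $B_1(M)=0$ (since $B_1(M)$ must then annihilate the whole range $\K^p$ of $c$). Therefore $B_1$ factors through $c$: there is a linear map $\gamma:\K^p\to\Mat_{m-r,p}(\K)$ with $B_1(M)=\gamma(c(M))$, and the bilinear map $(u,v)\mapsto\gamma(u)v$ from $\K^p\times\K^p$ to $\K^{m-r}$ is alternating. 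Its $k$-th coordinate is represented by an alternating matrix $E_k\in\Mata_p(\K)$, so that the $k$-th row of $\gamma(u)$ equals $u^T E_k$. Setting $s:=\dim\Vect(E_1,\dots,E_{m-r})$ and choosing a basis $B_1,\dots,B_s$ of this subspace of $\Mata_p(\K)$, one applies a row operation on rows $r+1,\dots,m$ (which leaves $J_r$ invariant) that converts the image of $\gamma$ into the desired normal form: the first $s$ rows of $B_1(M)$ become $R(M)=[c(M)^T B_i]_{i=1}^s$, and the remaining $m-r-s$ rows vanish.

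Step 3: let $U_0\subset\K^{r-p}$ be the common kernel of the bottom $m-r-s$ rows of $B_2(M)$ as $M$ ranges over $\calT$, and set $t:=r-p-\dim U_0$. A column operation on columns $p+1,\dots,r$ sending $U_0$ to the last $r-p-t$ coordinates, compensated by its inverse on rows $p+1,\dots,r$ (this preserves $J_r$, $C(M)$ and $B_1(M)$), produces the announced block decomposition. By construction $H(\calT)\subset\Mat_{m-r-s,t}(\K)$ has trivial common kernel, and it is essentially surjective onto $\K^{m-r-s}$, for otherwise a non-zero left-annihilator would yield an identically zero row in $\calT$, contradicting reducedness. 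Two applications of Lemma \ref{inductioncolumnlemma} then pass the column property down to $H(\calT)$: first, the lower space $B(\calT)$ of the $(r,r)$-decomposition of $\calT$ inherits it, and then, after a column permutation placing the $H$ block first, $B(\calT)$ is $(s,t)$-decomposed with lower space exactly $H(\calT)$. When $t>0$, $H(\calT)$ therefore has the column property with non-zero source space $\K^t$, and is thus defective, giving $\urk H(\calT)<t$. The main obstacle throughout is combinatorial bookkeeping: at each step one must verify that the chosen row or column operation preserves $r$-reducedness and the already-achieved shapes of $C(M)$ and $B_1(M)$, which is precisely what forces the systematic use of compensating inverse operations.
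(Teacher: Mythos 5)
Your proof is correct and follows essentially the same route as the paper's: normalize to $J_r$ and $e_n$, apply the Flanders--Atkinson identity $B(M)C(M)=0$ and its polarization to factor the lower-left block through $C$ via alternating matrices, row-reduce to $s$ linearly independent ones, and finally reduce the remaining block and invoke Lemma \ref{inductioncolumnlemma} to get the column property and defectiveness of $H(\calT)$. The only cosmetic caveat is that the quadratic identity $B_1(M)c(M)=0$ should be taken directly from Flanders--Atkinson rather than recovered from the polarized relation (which only gives $2B_1(M)c(M)=0$), but you did establish it directly in Step~1, so nothing is lost.
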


\begin{proof}
We lose no generality in assuming that $A=\begin{bmatrix}
I_r & [0]_{r \times 1} \\
[0]_{(m-r) \times r} & [0]_{(m-r) \times 1}
\end{bmatrix}$ and that $x$ is the last vector of the canonical basis of $\K^n$.
Thus, we may write every matrix $M \in \calS$ as
$$M=\begin{bmatrix}
[?]_{r \times r} & C'(M) \\
B'(M) & [0]_{(m-r) \times 1}
\end{bmatrix}, \quad \text{with $B'(M) \in \Mat_{m-r,r}(\K)$, $C'(M) \in \Mat_{r,1}(\K)$}$$
and $p=\dim C'(\calS)$.
Changing the bases once more, we see that no generality is lost in assuming that
$C'(\calS)=\K^p \times \{0\}$.
Then, we rewrite every $M \in \calS$ as
$$M=\begin{bmatrix}
[?]_{p \times p} & [?]_{p \times (r-p)} & C(M) \\
[?]_{(r-p) \times p} & [?]_{(r-p) \times (r-p)} & [0]_{(r-p) \times 1} \\
B(M) & [?]_{(m-r) \times (r-p)} & [0]_{(m-r) \times 1} \\
\end{bmatrix},$$
so that $C(\calS)=\K^p$.
The Flanders-Atkinson lemma yields:
\begin{equation}\label{nullproductidentity}
\forall M \in \calS, \; B(M)\,C(M)=0.
\end{equation}
Polarizing this quadratic identity yields
$$\forall (M,N) \in \calS^2, \; B(M)\,C(N)+B(N)\,C(M)=0.$$
For any $N \in \calS$ such that $C(N)=0$, we deduce that $B(N)C(\calS)=0$ and hence $B(N)=0$.
It follows that we have a linear map $\psi : \K^p \rightarrow \Mat_{m-r,p}(\K)$ such that
$\forall M \in \calS, \; B(M)=\psi(C(M))$, yielding matrices $B_1,\dots,B_{m-r}$ in $\Mat_p(\K)$ such that
$$\forall M \in \calS, \; B(M)=\begin{bmatrix}
C(M)^T B_1 \\
\vdots \\
C(M)^T B_{m-r}
\end{bmatrix}.$$
Since $C(\calS)=\K^p$, identity \eqref{nullproductidentity} shows that $B_1,\dots,B_{m-r}$ belong to $\Mata_p(\K)$.
Set $s:=\rk(B_1,\dots,B_{m-r})$.
Using row operations on the last $m-r$ rows, we see that no generality is lost in assuming that
$B_1,\dots,B_s$ are linearly independent and $B_{s+1}=\cdots=B_{m-r}=0$.
Then, for all $M \in \calS$, we have
$$B(M)=\begin{bmatrix}
R(M) \\
[0]_{(m-r-s) \times p}
\end{bmatrix}, \quad \text{where}\;
R(M)=\begin{bmatrix}
C(M)^T B_1 \\
\vdots \\
C(M)^T B_s
\end{bmatrix},$$
and we may write
$$M=\begin{bmatrix}
[?]_{p \times p} & [?]_{p \times (r-p)} & C(M) \\
[?]_{(r-p) \times p} & [?]_{(r-p) \times (r-p)} & [0]_{(r-p) \times 1} \\
R(M) & [?]_{s \times (r-p)} & [0]_{s \times 1} \\
[0]_{(m-r-s) \times p} & T(M) & [0]_{(m-r-s) \times 1} \\
\end{bmatrix},$$
where $T(M) \in \Mat_{m-r-s,r-p}(\K)$.

We finish by reducing $T(\calS)$. If $T(\calS)=\{0\}$, then we are done.
Assume now that $T(\calS)\neq \{0\}$.
Choosing a decomposition $\K^{r-p}=U \oplus \underset{M \in \calS}{\bigcap} \Ker T(M)$,
setting $t:=\dim U$ and changing the bases once more, we may assume that, for all $M \in \calS$, one has
$$T(M)=\begin{bmatrix}
H(M) & [0]_{(m-r-s) \times (r-p-t)}
\end{bmatrix}, \quad \text{with $H(M) \in \Mat_{m-r-s,t}(\K)$,}$$
and the intersection of the kernels of the matrices $H(M)$ is $\{0\}$.
Finally, as the sum of all column spaces of the matrices of $\calS$ is $\K^m$,
one obtains that the sum of all column spaces of the matrices of $H(\calS)$ is $\K^{m-r-s}$.
Therefore, $H(\calS)$ is reduced. Finally, Lemma \ref{inductioncolumnlemma} shows that $H(\calS)$ has the column property, and hence
$\urk H(\calS)<t$.
\end{proof}

\subsection{On the essential range of a minimal LLD operator space}\label{essentialrangesection}

Here, we recall and improve some known results on the essential range of a minimal LLD operator space.
Our starting point is the following lemma of Atkinson and Lloyd \cite[Lemma 6]{AtkLloydPrim}
(in \cite{AtkLloydPrim}, they actually state and prove the ``transposed" version):

\begin{theo}[Atkinson, Lloyd]\label{rangelemma}
Let $\calS$ be a reduced subspace of $\Mat_{m,n}(\K)$ with the column property.
Assume that $\# \K > r$.
Then, $m \leq \dbinom{r+1}{2}$. Moreover, $m \leq 1+\dbinom{r}{2}$ if $n>r+1$.
\end{theo}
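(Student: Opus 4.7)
The plan is to induct on $r:=\urk(\calS)$, proving both inequalities simultaneously. The base cases $r\in\{0,1\}$ are direct: reducedness gives $m=0$ when $r=0$, and for $r=1$ the column property together with Schur's theorem (as used in the Remark following Proposition \ref{reductiontoprimitive}) forces $m=1$, so both bounds hold vacuously or trivially.

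For the inductive step with $r\geq 2$, I would pick $A\in\calS$ of rank $r$ and put it (via equivalence) in the canonical form $A=J_r$. Since $\#\K>r$, the Flanders-Atkinson lemma applied to every pair $(J_r,M)$ with $M\in\calS$ shows that $\calS$ is $(r,r)$-decomposed: each $M\in\calS$ takes the form $M=\begin{bmatrix} A(M) & C(M)\\ B(M) & 0\end{bmatrix}$. The lower space $\calT:=B(\calS)\subset\Mat_{m-r,r}(\K)$ inherits the column property from $\calS$ via Lemma \ref{inductioncolumnlemma} and is defective because $\calS$ has the column property. Setting $k:=\dim\bigcap_{M\in\calS}\Ker B(M)$ and changing basis on the first $r$ columns, I would observe that $\calS$ is in fact $(r,r-k)$-decomposed, and the reduced space $\bar\calT\subset\Mat_{m-r,r-k}(\K)$ obtained from $\calT$ has the column property, essential range $\K^{m-r}$ (this follows from $\calS$ being reduced), and upper-rank at most $r-k-1$ (by the column property of $\calS$ applied to this sharper decomposition).

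The first bound is then immediate: applying the induction hypothesis to $\bar\calT$ gives $m-r\leq \binom{\urk\bar\calT+1}{2}\leq \binom{r-k}{2}\leq \binom{r}{2}$, hence $m\leq \binom{r+1}{2}$. For the refined bound under $n>r+1$, I would split into two cases. When $\urk\bar\calT\leq r-2$ (which is automatic if $k\geq 1$), the first part of the induction hypothesis applied to $\bar\calT$ directly yields $m-r\leq \binom{r-1}{2}$, hence $m\leq r+\binom{r-1}{2}=1+\binom{r}{2}$. The delicate case is $k=0$ together with $\urk\bar\calT=r-1$: then Lemma \ref{decompositionlemma} forces $\urk C(\calS)\leq r-\urk B(\calS)=1$, and reducedness rules out $\urk C(\calS)=0$ (otherwise the last $n-r\geq 2$ columns of $\calS$ would all vanish). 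So $C(\calS)$ is a non-zero space of upper-rank $1$ in $\Mat_{r,n-r}(\K)$, and Schur's theorem produces two subcases: the ``columns-concentrated'' one makes some columns of $\calS$ identically zero, contradicting reducedness; the ``rows-concentrated'' one, after a row operation on the first $r$ rows, makes $\calS$ into a $(1,r)$-decomposed space. The new lower space $\tilde B(\calS)\subset \Mat_{m-1,r}(\K)$ has the column property and satisfies $\urk \tilde B(\calS)\leq r-1$; if $\tilde B(\calS)$ has trivial common kernel, the first part of the induction hypothesis applied to $\tilde B(\calS)$ gives $m-1\leq \binom{r}{2}$, and if its common kernel contains a non-zero $y$, then $x=(y,0)\in\K^n$ satisfies $\dim \calS x=1$ (using reducedness to rule out $\calS x=\{0\}$), so Lemma \ref{1decomplemma} followed by the first part of the induction hypothesis again gives $m-1\leq \binom{r}{2}$.

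The main obstacle is the delicate subcase $k=0$, $\urk\bar\calT=r-1$: a direct application of induction to $\bar\calT$ does not deliver a strong enough bound, and the argument must pivot to a transposed $(1,r)$-decomposition through the Schur analysis of the small block $C(\calS)$, ultimately exploiting the extra columns afforded by the hypothesis $n>r+1$.
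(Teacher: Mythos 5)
Your proof is correct, and it follows exactly the route the paper indicates but declines to reproduce (``induction, using decompositions obtained by the Flanders--Atkinson lemma, together with Lemma \ref{inductioncolumnlemma}''): the $(r,r)$-decomposition of an $r$-reduced space, passage to the reduced lower space, and the pivot to a $(1,r)$-decomposition via the Schur analysis of $C(\calS)$ in the delicate subcase all match the Atkinson--Lloyd strategy, and your handling of that subcase (including the appeal to Lemma \ref{1decomplemma} when $\tilde B(\calS)$ has a nontrivial common kernel) is sound. The only omissions are degenerate cases ($k=r$, or $m=1$ in the last subcase), which are trivially absorbed by the bounds.
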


The proof works by induction, using decompositions obtained by the Flanders-Atkinson lemma,
together with Lemma \ref{inductioncolumnlemma}: we shall not reproduce it.

For minimal LLD spaces, the duality argument applied to semi-primitive spaces yields the following corollary,
which was later rediscovered by Chebotar and \v Semrl \cite{ChebotarSemrl} (with a similar proof):

\begin{cor}\label{rangecor}
Let $\calS \subset \calL(U,V)$ be a reduced minimal LLD space.
Assume that $\# \K \geq \dim \calS$. Then, $\dim V \leq \dbinom{\dim \calS}{2}$.
\end{cor}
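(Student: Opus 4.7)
The plan is to derive this corollary from Theorem \ref{rangelemma} by transferring the problem to the dual space $\widehat{\calS}$, where the column property becomes available.

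Set $n:=\dim \calS$ and $m:=\dim V$. First I would invoke the proposition characterizing minimal reduced $c$-LLD spaces in terms of semi-primitivity (with $c=1$ here): since $\calS$ is reduced and minimal among LLD subspaces of $\calL(U,V)$, its adjoint $\widehat{\calS}$ is a semi-primitive operator space in $\calL(\calS,V)$. Moreover, because the adjunction functor is an involution on reduced operator spaces, $\widehat{\calS}$ is itself reduced. Fixing bases of $\calS$ and $V$ represents $\widehat{\calS}$ as a linear subspace of $\Mat_{m,n}(\K)$.

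Next I would control the upper-rank. Since $\calS$ is LLD, $\widehat{\calS}$ is defective, so
$$r:=\urk \widehat{\calS}\leq n-1.$$
The hypothesis $\#\K\geq n$ then yields $\#\K\geq n\geq r+1 >r$, so the lemma stating that semi-primitive matrix spaces with $\#\K>\urk$ have the column property applies to $\widehat{\calS}$.

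Finally, I would apply Theorem \ref{rangelemma} (Atkinson--Lloyd) to $\widehat{\calS}\subset \Mat_{m,n}(\K)$: since this space is reduced, has the column property, and satisfies $\#\K>r$, we obtain
$$m\leq \binom{r+1}{2}\leq \binom{n}{2},$$
the second inequality following from $r\leq n-1$. This gives $\dim V\leq \binom{\dim \calS}{2}$, as desired.

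Essentially no real obstacle arises: the hard work is packaged inside Theorem \ref{rangelemma} and the earlier characterization of minimal LLD spaces via semi-primitivity of $\widehat{\calS}$. The only point requiring a touch of care is verifying that $\#\K>r$ from the weaker assumption $\#\K\geq n$; this works because defectiveness forces $r\leq n-1$, so no boundary case is lost.
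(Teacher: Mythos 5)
Your proof is correct and is exactly the argument the paper intends: the paper gives no written proof beyond the remark that ``the duality argument applied to semi-primitive spaces yields'' the corollary, and your reconstruction (pass to $\widehat{\calS}\subset\Mat_{m,n}(\K)$, note it is reduced and semi-primitive hence has the column property since $\#\K\geq n>r=\urk\widehat{\calS}$, then apply Theorem \ref{rangelemma} and $r\leq n-1$) is precisely that argument.
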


We finish by noting that in the situation considered by Chebotar and \v Semrl, an improved upper bound
on $\dim V$ may be found in some cases:

\begin{lemme}\label{finemaxdim}
Let $\calS$ be a reduced subspace of $\Mat_{m,n}(\K)$ with the column property,
set $r:=\urk(\calS)$ and assume that $r=n-1$ and $\# \K>r$.
Let $A \in \calS$ be a rank $r$ matrix. Let $x \in \Ker A \setminus \{0\}$, and set $p:=\dim \calS x$.
Then,
$$m\leq \binom{p+1}{2}+\binom{r-p+1}{2}.$$
\end{lemme}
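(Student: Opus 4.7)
The plan is to reduce the bound to a combination of three elementary estimates by applying the thin decomposition lemma (Lemma \ref{finedecomp}) to $\calS$, $A$ and $x$. This produces an equivalent space $\calT$, integers $s \in \lcro 0, m-r\rcro$ and $t \in \lcro 0, r-p\rcro$, and linearly independent alternating matrices $B_1,\dots,B_s \in \Mata_p(\K)$, so that $m = r + s + (m-r-s)$ decomposes naturally into three summands to be bounded separately.

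The first bound is immediate: since $B_1,\dots,B_s$ are linearly independent in $\Mata_p(\K)$, we obtain $s \leq \dim \Mata_p(\K) = \binom{p}{2}$. The second bound, on $m - r - s$, splits into two cases according to whether $t=0$ or $t>0$. If $t=0$, then the last $m-r-s$ rows of every matrix in $\calT$ are identically zero (the $H(M)$ block has zero columns and the surrounding blocks are zero by the decomposition); since $\calT$ is reduced, this forces $m-r-s=0$. If $t>0$, then by conclusion (iv) of Lemma \ref{finedecomp} the space $H(\calT) \subset \Mat_{m-r-s,t}(\K)$ is reduced, has the column property, and $\urk H(\calT) < t$. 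Since $\#\K > r \geq \urk H(\calT)$, Theorem \ref{rangelemma} applied to $H(\calT)$ yields $m - r - s \leq \binom{t}{2}$; more precisely, this holds directly when $t = \urk H(\calT) + 1$, and when $t > \urk H(\calT) + 1$ one instead gets $m-r-s \leq 1 + \binom{t-2}{2}$, which is easily checked to be at most $\binom{t}{2}$ for $t \geq 2$ (the case $t=1$ being vacuous since $\urk H(\calT) < 1$ would force $H(\calT) = \{0\}$, contradicting reducedness). In all cases, $m-r-s \leq \binom{t}{2} \leq \binom{r-p}{2}$.

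Combining the two bounds gives $m \leq r + \binom{p}{2} + \binom{r-p}{2}$, and a direct calculation (using $\binom{k+1}{2} = \binom{k}{2} + k$ applied to $k=p$ and $k=r-p$) shows that this is exactly $\binom{p+1}{2} + \binom{r-p+1}{2}$, which is the desired inequality.

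The proof is essentially bookkeeping once the thin decomposition lemma is available: the only subtle point is verifying that the bound coming from Theorem \ref{rangelemma} on $H(\calT)$ can be uniformly packaged as $\binom{t}{2}$, which requires reconciling the two alternative conclusions of that theorem. No other step should present a real obstacle.
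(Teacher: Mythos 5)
Your proof is correct and takes essentially the same route as the paper: apply Lemma \ref{finedecomp}, bound $s$ by $\dim \Mata_p(\K)=\dbinom{p}{2}$, bound $m-r-s$ by $\dbinom{t}{2}\leq\dbinom{r-p}{2}$ via Theorem \ref{rangelemma}, and sum. The only (harmless) difference is that your case analysis for $m-r-s$ is more elaborate than needed, since the first conclusion of Theorem \ref{rangelemma} already gives $m-r-s\leq\dbinom{\urk H(\calT)+1}{2}\leq\dbinom{t}{2}$ directly.
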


\begin{proof}
Indeed, by Lemma \ref{finedecomp}, and using the same notation, we have
$s \leq \dim \Mata_p(\K)=\dbinom{p}{2}$. On the other hand, Theorem \ref{rangelemma} applied to $H(\calS')$ yields $m-r-s \leq
\dbinom{t}{2} \leq \dbinom{r-p}{2}$.
Thus,
$$m \leq r+\dbinom{p}{2}+\dbinom{r-p}{2}=\binom{p+1}{2}+\binom{r-p+1}{2}.$$
\end{proof}

\begin{Rem}\label{convexityremark}
With $r\geq 2$ fixed, the function
$$p \mapsto \dbinom{p+1}{2}+\dbinom{r-p+1}{2}$$
is polynomial of degree 2 with a positive coefficient along $p^2$, and
it is symmetric around $\frac{r}{2}$. It is therefore decreasing up to $\frac{r}{2}$, and increasing from that point on.
\end{Rem}

\section{Operator spaces of the alternating kind}\label{fromalternatingsection}

Let us restate that all the vector spaces are assumed finite-dimensional.

\subsection{The operator space attached to an alternating bilinear map}

Given a vector space $U$, we denote by
$\calB_2(U)$ the space of bilinear forms on $U$, and by $\calA_2(U)$ the space of alternating bilinear forms on $U$.

Let $\varphi : U \times U \rightarrow V$ be an essentially surjective alternating bilinear map.
Note that $\dim V \leq \dbinom{\dim U}{2}$. Note also that $\varphi$ is left-regular if and only if it is right-regular.

If $\varphi$ is left-regular, then $\calS_\varphi\sim \widehat{\calS_\varphi}$;
as $\varphi(x,-)$ vanishes at $x$ for all non-zero vector $x \in U$, one sees in this case that $\calS_\varphi$ is both an LLD subspace of $\calL(U,V)$
and a defective subspace of $\calL(U,V)$.
Otherwise, we may split $U=U_0 \oplus K$, where $K:=\{x \in U : \; \varphi(x,-)=0\}$,
and one is reduced to the fully-regular situation by
noting that $\varphi_{|U_0 \times U_0} : U_0 \times U_0 \rightarrow V$ is fully-regular.
It ensues that $\varphi$ is fully-regular whenever $\dim V>\dbinom{\dim U-1}{2}$.

\begin{Def}
An operator space is said to be \textbf{of the alternating kind} when it is equivalent to a space of the form
$\calS_\varphi$, where $\varphi : U \times U \rightarrow V$ is a fully-regular alternating bilinear map. \\
The spaces of matrices representing such operator spaces are also said to be of the alternating kind.
\end{Def}

A prime example is the one of the standard pairing
$$\varphi : (x,y) \in U \times U \longmapsto x \wedge y \in U \wedge U.$$
In that case, $\varphi$ is obviously fully-regular and $\calS_\varphi$ has upper-rank $\dim U-1$.

Let us recall the notion of congruent alternating bilinear maps:

\begin{Def}
Let $\varphi : U \times U \rightarrow V$ and $\psi : U' \times U' \rightarrow V'$ be alternating bilinear maps.
We say that $\varphi$ and $\psi$ are \textbf{congruent} when there are isomorphisms $f : U' \overset{\simeq}{\rightarrow} U$
and $h : V \overset{\simeq}{\rightarrow} V'$ such that
$$\forall (x,y) \in (U')^2, \; \psi(x,y)=h\bigl(\varphi(f(x),f(y))\bigr).$$
\end{Def}

Note that $\varphi$ and $\psi$ are equivalent if they are congruent, but the converse does not hold in general.

The rest of the section is laid out as follows: in Section \ref{describealternatingsection}, we describe
various ways to view essentially surjective alternating bilinear maps, and we discuss
the connection between them. In Section \ref{CSsemiprimitivesection}, we examine when the operator space associated with such a map is semi-primitive.
In particular, we will see that it is always so provided that the cardinality of the underlying field
is large enough.
In Section \ref{CStransitivesection}, we give a sufficient condition, based on the dimension of the target space,
for an operator space of the alternating kind to have the greatest possible upper-rank with respect to the dimension of the source space.
From that, we obtain a sufficient condition for semi-primitivity with no restriction on the cardinality of the underlying field.
In Section \ref{CSprimitivesection}, we use those results to obtain sufficient conditions for the primitivity of
an operator space of the alternating kind.
In Section \ref{equivalencetocongruencesection}, we give sufficient conditions for
the equivalence of the operator spaces $\calS_\varphi$ and $\calS_\psi$
to be equivalent to the congruence of the fully-regular alternating bilinear maps $\varphi$ and $\psi$. Finally, in Section \ref{trialitysection},
we study when an operator space of the alternating kind can be equivalent to the transpose of another such space.

\subsection{Alternative descriptions of essentially surjective alternating bilinear maps}\label{describealternatingsection}

For classification purposes, it is important to understand how
essentially surjective alternating bilinear maps may be obtained in practice.
First of all, the datum of such a map $\varphi : U \times U \rightarrow V$
is equivalent to that of a surjective linear map $\widetilde{\varphi} : U \wedge U \twoheadrightarrow V$
whose kernel we denote by $\calW_\varphi$. Then, $\varphi$ is congruent to
$$(x,y) \in U \times U \mapsto \bigl[x \wedge y\bigr] \in (U \wedge U)/\calW_\varphi.$$
By transposing, the datum of $\widetilde{\varphi}$ is seen to be equivalent to that of a linear injection
$\overline{\varphi} : V^\star \hookrightarrow (U \wedge U)^\star \simeq \calA_2(U)$,
the image of which we denote by $\calV_\varphi$. Transposing back, we see that $\varphi$ is congruent to
$$(x,y) \in U \times U \longmapsto \bigl[f \mapsto f(x,y)\bigr] \in (\calV_\varphi)^\star$$
Note that $\calW_\varphi$ corresponds to the dual orthogonal of $\calV_\varphi$ through the canonical isomorphism
between $(U \wedge U)^\star$ and $\calA_2(U)$.

The following results are obvious:

\begin{prop}
Let $\varphi : U \times U \rightarrow V$ be an essentially surjective alternating bilinear map.
The following conditions are equivalent:
\begin{enumerate}[(i)]
\item $\varphi$ is left-regular;
\item There is no vector $x \in U \setminus \{0\}$ such that $f(x,-)=0$ for all $f \in \calV_\varphi$;
\item There is no vector $x \in U \setminus \{0\}$ such that $\calW_\varphi$ contains $x \wedge U$.
\end{enumerate}
\end{prop}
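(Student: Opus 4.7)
The plan is to prove the equivalences by unwinding the two congruent descriptions of $\varphi$ already established just above the statement, namely
$$\varphi \;\cong\; \bigl[(x,y) \mapsto [x \wedge y] \in (U \wedge U)/\calW_\varphi\bigr] \;\cong\; \bigl[(x,y) \mapsto (f \mapsto f(x,y)) \in \calV_\varphi^\star\bigr].$$
Since congruence preserves left-regularity (it is an invariant under isomorphisms of the source $U$ and target $V$), it suffices to translate the condition $\varphi(x,-) \neq 0$ through each of these two pictures.

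For the equivalence (i)~$\Leftrightarrow$~(iii), I would fix $x \in U \setminus \{0\}$ and use the first congruence: $\varphi(x,-)$ is the map $y \mapsto [x \wedge y] \in (U \wedge U)/\calW_\varphi$, which vanishes identically on $U$ if and only if $x \wedge y \in \calW_\varphi$ for every $y \in U$, that is, if and only if $\calW_\varphi \supseteq x \wedge U$. Negating over $x \neq 0$ gives the equivalence.

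For (i)~$\Leftrightarrow$~(ii), I would again fix $x \in U \setminus \{0\}$ and use the second congruence: $\varphi(x,-)$ becomes the map $y \mapsto [f \mapsto f(x,y)] \in \calV_\varphi^\star$. This map is identically zero precisely when, for every $y \in U$ and every $f \in \calV_\varphi$, one has $f(x,y)=0$, which is the same as saying $f(x,-)=0$ for every $f \in \calV_\varphi$. Negating over $x \neq 0$ yields (ii).

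There is no real obstacle here: the statement is labeled ``obvious'' precisely because the two alternative descriptions were set up to make each clause a direct reading of left-regularity. The only care needed is to note that $\calV_\varphi \subset \calA_2(U)$ is identified via the canonical duality $(U \wedge U)^\star \simeq \calA_2(U)$ with the annihilator of $\calW_\varphi$, which makes the equivalence (ii)~$\Leftrightarrow$~(iii) also visible directly: $\calW_\varphi \supseteq x \wedge U$ means $x \wedge U$ lies in the common kernel of all forms in $\calV_\varphi$, i.e.\ $f(x,y)=0$ for all $f \in \calV_\varphi$ and all $y \in U$.
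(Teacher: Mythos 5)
Your argument is correct and is exactly the unwinding the paper intends: the paper states this proposition without proof (declaring it obvious), and your translation of $\varphi(x,-)\neq 0$ through the two congruent models $(x,y)\mapsto[x\wedge y]\in(U\wedge U)/\calW_\varphi$ and $(x,y)\mapsto[f\mapsto f(x,y)]\in(\calV_\varphi)^\star$, together with the remark that $\calW_\varphi$ is the dual orthogonal of $\calV_\varphi$, supplies precisely the omitted details. Nothing is missing.
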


\begin{prop}\label{significationcongruence}
Let $\varphi : U \times U \rightarrow V$ and $\psi : U' \times U' \rightarrow V'$ be essentially surjective alternating bilinear maps.
The following conditions are equivalent:
\begin{enumerate}[(i)]
\item $\varphi$ and $\psi$ are congruent;

\item There is a linear isomorphism $f : U \overset{\simeq}{\rightarrow} U'$ such that
$\calV_\varphi=\bigl\{(x,y) \mapsto B(f(x),f(y)) \mid B \in \calV_\psi\bigr\}$;

\item There is a linear isomorphism $f : U \overset{\simeq}{\rightarrow} U'$ such that
$\calW_\psi=(f \wedge f)(\calW_\varphi)$.
\end{enumerate}
\end{prop}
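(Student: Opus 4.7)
The plan is to prove the equivalences via (i) $\Leftrightarrow$ (iii) and (ii) $\Leftrightarrow$ (iii), relying on the two descriptions of $\varphi$ given just above the statement: the surjection $\widetilde{\varphi} : U \wedge U \twoheadrightarrow V$ with kernel $\calW_\varphi$, and the injection $\overline{\varphi} : V^\star \hookrightarrow \calA_2(U)$ with image $\calV_\varphi$. The key underlying fact is that $\calV_\varphi$ is the orthogonal of $\calW_\varphi$ under the canonical perfect pairing $\calA_2(U) \times (U \wedge U) \to \K$.

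For (i) $\Rightarrow$ (iii), I would first rewrite the congruence definition using the inverse of its isomorphism $U' \to U$, so that it takes the form $\psi(f(x), f(y)) = h(\varphi(x,y))$ with $f : U \to U'$ and $h : V \to V'$ isomorphisms. Passing to $U \wedge U$, this is exactly the identity $\widetilde{\psi} \circ (f \wedge f) = h \circ \widetilde{\varphi}$. Since $h$ is an isomorphism, both sides have the same kernel, i.e., $\calW_\varphi = (f \wedge f)^{-1}(\calW_\psi)$, and as $f \wedge f$ is itself an isomorphism, this rewrites as $\calW_\psi = (f \wedge f)(\calW_\varphi)$.

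For (iii) $\Rightarrow$ (i), I would start from $(f\wedge f)(\calW_\varphi) = \calW_\psi$ and consider the essentially surjective alternating bilinear map $(x,y) \mapsto \psi(f(x), f(y))$, whose factorization is $\widetilde{\psi} \circ (f \wedge f) : U \wedge U \to V'$. Its kernel is $(f\wedge f)^{-1}(\calW_\psi) = \calW_\varphi = \ker \widetilde{\varphi}$. By the universal property of the quotient (both maps being surjective onto their targets with the same kernel), there is a unique linear map $h : V \to V'$ with $h \circ \widetilde{\varphi} = \widetilde{\psi} \circ (f \wedge f)$, and $h$ is automatically an isomorphism. Evaluating at a decomposable bivector yields $h(\varphi(x,y)) = \psi(f(x),f(y))$, giving congruence after inverting $f$.

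Finally, (ii) $\Leftrightarrow$ (iii) follows from orthogonality. The operation $B \mapsto [(x,y) \mapsto B(f(x), f(y))]$ on alternating forms is exactly the transpose $(f \wedge f)^\star : (U' \wedge U')^\star \to (U \wedge U)^\star$ under the identification $\calA_2 \simeq (\wedge^2)^\star$. Thus (ii) reads $\calV_\varphi = (f \wedge f)^\star(\calV_\psi)$. Taking orthogonals in $U \wedge U$, and using that the orthogonal of $(f \wedge f)^\star(\calV_\psi)$ equals $(f \wedge f)^{-1}(\calV_\psi^\perp) = (f \wedge f)^{-1}(\calW_\psi)$, condition (ii) becomes $\calW_\varphi = (f \wedge f)^{-1}(\calW_\psi)$, which is equivalent to (iii). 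The only real care needed is bookkeeping the direction of $f$ (the congruence definition uses $U' \to U$ while (ii) and (iii) use $U \to U'$); once that is fixed, each step is essentially formal, and the main point to highlight is the orthogonality between $\calW_\varphi$ and $\calV_\varphi$ which underlies the whole argument.
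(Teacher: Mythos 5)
The paper offers no proof of this proposition (it is declared ``obvious'' right after the identifications $\varphi \leftrightarrow \widetilde{\varphi} \leftrightarrow \overline{\varphi}$ and the remark that $\calW_\varphi$ is the orthogonal of $\calV_\varphi$ are set up), and your argument is a correct, complete justification using exactly those ingredients. All three steps check out — the kernel identity $\widetilde{\psi}\circ(f\wedge f)=h\circ\widetilde{\varphi}$ for (i) $\Leftrightarrow$ (iii), the factorization through the quotient for the converse, and the duality $\bigl((f\wedge f)^\star(\calV_\psi)\bigr)^\perp=(f\wedge f)^{-1}(\calW_\psi)$ for (ii) $\Leftrightarrow$ (iii) — including the bookkeeping of the direction of $f$ relative to the congruence definition.
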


Setting $n:=\dim U$, $m:=\dim V$ and choosing a basis $\calB$ of $U$, the subspace $\calV_\varphi$
is represented in $\calB$ by a linear subspace $\calV_{\varphi,\calB}$ of $\Mata_n(\K)$ with dimension $m$.
When one varies the basis, the set of all such matrix spaces forms a congruence class of $m$-dimensional subspaces of $\Mata_n(\K)$.
Note that $\varphi$ is left-regular if and only if $\calV_{\varphi,\calB}$ is \textbf{incompressible}, i.e.\
it is not congruent to a subspace of $\Mata_{n-1}(\K) \oplus \{0\}$.

Similarly, we deduce an isomorphism $U \wedge U \overset{\simeq}{\longrightarrow} \Mata_n(\K)$
from the alternating bilinear map $(x,y) \mapsto XY^T-YX^T$, where $X$ and $Y$ are the respective matrices of $x$ and $y$ in $\calB$.
Thus, one recovers from $\calW_\varphi$ a linear subspace $\calW_{\psi,\calB}$ of $\Mata_n(\K)$ with dimension $\dbinom{n}{2}-m$.
Varying $\calB$, we find that a whole congruence class of $\Bigl(\dbinom{n}{2}-m\Bigr)$-dimensional subspaces of $\Mata_n(\K)$
is attached to $\varphi$.

We interpret Proposition \ref{significationcongruence} by saying that
$\varphi$ is determined up to congruence by the congruence class of $\calV_{\varphi,\calB}$,
and also by the one of $\calW_{\varphi,\calB}$.

\paragraph{}
Recall the standard non-degenerate symmetric bilinear form $\langle - \mid - \rangle$ on $\Mata_n(\K)$, defined, for
$A=(a_{i,j})$ and $B=(b_{i,j})$ in $\Mata_n(\K)$, by
$$\langle A\mid B\rangle:=\sum_{1 \leq i<j \leq n} a_{i,j} b_{i,j.}$$
One checks that, for all pairs $((x,y),b)$ in $U^2 \times \calA_2(U)$,
$$\langle XY^T-YX^T \mid M_\calB(b)\rangle=b(x,y),$$
where $X$ and $Y$ represent $x$ and $y$ in $\calB$.
Thus, $\calW_{\varphi,\calB}$ is the orthogonal of $\calV_{\varphi,\calB}$ for $\langle -\mid -\rangle$.

\begin{Rem}[An explicit matrix parametrization]\label{parametrage}
Let $\calH$ be a linear subspace of $\Mata_n(\K)$, equipped with a basis
$(A_1,\dots,A_m)$. The operator space associated with the essentially surjective alternating bilinear map
$$(X,Y)\in \K^n \times \K^n \longmapsto [M \mapsto X^T M Y]\in \calH^\star$$
is represented in the canonical basis of $\K^n$ and the dual basis of $(A_1,\dots,A_m)$ by the space of matrices
$$\Biggl\{\begin{bmatrix}
X^T A_1 \\
\vdots \\
X^T A_m
\end{bmatrix} \mid X \in \K^n\Biggr\}.$$
\end{Rem}

We close this paragraph by interpreting the dual operator space of the transpose of $\calS_\varphi$.

\begin{prop}\label{effetdunetransposition}
Let $\varphi : U \times U \rightarrow V$ be a fully-regular alternating bilinear map.
Then, through the natural isomorphism $f \in \calL(U,U^\star) \overset{\simeq}{\longrightarrow} [(x,y) \mapsto f(x)[y]] \in \calB_2(U)$,
the operator space $\widehat{(\calS_\varphi)^T}$ corresponds to $\calV_\varphi$ and
is therefore represented, in well-chosen bases, by a space of alternating matrices.
\end{prop}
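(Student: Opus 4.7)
The plan is to make all identifications explicit and then compute. Since $\varphi$ is fully-regular, it is in particular left-regular, so the map $\alpha : x \in U \mapsto \varphi(x,-) \in \calS_\varphi$ is a linear isomorphism; dualizing yields an isomorphism $\alpha^T : x \in U \mapsto \varphi(x,-)^T \in (\calS_\varphi)^T$, which provides a canonical identification $(\calS_\varphi)^T \simeq U$. The statement of the proposition implicitly uses this identification to view $\widehat{(\calS_\varphi)^T}$ as a subspace of $\calL(U,U^\star)$.

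Next I would unwind the definition of the adjunction functor. Each $\xi \in V^\star$ gives an element $\widehat{\xi} \in \widehat{(\calS_\varphi)^T}$ defined by $g \in (\calS_\varphi)^T \mapsto g(\xi) \in U^\star$. Pre-composing with $\alpha^T$ yields a linear map $U \to U^\star$, namely $x \mapsto \xi \circ \varphi(x,-)$. Through the natural isomorphism $\calL(U,U^\star) \simeq \calB_2(U)$, this map corresponds to the bilinear form $(x,y) \mapsto \xi(\varphi(x,y))$, which is alternating since $\varphi$ is. The assignment $\xi \mapsto [(x,y) \mapsto \xi(\varphi(x,y))]$ is precisely the transpose $\overline{\varphi}$ of the surjection $\widetilde{\varphi} : U \wedge U \twoheadrightarrow V$ from Section \ref{describealternatingsection}, whose image is $\calV_\varphi$ by definition; essential surjectivity of $\varphi$ makes $\widetilde{\varphi}$ surjective, hence $\overline{\varphi}$ injective, yielding an isomorphism $V^\star \overset{\simeq}{\longrightarrow} \calV_\varphi$. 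Chasing the identifications then gives the announced equality between $\widehat{(\calS_\varphi)^T}$ and $\calV_\varphi \subset \calA_2(U)$.

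For the matrix representation, I would fix a basis $\calB$ of $U$, use the dual basis for $U^\star$, and pick any basis of $V^\star$ (for instance the dual of a basis of $V$). Under the standard identification $\Mat_n(\K) \simeq \calL(U,U^\star)$ induced by $\calB$ and its dual, the matrices corresponding to alternating bilinear forms are exactly the alternating matrices. Consequently, in these bases, $\widehat{(\calS_\varphi)^T}$ is represented by the matrix subspace $\calV_{\varphi,\calB} \subset \Mata_n(\K)$ introduced earlier in the paper.

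The proof is entirely formal, reducing to a careful unwinding of definitions, so no substantial obstacle is expected. The only delicate point is the initial identification $(\calS_\varphi)^T \simeq U$, which depends on $\alpha$ being injective; this is precisely where the left-regularity of $\varphi$ (a consequence of full-regularity) enters.
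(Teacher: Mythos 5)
Your proposal is correct and follows essentially the same route as the paper: both unwind the adjunction to see that $\widehat{(\calS_\varphi)^T}$ is the image of $\xi \in V^\star \mapsto \bigl[(x,y)\mapsto \xi(\varphi(x,y))\bigr] \in \calB_2(U)$, which is $\calV_\varphi$ by definition. Your treatment is slightly more explicit about the identification $(\calS_\varphi)^T \simeq U$ via left-regularity, but the substance is identical.
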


\begin{proof}
The operator space $(\calS_\varphi)^T$ is the range of the linear map
$$\begin{cases}
U & \longrightarrow \calL(V^\star,U^\star) \\
x & \longmapsto \bigl[f \mapsto [y \mapsto f(\varphi(x,y))]\bigr].
\end{cases}$$
The dual operator space $\widehat{(\calS_\varphi)^T}$ is therefore the range of
$$\begin{cases}
V^\star & \longrightarrow \calL(U,U^\star) \\
f & \longmapsto \bigl[x \mapsto [y \mapsto f(\varphi(x,y))]\bigr],
\end{cases}$$
which, by identifying $\calL(U,U^\star)$ and $\calB_2(U)$ through the above canonical isomorphism, reads
$$\begin{cases}
V^\star & \longrightarrow \calB_2(U) \\
f & \longmapsto \bigl[(x,y) \mapsto (f\circ \varphi)(x,y)\bigr].
\end{cases}$$
As $\calV_\varphi$ is the range of this last map, the conclusion follows.
\end{proof}

\subsection{Sufficient conditions for semi-primitivity}\label{CSsemiprimitivesection}

First of all, we note that an operator space of the alternating kind has always the column property:

\begin{prop}\label{regularalternatingcolumn}
Let $\varphi : U \times U \rightarrow V$ be a fully-regular alternating bilinear map.
Then, $\calS_\varphi$ has the column property.
\end{prop}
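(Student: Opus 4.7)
\medskip
\noindent \textbf{Proof plan.}
My plan is to use the equivalent characterization of the column property stated in the paragraph immediately following the definition: $\calS$ has the column property iff, for every linear surjection $\pi : V \twoheadrightarrow V'$, the reduced space of $\{\pi \circ f \mid f \in \calS\}$ is either zero or defective. So I fix an arbitrary surjective $\pi : V \twoheadrightarrow V'$ and show the corresponding reduced space is zero or defective.

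First, I introduce $\psi := \pi \circ \varphi : U \times U \rightarrow V'$, which is clearly alternating, and which is essentially surjective because $\varphi$ is essentially surjective and $\pi$ is surjective. Then $\{\pi \circ f \mid f \in \calS_\varphi\} = \calS_\psi$ by definition. I then identify the reduced space of $\calS_\psi$. Its essential range is $\Vect\{\psi(x,y) \mid (x,y) \in U^2\} = V'$, so no quotient on the target side is needed. Its kernel as an operator space is $\bigcap_{x \in U} \Ker \psi(x,-)$, which by the alternation of $\psi$ coincides with $K := \{y \in U \mid \psi(y,-)=0\}$. Consequently $\psi$ passes to the quotient as a map $\bar\psi : (U/K) \times (U/K) \rightarrow V'$, which is alternating, essentially surjective and left-regular (hence also right-regular), i.e.\ fully-regular (or the source is zero), and the reduced space of $\calS_\psi$ is canonically identified with $\calS_{\bar\psi}$.

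It remains to check that $\calS_{\bar\psi}$ is zero or defective. If $U/K = 0$ we are done. Otherwise $\bar\psi$ is a fully-regular alternating bilinear map on a non-zero space, and then, for every $x \in U/K$, the operator $\bar\psi(x,-)$ satisfies $\bar\psi(x,x)=0$, so it is not injective (when $x \neq 0$ it has a non-zero element in its kernel, and when $x=0$ it is the zero operator on a non-zero space). Hence no operator in $\calS_{\bar\psi}$ is injective, which is exactly the defectiveness of $\calS_{\bar\psi}$ as a subspace of $\calL(U/K, V')$.

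There is no real obstacle here: the only delicate point is to correctly identify the reduced space of $\calS_\psi$ with $\calS_{\bar\psi}$, and this reduces to the observation that the alternation of $\psi$ forces its left- and right-kernels to coincide, which is what makes the construction of $\bar\psi$ compatible with the operator-space reduction.
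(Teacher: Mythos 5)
Your proof is correct. It rests on the same underlying observation as the paper's -- alternation gives $\psi(x,x)=0$, hence non-injectivity -- but the route is genuinely different in how the ``restriction'' inherent in the column property is handled. The paper verifies the matrix-level definition directly: given an $(r,s)$-decomposition with associated subspace $U'=\Vect(e_1,\dots,e_s)$ and projection $\pi$, it must exhibit a non-zero kernel vector of $y\in U'\mapsto \pi(\varphi(x,y))$ \emph{inside} $U'$, and does so by splitting $x=y+y'$ with $y\in U'$ and $y'\in\Vect(e_{s+1},\dots,e_n)$, so that $\pi(\varphi(x,y))=-\pi(\varphi(x,y'))=0$ (with the separate fallback $\pi(\varphi(x,e_1))=0$ when $y=0$). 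You instead invoke the paper's quotient reformulation of the column property (``for every surjection $\pi:V\twoheadrightarrow V'$, the reduced space of $\{\pi\circ f\}$ is zero or defective'') and pass to the quotient $U/K$; there the kernel vector $[x]$ of $\bar\psi([x],-)$ lies in the source automatically, so no decomposition trick is needed. The price you pay is the need to identify the reduced space of $\calS_{\pi\circ\varphi}$ with $\calS_{\bar\psi}$, which you correctly reduce to the coincidence of the left and right kernels of an alternating map; the benefit is a cleaner, coordinate-free argument. Both proofs are complete; note only that your version leans on the ``in other words'' reformulation of the column property, which the paper asserts without a detailed verification, whereas the paper's proof works from the primary definition.
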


\begin{proof}
Assume that there are bases $\calB=(e_1,\dots,e_n)$ and $\calC=(f_1,\dots,f_m)$, respectively, of
$U$ and $V$ such that the matrix space $\Mat_{\bfB,\bfC}(\calS_\varphi)$ is $(r,s)$-decomposed for some
$r \in \lcro 0,m\rcro$ and some $s \in \lcro 1,n\rcro$.
Set $U':=\Vect(e_1,\dots,e_s)$, $V':=\Vect(f_{r+1},\dots,f_m)$,
and denote by $\pi : V \twoheadrightarrow V'$ the projection alongside $\Vect(f_1,\dots,f_r)$.
The above assumptions mean that $\pi(\varphi(x,y))=0$ for all $x \in U$ and all $y \in \Vect(e_{s+1},\dots,e_n)$, and
we have to show that, for each $x \in U$, the operator $y \in U' \mapsto \pi(\varphi(x,y))$ has non-zero kernel.
However, given $x \in U \setminus \{0\}$, we may split $x=y+y'$ with $y \in U'$ and $y' \in \Vect(e_{s+1},\dots,e_n)$,
and we note that $\pi(\varphi(x,y))=-\pi(\varphi(x,y'))=0$, and hence either $y \neq 0$ and then we are done,
or $x \in \Vect(e_{s+1},\dots,e_n)$ and then $\pi(\varphi(x,e_1))=-\pi(\varphi(e_1,x))=0$. This finishes the proof.
\end{proof}

Now, we give a sufficient condition for $\calS_\varphi$ to be semi-primitive.

\begin{prop}\label{largefieldtosemiprimitive}
Let $\varphi : U \times U \rightarrow V$ be a fully-regular alternating bilinear map.
Set $r:=\urk \calS_\varphi$ and assume that $U$ is spanned by the vectors $x \in U$ for which $\rk \varphi(x,-)=r$.
Then, $\calS_\varphi$ is semi-primitive.
\end{prop}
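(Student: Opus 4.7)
The space $\calS_\varphi$ is reduced: essential surjectivity of $\varphi$ gives essential range $V$, and regularity in the second argument gives trivial kernel. So it remains to verify condition (i) in the definition of semi-primitivity, namely that for \emph{no} hyperplane $U'$ of $U$, the restriction space $\calS_\varphi|_{U'}:=\{f_{|U'}\mid f\in \calS_\varphi\}$ is $c$-defective, where $c$ is the defectiveness index of $\calS_\varphi$. Since every nonzero $\varphi(x,-)$ satisfies $\varphi(x,x)=0$, the space $\calS_\varphi$ is $(\dim U-r)$-defective; on the other hand, by definition of $r$, it is not $(\dim U-r+1)$-defective. Hence $c=\dim U-r$.

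Fix a hyperplane $U' \subset U$. I need to find $x\in U$ such that the restriction $\varphi(x,-)_{|U'}$ has rank exactly $r$, for then its kernel has dimension $(\dim U-1)-r=c-1<c$, disproving $c$-defectiveness of $\calS_\varphi|_{U'}$. Let $\calG:=\{x\in U\mid \rk \varphi(x,-)=r\}$ be the set of ``generic'' vectors. By hypothesis $\Vect(\calG)=U$, so $\calG$ is not contained in the hyperplane $U'$: pick $x_0\in\calG\setminus U'$. Then $\dim \Ker \varphi(x_0,-)=\dim U-r$, and the alternating property yields $x_0\in \Ker \varphi(x_0,-)$. As $x_0\notin U'$, the inclusion $\Ker \varphi(x_0,-)\cap U' \subsetneq \Ker \varphi(x_0,-)$ is strict, which forces
$$\dim\bigl(\Ker \varphi(x_0,-)\cap U'\bigr)=\dim U-r-1,$$
and therefore
$$\rk \bigl(\varphi(x_0,-)_{|U'}\bigr)=(\dim U-1)-(\dim U-r-1)=r.$$
This is the contradiction that completes the proof.

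The only substantive input is the identity $\varphi(x,x)=0$, which guarantees $x\in \Ker \varphi(x,-)$; everything else is dimension bookkeeping. The only point to be careful about is the correct identification of the defectiveness index and the correct reading of condition (i) in the case of operator spaces (as opposed to the matrix conditions (A)--(D)); no field-size hypothesis is used.
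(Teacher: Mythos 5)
Your proof is correct and is essentially the paper's argument run in the contrapositive direction: both rest on the single observation that for a rank-$r$ vector $x_0$ outside the hyperplane $U'$, the relation $\varphi(x_0,x_0)=0$ places $x_0$ in $\Ker\varphi(x_0,-)\setminus U'$, so the kernel drops by exactly one dimension upon restriction and $\rk\varphi(x_0,-)_{|U'}=r$. Your bookkeeping of the defectiveness index $c=\dim U-r$ and of condition (i) is accurate, so nothing further is needed.
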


\begin{proof}
Assume that $\calS_\varphi$ is not semi-primitive. As $\calS_\varphi$ is regular,
there must be a linear hyperplane $U_0$ of $U$ such that $\rk \varphi(x,-)_{|U_0}<r$ for all
$x \in U$. Let $x \in U \setminus U_0$. Then, $\dim \Ker \varphi(x,-)>\dim \Ker \varphi(x,-)_{| U_0}$ since $\varphi(x,x)=0$.
The rank theorem yields $\rk \varphi(x,-)_{|U_0} \geq \rk \varphi(x,-)$. Therefore,
$\rk \varphi(x,-)<r$ for every vector $x \in U \setminus U_0$, and hence all the
vectors $x \in U$ for which $\rk \varphi(x,-)=r$ belong to $U_0$, contradicting our assumptions.
\end{proof}

Now, we prove that the condition $\# \K> \urk \calS_\varphi$ is sufficient
for $\calS_\varphi$ to be semi-primitive:

\begin{lemme}\label{spanofrankr}
Let $\calS$ be a linear subspace of $\calL(U,V)$.
Set $r:=\urk \calS$ and assume that $\# \K>r$. Then, $\calS$ is spanned by its rank $r$ operators.
\end{lemme}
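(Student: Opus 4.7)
The plan is to fix one rank-$r$ operator $g\in\calS$ (which exists by definition of $\urk$, assuming $r\geq 1$; the case $r=0$ forces $\calS=\{0\}$ and is trivial) and then show that every $f\in\calS$ can be written as a linear combination of at most two rank-$r$ operators of the form $f+tg$ and $tg$.

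The core computation will go as follows. Choose bases and represent $f$ and $g$ by matrices $A$ and $B$. Since $\rk B=r$, there exist $r$-element subsets $I$ of row indices and $J$ of column indices such that $B[I,J]$ is invertible. Consider the polynomial
\[
P(t):=\det\bigl(A[I,J]+t\,B[I,J]\bigr)\in\K[t].
\]
Its degree in $t$ is exactly $r$, because the coefficient of $t^r$ equals $\det B[I,J]\neq 0$. In particular $P$ has at most $r$ roots in $\K$.

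Now I split into cases. If $P(0)=\det A[I,J]\neq 0$, then $\rk f\geq r$, and since $\urk\calS=r$ we get $\rk f=r$, so $f$ is itself a rank-$r$ operator. Otherwise $0$ is a root of $P$, so $P$ has at most $r-1$ nonzero roots; the number of $t\in\K\setminus\{0\}$ with $P(t)\neq 0$ is therefore at least $(\#\K-1)-(r-1)=\#\K-r\geq 1$. Pick such a $t$: then $f+tg$ has an invertible $r\times r$ submatrix, so $\rk(f+tg)\geq r$, and again $\urk\calS=r$ forces $\rk(f+tg)=r$. Since $t\neq 0$, the operator $tg$ also has rank $r$, and the identity $f=(f+tg)+(-t)g$ exhibits $f$ as a linear combination of two rank-$r$ operators in $\calS$.

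The only subtle point is the off-by-one in the counting step: one has to exclude $t=0$ from the set of admissible parameters, and this is precisely what is used to absorb the root at $0$ in the case $P(0)=0$. The hypothesis $\#\K>r$ (equivalently $\#\K\geq r+1$) is exactly what makes $\#\K-r\geq 1$, so the argument is tight. No appeal to the generic rank lemma is needed; the elementary minor computation suffices.
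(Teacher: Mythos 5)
Your proof is correct, and it takes a genuinely different route from the paper's. The paper argues dually: it takes a linear form $\alpha$ on $\calS$ vanishing on every rank $r$ operator, forms the augmented matrices $M(f):=A(f)\oplus\alpha(f)$ (which all have rank at most $r$), and invokes the decomposition lemma (Lemma \ref{decompositionlemma}) to obtain $\urk\calS+\rk\alpha\leq r$, forcing $\alpha=0$; i.e., it shows that the rank $r$ operators are not contained in any hyperplane of $\calS$. You instead prove the stronger and more explicit statement that every $f\in\calS$ is a linear combination of at most two rank $r$ operators, via a direct minor computation on the pencil $f+tg$. Your counting is right: $P$ has degree exactly $r$, hence at most $r$ roots; in the remaining case $t=0$ is already one of them (otherwise $f$ itself has rank $r$), so at least $\#\K-r\geq 1$ nonzero values of $t$ survive, and the conclusion follows from $\urk\calS=r$. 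The paper's proof is shorter given the machinery it has already set up, but it leans on Lemma \ref{decompositionlemma}, which is itself proved through the generic rank lemma by essentially the same kind of polynomial-root counting you carry out by hand; your argument is self-contained and yields the quantitative refinement that two rank $r$ operators always suffice to express any element of $\calS$.
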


\begin{proof}
Let $\alpha$ be a linear form on $\calS$ such that $\alpha(f)=0$ for every rank $r$ operator $f \in \calS$.
Choose respective bases $\calB$ and $\calC$ of $U$ and $V$, and, for $f \in \calS$, consider the matrix
$M(f):=A(f) \oplus \alpha(f)$, where $A(f)=\Mat_{\bfB,\bfC}(f)$.
Then, $\rk M(f) \leq r$ for all $f \in \calS$. As $\#\K>r$, Lemma \ref{decompositionlemma} yields
$\urk \calS+\rk \alpha=\urk A(\calS)+\rk \alpha \leq r$.
Therefore, $\alpha=0$, which proves our claim.
\end{proof}

\begin{cor}\label{largefieldtosemiprimitive}
Let $\varphi : U \times U \rightarrow V$ be a fully-regular alternating bilinear map.
If $\# \K>\urk \calS_\varphi$, then $\calS_\varphi$ is semi-primitive.
\end{cor}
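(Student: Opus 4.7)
The proof is essentially a combination of the two preceding statements, so the plan is very short. I would argue as follows.

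First I would apply Lemma \ref{spanofrankr} to the operator space $\calS_\varphi \subset \calL(U,V)$. Setting $r := \urk \calS_\varphi$, the assumption $\#\K > r$ yields that $\calS_\varphi$ is spanned by its rank $r$ operators. Next I would use the fact that $\varphi$ is fully-regular, hence in particular left-regular: this means that the linear map $x \in U \longmapsto \varphi(x,-) \in \calS_\varphi$ is an isomorphism (it is surjective by the very definition of $\calS_\varphi$, and injective by left-regularity). Therefore, each rank $r$ operator in $\calS_\varphi$ has the form $\varphi(x,-)$ for a unique $x \in U$, and the preimages under this isomorphism of the spanning family of rank $r$ operators form a spanning family of $U$. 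In other words, $U$ is spanned by the vectors $x \in U$ for which $\rk \varphi(x,-) = r$.

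With this in hand, I would simply invoke Proposition \ref{largefieldtosemiprimitive} (the one stated just before the lemma on rank $r$ operators), whose hypothesis is now verified. That proposition delivers semi-primitivity of $\calS_\varphi$, concluding the proof.

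There is no real obstacle here: the only subtlety worth mentioning explicitly is the use of left-regularity to identify rank $r$ operators in $\calS_\varphi$ with vectors $x \in U$ satisfying $\rk \varphi(x,-) = r$, which is exactly the bridge between Lemma \ref{spanofrankr} (phrased for operator spaces) and Proposition \ref{largefieldtosemiprimitive} (phrased in terms of the source space $U$).
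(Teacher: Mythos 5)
Your proposal is correct and follows exactly the route the paper intends: the corollary is stated without proof precisely because it is the immediate combination of Lemma \ref{spanofrankr} (spanning by rank $r$ operators) and Proposition \ref{largefieldtosemiprimitive} (the spanning criterion for semi-primitivity), transported through the isomorphism $x \mapsto \varphi(x,-)$ given by left-regularity. Nothing is missing.
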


\subsection{A sufficient condition for transitivity}\label{CStransitivesection}

\begin{Def}
Let $\varphi : U \times U \rightarrow V$ be a fully-regular alternating bilinear map.
We say that $\varphi$ is \textbf{transitive} when some $x \in U$ satisfies
$\rk \varphi(x,-)=\dim U-1$, i.e.\ when $\urk \calS_\varphi=\dim U-1$, which is equivalent to having $\trk \calS_\varphi=\dim U-1$.
\end{Def}

In this paragraph, we discuss sufficient conditions on $\dim U$ and $\dim V$ for $\varphi$ to be transitive.
The first set of conditions works for all fields and gives a stronger conclusion than the mere transitivity.
In it, we need a simple definition:

\begin{Def}
A $2$-compound of a vector space $V$ is a subset of the form $H_1 \cup H_2$, where $H_1$ and $H_2$ are (linear)
hyperplanes of $V$.
\end{Def}

Note that a $2$-compound of $V$ is always a proper subset of $V$.

\begin{prop}\label{transitivityprop}
Let $\varphi : U \times U \rightarrow V$ be a fully-regular alternating bilinear map with
$\dim V>\dbinom{\dim U-1}{2}$. Let $C$ be either a $2$-compound of $U$ if $\# \K>2$, or a hyperplane of $U$ otherwise.
Then, $C$ does not contain all the vectors $x \in U$ for which $\rk \varphi(x,-)=\dim U-1$.
In particular, $\urk \calS_\varphi=\dim U-1$.
\end{prop}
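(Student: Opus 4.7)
The plan is to argue by contradiction, assuming that $R := \{x \in U : \rk \varphi(x,-) = n-1\}$ (with $n := \dim U$) is contained in $C$, and deriving a contradiction with the hypothesis $\dim V > \binom{n-1}{2}$, i.e.\ with $d := \dim \calW_\varphi \leq n-2$.

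I would first translate into the dual picture of Section \ref{describealternatingsection}: one has $\rk \varphi(x,-) = n - 1 - \dim\bigl((x \wedge U) \cap \calW_\varphi\bigr)$, so $x \notin R$ (with $x \neq 0$) is equivalent to the existence of $y \in U \setminus \K x$ with $x \wedge y \in \calW_\varphi \setminus \{0\}$. The bad set $X := U \setminus R$ is thus a cone, namely the union of the supports (i.e.\ the $2$-planes $\Vect(a,b)$) of the nonzero rank-$2$ elements $a \wedge b$ of $\calW_\varphi$. A Pl\"ucker-style incidence/dimension count over the algebraic closure $\overline{\K}$ bounds $\dim X_{\overline{\K}} \leq d + 1 \leq n - 1$, so $R$ is non-empty and Zariski-dense in $U_{\overline{\K}}$. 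In the matrix parametrization of $\calS_\varphi$ from Remark \ref{parametrage}, this means that some $(n-1) \times (n-1)$ minor $P$ of the generic matrix $M(x)$ is a non-zero polynomial in the coordinates of $x$; Cramer's rule applied to the alternating identity $M(x) \cdot x = 0$ further factors $P$ as $\pm x_i\, Q(x)$ for a common polynomial $Q$ of degree $n-2$.

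The last step is to find $x \in \K^n$ with $Q(x) \neq 0$ and $x \notin C$, so that $x \in R \setminus C$ yields the contradiction. For $\#\K$ large enough, a standard combinatorial Nullstellensatz argument applied to $Q \cdot L_1 \cdot L_2$ (resp.\ $Q \cdot L$ when $C = H$), where the $L_i$ are linear forms cutting out $C$, suffices. For small fields I would complement this by a direct cardinality bound $|X| \leq 1 + \frac{(\#\K)^d-1}{\#\K-1}\bigl((\#\K)^2-1\bigr)$, obtained by counting at most $(q^d-1)/(q-1)$ decomposable projective points of $\calW_\varphi$, each contributing a $2$-plane of size $q^2$, and then checking $|X \cup C| < |U|$. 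The calibration in the hypothesis, between ``$2$-compound'' when $\#\K > 2$ and only ``hyperplane'' when $\#\K = 2$, is precisely what is needed to make this bound work uniformly.

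I expect the main obstacle to lie in the borderline case $\K = \F_2$: the naive cardinality estimate is then tight, and one must exploit the structural fact that $R$ is a cone not containing $0$ while any linear hyperplane does contain $0$, together with a direct inspection of the rank-$2$ locus of $\calW_\varphi$ (using that two rank-$2$ elements sharing a vector force a third rank-$2$ element in the pencil they span), to eventually force $\dim \calW_\varphi \geq n-1$ and complete the contradiction. The final ``in particular'' clause $\urk \calS_\varphi = n - 1$ is then immediate: $R \neq \emptyset$ gives $\urk \calS_\varphi \geq n - 1$, while the identity $\varphi(x,x) = 0$ gives the reverse inequality.
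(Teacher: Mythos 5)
Your reduction to the kernel picture is sound: for $x\neq 0$ one indeed has $\rk\varphi(x,-)=n-1-\dim\bigl((x\wedge U)\cap\calW_\varphi\bigr)$, so the complement of $R$ is $\{0\}$ together with the union of the supports of the nonzero decomposable elements of $\calW_\varphi$, and $\dim\calW_\varphi\leq n-2$. Your cardinality bound then does close the argument for $\#\K=q\geq 3$: at most $\frac{q^{n-2}-1}{q-1}$ projective decomposable points, each contributing $q^2-1$ nonzero vectors, gives $\card(X\cup C)\leq 1+(q^{n-2}-1)(q+1)+2q^{n-1}-q^{n-2}=3q^{n-1}-q<q^n$. (Once you have this count, the whole first paragraph --- Zariski density over $\overline{\K}$, the Cramer factorization of the minors, and the combinatorial Nullstellensatz --- is superfluous for $q\geq 3$; and as stated that Nullstellensatz step is not justified anyway, since $Q\cdot L_1\cdot L_2$ has total degree $n$ and its individual degrees can exceed $\#\K$, so no top-degree monomial need satisfy the hypothesis of the theorem.)

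The genuine gap is the case $\K=\F_2$, which the proposition explicitly covers (with $C$ a hyperplane) and which you yourself flag as ``the main obstacle''. There the count gives $\card X\leq 3\cdot 2^{n-2}-2$ and $\card C=2^{n-1}$, whose sum exceeds $2^n$ as soon as $n\geq 4$, so no conclusion follows; the structural remarks you offer (that $R$ is a cone avoiding $0$, that two decomposable elements sharing a support vector produce a third in their pencil) shave off only a bounded amount per incidence and do not obviously ``force $\dim\calW_\varphi\geq n-1$''. As it stands this is a plan, not a proof, for $\F_2$. The paper avoids the issue entirely by working in the dual space $\calV_\varphi\subset\Mata_n(\K)$ rather than in $\calW_\varphi$, and proving the contrapositive (Lemma \ref{transitivitylemma}) by induction on $n$: one splits off the last basis vector $e_n\notin C$, shows that every $x\in\K^{n-1}$ with $\dim K(\calW)x=n-2$ lies in a fixed hyperplane $H_1$ of $\K^{n-1}$ (here the distinction between a $2$-compound for $\#\K>2$ and a hyperplane for $\#\K=2$ is exactly what guarantees a scalar $t\neq 0$ with $x+te_n\notin C$), and concludes $\dim\calV\leq\binom{n-2}{2}+(n-2)=\binom{n-1}{2}$. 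That induction is uniform in the field and is the part you would need to replace to repair the $\F_2$ case.
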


In \cite{AtkLloydPrim}, Atkinson states a somewhat different version of Proposition \ref{transitivityprop},
with weaker assumptions and conclusions (the main assumption is that there is no proper linear subspace
$U_0$ of $U$ for which $V=\Phi(U_0 \times U_0)$, and the conclusion only involves hyperplanes).
Atkinson derives his result from a generalization of a theorem of Vaughan-Lee \cite[Section 2]{Vaughan-Lee}.
Here, we do not need the weaker assumption, however the
generalization to $2$-compounds will be very useful in what follows.
This justifies that we devote a few lines to an elementary self-contained proof of Proposition \ref{transitivityprop}.

Note that the considerations of Section \ref{describealternatingsection} show that the problem may be
entirely restated in terms of spaces of alternating matrices, so that Proposition \ref{transitivityprop} is
equivalent to the following statement:

\begin{lemme}\label{transitivitylemma}
Let $\calV$ be a linear subspace of $\Mata_n(\K)$.
Let $C$ be either a hyperplane of $\calV$ if $\# \K=2$, or a $2$-compound of $\calV$ otherwise.
Assume that the set $E:=\bigl\{x \in \K^n : \; \dim \calV x=n-1\bigr\}$ is included in $C$.
Then, $\dim \calV \leq \dbinom{n-1}{2}$.
\end{lemme}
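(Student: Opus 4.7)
The strategy is induction on $n$, with base cases $n \leq 2$ handled by direct cardinality bounds (for $n=2$, if $\calV \neq 0$ then $E = \K^2 \setminus \{0\}$, which fits into no hyperplane or $2$-compound). For the inductive step $n \geq 3$, set $r := \trk(\calV) = \max_x \dim \calV x \leq n-1$. First pass to the fully-regular case by quotienting out the common kernel of $\calV$ (this only lowers $n$, and when the common kernel is non-trivial the bound $\dim \calV \leq \binom{n-1}{2}$ follows directly). By Proposition~\ref{regularalternatingcolumn}, the associated operator space $\calS_\varphi$ has the column property, and Theorem~\ref{rangelemma} then applies. If $r \leq n-2$, since $n > r+1$ it yields $\dim \calV \leq 1 + \binom{r}{2} \leq 1 + \binom{n-2}{2} \leq \binom{n-1}{2}$ (modulo a separate argument handling the small-field cases $\#\K \leq r$).

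The remaining case is $r = n-1$, where $E \neq \emptyset$ and the hypothesis lets us pick $x_0 \in E \subset C$. Change bases so that $x_0 = e_1$ and decompose each $M \in \calV$ into its first-column-below-diagonal vector $c(M) \in \K^{n-1}$ and lower-right alternating block $A(M) \in \Mata_{n-1}(\K)$. Define $\calB := \{A(M) : c(M) = 0\}$; since $x_0 \in E$ forces $c : \calV \twoheadrightarrow \K^{n-1}$ to be surjective, we have $\dim \calV = \dim \calB + (n-1)$. The crux is to verify that $E_1 := \{v' \in \K^{n-1} : \dim \calB v' = n-2\}$ is contained in $C_1 := (H_1 \cap V_1) \cup (H_2 \cap V_1)$ with $V_1 := \Vect(e_2,\ldots,e_n)$ (or $H \cap V_1$ when $\#\K = 2$), so that the inductive hypothesis applies to $\calB$. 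For $v' \in E_1 \setminus C_1$, examine $y_\alpha := \alpha e_1 + (0,v')$ for $\alpha \in \K$: since $e_1 \in C$, the cardinality condition on $\#\K$ provides some $\alpha$ with $y_\alpha \notin C$. The hypothesis then gives $\dim \calV y_\alpha \leq n-2$, while $\dim \calV y_\alpha \geq \dim \calV' v = n-2$, so equality holds and $\calV y_\alpha = \calV' v \subset \{0\} \times \K^{n-1}$; projecting to the first coordinate yields $c(M) \cdot v' = 0$ for all $M \in \calV$, and the surjectivity of $c$ then forces $v' = 0$, a contradiction. Induction on $\calB$ therefore bounds $\dim \calB \leq \binom{n-2}{2}$.

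\textbf{Main obstacle.} Combining gives $\dim \calV \leq \binom{n-2}{2} + (n-1) = \binom{n-1}{2} + 1$, off by one. Closing this gap is the principal technical difficulty: it requires a refined analysis of the extremal situation $\dim \calB = \binom{n-2}{2}$, in which the inductive classification essentially pins down the structure of $\calB$ (it must be of the alternating kind on a quotient); leveraging this rigidity against the section data $c : \calV \twoheadrightarrow \K^{n-1}$ produces a vector $v'$ violating the inclusion $E_1 \subset C_1$ already established, yielding the missing strict inequality. Alternatively, one may combine information from two distinct points $x_0 \in E$ and $x_0' \notin C$ to tighten the dimension count without going through the equality case.
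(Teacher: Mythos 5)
Your proposal follows the same general template as the paper's proof (induction on $n$, a decomposition of each matrix into a distinguished row/column plus an alternating block of size $n-1$, and a perturbation argument ``$x+te_n$ must escape $C$ for some $t$'' to push the hypothesis down to the smaller block), but it anchors the decomposition at the wrong vector, and this is exactly what produces the off-by-one gap that you acknowledge but do not close. You base the splitting at a point $x_0\in E\subset C$, which forces the column map $c:\calV\to\K^{n-1}$ to be \emph{surjective} and hence contributes $n-1$ to the dimension count, giving $\dim\calV\le\dbinom{n-2}{2}+(n-1)=\dbinom{n-1}{2}+1$. The paper instead bases the splitting at a vector $e_n\notin C$ (such a vector exists since a $2$-compound, or a hyperplane, is a proper subset). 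Because $E\subset C$, this $e_n$ is \emph{not} in $E$, so $\dim\calV e_n\le n-2$: the column part $L(\calV)$ contributes at most $n-2$, and the count $\dbinom{n-2}{2}+(n-2)=\dbinom{n-1}{2}$ closes with no extremal-case analysis. Your proposed repairs are not workable as stated: the induction hypothesis only yields a dimension bound, not a classification of $\calB$ in the equality case, so ``leveraging the rigidity'' of $\dim\calB=\dbinom{n-2}{2}$ has no basis within the induction; your alternative of ``combining two points $x_0\in E$ and $x_0'\notin C$'' is essentially the correct idea, but then the point to use is $x_0'$, not $x_0$.

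A secondary gap: your treatment of the case $r\le n-2$ routes through Proposition~\ref{regularalternatingcolumn} and Theorem~\ref{rangelemma}, the latter of which requires $\#\K>r$; the lemma carries no such hypothesis (and is used in Proposition~\ref{transitivityprop} precisely for arbitrary fields, including $\#\K=2$), so the ``separate argument handling the small-field cases'' you defer is not optional. In the paper's proof no case distinction on $r$ is needed: the same induction covers $E=\varnothing$ automatically, since the key claim (that every $x\in\K^{n-1}$ with $\dim K(\calW)x=n-2$ lies in the hyperplane $H_1$) is proved by exhibiting a point of $E$ outside $C$, which is a contradiction whether or not $E$ is empty. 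I would also note that the hyperplane $H_1$ of $\K^{n-1}$ fed to the induction hypothesis is a legitimate instance of the lemma's hypothesis (a hyperplane is a degenerate $2$-compound), which is why the induction is stated for $2$-compounds rather than for pairs of genuinely distinct hyperplanes; your $C_1=(H_1\cap V_1)\cup(H_2\cap V_1)$ plays the analogous role but the subsequent bookkeeping is moot given the dimension-count failure above.
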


\begin{proof}
The result is obvious if $n=1$. We proceed by induction: assume that $n \geq 2$ and that the result holds for $n-1$.
Note that $E \neq \K^n$, in any case.
Denoting by $(e_1,\dots,e_n)$ the canonical basis of $\K^n$, we see that no generality is lost in assuming that
$e_n \not\in C$, and that $C=\Vect(e_1,\dots,e_{n-1})$ if $\# \K=2$, and otherwise that
$C=\Vect(e_1,\dots,e_{n-1}) \cup H$ for some hyperplane $H$ of $\K^n$.
In this reduced situation, we write every matrix of $\calV$ as
$$M=\begin{bmatrix}
K(M) & -L(M)^T \\
L(M) & 0
\end{bmatrix}, \; \text{with $K(M) \in \Mata_{n-1}(\K)$ and $L(M) \in \Mat_{1,n-1}(\K)$,}$$
and set
$$\calW:=\{M \in \calV : \; L(M)=0\}.$$
The rank theorem yields
$$\dim \calV=\dim K(\calW)+\dim L(\calV).$$
If $L(\calV)=\{0\}$, then we readily have
$$\dim \calV =\dim K(\calW) \leq \dim \Mata_{n-1}(\K)=\binom{n-1}{2}.$$
From that point on, we shall identify $\K^{n-1}$ canonically
with the linear subspace $\K^{n-1} \times \{0\}$ of $\K^n$.

Assume now that $L(\calV) \neq \{0\}$.
Then, we may find a hyperplane $H_1$ of $\K^{n-1}$ which contains every $x \in \K^{n-1}$ satisfying
$x^T M e_n=0$ for all $M \in \calV$.

We contend that $H_1$ contains all the vectors $x \in \K^{n-1}$ for which $\dim K(\calW) x=n-2$.
Assume on the contrary that some
$x \in \K^{n-1}$ satisfies both $x^T \calV e_n \neq \{0\}$ and $\dim K(\calW) x=n-2$.
Let $t$ be a non-zero scalar. One sees that $K(\calW)x=\calW x =\calW (x+t\,e_n) \subset \calV (x+t\,e_n)$, while
$\dim K(\calW) x=n-2$ and $\dim (\calV (x+t\,e_n)) \leq n-1$. If $\dim \calV (x+t\,e_n)<n-1$, we would deduce that
$\calV (x+t\,e_n)=\calW x \subset \Mata_{n-1}(\K)x$, leading to
$x^T M e_n=t^{-1}\,x^TM(x+t\,e_n)=0$ for all $M \in \calV$.
Thus, $x+t\,e_n \in E$ for every non-zero scalar $t$. \\
On the other hand, if $\# \K=2$, then obviously $x+e_n \not\in C$ as $C=\K^{n-1}$;
if $\# \K>2$, then $C=\K^{n-1} \cup H$ for some linear hyperplane $H$ of $\K^n$ which does not contain $e_n$,
and hence there is at most one non-zero scalar $t$ such that $x+t\,e_n \in H$,
while $x+t\,e_n \not\in \K^{n-1}$ for all non-zero scalar $t$.
In any case, there is at least one non-zero scalar $t$ such that $x+t\,e_n \not\in C$, which contradicts the assumption that
$E \subset C$.

Thus, every vector $x \in \K^{n-1}$ for which $\dim K(\calW) x=n-2$ must belong to $H_1$.
By induction,
$$\dim K(\calW) \leq \binom{n-2}{2}.$$
Finally, as $e_n \not\in C$ and $E \subset C$, we have $\dim \calV e_n<n-1$, i.e.\ $\dim L(\calV)< n-1$.
We conclude that
$$\dim \calV \leq \binom{n-2}{2}+n-2=\binom{n-1}{2}.$$
\end{proof}

Lemma \ref{transitivitylemma} is optimal, which is demonstrated by the case of the subspace
$\Mata_{n-1}(\K) \oplus \{0\}$ of $\Mata_n(\K)$.

\vskip 3mm
Here is a corollary to Proposition \ref{transitivityprop} and Corollary \ref{largefieldtosemiprimitive}.

\begin{cor}\label{semiprimitivesmallfield}
Let $\varphi : U \times U \rightarrow V$ be a fully-regular alternating bilinear map with
$\dim V>\dbinom{\dim U-1}{2}$. Then, $\calS_\varphi$ is semi-primitive.
\end{cor}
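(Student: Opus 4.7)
The plan is to combine Proposition \ref{transitivityprop} with the first version of Proposition \ref{largefieldtosemiprimitive} (the one whose hypothesis is that $U$ be spanned by the vectors of maximal $\varphi$-rank). The hypothesis $\dim V > \binom{\dim U - 1}{2}$ is exactly what makes Proposition \ref{transitivityprop} applicable, and this will be the main engine: it forces the set of vectors $x \in U$ with $\rk \varphi(x,-) = \dim U - 1$ to escape every 2-compound (or hyperplane, when $\# \K = 2$).

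In more detail, set $r := \urk \calS_\varphi$. Proposition \ref{transitivityprop} first tells us that $r = \dim U - 1$, so $\varphi$ is transitive. Let
\[
E := \bigl\{ x \in U \mid \rk \varphi(x,-) = r \bigr\}.
\]
By Proposition \ref{largefieldtosemiprimitive} (the statement concerning the span of rank-$r$ vectors), it suffices to show that $E$ spans $U$. Equivalently, we must show that $E$ is not contained in any linear hyperplane of $U$.

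So suppose, for contradiction, that $E \subset H$ for some hyperplane $H$ of $U$. If $\# \K = 2$, this contradicts the ``hyperplane'' case of Proposition \ref{transitivityprop} directly. If $\# \K > 2$, observe that a single hyperplane $H$ is itself a 2-compound (write $H = H \cup H$), so $E \subset H$ again contradicts Proposition \ref{transitivityprop}. In either case we get a contradiction, so $E$ spans $U$, and the first Proposition \ref{largefieldtosemiprimitive} yields the semi-primitivity of $\calS_\varphi$.

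There is no real obstacle here once Proposition \ref{transitivityprop} is available; the only subtle point is noticing that the 2-compound notion subsumes the single-hyperplane case (by taking $H_1 = H_2$), so that the same argument handles both the large- and small-field regimes uniformly. In particular, the case $\# \K \leq r$, which is not covered by Corollary \ref{largefieldtosemiprimitive}, is precisely what makes introducing the notion of 2-compound worthwhile in Proposition \ref{transitivityprop}.
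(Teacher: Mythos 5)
Your proof is correct and follows exactly the route the paper intends (the paper gives no written proof, merely citing Proposition \ref{transitivityprop} together with the span-of-maximal-rank-vectors criterion for semi-primitivity): Proposition \ref{transitivityprop} shows the rank-$(\dim U-1)$ vectors avoid every hyperplane (a hyperplane being a $2$-compound with $H_1=H_2$ when $\# \K>2$, and being handled directly when $\# \K=2$), hence span $U$, and the first Proposition \ref{largefieldtosemiprimitive} concludes. Your closing remark is also on point: the absence of any cardinality hypothesis is precisely why one must use the span criterion rather than the corollary requiring $\# \K>\urk \calS_\varphi$.
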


For large enough fields, the condition on $\dim V$ in Proposition \ref{transitivityprop}
may be substantially lowered.
Interestingly, this is a consequence of Atkinson and Lloyd's lemma on the essential range of a space with the column property!

\begin{prop}\label{transitivityprop2}
Let $\varphi : U \times U \rightarrow V$ be a fully-regular alternating bilinear map. Set $n:=\dim U$ and assume that
$\dim V>1+\dbinom{n-2}{2}$ and $\# \K \geq n-1$.
Then, $\urk \calS_\varphi=n-1$.
\end{prop}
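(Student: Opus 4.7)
The plan is to argue by contradiction, exploiting the Atkinson--Lloyd essential-range bound (Theorem \ref{rangelemma}) together with the column property of $\calS_\varphi$ supplied by Proposition \ref{regularalternatingcolumn}.

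First, I would dispense with the trivial upper inequality $\urk \calS_\varphi \leq n-1$: for every nonzero $x \in U$, the identity $\varphi(x,x)=0$ places $x$ in $\Ker \varphi(x,-)$, hence $\rk \varphi(x,-) \leq n-1$. The real content of the statement is the opposite inequality.

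To prove $\urk \calS_\varphi \geq n-1$, I would suppose for contradiction that $r := \urk \calS_\varphi \leq n-2$. Since $r \leq n-2$ and $\# \K \geq n-1$, one automatically has $\# \K > r$, so the cardinality hypothesis of Theorem \ref{rangelemma} is met. The full regularity of $\varphi$ forces $\calS_\varphi$ to be reduced: its right-regularity kills the common kernel of $\calS_\varphi$, and its essential surjectivity recovers $V$ as the essential range. By Proposition \ref{regularalternatingcolumn}, $\calS_\varphi$ has the column property. Representing it in arbitrary bases therefore yields a reduced subspace of $\Mat_{\dim V,\dim U}(\K)$ with the column property, upper-rank $r$, and number of columns $n = \dim U > r+1$. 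The ``moreover'' clause of Theorem \ref{rangelemma} applies and gives
$$\dim V \;\leq\; 1+\binom{r}{2} \;\leq\; 1+\binom{n-2}{2},$$
contradicting the standing assumption $\dim V > 1+\binom{n-2}{2}$. Hence $\urk \calS_\varphi \geq n-1$, and combining with the elementary upper bound yields the claimed equality.

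I do not expect a significant obstacle here: the entire argument reduces to recognizing that the hypothesis $\# \K \geq n-1$ is precisely calibrated to activate the sharper bound of Theorem \ref{rangelemma} in the critical case $r = n-2$, and that the column property — which is what powers that bound — is freely available for $\calS_\varphi$ thanks to Proposition \ref{regularalternatingcolumn}. In this sense the proposition is essentially a direct corollary of those two ingredients, and one should expect no computation beyond the binomial inequality above.
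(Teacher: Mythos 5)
Your proof is correct and follows exactly the paper's own argument: assume $r\leq n-2$, invoke Proposition \ref{regularalternatingcolumn} for the column property and full regularity for reducedness, then apply the sharper bound of Theorem \ref{rangelemma} (valid since the number of columns exceeds $r+1$) to contradict $\dim V>1+\binom{n-2}{2}$. Your additional remark on the trivial upper bound $\urk\calS_\varphi\leq n-1$ is a harmless elaboration of what the paper leaves implicit.
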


\begin{proof}
Set $r:=\urk \calS_\varphi$ and $m:=\dim V$ and assume that $r<n-1$. By Proposition \ref{regularalternatingcolumn},
the space $\calS_\varphi$ has the column property. Moreover, it is reduced since $\varphi$ is fully-regular.
As $r \leq n-2$, Theorem \ref{rangelemma} yields $m \leq 1+\dbinom{r}{2} \leq 1+\dbinom{n-2}{2}$,
which contradicts our assumptions.
\end{proof}

The following corollary, which is stated in terms of the transitive rank of a linear subspace of $\Mata_n(\K)$,
appears to be new:

\begin{cor}\label{transitivitylemma2}
Let $\calV$ be an incompressible linear subspace of $\Mata_n(\K)$ with dimension $m$.
Assume that $m>1+\dbinom{n-2}{2}$ and $\# \K \geq n-1$. Then, $\trk \calV=n-1$.
\end{cor}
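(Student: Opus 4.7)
The plan is to translate the statement into the language of essentially surjective alternating bilinear maps and then invoke Proposition \ref{transitivityprop2}. Associate to $\calV$ the alternating bilinear map
$$\varphi : \K^n \times \K^n \longrightarrow \calV^\star, \quad \varphi(X,Y) : M \longmapsto X^T M Y.$$
This is the map considered in Remark \ref{parametrage}, and $\calV^\star$ has dimension $m$.

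First I would check that $\varphi$ is fully-regular. Essential surjectivity is automatic: if $M \in \calV$ satisfies $X^T M Y = 0$ for all $X,Y$, then $M=0$, and this says exactly that $\varphi$ spans $\calV^\star$. For regularity, suppose some non-zero $X \in \K^n$ satisfies $M X = 0$ for every $M \in \calV$. Completing $X$ to a basis of $\K^n$ and performing the associated congruence on $\calV$ would place the last row (and, by antisymmetry, the last column) of every matrix of $\calV$ at zero, showing that $\calV$ is congruent to a subspace of $\Mata_{n-1}(\K) \oplus \{0\}$. This contradicts incompressibility, so $\varphi$ is left-regular (hence right-regular as well, $\varphi$ being alternating).

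Next I would identify $\trk \calV$ with $\urk \calS_\varphi$. For each $X \in \K^n$, using the antisymmetry $X^T M Y = -(MX)^T Y$, the kernel of the linear map $Y \mapsto \varphi(X,Y)$ is $\{Y \in \K^n : (MX)^T Y = 0 \text{ for all } M \in \calV\}=(\calV X)^\perp$ with respect to the standard pairing. Therefore $\rk \varphi(X,-) = \dim \calV X$, and taking the maximum over $X$ yields
$$\urk \calS_\varphi = \max_{X \in \K^n} \dim \calV X = \trk \calV.$$

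Since $\dim \calV^\star = m > 1 + \dbinom{n-2}{2}$ and $\# \K \geq n-1$, Proposition \ref{transitivityprop2} applies to $\varphi$ and gives $\urk \calS_\varphi = n-1$. Combined with the identity above, this yields $\trk \calV = n-1$, as desired. No real obstacle is anticipated here: the entire content of the corollary is a dictionary translation between matrix spaces of alternating matrices and the operator spaces of the alternating kind developed in Section \ref{describealternatingsection}; the only delicate points are the verification of full regularity (handled by incompressibility) and the computation identifying $\trk \calV$ with $\urk \calS_\varphi$ via the antisymmetry trick.
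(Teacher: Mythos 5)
Your proposal is correct and follows exactly the route the paper intends: the corollary is stated as an immediate consequence of Proposition \ref{transitivityprop2} via the dictionary of Section \ref{describealternatingsection}, where incompressibility of $\calV$ is precisely left-regularity of the associated map $(X,Y)\mapsto [M \mapsto X^TMY]$ and the antisymmetry computation identifies $\urk \calS_\varphi$ with $\trk\calV$. Your write-up simply makes explicit the verifications the paper leaves implicit.
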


That the lower bound $1+\dbinom{n-2}{2}$ is optimal is exemplified by the space $\Mata_2(\K) \oplus \Mata_{n-2}(\K)$.

\subsection{Sufficient conditions for primitivity}\label{CSprimitivesection}

Here, we apply the results of the previous sections to obtain necessary and sufficient conditions for the primitivity of
an operator space of the alternating kind.

First of all, the following lemma is straightforward:

\begin{lemme}
Let $\varphi : U \times U \rightarrow V$ be a fully-regular alternating bilinear map with $\dim V>0$.
Set $r:=\urk \calS_\varphi$.
Then, $\calS_\varphi$ is primitive if and only if $\calS_\varphi$ is semi-primitive and, for every
surjective linear map $\pi : V \twoheadrightarrow V'$ with $\dim V'=\dim V-1$, the operator space
$\calS_{\pi \circ \varphi}$ has upper-rank $r$.
\end{lemme}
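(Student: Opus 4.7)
My plan is to unfold the definitions of primitivity and semi-primitivity and verify that, for the specific class of operator spaces $\calS_\varphi$, condition (ii) in the definition of primitivity translates exactly into the upper-rank statement in the lemma. The whole proof should reduce to a careful bookkeeping of the defectiveness index of $\calS_\varphi$.

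First I would compute the defectiveness index $c$ of $\calS_\varphi$. Every non-zero operator in $\calS_\varphi$ has the form $\varphi(x,-)$ for some $x \in U$, and since $\varphi(x,x)=0$ we have $x \in \Ker \varphi(x,-)$. Combined with the rank theorem, this gives $\dim \Ker \varphi(x,-)=\dim U-\rk \varphi(x,-)\ge \dim U-r$. Because $r=\urk \calS_\varphi$ is attained at some $x_0 \in U$, the minimum kernel dimension of a non-zero operator in $\calS_\varphi$ is exactly $\dim U-r$, so $c=\dim U-r$.

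Next I would observe that, since $\pi \circ \varphi$ is again alternating and essentially surjective (as $\pi$ is surjective), the operator space $\{\pi \circ f \mid f \in \calS_\varphi\}$ coincides with $\calS_{\pi \circ \varphi} \subset \calL(U,V')$. Unfolding the definition of $(c+1)$-defectiveness, this space is $(c+1)$-defective iff every operator $(\pi \circ \varphi)(x,-)$ has kernel dimension at least $c+1=\dim U-r+1$, that is, iff $\urk \calS_{\pi \circ \varphi} \leq r-1$. Since postcomposing with $\pi$ can only decrease the rank, one always has $\urk \calS_{\pi \circ \varphi} \leq \urk \calS_\varphi=r$, so the negation of $(c+1)$-defectiveness is precisely $\urk \calS_{\pi \circ \varphi}=r$.

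Combining these observations, condition (ii) in the definition of primitivity for $\calS_\varphi$ reads exactly: for every surjective $\pi : V \twoheadrightarrow V'$ with $\dim V'=\dim V-1$, one has $\urk \calS_{\pi \circ \varphi}=r$. Together with the fact that primitivity is, by definition, semi-primitivity plus condition (ii), this yields the stated equivalence. There is no real obstacle here; the only thing to watch is the bookkeeping of the defectiveness index, namely the identification $c=\dim U-r$, which makes $(c+1)$-defectiveness of $\calS_{\pi \circ \varphi}$ equivalent to a strict drop in the upper-rank.
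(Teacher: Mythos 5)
Your proof is correct and is exactly the definitional unfolding the paper has in mind: it states this lemma without proof as ``straightforward,'' and the only real content is the identification of the defectiveness index $c=\dim U-r$ (via $\varphi(x,x)=0$) so that failure of $(c+1)$-defectiveness of $\calS_{\pi\circ\varphi}$ becomes $\urk \calS_{\pi\circ\varphi}=r$, which you carry out correctly.
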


As a consequence of Proposition \ref{transitivityprop} and Corollary \ref{semiprimitivesmallfield}, we deduce:

\begin{prop}\label{smallfieldprimitive}
Let $\varphi : U \times U \rightarrow V$ be a fully-regular alternating bilinear map
with $\dim V > 1+\dbinom{\dim U-1}{2}$. Then, $\calS_\varphi$ is primitive.
\end{prop}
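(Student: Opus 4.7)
My plan is to invoke the characterization lemma stated just above the proposition: primitivity of $\calS_\varphi$ reduces to semi-primitivity plus the condition that $\urk \calS_{\pi \circ \varphi} = r$ for every surjective $\pi : V \twoheadrightarrow V'$ with $\dim V' = \dim V - 1$, where $r := \urk \calS_\varphi$. The hypothesis $\dim V > 1 + \dbinom{\dim U-1}{2}$ is already strong enough that Corollary \ref{semiprimitivesmallfield} directly yields semi-primitivity (since $\dim V > \dbinom{\dim U-1}{2}$), and Proposition \ref{transitivityprop} yields $r = \dim U - 1$. So the one remaining task is, for each such $\pi$, to show $\urk \calS_{\pi \circ \varphi} = \dim U - 1$.

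The natural strategy is to apply Proposition \ref{transitivityprop} a second time, now to the essentially surjective alternating map $\varphi' := \pi \circ \varphi : U \times U \to V'$, after noting that $\calS_{\varphi'} = \pi \circ \calS_\varphi$. Since $\dim V' = \dim V - 1 > \dbinom{\dim U - 1}{2}$, the dimensional hypothesis of Proposition \ref{transitivityprop} is satisfied, and it will conclude $\urk \calS_{\varphi'} = \dim U - 1$, provided that $\varphi'$ is fully-regular. Essential surjectivity of $\varphi'$ is automatic, so the only thing to verify is its left-regularity.

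The main obstacle is thus to rule out the existence of $x_0 \in U \setminus \{0\}$ with $\pi \circ \varphi(x_0, -) = 0$. If such $x_0$ existed, then $\varphi(x_0, U) \subseteq L := \Ker \pi$, a line in $V$; combined with the left-regularity of $\varphi$, this forces $\varphi(x_0, U) = L$, i.e.\ $\varphi(x_0,-)$ has rank exactly $1$. Viewing $\widetilde{\varphi} : U \wedge U \twoheadrightarrow V$ as in Section \ref{describealternatingsection}, its restriction to the $(\dim U - 1)$-dimensional subspace $x_0 \wedge U$ has $1$-dimensional image, so its kernel has dimension $\dim U - 2$, and that kernel sits inside $\calW_\varphi = \Ker \widetilde\varphi$. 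Comparing dimensions then gives
$$\dim V = \dbinom{\dim U}{2} - \dim \calW_\varphi \leq \dbinom{\dim U}{2} - (\dim U - 2) = 1 + \dbinom{\dim U - 1}{2},$$
contradicting the hypothesis. This contradiction delivers the left-regularity of $\varphi'$, after which the strategy above concludes the proof.
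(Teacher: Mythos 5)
Your proof is correct and takes essentially the same route the paper intends: the paper deduces this proposition directly from Corollary \ref{semiprimitivesmallfield} (semi-primitivity) and Proposition \ref{transitivityprop} applied to $\pi\circ\varphi$, exactly as you do via the characterization lemma. The one step you spell out that the paper leaves implicit is the left-regularity of $\pi\circ\varphi$, which the paper covers by its earlier remark that an essentially surjective alternating map onto a space of dimension greater than $\dbinom{\dim U-1}{2}$ is automatically fully-regular; your kernel-dimension count inside $\calW_\varphi$ yields the same inequality.
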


For large fields and large target spaces, we can give improved necessary and sufficient conditions for $\calS_\varphi$ to be primitive:

\begin{prop}\label{CSprimitivite}
Let $\varphi : U \times U \rightarrow V$ be a fully-regular alternating bilinear map.
Assume that $\dim V>2+\dbinom{\dim U-2}{2}$ and $\# \K \geq \dim U-1$.
Then, $\calS_\varphi$ is non-primitive if and only if there exists a linear hyperplane $U_0 \subset U$
together with a non-zero vector $x \in U_0$ such that $\calW_\varphi$ contains $x \wedge U_0$.
\end{prop}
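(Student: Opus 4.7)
The plan is to first reformulate the condition "$\exists$ hyperplane $U_0 \subset U$ and $x \in U_0 \setminus \{0\}$ with $\calW_\varphi \supset x \wedge U_0$" as "$\calS_\varphi$ contains a rank-one operator". Indeed, $\calW_\varphi \supset x \wedge U_0$ amounts to $\varphi(x, U_0) = 0$, so $\varphi(x,-)$ vanishes on the hyperplane $U_0 \ni x$, and left-regularity together with $x \neq 0$ forces $\rk \varphi(x,-) = 1$; conversely, any rank-one operator $\varphi(x,-) \in \calS_\varphi$ gives the hyperplane $U_0 := \ker \varphi(x,-)$. With this reformulation, the backward direction is immediate: setting $V_1 := \im \varphi(x,-)$ and $\pi : V \twoheadrightarrow V/V_1$, for every $y \in U$ the kernel of $(\pi \circ \varphi)(y,-)$ contains both $y$ (since $\varphi(y,y) = 0$) and $x$ (since $\varphi(y,x) = -\varphi(x,y) \in V_1$), so it has dimension $\geq 2$ for $y \notin \K x$ and equals $U$ otherwise; hence $\calS_{\pi \circ \varphi}$ is $2$-defective and condition (ii) of primitivity fails.

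For the forward direction, I would first use Proposition \ref{transitivityprop2} (applicable under the hypotheses) to obtain $\urk \calS_\varphi = n-1$, so that the defectiveness index equals $1$ and non-primitivity means condition (i) or (ii) fails. If (ii) fails via $\pi : V \twoheadrightarrow V/V_1$ making $\calS_{\pi \circ \varphi}$ $2$-defective, the composite $\pi \circ \varphi$ is alternating and essentially surjective into a space of dimension $\dim V - 1 > 1 + \binom{n-2}{2}$; if it were also left-regular (hence fully-regular), Proposition \ref{transitivityprop2} applied to $\pi \circ \varphi$ would force $\urk \calS_{\pi \circ \varphi} = n-1$, contradicting $2$-defectivity, so some $x \neq 0$ satisfies $\im \varphi(x,-) \subset V_1$ and thus $\rk \varphi(x,-) = 1$ by the regularity of $\varphi$. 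If instead (i) fails via a hyperplane $U_0 \subset U$ with $\rk \varphi(y,-)|_{U_0} \leq n-2$ for every $y$, set $\psi := \varphi|_{U_0 \times U_0}$: if $\ker \psi \neq \{0\}$, any non-zero vector of $\ker \psi$ makes $\varphi(x, U_0) = 0$ and supplies a rank-one operator in $\calS_\varphi$.

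The only remaining case is (i) failing with $\ker \psi = \{0\}$, so $\psi$ is fully-regular on $U_0$, and this is the main obstacle. My plan for ruling it out is to fix $u_0 \notin U_0$ and set $T'(w) := \varphi(w, u_0) \in V$ for $w \in U_0$: the decomposition $V = V_\psi + \im T'$, together with $\rk T' \leq n-2$ (forced by the failure of (i)) and the hypothesis $\dim V > 2 + \binom{n-2}{2}$, yields $\dim V_\psi > 1 + \binom{n-3}{2}$, whence Proposition \ref{transitivityprop2} applied to $\psi$ gives $\urk \calS_\psi = n-2$. The failure of (i) then translates into the statement that the $n$-dimensional operator subspace $\K T' + \calS_\psi \subset \calL(U_0, V)$ has upper rank at most $n-2$; coupling this with the column property inherited from $\calS_\varphi$ (Proposition \ref{regularalternatingcolumn}) and the dimensional bounds of Theorem \ref{rangelemma} together with Lemmas \ref{finedecomp} and \ref{finemaxdim} applied to $\calS_\psi$, I expect to derive $\dim V \leq 2 + \binom{n-2}{2}$, contradicting the hypothesis. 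The delicate part is to run this dimension count cleanly in all configurations, possibly through a short induction on $n$ to reduce to smaller source spaces where Proposition \ref{transitivityprop} is directly available.
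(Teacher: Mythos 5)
Your reformulation of the condition as ``$\calS_\varphi$ contains a rank-one operator'' is correct, and two of your three sub-arguments are both correct and essentially identical to the paper's: the backward direction (the paper phrases it by adjoining $\K(x\wedge y)$ to $\calW_\varphi$, which is the same thing as your projection modulo $\im \varphi(x,-)$), and the forward direction in the case where condition (ii) of primitivity fails (the paper likewise applies Proposition \ref{transitivityprop2} to $\pi\circ\varphi$ to force non-regularity and then extracts the hyperplane).

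The gap is the case you yourself flag as ``the main obstacle'': condition (i) failing with $\psi=\varphi_{|U_0\times U_0}$ fully regular. What you offer there is a plan, not a proof, and the dimension count you sketch does not obviously close: knowing $\urk\calS_\psi=n-2$ only bounds $\dim V_\psi$ by $\dbinom{n-1}{2}$ via Theorem \ref{rangelemma}, which exceeds the target $2+\dbinom{n-2}{2}$ as soon as $n\geq 5$, so substantial further analysis of how $T'$ interacts with $\calS_\psi$ would still be needed. The paper does none of this, because the case is (essentially) vacuous: since $\urk\calS_\varphi=n-1$, Corollary \ref{largefieldtosemiprimitive} shows that $\calS_\varphi$ is semi-primitive as soon as $\#\K>n-1$, and semi-primitivity is by definition reducedness plus condition (i); hence non-primitivity can only arise from the failure of condition (ii), which is the case you have already handled. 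That one-line observation is what should replace your dimension count. (A caveat: the proposition only assumes $\#\K\geq n-1$, and at the boundary $\#\K=n-1$ the strict inequality $\#\K>\urk\calS_\varphi$ required by Corollary \ref{largefieldtosemiprimitive} fails; the paper's own proof silently glosses over this, so your instinct that condition (i) deserves attention is not baseless --- but the route you propose for it is not a proof as it stands.)
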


\begin{proof}
Set $n:=\dim U$ and $m:=\dim V$.
Without loss of generality, we may assume that $V=(U \wedge U)/\calW_\varphi$
and that $\varphi$ is the standard alternating bilinear map from $U \times U$ to $V$.
By Proposition \ref{transitivityprop}, we have $\urk \calS_\varphi=n-1$.

\begin{itemize}
\item Suppose that there exists a
linear hyperplane $U_0 \subset U$ together with a non-zero vector $x \in U_0$ such
that $\calW_\varphi$ contains $x \wedge U_0$.
Choose $y \in U \setminus U_0$.
Note that $\calW_\varphi$ cannot contain $x \wedge y$, for if it does then $\varphi(x,-)=0$, contradicting
the assumption that $\varphi$ be left-regular. \\
Thus $H:=\calW_\varphi \oplus \K(x \wedge y)$ has codimension $m-1$ in $U \wedge U$;
defining $\pi : (U \wedge U)/\calW_\varphi \twoheadrightarrow (U \wedge U)/H$ as the natural projection, we would find that
$(\pi \circ \varphi)(x,-)=0$, so that $\pi \circ \varphi$ is non-regular; then,
$\urk \calS_{\pi \circ \varphi} \leq n-2$, which shows that $\calS_\varphi$ is non-primitive.

\item Conversely, if $\calS_\varphi$ is non-primitive, then one finds a linear surjection $\pi : V \twoheadrightarrow V'$
to an $(m-1)$-dimensional vector space $V'$ such that $\psi:=\pi \circ \varphi$
satisfies $\urk \calS_\psi <n-1$.
As $m-1 >1+\dbinom{n-2}{2}$ and $\# \K\geq n-1$, Proposition
\ref{transitivityprop2} shows that $\psi$ is non-regular.
This yields a non-zero vector $x$ such that $x \wedge U \subset \calW_\psi$.
As $\calW_\varphi$ is a hyperplane of $\calW_\psi$, one deduces that $x \wedge U_0 \subset \calW_\psi$
for some hyperplane $U_0$ of $U$. Finally, $x \in U_0$, as
$x \not\in U_0$ would yield $\varphi(x,-)=0$.
\end{itemize}
\end{proof}

\subsection{On the equivalence of spaces of the alternating kind}\label{equivalencetocongruencesection}

We have seen that two operator spaces $\calS_\varphi$ and $\calS_\psi$ of the alternating kind are
equivalent if and only if the bilinear maps $\varphi$ and $\psi$ are equivalent.
Here, we investigate when the equivalence of fully-regular alternating bilinear maps imply their congruence.
Note that it suffices to consider bilinear maps with the same source space and the same target space.

\begin{lemme}\label{CNScongruencevsequivalence}
Let $\varphi : U \times U \rightarrow V$ be a fully-regular alternating bilinear map.
Assume that every automorphism $u$ of $U$ such that $\forall x \in U, \; \varphi(u(x),x)=0$
is a scalar multiple of the identity. Then, every fully-regular alternating bilinear map
that is equivalent to $\varphi$ is actually congruent to $\varphi$.
\end{lemme}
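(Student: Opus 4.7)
My plan is to unpack the definition of equivalence in the category $\BM$ of bilinear maps and then exploit the alternating property to reduce the statement to a direct application of the hypothesis. Let $\psi : U \times U \to V$ be a fully-regular alternating bilinear map with $\psi \sim \varphi$. By the definition of equivalence of bilinear maps given earlier in the paper, I obtain linear isomorphisms $f,g : U \to U$ and $h : V \to V$ such that
$$\psi(x,y) = h\bigl(\varphi(f(x),g(y))\bigr) \quad \forall (x,y) \in U^2.$$
The aim is now to show that $g$ must equal $\lambda f$ for some $\lambda \in \K^*$; once that is established, congruence follows by a one-line manipulation.

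The key step is to apply the alternating property $\psi(x,x)=0$ to the identity above. Since $h$ is an isomorphism, this forces $\varphi(f(x),g(x))=0$ for all $x \in U$; substituting $y:=f(x)$ and setting $u := g \circ f^{-1}$, which is an automorphism of $U$, the identity rewrites as $\varphi(y,u(y))=0$ for all $y \in U$, or equivalently (by the alternating property of $\varphi$) $\varphi(u(y),y)=0$ for all $y \in U$. At this point the hypothesis applies directly and yields $u = \lambda\,\id_U$ for some $\lambda \in \K^*$, i.e.\ $g = \lambda f$.

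It then suffices to substitute $g=\lambda f$ back into the defining identity: using bilinearity of $\varphi$ in its second argument, one gets
$$\psi(x,y) = h\bigl(\varphi(f(x),\lambda f(y))\bigr) = \lambda\,h\bigl(\varphi(f(x),f(y))\bigr),$$
which is exactly the congruence condition with source isomorphism $f : U \to U$ and target isomorphism $\lambda h : V \to V$. Thus $\varphi$ and $\psi$ are congruent.

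The argument is essentially linear and there is no serious technical obstacle; the only modest piece of ingenuity lies in recognizing that the alternating condition translates the very gap between equivalence and congruence of bilinear maps (namely, the freedom to use two different isomorphisms $f$ and $g$ on the two copies of the source space) into precisely the hypothesis made on $\varphi$. It is also worth noting for the write-up that the argument uses nothing about full-regularity of $\psi$ beyond the fact that $\psi$ is alternating, so the same proof actually establishes the slightly stronger statement where $\psi$ is merely assumed alternating and equivalent to $\varphi$.
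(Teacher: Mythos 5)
Your proof is correct and follows essentially the same route as the paper: both arguments extract from the alternating identity $\psi(x,x)=0$ an automorphism $u$ of $U$ with $\varphi(u(y),y)=0$ and then invoke the hypothesis to get $u=\lambda\,\id_U$. The only cosmetic difference is that the paper first normalizes $g=\id_U$ by replacing $\psi$ with the congruent map given by $(g^{-1},g^{-1},\id)$ and applies the hypothesis to $f$, whereas you apply it directly to $u:=g\circ f^{-1}$; your closing observation that full-regularity of $\psi$ is not actually used is also accurate.
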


\begin{proof}
Let $\psi : U' \times U' \rightarrow V'$ be a fully-regular alternating bilinear map
which is equivalent to $\varphi$. Without loss of generality, we may assume that $U=U'$ and $V=V'$, and then we have automorphisms
$f$, $g$ and $h$, respectively, of $U$, $U$ and $V$ such that
$$\forall (x,y) \in U^2, \; \psi(x,y)=h\bigl(\varphi(f(x),g(y))\bigr).$$
Without loss of generality (using the congruence defined by the triple $(g^{-1},g^{-1},\id)$),
we may assume that $g=\id_U$. As $\forall x \in U, \; \varphi(f(x),x)=0$, we find a scalar $\alpha$ such that $f=\alpha\,\id_U$, and
we deduce that
$$\forall (x,y)\in U^2, \; \psi(x,y)=(\alpha h)\bigl(\varphi(x,y)\bigr).$$
As $f$ is non-zero, $\alpha$ is non-zero whence $\alpha\,h$ is an automorphism of $V$;
one concludes that $\varphi$ and $\psi$ are congruent.
\end{proof}

We finish with more useful sufficient conditions for equivalence to imply congruence:

\begin{prop}\label{CScongruence}
Let $\varphi : U \times U \rightarrow V$ and $\psi : U \times U \rightarrow V$ be fully-regular
alternating bilinear maps, and set $n:=\dim U$.
Assume that $\dim V>\dbinom{n-1}{2}$, or that $\# \K \geq n$ and $\urk \calS_\varphi=n-1$.
Then, $\varphi$ and $\psi$ are equivalent if and only if they are congruent.
\end{prop}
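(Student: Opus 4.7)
The forward implication (congruence implies equivalence) is trivial, so the plan focuses on the converse. My approach is to invoke Lemma \ref{CNScongruencevsequivalence}, which reduces the task to showing the following: under either of the two hypotheses, every automorphism $u \in GL(U)$ satisfying $\varphi(u(x),x)=0$ for all $x \in U$ is a scalar multiple of the identity. Fix such a $u$.

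The key tool is polarization of the quadratic identity $\varphi(u(x),x)=0$: since $\varphi$ is alternating, this yields
$$\varphi(u(x),y)=\varphi(x,u(y)) \quad \text{for all } (x,y) \in U^2.$$
Set $T:=\{x \in U : \rk \varphi(x,-)=n-1\}$. For any $x \in T$, the kernel of $\varphi(x,-)$ is exactly $\K x$, and from $\varphi(x,u(x))=-\varphi(u(x),x)=0$ we deduce that $u(x)=\lambda(x)\,x$ for a unique scalar $\lambda(x)$. Substituting back into the polarized identity gives $\varphi(x,u(y)-\lambda(x)y)=0$ for all $y \in U$, i.e.
$$u(y) \equiv \lambda(x)\,y \pmod{\K x} \quad \text{for all } y \in U.$$

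Now take $x_1,x_2 \in T$ linearly independent. Writing both congruences for the same $y$ and subtracting, we find that $(\lambda(x_1)-\lambda(x_2))y \in \Vect(x_1,x_2)$ for every $y \in U$. When $n \geq 3$, this forces $\lambda(x_1)=\lambda(x_2)$; combined with the obvious equality $\lambda(cx)=\lambda(x)$ for $c \in \K^*$, we conclude that $\lambda$ is constant on $T$. The last step is to show that $T$ spans $U$, so that $u$ agrees with this constant scalar on all of $U$. This is handled by a case split: if $\dim V>\dbinom{n-1}{2}$, Proposition \ref{transitivityprop} implies that $T$ is contained in no $2$-compound (nor hyperplane when $\#\K=2$), so in particular $T$ spans $U$; if instead $\#\K \geq n$ and $\urk \calS_\varphi=n-1$, Lemma \ref{spanofrankr} applied to $\calS_\varphi$ (with $r=n-1$) shows that $\calS_\varphi$ is spanned by its rank $(n-1)$ operators, so pulling back through the isomorphism $x \mapsto \varphi(x,-)$ gives $U=\Vect(T)$.

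The case $n=2$ must be treated separately since the argument collapsing $\lambda(x_1)=\lambda(x_2)$ used $n \geq 3$; but there $\dim V=1$ and $T=U\setminus\{0\}$, so every nonzero vector is an eigenvector of $u$, a classical situation forcing $u$ to be a scalar. The case $n=1$ cannot occur under either hypothesis. The main obstacle is really the polarization step and the mod-$\K x$ manipulation: once one sees that $u$ must act by a scalar modulo each line $\K x$ for $x \in T$, the spanning property of $T$ does the rest, and both hypotheses were carefully tailored to guarantee this spanning property.
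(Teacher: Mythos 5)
Your proof is correct, and its core mechanism is genuinely different from the paper's. Both proofs begin with the same reduction via Lemma \ref{CNScongruencevsequivalence} to showing that any automorphism $u$ with $\varphi(u(x),x)=0$ for all $x$ is scalar, and both use Proposition \ref{transitivityprop} (resp.\ Lemma \ref{spanofrankr}) at some point. But the paper argues through the \emph{eigenvector set} of $u$: in the first case it shows that every $x$ with $\rk\varphi(x,-)=n-1$ must be an eigenvector of $u$, and then uses the fact that the eigenvectors of a non-scalar automorphism lie in a $2$-compound (a union of two hyperplanes) to contradict Lemma \ref{transitivitylemma}; in the second case it runs a rather delicate analysis (diagonalisability of $u$, restriction to $2$-dimensional subspaces, elimination of the cases of three-or-more and of exactly two eigenvalues). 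Your polarization identity $\varphi(u(x),y)=\varphi(x,u(y))$, combined with the observation that $u(y)\equiv\lambda(x)\,y \pmod{\K x}$ for each $x$ in the maximal-rank set $T$, replaces all of this eigenvalue bookkeeping by a two-line subtraction argument showing $\lambda$ is constant on $T$ (for $n\geq 3$), after which only the spanning of $T$ is needed --- and that is exactly what each of the two hypotheses delivers. This buys two things: a uniform treatment of both hypotheses (the paper's second case is by far the longest part of its proof), and a weaker input in the first case --- you only need $T$ to span $U$, i.e.\ not to lie in a single hyperplane, whereas the paper's eigenvector argument genuinely needs the stronger $2$-compound version of Proposition \ref{transitivityprop} because the eigenvectors of a non-scalar $u$ can fill a union of two hyperplanes. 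Your separate handling of $n=2$ (where $\dim V=1$, $T=U\setminus\{0\}$, and every nonzero vector is an eigenvector) and the exclusion of $n=1$ are both fine.
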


\begin{proof}
Let $u$ be an automorphism of $U$ such that $(x,y) \mapsto \varphi(u(x),y)$ is alternating.
By Lemma \ref{CNScongruencevsequivalence}, it suffices to prove that $u$ is a scalar multiple of the identity.
Assume that $u\not\in \K \id_U$. Then, the set $E$ of eigenvectors of $u$ is included in a $2$-compound of $U$,
and it is even included in a hyperplane of $U$ if $\# \K=2$.
However, given a non-zero vector of $U$ that is not an eigenvector for $u$, the linearly independent
vectors $x$ and $u(x)$ are
both annihilated by $\varphi(x,-)$,
which yields $\rk \varphi(x,-) \leq n-2$. Thus, every non-zero vector $x \in U$ satisfying $\rk \varphi(x,-)=n-1$
must belong to $E$. If $\dim V>\dbinom{n-1}{2}$, this contradicts Lemma \ref{transitivitylemma}.

Let us now assume that $\# \K \geq n$ and $\urk \calS_\varphi=n-1$.
Lemma \ref{spanofrankr} shows that $u$ must be diagonalisable.

Consider a $2$-dimensional subspace $P$ of $U$, and suppose
that the set of all vectors $x \in P$ for which $\rk \varphi(x,-)=n-1$ is included in a $1$-dimensional linear subspace
 of $P$. Then, Lemma \ref{spanofrankr} applied to the operator space $\{\varphi(x,-)\mid x \in P\}$ yields that
$\rk \varphi(x,-)<n-1$ for all $x \in P$.

Assume that $u$ has more than two eigenvalues, consider an eigenvector $x$ of $U$ and two eigenvectors $y$ and $z$
such that the respective eigenvalues of $x$, $y$ and $z$ are pairwise distinct. Then, the eigenvectors of
$u$ in $\Vect(x,x+y+z)$ are included in $\K x$, and one deduces from the above point that $\rk \varphi(x,-)<n-1$.
Therefore, $\rk \varphi(x,-)<n-1$ for all eigenvectors $x$ of $u$, as well as non-eigenvectors.
This contradicts our assumption that $\urk \calS_\varphi=n-1$.

Thus, $u$ has two eigenvalues exactly. If $\dim U=2$, then, as $\varphi$ is fully-regular, we have $\dim V=1$, and hence
$\rk \varphi(x,-)=1$ for all non-zero vector $x \in U$; then, every non-zero vector of $U$ would be an eigenvector of $u$,
which is false.
Therefore, $\dim U>2$. Denote by $\lambda$ and $\mu$ the two eigenvalues of $u$, with $\dim \Ker(u-\lambda \id_U)>1$.
Let $x$ be a non-zero vector of $\Ker(u-\lambda \id_U)$. Choose $y \in \Ker(u-\lambda \id_U) \setminus \K x$,
and $z \in \Ker(u-\mu \id_U)\setminus \{0\}$. Then, one sees that the eigenvectors of $u$ in $\Vect(x,y+z)$ belong to
$\K x$. It follows from the above considerations that $\rk \varphi(x,-)<n-1$. Therefore,
every vector $x \in U$ for which $\rk \varphi(x,-)=n-1$ must belong to $\Ker(u-\mu \id_U)$, and hence those vectors do not span $U$.
This contradicts Lemma \ref{spanofrankr}. Therefore, $u$ is a scalar multiple of the identity, as claimed.
\end{proof}

Noting that $\calS_\varphi$ and $\calS_\psi$ are congruent if and only if $\calV_\varphi$ and $\calV_\psi$ are congruent,
the following known result ensues:

\begin{prop}
Let $\calV_1$ and $\calV_2$ be incompressible linear subspaces of $\Mata_n(\K)$.
Assume either that $\dim \calV_1>\dbinom{n-1}{2}$, or that
$\trk \calV_1=n-1$ and $\# \K \geq n$.
Then, $\calV_1$ and $\calV_2$ are equivalent if and only if they are congruent.
\end{prop}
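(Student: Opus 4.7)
The plan is to reduce this proposition to Proposition \ref{CScongruence} via the parametrization of Remark \ref{parametrage}. To each incompressible $\calV_i \subset \Mata_n(\K)$ I associate the fully-regular alternating bilinear map $\varphi_i : \K^n \times \K^n \rightarrow \calV_i^\star$ defined by $\varphi_i(X,Y)(M) := X^T M Y$; incompressibility of $\calV_i$ is precisely equivalent to the left-regularity, hence full-regularity, of $\varphi_i$. Since congruence of matrix spaces trivially implies matrix equivalence, only one direction requires proof.

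The first step is to show that $\calV_1$ and $\calV_2$ are equivalent as matrix subspaces if and only if $\varphi_1$ and $\varphi_2$ are equivalent as bilinear maps. I would carry this out by a direct computation with transposes: an equivalence $\varphi_2(X,Y) = h\bigl(\varphi_1(f(X), g(Y))\bigr)$ with $f, g \in \GL(\K^n)$ and $h : \calV_1^\star \overset{\simeq}{\rightarrow} \calV_2^\star$, once evaluated at $M \in \calV_2$ and unfolded through the transpose $h^T : \calV_2 \rightarrow \calV_1$, becomes the matrix identity $X^T M Y = f(X)^T h^T(M) g(Y)$, valid for all $X, Y$. This forces $h^T(M) = (f^{-1})^T M g^{-1}$, and then the equality $h^T(\calV_2) = \calV_1$ reads $P \calV_2 Q = \calV_1$ with $P := (f^{-1})^T$ and $Q := g^{-1}$, which is the matrix equivalence. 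Reversing this construction yields the converse.

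The second step is immediate from Proposition \ref{significationcongruence}: the bilinear maps $\varphi_1$ and $\varphi_2$ are congruent if and only if $\calV_1$ and $\calV_2$ are congruent.

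Finally, I would check that the hypotheses transfer. On the side of $\varphi_1$ the target has dimension $\dim \calV_1^\star = \dim \calV_1$. Using $M^T = -M$ for $M \in \calV_1$, the kernel of $Y \mapsto \varphi_1(X, Y)$ equals the orthogonal of $\calV_1 X$ in $\K^n$, whence $\rk \varphi_1(X, -) = \dim \calV_1 X$, and therefore $\urk \calS_{\varphi_1} = \trk \calV_1$. Thus each of the two hypotheses on $\calV_1$ implies the corresponding hypothesis of Proposition \ref{CScongruence} for $\varphi_1$ and $\varphi_2$, whose conclusion (congruence of $\varphi_1$ and $\varphi_2$) gives, through the second step, the desired congruence of $\calV_1$ and $\calV_2$. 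The main work is in the first step; however, it is a routine transposition computation, so no genuine obstacle is expected.
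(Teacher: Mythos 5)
Your proposal is correct and follows the same route the paper intends: the paper derives this proposition as an immediate consequence of Proposition \ref{CScongruence} via the dictionary between congruence of the matrix spaces $\calV_\varphi$ and congruence of the bilinear maps (Proposition \ref{significationcongruence}), exactly the chain you set up. The details you supply --- that matrix equivalence $\calV_1=P\calV_2 Q$ corresponds to equivalence of the associated fully-regular alternating maps, and that $\urk \calS_{\varphi_1}=\trk\calV_1$ and $\dim \calV_1^\star=\dim\calV_1$ transfer the hypotheses --- are precisely the routine verifications the paper leaves implicit.
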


\subsection{Triality}\label{trialitysection}

For further classification theorems, it is worthwhile to have necessary and sufficient conditions
for an operator space of the alternating kind to be equivalent to the transpose of another such space.
First of all, let us consider regular alternating bilinear maps
$\varphi : U \times U \rightarrow V$ and $\psi : U' \times U' \rightarrow V'$ such that
$\calS_\varphi$ is equivalent to $\calS_\psi^T$; then, $\varphi$ is equivalent to a bilinear map $U' \times (V')^\star \rightarrow (U')^\star$,
and hence $\dim U=\dim U'$, $\dim U=\dim (V')^\star$ and $\dim V=\dim (U')^\star$; one concludes that
$\dim U=\dim V=\dim U'=\dim V'$.

\begin{prop}\label{exclusivitegeneral}
Let $\varphi : U \times U \rightarrow V$ be a fully-regular essentially surjective bilinear map, with $\dim U=\dim V$.
Then, the following conditions are equivalent:
\begin{enumerate}[(i)]
\item The space $\calS_\varphi^T$ is of the alternating kind;
\item The bilinear map $\varphi$ is equivalent to the bilinear map $U \times U \rightarrow U^\star$ associated with a
fully-regular alternating trilinear form
$B : U\times U \times U \rightarrow \K$;
\item The space $\calS_\varphi$ is represented, in well-chosen bases, by a space of alternating matrices;
\item The space $\calS_\varphi^T$ is equivalent to $\calS_\varphi$.
\end{enumerate}
If those conditions hold, then $\calS_\varphi \sim \widehat{\calS_\varphi^T}$ and
hence $\calS_\varphi$ is represented by $\calV_\varphi$
in well-chosen bases.
\end{prop}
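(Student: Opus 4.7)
The plan is to prove the cycle (iii) $\Rightarrow$ (iv) $\Rightarrow$ (i) $\Rightarrow$ (iii), then the separate equivalence (iii) $\Leftrightarrow$ (ii), and finally the supplementary claim. Two preliminary observations underlie everything. First, for any fully-regular alternating bilinear map $\varphi$, one has $\widehat{\calS_\varphi} \sim \calS_\varphi$: using the isomorphism $\alpha_\varphi : x \in U \mapsto \varphi(x,-) \in \calS_\varphi$, one identifies the operator $\widehat{y} : \varphi(x,-) \mapsto \varphi(x,y)$ of $\widehat{\calS_\varphi}$ with the operator $-\varphi(y,-) \in \calS_\varphi$, so the two subspaces coincide as elements of $\calL(U,V)$ up to the canonical identification. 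Second, Proposition \ref{effetdunetransposition} provides that $\widehat{\calS_\varphi^T}$ is, in well-chosen bases, represented by the alternating matrix space $\calV_\varphi$.

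Combining these observations with the fact that the adjunction functor $\widehat{(-)}$ is an involution (up to natural equivalence) on reduced operator spaces yields the cornerstone chain $\calS_\varphi \sim \calS_\varphi^T \Leftrightarrow \widehat{\calS_\varphi} \sim \widehat{\calS_\varphi^T} \Leftrightarrow \calS_\varphi \sim \widehat{\calS_\varphi^T}$. Thus (iv) is equivalent to $\calS_\varphi \sim \widehat{\calS_\varphi^T}$, which together with the second observation at once proves the additional claim that $\calS_\varphi$ is represented, in well-chosen bases, by $\calV_\varphi$.

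Now for the main cycle. The implication (iii) $\Rightarrow$ (iv) follows because, if $\calM \subset \Mata_n(\K)$ represents $\calS_\varphi$, then $\calM^T = -\calM = \calM$, while $\calM^T$ represents $\calS_\varphi^T$ in the dual bases, so $\calS_\varphi \sim \calS_\varphi^T$. Next, (iv) $\Rightarrow$ (i) is trivial since $\calS_\varphi$ itself is of the alternating kind. For (i) $\Rightarrow$ (iii): if $\calS_\varphi^T \sim \calS_\psi$ for some fully-regular alternating $\psi$, then applying the first observation to $\psi$ gives $\widehat{\calS_\varphi^T} \sim \widehat{\calS_\psi} \sim \calS_\psi \sim \calS_\varphi^T$; hence $\calS_\varphi^T$ is represented by alternating matrices (namely, by $\calV_\varphi$), and transposing these matrices shows that $\calS_\varphi$ is as well.

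The remaining equivalence (iii) $\Leftrightarrow$ (ii) is obtained through the trilinear form $B(x, y, z) := \varphi(x,y)(z)$, where $V$ is identified with $U^\star$ via the bases provided by (iii). Writing $A_x$ for the alternating matrix representing $\varphi(x,-)$, one has $B(x,y,z) = z^T A_x y$; antisymmetry in $(x,y)$ comes from $\varphi$ being alternating (so $A_x y = -A_y x$) and antisymmetry in $(y,z)$ from each $A_x$ being alternating. The subtle point, which is the main obstacle and is needed in characteristic $2$, is to upgrade antisymmetry to the alternating condition $B(x,x,z)=B(x,y,y)=B(x,y,x)=0$: the first and third equalities follow from $A_x x = 0$ (itself a consequence of $\varphi(x,x)=0$), while the second comes from $A_x$ being alternating. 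Full regularity of $B$ is inherited from that of $\varphi$. The converse (ii) $\Rightarrow$ (iii) is immediate: the bilinear map associated with $B$ is represented by the matrices $(B(x,e_i,e_j))_{i,j}$ in a basis $(e_1,\dots,e_n)$ of $U$ and its dual basis of $U^\star$, and these matrices are alternating because $B$ is.
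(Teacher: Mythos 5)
Your proof is correct, but it routes the hard implication differently from the paper. The paper closes the cycle as (ii) $\Rightarrow$ (iii) $\Rightarrow$ (iv) $\Rightarrow$ (i) $\Rightarrow$ (ii), and the substantial step (i) $\Rightarrow$ (ii) is done by hand: the equivalence $\calS_\varphi^T \sim \calS_\psi$ is unwound into an identity $\widehat{\psi}(x,y,z)=\widehat{\varphi}(f(x),g(z),h(y))$ between trilinear forms with automorphisms $f,g,h$ of $U$, and setting $\alpha:=h\circ f^{-1}$ one checks that $B(x,y,z):=\widehat{\varphi}(x,y,\alpha(z))$ vanishes when the first two or the first and third arguments coincide, whence it is alternating; this produces the trilinear form of (ii) directly from the equivalence data. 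You instead prove (i) $\Rightarrow$ (iii) purely through the duality machinery -- $\calS_\varphi^T\sim\calS_\psi\sim\widehat{\calS_\psi}\sim\widehat{\calS_\varphi^T}$, the latter being represented by $\calV_\varphi\subset\Mata_n(\K)$ by Proposition \ref{effetdunetransposition} -- and then recover (ii) from (iii) by an explicit computation with the alternating matrices $A_x$. Your route is shorter and avoids the automorphism bookkeeping entirely, at the price of leaning on Proposition \ref{effetdunetransposition}, the equivalence $\widehat{\calS_\psi}\sim\calS_\psi$, and the involutivity of $\widehat{(-)}$ on reduced spaces (all of which are indeed available at this point in the paper); the paper's route has the merit of exhibiting the alternating trilinear form intrinsically, without choosing bases. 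Both arguments handle characteristic $2$ correctly: you verify the alternating condition on all three pairs of arguments rather than settling for antisymmetry, which is exactly the point the paper is also careful about. Your derivation of the supplementary claim ($\calS_\varphi\sim\widehat{\calS_\varphi^T}$, hence represented by $\calV_\varphi$) from (iv) and the chain $\calS_\varphi\sim\calS_\varphi^T\Leftrightarrow\calS_\varphi\sim\widehat{\calS_\varphi^T}$ is also sound, and is in fact more explicit than the paper, which leaves that part to the reader.
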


\begin{proof}
Implications (iii) $\Rightarrow$ (iv) and (iv) $\Rightarrow$ (i) are obvious.

Assume that (ii) holds, and denote by $t : U^3 \rightarrow \K$ an alternating trilinear form
such that $\theta_t : (x,y)\in U^2 \mapsto t(x,y,-) \in U^\star$ is equivalent to $\varphi$.
Then, given a basis $\calB$ of $U$, one sees that $\calS_{\theta_t}$ is represented by
alternating matrices in $\calB$ and its dual basis. This proves (iii), as $\calS_\varphi \sim \calS_{\theta_t}$.

Finally, assume that (i) holds. We lose no generality in assuming that $V=U^\star$.
Then, there exists a regular alternating bilinear map $\psi : U \times U \rightarrow U^\star$
such that $\calS_\varphi^T$ is equivalent to $\calS_\psi$.
Using $\varphi$ and $\psi$, we define trilinear forms $\widehat{\varphi} : (x,y,z) \mapsto \varphi(x,y)[z]$ and
$\widehat{\psi} : (x,y,z) \mapsto \psi(x,y)[z]$ on $U^3$.
Thus, we obtain automorphisms $f$, $g$ and $h$ of $U$ such that
$$\forall (x,y,z)\in U^3, \; \widehat{\psi}(x,y,z)=\widehat{\varphi}\bigl(f(x),g(z),h(y)\bigr).$$
Setting $\alpha:=h \circ f^{-1}$, we find
$$\forall (x,y)\in U^2, \; \widehat{\varphi}\bigl(x,y,\alpha(x)\bigr)=\widehat{\psi}\bigl(f^{-1}(x),f^{-1}(x),g^{-1}(y)\bigr)=0.$$
Setting $B : (x,y,z) \mapsto \widehat{\varphi}(x,y,\alpha(z))$, one deduces that
$$\forall (x,y,z)\in U^3, \; B(x,x,y)=0=B(x,y,x).$$
As $B$ is trilinear, this shows successively that $B$ is skew-symmetric and that
$\forall (x,y)\in U^2, \; B(y,x,x)=0$, whence $B$ is alternating.
This proves (ii) since $\alpha$ is an automorphism of $U$. We conclude that conditions (i) to (iv) are pairwise equivalent.
\end{proof}

In particular, an operator space of the alternating kind that is equivalent to the transpose of such a space
must stem from an alternating trilinear form.

Here is the most basic example. Consider a $3$-dimensional vector space $U$ together with a non-zero
alternating trilinear form $\varphi : U^3 \rightarrow \K$.
Obviously $\varphi$ is fully-regular, and hence the operator space associated with the bilinear map $\Phi : (x,y) \mapsto \varphi(x,y,-)$
is represented, in well-chosen bases, by a $3$-dimensional space of alternating $3 \times 3$ matrices, i.e.\ by $\Mata_3(\K)$.
By the above theorem, any subspace of $\Mat_3(\K)$ which is of the alternating kind and is
equivalent to the transpose of a space of the alternating kind must be equivalent to $\Mata_3(\K)$.

\paragraph{}
We finish this section by giving sufficient conditions for the equivalence of fully-regular alternating trilinear forms
to imply their congruence.

\begin{prop}
Let $\varphi : U^3 \rightarrow \K$ and $\psi : U^3 \rightarrow \K$ be fully-regular alternating trilinear forms.
Assume that there exists $x \in U$ such that $\rk \varphi(x,-,-)=\dim U-1$ and that $\# \K \geq \dim U$. Then,
$\varphi$ and $\psi$ are equivalent if and only if they are congruent.
\end{prop}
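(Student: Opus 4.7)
The direction \emph{congruent $\Rightarrow$ equivalent} is trivial, so the plan is to tackle the converse, mirroring the strategy of Proposition~\ref{CScongruence}.

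I would write an equivalence between $\varphi$ and $\psi$ as $\psi(x,y,z) = \lambda\,\varphi(f_1 x, f_2 y, f_3 z)$ for some $f_1,f_2,f_3 \in \GL(U)$ and $\lambda \in \K^*$. Since $\psi$ is alternating, $\psi(x,x,z)=0$ gives $\varphi(f_1 x, f_2 x, -) = 0$ on $U$, while $\psi(x,y,x)=0$ gives $\varphi(f_1 x, -, f_3 x) = 0$ and hence, by total antisymmetry, $\varphi(f_1 x, f_3 x, -) = 0$. Setting $y := f_1 x$, $u := f_2 f_1^{-1}$ and $w := f_3 f_1^{-1}$, both $u$ and $w$ are automorphisms of $U$ satisfying
\begin{equation*}
\varphi(y, u(y), -) = 0 = \varphi(y, w(y), -) \quad \text{for all } y \in U.
\end{equation*}
If I can prove that every such automorphism lies in $\K^*\,\id_U$, then writing $f_2 = c\,f_1$ and $f_3 = c'\,f_1$ yields $\psi(x,y,z) = (\lambda c c')\,\varphi(f_1 x, f_1 y, f_1 z)$, the desired congruence.

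The heart of the proof is thus to show that any $u \in \GL(U)$ with $\varphi(y, u(y), -) = 0$ for all $y \in U$ is a scalar multiple of the identity. I would introduce the fully-regular alternating bilinear map $\Phi : U \times U \rightarrow U^\star$, $(x,y) \mapsto \varphi(x,y,-)$, for which $\urk \calS_\Phi = \trk \calS_\Phi = n-1$ by hypothesis, where $n := \dim U$. The condition rewrites as $u(y) \in \Ker \Phi(y,-)$; for each $y$ with $\rk \Phi(y,-) = n-1$, this kernel contains $y$ by alternation and has dimension $1$, so it equals $\K y$, and $y$ is an eigenvector of $u$. Since $\# \K \geq n > n-1$, Lemma~\ref{spanofrankr} applied to $\calS_\Phi$ ensures that such vectors span $U$, so $u$ is diagonalizable.

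To finish, suppose for contradiction that $u$ has $k \geq 2$ distinct eigenvalues $\lambda_1,\dots,\lambda_k$ with eigenspaces $V_1,\dots,V_k$ of dimensions $d_1,\dots,d_k$. Applying the defining relation to $v_i + v_j$ with $v_i \in V_i$, $v_j \in V_j$, $i \neq j$, yields $(\lambda_j - \lambda_i)\,\varphi(v_i,v_j,-) = 0$, whence $\varphi(V_i, V_j, -) = 0$. For any $x = \sum_j v_j$ (with $v_j \in V_j$), the cross-block vanishing collapses the image to $\varphi(x, U, -) = \sum_j \varphi(v_j, V_j, -)$, and alternation forces $\dim \varphi(v_j, V_j, -) \leq d_j - 1$. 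Summing gives $\rk \varphi(x,-,-) \leq n - k$ for every $x$, contradicting the hypothesis that some $x$ achieves rank $n-1$ unless $k=1$. Combined with diagonalizability, $u \in \K^*\,\id_U$. The main potential obstacle is simply the bookkeeping in the reduction from trilinear equivalence to this scalar claim on $u$ and $w$; the rank estimate $\rk \varphi(x,-,-) \leq n - k$ obtained via cross-block vanishing is actually sharper and more uniform than the argument required in the bilinear Proposition~\ref{CScongruence}, so no delicate case split (such as $k = 2$ versus $k \geq 3$) should be needed.
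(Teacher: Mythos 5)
Your proof is correct, and its overall skeleton matches the paper's: reduce the statement to the claim that any automorphism $u$ of $U$ satisfying $\varphi(y,u(y),-)=0$ for all $y\in U$ is a scalar multiple of $\id_U$, then use the partial map $\Phi:(x,y)\mapsto\varphi(x,y,-)$ together with Lemma \ref{spanofrankr} to see that the rank-$(n-1)$ vectors are eigenvectors of $u$ and span $U$, so that $u$ is diagonalisable. (Your normalisation by $f_1^{-1}$ on the right is just a cosmetic variant of the paper's replacement of $\varphi$ by the congruent form $\varphi(f(\cdot),f(\cdot),f(\cdot))$, and handling both $u=f_2f_1^{-1}$ and $w=f_3f_1^{-1}$ through the single map $\Phi$ is legitimate thanks to total antisymmetry.) Where you genuinely depart from the paper is in the last step: the paper delegates it to ``the line of reasoning from the proof of Proposition \ref{CScongruence}'', which rules out a non-scalar diagonalisable $u$ by a case split on the number of eigenvalues (exactly two versus more than two) and an auxiliary argument about rank-$(n-1)$ vectors inside well-chosen $2$-dimensional subspaces. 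Your cross-block computation $(\lambda_j-\lambda_i)\,\varphi(v_i,v_j,-)=0$, which kills all off-diagonal blocks and yields the uniform bound $\rk\varphi(x,-,-)\le n-k$ for every $x$ when $u$ has $k$ distinct eigenvalues, reaches the same contradiction for $k\ge 2$ in one stroke, with no case analysis. This is a cleaner and slightly sharper route; in fact the same polarization trick would also streamline the second half of the proof of Proposition \ref{CScongruence} itself, since there too the alternating hypothesis gives $\varphi(v_i,v_j)=0$ across distinct eigenspaces. The only minor point to watch is the degenerate term $v_j=0$ in your estimate $\dim\varphi(v_j,V_j,-)\le d_j-1$, which still holds because eigenspaces are non-zero; as written, your argument is complete.
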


\begin{proof}
The converse implication is obvious. Assume that $\varphi$ and $\psi$ are equivalent, so that we have automorphisms
$f$, $g$ and $h$ of $U$ such that
$$\forall (x,y,z) \in U^3, \; \psi(x,y,z)=\varphi(f(x),g(y),h(z)).$$
Replacing $\varphi$ with the congruent form $(x,y,z) \mapsto \varphi(f(x),f(y),f(z))$, we see that no generality is lost in assuming that
$f=\id_U$. In that situation, we prove that $g$ and $h$ are scalar multiples of the identity.

Consider the regular alternating bilinear maps
$$\Phi : (x,y) \in U^2 \mapsto \varphi(x,y,-)\in U^\star \quad \text{and} \quad
\Psi : (x,y) \in U^2 \mapsto \psi(x,y,-)\in U^\star,$$
and note that $\urk \calS_\Phi=\dim U-1$.
Then,
$$\forall (x,y)\in U^2, \;  \Psi(x,y)=h^T\bigl(\Phi(x,g(y))\bigr).$$
Using the line of reasoning from the proof of Proposition \ref{CScongruence}, one finds a non-zero scalar $\alpha$
such that $g=\alpha \id_U$. Applying the same arguments to the
alternating bilinear maps $(x,y) \in U^2 \mapsto \varphi(x,-,y)\in U^\star$ and
$(x,y) \in U^2 \mapsto \psi(x,-,y)\in U^\star$ yields a non-zero scalar $\beta$ such that $h=\beta \id_U$.
Thus,
$$\forall (x,y,z)\in U^3, \; \psi(x,y,z)=(\alpha\beta)\,\varphi(x,y,z),$$
with $(\alpha \beta)\in \K \setminus \{0\}$, as claimed.
\end{proof}

\section{Classification theorems for minimal LLD spaces}\label{classtheoSection}

In this section, we use the connection between minimal LLD operator spaces
and semi-primitive matrix spaces to obtain
classification theorems for the former. We will be mostly concerned with
classifying reduced spaces of matrices with the column property as that property behaves well with induction processes:
the results on semi-primitive spaces and on primitive spaces will appear as special cases.

We are interested in two kinds of results, the latter of which will, for the most part, be obtained as
corollaries of the former.
\begin{enumerate}[(a)]
\item For arbitrary values of $r$, classify the reduced spaces $\calS \subset \Mat_{m,n}(\K)$ with the column property and upper-rank $r$,
when $m$ is close to the upper bound $\dbinom{r+1}{2}$. As that upper bound is attained by the operator space associated with
$(x,y) \in (\K^{r+1})^2 \mapsto x \wedge y$, it is a reasonable guess that
such spaces should be very close to spaces of the alternating kind when $m$ is ``close to" $\dbinom{r+1}{2}$.

\item Obtain classification results for all primitive spaces with small upper-rank $r$ and a field with
more than $r$ elements. The case $r=2$ is known. We shall give a complete classification for $r=3$.
\end{enumerate}

\subsection{The reduction lemma}\label{reductionlemmasection}

The following result is the key to the classification theorems that will follow.
It is implicit in Atkinson's work \cite{AtkinsonPrim}. For further developments on the topic, it is necessary to
state it precisely and to give a detailed proof of it.

\begin{prop}[Reduction lemma]\label{reductionlemma}
Let $\calS$ be a reduced subspace of $\Mat_{m,n}(\K)$ with $\urk \calS=n-1$.
Set $r:=n-1$ and assume that $\# \K>r$.
Assume that $\calS$ is $r$-reduced. Every matrix $M$ in $\calS$ splits up as
$$M=\begin{bmatrix}
A(M) & C(M) \\
B(M) & [0]_{(m-r) \times 1}
\end{bmatrix}$$
according to the $(r,r)$-decomposition. Set $p:=\dim C(\calS)$ and assume that $p \geq 2$.
Let
$$\mathbf{M}=\begin{bmatrix}
\mathbf{A} & \mathbf{C} \\
\mathbf{B} & [0]_{(m-r) \times 1}
\end{bmatrix},$$
be a generic matrix of $\calS$.

\begin{enumerate}[(a)]
\item If $\rk \mathbf{B}=r-1$, i.e.\ $\urk B(\calS)=r-1$, then $\rk \begin{bmatrix}
\mathbf{A}\mathbf{C} & \mathbf{C}
\end{bmatrix}=1$.

\item Assume that $\rk \begin{bmatrix}
\mathbf{A}\mathbf{C} & \mathbf{C}
\end{bmatrix}=1$.
Then, there exists a space $\calT \sim \calS$ and an integer
$s \leq m$ such that every matrix of $\calT$ splits up as
$$M=\begin{bmatrix}
D(M) & [?]_{s \times (n-p-1)} \\
[0]_{(m-s) \times (p+1)} &  H(M)
\end{bmatrix},$$
where $D(\calT)$ is a semi-primitive subspace of $\Mat_{s,p+1}(\K)$ of the alternating kind with
$\urk D(\calT)=p$, and $H(\calT) \subset \Mat_{m-s,n-p-1}(\K)$ has the column property
(and therefore has upper-rank less than $n-p-1$ if $s<m$).

\item With the assumptions from (b), if $\calS$ is semi-primitive,
then $p=r$ and $\calS$ is of the alternating kind.
\end{enumerate}
\end{prop}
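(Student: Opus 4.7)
For part~(a), I would apply the Flanders-Atkinson lemma (Lemma~\ref{Flanders}) to the pair $(J_r, \mathbf{M})$; this is valid since $\calS$ is $r$-reduced with $\urk \calS = r$, and yields $\mathbf{B}\mathbf{A}^k\mathbf{C} = 0$ for every $k \in \N$. Specialising at $k = 0$ and $k = 1$ places both $\mathbf{C}$ and $\mathbf{A}\mathbf{C}$ in the right kernel of $\mathbf{B}$ over the fraction field. When $\rk \mathbf{B} = r - 1$, this kernel is one-dimensional, so $\mathbf{C}$ and $\mathbf{A}\mathbf{C}$ are proportional; hence $\rk[\mathbf{A}\mathbf{C}\mid\mathbf{C}] \leq 1$, with equality since $p \geq 2$ forces $\mathbf{C} \neq 0$.

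For part~(b), the condition $\rk[\mathbf{A}\mathbf{C}\mid\mathbf{C}] = 1$ is Zariski-closed (vanishing of all $2 \times 2$ minors), so it specialises to $A(M) C(M) \in \Vect(C(M))$ for every $M \in \calS$. I then apply Lemma~\ref{finedecomp} with $A = J_r$ and $x = e_n$ (valid since $J_r e_n = 0$ and $\dim \calS e_n = \dim C(\calS) = p$) to obtain an equivalent space (still denoted $\calS$) in thin-decomposition form, together with linearly independent alternating matrices $B_1,\dots,B_{s_0} \in \Mata_p(\K)$ generating the $R$-block; write $C(M) = (c(M), 0)^T$, so that $c(\calS) = \K^p$. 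The hypothesis decomposes as $A_{21}(M) c(M) = 0$ (from which polarisation extracts alternating matrices $B'_1,\dots,B'_{r-p} \in \Mata_p(\K)$ with $A_{21}(M) = [c(M)^T B'_j]_j$) and $A_{11}(M) c(M) = \lambda(M) c(M)$ for a scalar $\lambda(M)$.

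A degree-in-$t$ count on the polynomial identity $A_{11}(M + tM') c(M + tM') = \lambda(M + tM') c(M + tM')$ (the left-hand side being of degree at most $2$ in $t$) shows that $\lambda$ extends to a linear functional on $\calS$, coherent with its forced value $A_{11}(M) = \lambda(M) I_p$ on $\Ker c$. Setting $\tilde A := A - \lambda I_r$ and polarising $\tilde A(M) C(M) = 0$ yields $\tilde A(M) C(M') + \tilde A(M') C(M) = 0$, which, specialised at $M' \in \Ker c$, forces $A_{11}(M') = \lambda(M') I_p$ and $A_{21}(M') = 0$. Consequently, the first $p$ columns of $\tilde A$ factor through $c$ via a linear map $\psi : \K^p \to \Mat_{r,p}(\K)$ whose associated bilinear map is alternating. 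With $X(M) := (c(M), -\lambda(M)) \in \K^{p+1}$ and following Remark~\ref{parametrage}, one assembles alternating matrices $C_i \in \Mata_{p+1}(\K)$ such that each row of $D(M)$ equals $X(M)^T C_i$: for the top $p$ rows take $C_i = \begin{bmatrix} \tilde D_i & e_i \\ -e_i^T & 0 \end{bmatrix}$ (with $\tilde D_i$ derived from $\psi$), and for the middle and $R$-rows take zero-padded alternating matrices built from the $B'_j$ and $B_k$. Further row operations absorb any linear dependencies within $\{B'_j\} \cup \{B_k\} \subset \Mata_p(\K)$ into the $H$-block, yielding the stated block form with $s = p + s'$ where $s' = \dim \Vect(\{B'_j\} \cup \{B_k\})$. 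Surjectivity of $X$ (using $\lambda(J_r) = 1$) and left-regularity of the associated $\varphi$ give $D(\calT) \sim \calS_\varphi$ of dimension $p + 1$; the equality $\urk D(\calT) = p$ follows from $\rk D(J_r) = p$ combined with $\varphi(X, X) = 0$; Corollary~\ref{largefieldtosemiprimitive} (applicable since $\#\K > r > p$) delivers semi-primitivity, and Lemma~\ref{inductioncolumnlemma} gives the column property of $H(\calT)$.

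For part~(c), assume $\calS$ (equivalently $\calT \sim \calS$) semi-primitive. If $p < r$ then the right block of $\calT$ has width $n - p - 1 \geq 1$, and deleting the last column produces matrices of rank at most $\rk D(M) + (n - p - 2) \leq p + (n - p - 2) = r - 1$, contradicting condition~(C) of semi-primitivity. Hence $p = r$; then $n - p - 1 = 0$ and reducedness of $\calT$ forces $s = m$, so $\calT = D(\calT)$ is of the alternating kind. The hardest step in this plan lies in~(b): establishing the linearity of $\lambda$ and then matching the row-by-row parametrisation of $D(M)$ to the alternating-kind form of Remark~\ref{parametrage}.
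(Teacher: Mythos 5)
Your overall architecture coincides with the paper's: part (a) is exactly the paper's argument (Flanders--Atkinson places both $\mathbf{C}$ and $\mathbf{A}\mathbf{C}$ in the one-dimensional kernel of $\mathbf{B}$), and part (c) is precisely the ``obvious consequence'' the paper alludes to, correctly spelled out via condition (C) of semi-primitivity. For part (b) you reach the same destination -- an eigenvector identity $A_{11}(M)c(M)=\lambda(M)c(M)$ with $\lambda$ linear, polarization to control the blocks on $\Ker c$, and an alternating parametrization of a $(p+1)$-column block $D$ -- but by a more computational route: you specialize the generic rank condition first and then lean on Lemma \ref{finedecomp} and Remark \ref{parametrage}, whereas the paper stays with the generic matrix throughout and gets the whole lower-left structure directly from Flanders--Atkinson and polarization, without invoking the thin decomposition lemma at all. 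Note that your detour through Lemma \ref{finedecomp} (and through Lemma \ref{inductioncolumnlemma} at the end) imports the column property of $\calS$, which is not among the stated hypotheses; the paper's own proof also quietly assumes it, but only for the final claim about $H(\calT)$, not for the construction of the $D$-block.

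The weakest link in your plan is the ``degree-in-$t$ count'' used to establish that $\lambda$ is a linear form. As written it glosses over several points: $\lambda(M)$ is only defined where $c(M)\neq 0$; along a line $t\mapsto M+tM'$ with $c(M)$ and $c(M')$ proportional the colinearity of a degree-$\leq 2$ vector with a degree-$\leq 1$ vector does not immediately yield an affine quotient (one must argue separately via a fixed eigenvector); and affineness along each line must still be upgraded to global linearity, which uses $\#\K\geq 3$ to identify polynomials with polynomial functions. All of this is repairable, but it is exactly the difficulty that Lemma \ref{genericcolinearity} is designed to bypass: applied to the full generic vectors $\mathbf{A}\mathbf{C}$ (degree $2$) and $\mathbf{C}$ (degree $1$), with $p\geq 2$ guaranteeing that $\mathbf{C}$ is not a multiple of a fixed vector of $\K^p$, it produces in one stroke a $1$-homogeneous polynomial $\mathbf{p}$ with $\mathbf{A}\mathbf{C}=\mathbf{p}\,\mathbf{C}$, i.e.\ a linear form on $\calS$, which you may then specialize. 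I would substitute that for the degree count; the rest of your plan (surjectivity of $X$ via $\lambda(J_r)=1$, $\urk D(\calT)=p$ from $\rk D(J_r)=p$ and the alternating relation, semi-primitivity of $D(\calT)$ from Corollary \ref{largefieldtosemiprimitive} since $\#\K>r\geq p$) is sound.
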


In the course of the proof, and later in this article, we will need the
following lemma, which generalizes a situation that appears in the proof of Theorem B of \cite{AtkinsonPrim}:

\begin{lemme}\label{genericcolinearity}
Let $R$ be a polynomial ring over $\K$,  and let $p \geq 2$ and $d \geq 1$ be integers. Denote by $\L$ the fraction field of $R$.
Let $\mathbf{X}$ and $\mathbf{Y}$ be two vectors of $R^p$ and assume:
\begin{enumerate}[(i)]
\item That the entries of $\mathbf{X}$ are $1$-homogeneous and the ones of $\mathbf{Y}$ are $d$-homogeneous.
\item That $\mathbf{X}$ is not a multiple of a vector of $\K^p$.
\item That $\mathbf{X}$ and $\mathbf{Y}$ are colinear in the $\L$-vector space $\L^p$.
\end{enumerate}
Then $\mathbf{Y}=\mathbf{p}\,\mathbf{X}$ for some $(d-1)$-homogeneous polynomial $\mathbf{p} \in R$.
\end{lemme}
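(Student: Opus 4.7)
The plan is to exploit the fact that $R$, being a polynomial ring over a field, is a UFD, so the colinearity in $\L^p$ can be witnessed by a pair of coprime elements in $R$. Hypothesis (ii) implies in particular $\mathbf{X} \neq 0$, so we can write $\mathbf{Y} = (f/g)\,\mathbf{X}$ in $\L^p$ for some $f,g \in R$ with $g \neq 0$ and $\gcd(f,g)=1$. Clearing denominators, this amounts to the system of identities $g\,Y_i = f\,X_i$ for all $i \in \{1,\dots,p\}$, and the goal reduces to showing that $g$ is a unit, so that $\mathbf{p}:=f/g$ lies in $R$ and $\mathbf{Y}=\mathbf{p}\,\mathbf{X}$.

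The key step, which I expect to be the main obstacle (though it turns out to be quite short), is precisely where hypothesis (ii) is used. Since $\gcd(f,g)=1$ and $g \mid f\,X_i$ in the UFD $R$, we get $g \mid X_i$ for every $i$. Assume for contradiction that $g$ is non-constant. Each $X_i$ is either $0$ or an irreducible linear form (linear forms in $\K[x_1,\dots,x_n]$ are irreducible), so for each index $i$ with $X_i \neq 0$ the divisor $g$ must be a scalar multiple of $X_i$; in particular $g$ is itself a linear form, and every nonzero $X_i$ equals $c_i\,g$ for some $c_i \in \K^*$. Setting $\mathbf{v}:=(c_1,\dots,c_p) \in \K^p$ (with $c_i=0$ whenever $X_i=0$), we obtain $\mathbf{X}=g\,\mathbf{v}$, contradicting (ii). Hence $g \in \K^*$, and $\mathbf{p}:=f/g$ lies in $R$ with $\mathbf{Y}=\mathbf{p}\,\mathbf{X}$.

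It only remains to check that $\mathbf{p}$ is $(d-1)$-homogeneous. Decompose $\mathbf{p}=\sum_k \mathbf{p}_k$ into its homogeneous components. Fix any index $i$ with $X_i \neq 0$ (such an $i$ exists since $\mathbf{X} \neq 0$). The identity $Y_i=\mathbf{p}\,X_i$ expands as
\[
Y_i \;=\; \sum_k \mathbf{p}_k\,X_i,
\]
where $\mathbf{p}_k\,X_i$ is homogeneous of degree $k+1$ by hypothesis (i). Since $Y_i$ is homogeneous of degree $d$, every component of degree $\neq d$ must vanish, giving $\mathbf{p}_k\,X_i=0$ for $k \neq d-1$. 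As $R$ is a domain and $X_i \neq 0$, this forces $\mathbf{p}_k=0$ for $k \neq d-1$, so $\mathbf{p}=\mathbf{p}_{d-1}$ is $(d-1)$-homogeneous, which concludes the proof.
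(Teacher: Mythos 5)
Your proof is correct and follows essentially the same route as the paper's: both are short UFD arguments in $R$ exploiting that the entries of $\mathbf{X}$ are linear forms. The only (cosmetic) difference is that the paper picks two $\K$-independent, hence coprime, entries $\mathbf{x}_i,\mathbf{x}_j$ and reads off $\mathbf{p}=\mathbf{y}_i/\mathbf{x}_i=\mathbf{y}_j/\mathbf{x}_j$ as a polynomial directly, whereas you write the ratio in lowest terms and use irreducibility of linear forms to rule out a non-constant denominator; your degree argument for the $(d-1)$-homogeneity is the standard one the paper leaves implicit.
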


\begin{proof}
Writing $\mathbf{X}=\begin{bmatrix}
\mathbf{x}_1 & \dots & \mathbf{x}_p
\end{bmatrix}^T$ and
$\mathbf{Y}=\begin{bmatrix}
\mathbf{y}_1 & \cdots & \mathbf{y}_p
\end{bmatrix}^T$, we have $\mathbf{X} \neq 0$ and we
deduce from assumption (iii) that there is a \emph{fraction} $\mathbf{p} \in \L$ such that
$\mathbf{Y}=\mathbf{p}\,\mathbf{X}$. Assumption (ii) shows that there are distinct integers $i$ and $j$ such that
$\mathbf{x}_i$ and $\mathbf{x}_j$ are not linearly dependent over $\K$; as they are $1$-homogeneous polynomials over $\K$,
they are mutually prime. As
$\mathbf{p}=\frac{\mathbf{y}_i}{\mathbf{x}_i}=\frac{\mathbf{y}_j}{\mathbf{x}_j}$, one deduces that $\mathbf{p}$
is a polynomial, and it is obviously $(d-1)$-homogeneous.
\end{proof}

\begin{proof}[Proof of Proposition \ref{reductionlemma}]
Without loss of generality, we may assume that $C(\calS)=\K^p \times \{0\}$.
Assume that $\rk \mathbf{B}=r-1$.
The Flanders-Atkinson lemma yields $\mathbf{B}\mathbf{A}\mathbf{C}=0$ and $\mathbf{B}\mathbf{C}=0$, i.e.\
both vectors $\mathbf{A}\mathbf{C}$ and $\mathbf{C}$ belong to the kernel of $\mathbf{B}$, which has dimension $1$.
This proves point (a) since $\mathbf{C}$ is non-zero.

If we now only assume that $\mathbf{A}\mathbf{C}$ and $\mathbf{C}$
are colinear in the fraction field of the considered polynomial ring over $\K$, then,
as $p \geq 2$ and the entries of $\mathbf{A}\mathbf{C}$ are $2$-homogeneous polynomials, Lemma
\ref{genericcolinearity} yields a $1$-homogeneous polynomial $\mathbf{p}$ such that $\mathbf{A}\mathbf{C}=\mathbf{p}\mathbf{C}$.
Let us write every matrix $M$ of $\calS$ as
$$M=\begin{bmatrix}
A'(M) & [?]_{p \times (r-p)} & C'(M) \\
L(M) & [?]_{(r-p) \times (r-p)} & [0]_{(r-p) \times 1} \\
S(M) & [?]_{(m-r) \times (r-p)} & [0]_{(m-r) \times 1}
\end{bmatrix},$$
where $A'(M)$, $L(M)$, $S(M)$ and $C'(M)$ are, respectively, $p \times p$, $(r-p) \times p$, $(m-r) \times p$ and
$p \times 1$ matrices.
We have just found a linear form $\alpha : \calS \rightarrow \K$ such that, for every $M \in \calS$,
$$A'(M)\,C'(M)=\alpha(M)\cdot C'(M) \quad \text{and} \quad L(M)\,C'(M)=0,$$
while $S(M)C'(M)=0$ by the Flanders-Atkinson lemma.
Now, for $M \in \calS$, we set
$$h(M)=\begin{bmatrix}
A'(M) & C'(M) \\
R(M) & [0]_{(m-p)\times 1}
\end{bmatrix}, \quad \text{where $R(M)=\begin{bmatrix}
L(M) \\
S(M)
\end{bmatrix}$,}$$
so that
\begin{equation}\label{polynomialeigenvector}
A'(M)\,C'(M)=\alpha(M)\cdot C'(M) \quad \text{and} \quad
R(M)\,C'(M)=0.
\end{equation}
Polarizing both quadratic identities in \eqref{polynomialeigenvector} leads to
\begin{equation}\label{polynomialeigenvectorpolarized}
\forall (M,N) \in \calS^2, \; A'(M)\,C'(N)+A'(N)\,C'(M)=\alpha(M)\cdot C'(N)+\alpha(N)\cdot C'(M)
\end{equation}
and
\begin{equation}\label{polynomialeigenvectorpolarized2}
\forall (M,N) \in \calS^2, \; R(M)\,C'(N)+R(N)\,C'(M)=0.
\end{equation}
Let $N \in \calS$ be such that $C'(N)=0$.
Since $C'(\calS)=\K^p$, identities \eqref{polynomialeigenvectorpolarized} and
\eqref{polynomialeigenvectorpolarized2} yield $A'(N)X=\alpha(N)X$ and $R(N)X=0$ for all
$X \in \K^p$. Thus, $A'(N)=\alpha(N)\cdot I_p$ and $R(N)=0$.
As $\calS$ contains the matrix $J_r=\begin{bmatrix}
I_r & [0]_{r \times 1} \\
[0]_{(m-r) \times r} & [0]_{(m-r) \times 1}
\end{bmatrix}$, we deduce the following parametrization:
there are linear maps $\varphi : \K^p \rightarrow \Mat_p(\K)$ and $\psi : \K^p \rightarrow \Mat_{m-p,p}(\K)$
such that $h(\calS)$ is the set of all matrices of the form
$$E(X,a):=\begin{bmatrix}
-a I_p+\varphi(X) & X \\
\psi(X) & [0]_{(m-p) \times 1}
\end{bmatrix} \quad \text{with $a \in \K$ and $X \in \K^p$,}$$
and then we have a linear form $\mu : \K^p \rightarrow \K$ such that $\forall X \in \K^p, \; \varphi(X)\,X=\mu(X)\, X$.
Replacing $\varphi$ with $X \mapsto \varphi(X)-\mu(X)I_p$, we may assume that
$\varphi(X)X=0$ for all $X \in \K^p$. Thus, we have
\begin{equation}\label{kernelidentity}
\forall X \in \K^p, \quad \begin{bmatrix}
\varphi(X) \\
\psi(X)
\end{bmatrix} \times X=0.
\end{equation}

Now, consider the bilinear map
$$\Phi : \Bigl(\begin{bmatrix}
X \\
a
\end{bmatrix},\begin{bmatrix}
Y \\
b
\end{bmatrix}\Bigr)\in (\K^{p+1})^2 \; \longmapsto \; E(X,a)\times \begin{bmatrix}
Y \\
b
\end{bmatrix}\in \K^m.$$
Using \eqref{kernelidentity}, we find that $\Phi$ is alternating; on the other hand, for all
$(X,a)\in \K^{p+1}$, the partial map $\Phi((X,a),-)$ is represented in the canonical bases by
$E(X,a)$, which is the zero matrix only if $(X,a)=0$; thus, $\Phi$ is left-regular. Note that
$h(\calS)$ represents the operator space $\bigl\{\Phi((X,a),-) \mid (X,a)\in \K^{p+1}\bigr\}$ in the canonical bases
of $\K^{p+1}$ and $\K^m$ and that it contains a rank $p$ operator.
Denoting by $V'$ the essential range of $\Phi$ and by $s$ its dimension,
decomposing $\K^m=V'\oplus V''$, replacing the canonical basis of $\K^m$ by a basis that is
adapted to this decomposition and permuting the basis of $\K^n$, we reduce the situation to the one where
every matrix of $\calS$ splits up as
$$M=\begin{bmatrix}
D(M) & [?]_{s \times (n-p-1)} \\
[0]_{(m-s) \times (p+1)} &  H(M)
\end{bmatrix},$$
where $D(\calS)$ represents, in well-chosen bases, the fully-regular alternating bilinear map
$\K^{p+1} \times \K^{p+1} \rightarrow V'$ deduced from $\Phi$,
to the effect that $D(\calS)$ is of the alternating kind with $\urk D(\calS)=p$.
As $\calS$ has the column property, one finds that $H(\calS)$ also does and that its upper-rank is
less than $n-p-1$. Thus, statement (b) is established.

Finally, statement (c) is an obvious consequence of statement (b).
\end{proof}

\subsection{First classification theorem for minimal LLD spaces}\label{classtheo1Section}

We are ready to extend Theorem C of \cite{AtkinsonPrim}: this important classification theorem was
later rediscovered by Eisenbud and Harris for $r \leq 3$ \cite[Corollary 2.5]{EisenbudHarris}
(in both \cite{AtkinsonPrim} and \cite{EisenbudHarris}, the theorem was stated for primitive spaces only,
and, in \cite{EisenbudHarris}, for an algebraically closed field only).
As we shall see in later sections, it is crucial that this first classification theorem be stated for spaces with the column property,
instead of primitive spaces only:

\begin{theo}[First classification theorem]\label{classtheo1}
Let $\calS$ be a reduced subspace of $\Mat_{m,n}(\K)$
with the column property. Set $r:=\urk(\calS)$ and assume that $m>\dbinom{r}{2}+1$ and $\# \K>r \geq 2$.
Then, $r=n-1$ and $\calS$ is of the alternating kind.
\end{theo}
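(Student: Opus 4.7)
The plan is to reduce everything to the Reduction Lemma (Proposition \ref{reductionlemma}), applied after establishing the two ``extremality'' facts $r=n-1$ and $p=r$, where $p$ is the dimension of the last column space in a convenient presentation of $\calS$.

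First, $r=n-1$: since $\calS$ is reduced with the column property, it is defective, so $r\leq n-1$; if $n>r+1$, Theorem \ref{rangelemma} would force $m\leq 1+\dbinom{r}{2}$, contradicting the hypothesis. Then, by Lemma \ref{spanofrankr} (valid since $\#\K>r$), I replace $\calS$ by an equivalent $r$-reduced space, so every $M\in\calS$ admits the $(r,r)$-decomposition
$$M=\begin{bmatrix} A(M) & C(M) \\ B(M) & 0 \end{bmatrix}.$$
Setting $p:=\dim C(\calS)=\dim \calS e_n$, Lemma \ref{finemaxdim} yields $m\leq \dbinom{p+1}{2}+\dbinom{r-p+1}{2}=p^2-rp+\dbinom{r+1}{2}$. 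Combined with $m>\dbinom{r}{2}+1$, this rearranges as $(p-1)(p-(r-1))>0$; together with $1\leq p\leq r$ (the lower bound coming from reducedness of $\calS$), this forces $p=r$.

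The main obstacle is the lower bound $\urk B(\calS)\geq r-1$ (the upper bound $\urk B(\calS)\leq r-1$ being immediate from Lemma \ref{decompositionlemma}, as $\urk C(\calS)=1$). To this end, $B(\calS)\subset \Mat_{m-r,r}(\K)$ has the column property by Lemma \ref{inductioncolumnlemma}, and its essential range is $\K^{m-r}$ by the reducedness of $\calS$. After quotienting out the common kernel $K$ of the matrices of $B(\calS)$ via suitable column operations, the resulting reduced subspace of $\Mat_{m-r,\,r-\dim K}(\K)$ retains the column property, and Theorem \ref{rangelemma} then gives $m-r\leq \dbinom{\urk B(\calS)+1}{2}$. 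Since the hypothesis $m>\dbinom{r}{2}+1$ rewrites as $m-r>\dbinom{r-1}{2}$, this forces $\urk B(\calS)\geq r-1$, hence equality.

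Finally, by Lemma \ref{genericrank}, $\rk \mathbf{B}=r-1$, so Proposition \ref{reductionlemma}(a) delivers the colinearity $\rk \begin{bmatrix}\mathbf{A}\mathbf{C} & \mathbf{C}\end{bmatrix}=1$, and Proposition \ref{reductionlemma}(b) produces a space $\calT\sim\calS$ in which every matrix splits as $\begin{bmatrix} D(M) & [?] \\ 0 & H(M) \end{bmatrix}$ with $D(\calT)\subset \Mat_{s,p+1}(\K)$ of the alternating kind and $H(\calT)\subset \Mat_{m-s,\,n-p-1}(\K)$. Because $p=r$ gives $n-p-1=0$, both the $[?]$ and $H$ blocks vanish, leaving $\calT=\begin{bmatrix} D(\calT) \\ 0 \end{bmatrix}$; the reducedness of $\calT\sim\calS$ then forces $s=m$, so $\calT$ itself is of the alternating kind, and the same holds for $\calS$.
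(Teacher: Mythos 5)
Your proof is correct and follows essentially the same route as the paper: establish $r=n-1$ and $p=r$ via Theorem \ref{rangelemma} and Lemma \ref{finemaxdim}, rule out $\urk B(\calS)\leq r-2$ by applying Theorem \ref{rangelemma} to the reduced version of $B(\calS)$, and conclude with Proposition \ref{reductionlemma} (your explicit handling of the empty $H$-block when $n-p-1=0$ is just a spelled-out version of what the paper leaves implicit). The only cosmetic slip is the appeal to Lemma \ref{spanofrankr} to pass to an $r$-reduced equivalent space: that step needs only the existence of a single rank-$r$ matrix in $\calS$, which is immediate from the definition of $\urk$.
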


Using the duality argument, we deduce the structure of minimal reduced LLD operator spaces with large essential range:

\begin{cor}[First classification theorem for minimal LLD spaces with large essential range]\label{classcor1}
Let $\calS$ be a minimal reduced LLD subspace of $\calL(U,V)$, where $U$ and $V$ are finite-dimensional vector spaces with
$\dim V>\dbinom{\dim \calS-1}{2}+1$ and $\# \K \geq \dim \calS \geq 3$.
Then, $\calS$ is of the alternating kind.
\end{cor}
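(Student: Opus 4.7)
The plan is to apply the matrix-side classification Theorem \ref{classtheo1} to the dual space $\widehat{\calS}$ and then transport the conclusion back via the involutive nature of the adjunction functor. Set $n := \dim \calS$ and $m := \dim V$; by the duality principle of Section \ref{dualityargumentsection}, $\widehat{\calS}$ is a subspace of $\calL(\calS,V)$, representable (in any pair of bases) by a matrix space in $\Mat_{m,n}(\K)$. Since $\calS$ is LLD, $\widehat{\calS}$ is defective, hence $r := \urk(\widehat{\calS}) \leq n-1$. Since $\calS$ is reduced and minimal among $1$-LLD operator spaces, the proposition identifying minimality with semi-primitivity of the dual shows that $\widehat{\calS}$ is reduced and semi-primitive. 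From $\#\K \geq n > r$ and the lemma stating that large-field semi-primitive spaces enjoy the column property, $\widehat{\calS}$ has the column property.

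Next, I would verify the hypotheses of Theorem \ref{classtheo1} for $\widehat{\calS}$. For the source-space bound, $m > \binom{n-1}{2}+1 \geq \binom{r}{2}+1$ since $r \leq n-1$, and $\#\K > r$ is immediate. The remaining point is $r \geq 2$: one cannot have $r=0$, as reducedness together with $n \geq 3$ would force $V=\{0\}$, contradicting $m \geq 3$; and $r = 1$ is excluded because, by the remark on reduced semi-primitive upper-rank-$1$ spaces (via Schur's classical theorem), one would necessarily have $m = 1$, again contradicting $m \geq 3$.

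Then Theorem \ref{classtheo1} applies and yields that $\widehat{\calS}$ is of the alternating kind, so there is a fully-regular alternating bilinear map $\varphi : U' \times U' \rightarrow V'$ with $\widehat{\calS} \sim \calS_\varphi$. To convert this back to $\calS$, I would invoke the fact (established at the end of Section \ref{dualityargumentsection}) that $\widehat{(-)}$ is an involution up to natural equivalence on reduced spaces, so that $\calS \sim \widehat{\widehat{\calS}} \sim \widehat{\calS_\varphi}$. Finally, because $\varphi$ is left-regular, the observation at the start of Section \ref{fromalternatingsection} gives $\widehat{\calS_\varphi} \sim \calS_\varphi$, whence $\calS \sim \calS_\varphi$ is of the alternating kind.

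The entirety of the hard work is already packaged in Theorem \ref{classtheo1}; the present corollary is essentially a translation exercise through the categorical duality set up in Sections \ref{dualityargumentsection} and \ref{LLDvsprimitive}. The only delicate piece is the handling of the boundary cases $r \in \{0,1\}$ not covered by the matrix-side theorem, which is what forces the auxiliary hypothesis $n \geq 3$ in the statement; once this is dispatched, the alternating-kind conclusion propagates to $\calS$ automatically because this property is stable under the double-dual equivalence $\widehat{\widehat{(-)}} \sim \id$.
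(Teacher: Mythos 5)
Your proposal is correct and follows exactly the route the paper intends (the paper states this corollary without a written proof, as an immediate consequence of Theorem \ref{classtheo1} via the duality argument): pass to $\widehat{\calS}$, use minimality to get semi-primitivity and hence the column property, check $r\geq 2$, apply the matrix-side theorem, and pull the alternating-kind conclusion back through $\widehat{\widehat{\calS}}\sim\calS$ and $\widehat{\calS_\varphi}\sim\calS_\varphi$. Your careful dispatch of the boundary cases $r\in\{0,1\}$ and the note that minimality among LLD spaces implies minimality among $c$-LLD spaces (as required by the semi-primitivity proposition) are exactly the details the paper leaves implicit.
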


The critical case when $\dim V=\dbinom{\dim \calS}{2}$ was rediscovered
by Chebotar and \v Semrl \cite[Theorem 1.2]{ChebotarSemrl} with a stricter assumption on the cardinality of $\K$:
they however missed the connection with Atkinson's theorem and produced a proof that is far longer than Atkinson's.

Now, we obtain Theorem \ref{classtheo1} as a rather straightforward byproduct of Proposition \ref{reductionlemma} and of some
earlier general considerations.

\begin{proof}[Proof of Theorem \ref{classtheo1}]
If $r<n-1$, then Theorem \ref{rangelemma} yields $m \leq 1+\dbinom{r}{2}$, which contradicts our assumptions.
Thus, $r=n-1$.

Now, we may assume that $\calS$ is $r$-reduced, and set $p:=\dim \calS e_n$, where $e_n$ is the last vector of the canonical basis of $\K^n$.
As $\calS$ is both reduced and $r$-reduced, we have $1 \leq p\leq r$.
If $1 \leq p \leq r-1$, then Lemma \ref{finemaxdim} and the subsequent remark show that
$m \leq 1+\dbinom{r}{2}$, another contradiction. Therefore, $p=r \geq 2$.

Splitting every matrix $M$ of $\calS$ along the $(r,r)$-decomposition as
$$M=\begin{bmatrix}
A(M) & C(M) \\
B(M) & [0]_{(m-r) \times 1}
\end{bmatrix},$$
we find that $\urk B(\calS) \leq r-1$ and that $B(\calS)$ has the column property.
Assume that $\urk B(\calS) \leq r-2$. Since $\calS$ is reduced, we lose no generality is assuming that we have an integer $q \in \lcro 0,r\rcro$
such that, for every $M \in \calS$, one has
$$B(M)=\begin{bmatrix}
B'(M) & [0]_{(m-r) \times (r-q)}
\end{bmatrix},$$
where $B'(\calS) \subset \Mat_{m-r,q}(\K)$ is reduced with the column property and $\urk B'(\calS)=\urk B(\calS)$.
Then, Theorem \ref{rangelemma} yields
$m-r \leq \dbinom{r-1}{2}$, and hence $m \leq 1+\dbinom{r}{2}$, contradicting our assumptions.

Therefore, $\urk B(\calS) = r-1$, and hence Proposition \ref{reductionlemma} yields that $\calS$ is of the alternating kind.
\end{proof}

Recently, the special case $m=\dbinom{r+1}{2}$ in Theorem \ref{classtheo1} has been spectacularly applied to
provide a short proof of the generalized Gerstenhaber theorem for fields with large cardinality: see \cite{dSPAtkinsontoGerstenhaber}.
An earlier success of the above classification theorem was the determination of the semi-primitive spaces
with upper-rank $2$. Let $\calV$ be a semi-primitive subspace of $\Mat_{m,n}(\K)$, with $m \geq 3$,
$\urk \calV=2$ and $\# \K>2$. By Theorem \ref{classtheo1},
one must have $m=n=3$ and $\calV$ must be of the alternating kind. Moreover, there is exactly on
such subspace up to equivalence. As we have seen in Section \ref{trialitysection},
the space $\Mata_3(\K)$ qualifies. This yields:

\begin{theo}
Let $\calS$ be a semi-primitive subspace of $\Mat_{m,n}(\K)$, with $\urk \calS=2$ and $\# \K>2$.
Then, either $m=2$ or $\calS \sim \Mata_3(\K)$.
If in addition $\calS$ is primitive, then $\calS \sim \Mata_3(\K)$.
\end{theo}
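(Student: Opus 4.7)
The plan is to dispose of the case $m \geq 3$ first by invoking the First Classification Theorem (Theorem \ref{classtheo1}) with $r=2$. Since $\urk \calS = 2 < \# \K$, $\calS$ is reduced and has the column property (being semi-primitive with large enough field), and $m \geq 3 > \binom{2}{2}+1 = 2$, the hypotheses of Theorem \ref{classtheo1} are met. The theorem then delivers $n-1 = r = 2$, whence $n=3$, together with the fact that $\calS$ is of the alternating kind.

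Next I would pin down which alternating kind arises. Write $\calS \sim \calS_\varphi$ for a fully-regular alternating bilinear map $\varphi : U \times U \to V$ with $\dim U = n = 3$ and $\dim V = m$. Fully-regularity forces $\dim V \leq \binom{\dim U}{2} = 3$, while the hypothesis $m \geq 3$ yields $m = 3$. Thus, in the surjection $\widetilde{\varphi} : U \wedge U \twoheadrightarrow V$, the kernel $\calW_\varphi$ has dimension $0$, so $\widetilde{\varphi}$ is an isomorphism and $\varphi$ is congruent to the standard pairing $(x,y) \mapsto x \wedge y$ on $\K^3$. In a suitable basis (see Remark \ref{parametrage}), this space is represented by $\Mata_3(\K)$, so $\calS \sim \Mata_3(\K)$, which handles the first dichotomy.

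For the second statement, suppose further that $\calS$ is primitive. By the remark following the definition of semi-primitivity, any non-trivial primitive space of upper-rank $r$ satisfies $r < \min(m,n)$; applied to $r = 2$ this rules out $m = 2$, so $m \geq 3$ and the first part concludes $\calS \sim \Mata_3(\K)$.

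The only potential obstacle is verifying the column property hypothesis of Theorem \ref{classtheo1} from the assumption of semi-primitivity, but this is exactly the content of the lemma earlier in Section 3 establishing that every semi-primitive space over a field with $\#\K > \urk \calS$ has the column property, so nothing new is needed. The argument is thus essentially a direct combination of the First Classification Theorem with the dimension count $\binom{3}{2} = 3$ and with the remark constraining the shape of a primitive space.
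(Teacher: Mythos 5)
Your proposal is correct and follows essentially the same route as the paper: apply the First Classification Theorem (the hypotheses $m>\binom{2}{2}+1=2$ and $\#\K>2$ being satisfied, with the column property supplied by the lemma on semi-primitive spaces) to get $n=3$ and the alternating kind, then use $\dim V\leq\binom{3}{2}=3$ to force $m=3$ and $\calW_\varphi=\{0\}$, so that the unique such space up to equivalence is $\Mata_3(\K)$, and finally rule out $m=2$ for primitive spaces via the remark that primitivity forces $\urk\calS<\min(m,n)$. The only cosmetic quibble is that reducedness is part of the definition of semi-primitivity rather than a consequence of the field-size hypothesis.
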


Using the duality argument, Proposition \ref{effetdunetransposition} and the fact that a space $\calS$ of the alternating kind is always
equivalent to its dual operator space $\widehat{\calS}$, one recovers a theorem of Bre\v sar and \v Semrl \cite{BresarSemrl}
which kicked off the systematic study of LLD operator spaces:

\begin{cor}
Let $\calS \subset \calL(U,V)$ be a $3$-dimensional reduced minimal LLD operator space, with
$\# \K>2$. Then, either $\dim V=2$ or $\dim U=\dim V=3$; in the latter case, $\calS$ is represented by $\Mata_3(\K)$ in some bases
of $U$ and $V$.
\end{cor}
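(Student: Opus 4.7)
The plan is to invoke the duality functor $\widehat{(-)}$, apply the preceding classification theorem for semi-primitive matrix spaces of upper-rank $2$ to $\widehat{\calS}$, and then transport the information back to $\calS$ via the natural involution $\widehat{\widehat{\calS}} \sim \calS$. Since $\calS$ is reduced and minimal among the LLD subspaces of $\calL(U,V)$, the proposition characterizing minimality via semi-primitivity shows that $\widehat{\calS}$ is a reduced, semi-primitive subspace of $\calL(\calS,V)$, of dimension $\dim U$. As $\calS$ is LLD, $\widehat{\calS}$ is defective, whence $\urk \widehat{\calS} \leq 2$. The value $\urk \widehat{\calS}=0$ is excluded because $\widehat{\calS}$ is reduced and non-zero (since $\dim\calS=3$), and $\urk \widehat{\calS}=1$ is ruled out as follows: the Schur-type remark forces $\dim V=1$ in that case, but then every $2$-dimensional subspace of $\calL(U,V)$ is already LLD (two linear forms share a zero on each $1$-dimensional target), contradicting the minimality of $\calS$. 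Hence $\urk \widehat{\calS}=2$.

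Next, apply the displayed theorem to $\widehat{\calS}\subset\calL(\calS,V)$: either $\dim V=2$, yielding the first conclusion directly, or $\widehat{\calS}\sim \Mata_3(\K)$. In the latter case, $\widehat{\calS}$ is represented in $\Mat_3(\K)$, so $\dim V=3$ and $\dim\calS=3$; since $\calS$ is reduced, $\dim U=\dim\widehat{\calS}=3$.

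It remains to show that $\calS$ itself is equivalent to $\Mata_3(\K)$. Because $\widehat{(-)}$ is an involution up to natural equivalence and respects equivalence on reduced spaces, from $\widehat{\calS}\sim \Mata_3(\K)$ we obtain $\calS\sim \widehat{\widehat{\calS}}\sim \widehat{\Mata_3(\K)}$, and the task reduces to verifying $\widehat{\Mata_3(\K)}\sim \Mata_3(\K)$. Write $\Mata_3(\K)=\calS_\varphi$, where $\varphi:\K^3\times\K^3\to\K^3$ is the standard wedge product; this $\varphi$ is the bilinear map associated with the (up to scalar unique) non-zero alternating trilinear form on $\K^3$, so Proposition \ref{exclusivitegeneral} gives $\calS_\varphi\sim\calS_\varphi^T$. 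Proposition \ref{effetdunetransposition} furnishes $\widehat{\calS_\varphi^T}\sim\calV_\varphi=\Mata_3(\K)$ in well-chosen bases. Applying $\widehat{(-)}$ to $\calS_\varphi\sim\calS_\varphi^T$ yields $\widehat{\calS_\varphi}\sim\widehat{\calS_\varphi^T}\sim\Mata_3(\K)$, i.e.\ $\widehat{\Mata_3(\K)}\sim\Mata_3(\K)$. Combined with the above, $\calS\sim\Mata_3(\K)$.

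The only genuinely non-routine step is the self-duality $\widehat{\Mata_3(\K)}\sim\Mata_3(\K)$, which is where the triality results of Section \ref{trialitysection} do the real work; everything else is bookkeeping with the adjunction functor and a direct appeal to the classification theorem for semi-primitive spaces of upper-rank $2$.
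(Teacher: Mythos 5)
Your proposal is correct and follows essentially the same route as the paper: pass to $\widehat{\calS}$, note it is semi-primitive of upper-rank $2$ (ruling out upper-rank $0$ and $1$ as you do), apply the classification theorem for such spaces, and transport back via $\calS\sim\widehat{\widehat{\calS}}$. The only cosmetic difference is that you establish $\widehat{\Mata_3(\K)}\sim\Mata_3(\K)$ through the triality results and Proposition \ref{effetdunetransposition}, whereas the paper invokes the more direct general fact from Section \ref{fromalternatingsection} that $\calS_\varphi\sim\widehat{\calS_\varphi}$ for any left-regular alternating $\varphi$; both are valid.
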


\subsection{The full classification of minimal $4$-dimensional LLD spaces}\label{classn=4section}

Here, we assume that $\# \K > 3$. The classification of all primitive spaces with upper-rank
$3$ was achieved by Atkinson \cite{AtkinsonPrim} for algebraically closed fields of characteristic not $2$,
and later rediscovered by Eisenbud and Harris \cite{EisenbudHarris} with no restriction on the characteristic of the field.
Here, we shall give the complete classification with the sole assumption that $\# \K >3$,
the only difference residing in the structure of the spaces of $4 \times 4$ matrices,
where the quadratic structure of the field comes into play.

Let $m$ and $n$ be two integers with $m \geq 4$ and $n \geq 4$, and $\calS$ be a semi-primitive subspace of
$\Mat_{m,n}(\K)$ with upper-rank $3$.
Assume first that $\calS$ is non-primitive. Remember that no primitive space has upper-rank $1$ and that
the sole primitive space with upper-rank $2$ is $\Mata_3(\K)$, up to equivalence.
Using Proposition \ref{reductiontoprimitive}, we deduce that $n=4$ and $\calS$ is equivalent to a subspace of the space
$$\Biggl\{\begin{bmatrix}
L & L' \\
[0]_{3 \times (p-3)} & A
\end{bmatrix} \mid L \in \Mat_{1,p-3}(\K), \; L' \in \Mat_{1,3}(\K), \; A \in \Mata_3(\K)\Biggr\}.$$
The classification, up to equivalence, of such semi-primitive subspaces seems possible though tedious, and we shall not consider it.

From now on, we assume that $\calS$ is primitive.
If $m \geq 5$, then Theorem \ref{classtheo1} shows that $\calS$ is of the alternating kind, and hence $n=4$ and $m \in \{5,6\}$.
If $n \geq 5$, then, as $\calS^T$ is a semi-primitive space with upper-rank $3$,
we find that $m=4$, $n \in \{5,6\}$ and that $\calS^T$ is of the alternating kind.

Thus, only the case $m=n=4$ is left to consider. The following result of Atkinson - which we quickly reprove for completeness -
answers the problem:

\begin{theo}[Atkinson]
Let $\calS$ be a primitive subspace of $\Mat_4(\K)$ with $\urk \calS=3$ and $\# \K>3$.
Then, one of the spaces $\calS$ or $\calS^T$ is of the alternating kind.
\end{theo}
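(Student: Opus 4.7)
The plan is to apply Proposition \ref{reductionlemma} to both $\calS$ and its transpose $\calS^T$: since the primitivity of $\calS$ makes both $\calS$ and $\calS^T$ semi-primitive, it will suffice to show that the colinearity hypothesis in part (b) of the reduction lemma holds for at least one of them, in which case part (c) will conclude. After a change of bases I may assume that $\calS$ is $3$-reduced, i.e.\ contains $J_3 = \begin{bmatrix} I_3 & 0 \\ 0 & 0 \end{bmatrix}$; since $J_3 = J_3^T$, the same holds for $\calS^T$. I decompose every $M \in \calS$ along the $(3,3)$-pattern as
$$M = \begin{bmatrix} A(M) & C(M) \\ B(M) & 0 \end{bmatrix},$$
with $A(M) \in \Mat_3(\K)$, $C(M) \in \Mat_{3,1}(\K)$ and $B(M) \in \Mat_{1,3}(\K)$, and fix a generic matrix $\mathbf{M} = \begin{bmatrix} \mathbf{A} & \mathbf{C} \\ \mathbf{B} & 0 \end{bmatrix}$ in the attached polynomial ring $R$, whose fraction field I denote by $\L$.

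The Flanders-Atkinson lemma applied to each pair $(J_3, M)$ with $M \in \calS$ provides $B(M) A(M)^k C(M) = 0$ for every $k \in \N$; since this is a homogeneous polynomial identity of degree $k + 2$ in the coefficients of $M$, a short degree-count argument (with special care for $\# \K = 4$, where $4$-homogeneity combined with the structure of the vanishing ideal of $\F_4^s$ is used) promotes it to the generic identities $\mathbf{B}\mathbf{A}^k\mathbf{C} = 0$ in $R$ for $k = 0, 1, 2$. Lemma \ref{1decomplemma} applied to $\calS$ and to $\calS^T$, together with primitivity, moreover forces $p := \dim C(\calS) \geq 2$ and $q := \dim B(\calS) \geq 2$, so that Lemma \ref{genericcolinearity} will be applicable on both sides. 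The argument is then a symmetric dichotomy over $\L$. Both $\mathbf{C}$ and $\mathbf{A}\mathbf{C}$ lie in the $2$-dimensional $\L$-kernel of the non-zero $1 \times 3$ row vector $\mathbf{B}$. If they are $\L$-colinear, Lemma \ref{genericcolinearity} yields $\rk[\mathbf{A}\mathbf{C} \mid \mathbf{C}] = 1$ and Proposition \ref{reductionlemma}(b)-(c) applied to $\calS$ shows that $\calS$ is of the alternating kind. Otherwise, $\mathbf{C}$ and $\mathbf{A}\mathbf{C}$ span the whole kernel of $\mathbf{B}$; the identities $\mathbf{B}\mathbf{A}\mathbf{C} = 0$ and $\mathbf{B}\mathbf{A}^2\mathbf{C} = 0$ ensure that $\mathbf{B}\mathbf{A}$ also annihilates both $\mathbf{C}$ and $\mathbf{A}\mathbf{C}$, hence the whole of $\ker \mathbf{B}$; consequently $\mathbf{B}$ and $\mathbf{B}\mathbf{A}$ are $\L$-proportional as row vectors (the case $\mathbf{B}\mathbf{A} = 0$ being trivially included), and a second application of Lemma \ref{genericcolinearity} on the transposed side puts $\calS^T$ in position to invoke Proposition \ref{reductionlemma}(b)-(c), whence $\calS^T$ is of the alternating kind.

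The hard part will be the promotion of Flanders-Atkinson's pointwise conclusions into genuine generic identities in the borderline case $k = 2$, whose relevant polynomial sits at degree exactly $4$: the extreme case $\# \K = 4$ is not covered by the naive degree-count and is saved only by the $4$-homogeneity of the polynomial together with a brief ad hoc argument specific to $\F_4$. Once that technicality is handled, the remaining argument is a clean symmetric dichotomy that is made possible precisely by the $4 \times 4$ shape of the problem, in which $B(M)$ is a single row and $C(M)$ a single column, so that the transposed side mirrors the direct one perfectly.
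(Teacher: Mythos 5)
Your proof is correct and follows essentially the same route as the paper's: reduce to the $3$-reduced case, derive the generic Flanders--Atkinson identities $\mathbf{B}\mathbf{A}^k\mathbf{C}=0$ for $k=0,1,2$, deduce that one of the matrices $\begin{bmatrix}\mathbf{C} & \mathbf{A}\mathbf{C}\end{bmatrix}$ or $\begin{bmatrix}\mathbf{B} \\ \mathbf{B}\mathbf{A}\end{bmatrix}$ has rank $1$ (your kernel dichotomy is just an unpacking of the paper's inequality $\rk\begin{bmatrix}\mathbf{B} \\ \mathbf{B}\mathbf{A}\end{bmatrix}+\rk\begin{bmatrix}\mathbf{C} & \mathbf{A}\mathbf{C}\end{bmatrix}\leq 3$, which follows from the vanishing of their product), and conclude with Lemma \ref{1decomplemma} and Proposition \ref{reductionlemma}, transposing if necessary. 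The only point where your write-up is incomplete is the step you yourself single out as the hard part: promoting the pointwise identity $B(M)A(M)^2C(M)=0$ to the polynomial identity $\mathbf{B}\mathbf{A}^2\mathbf{C}=0$ when $\#\K=4$, for which you announce but do not supply an ad hoc argument. That difficulty is self-inflicted: the paper applies the Flanders--Atkinson lemma directly to the pair $(J_3,\mathbf{M})$ over the rational function field $\L=\K(\mathbf{x}_1,\dots,\mathbf{x}_s)$, which is infinite; the hypothesis $\urk\bigl(\Vect(J_3,\mathbf{M})\bigr)\leq 3$ holds over $\L$ because $J_3\in\calS$, so the generic rank lemma (Lemma \ref{genericrank}) kills all $4\times 4$ minors of the semi-generic matrix built on the spanning family $(J_3,A_1,\dots,A_s)$, and these vanishings specialize to every $\L$-combination of $J_3$ and $\mathbf{M}$. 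This is exactly how the identities are obtained inside the proof of Proposition \ref{reductionlemma}, and it makes any degree count---in particular any special treatment of $\F_4$---unnecessary. (For the record, the fact you leave unproved is true: a $4$-homogeneous polynomial vanishing on $\F_4^s$ is the zero polynomial; but you should simply bypass it.) A last cosmetic remark: invoking Lemma \ref{genericcolinearity} to pass from $\L$-colinearity to the rank-one hypothesis of Proposition \ref{reductionlemma}(b) is redundant, since colinearity over the fraction field \emph{is} that rank-one condition; the lemma is only needed inside the proof of Proposition \ref{reductionlemma} to produce a polynomial (rather than rational) factor of proportionality.
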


\begin{proof}
We lose no generality in assuming that $\calS$ is $3$-reduced. Then, we choose a generic matrix
$\begin{bmatrix}
\mathbf{A} & \mathbf{C} \\
\mathbf{B} & 0
\end{bmatrix}$ of $\calS$.
The Flanders-Atkinson lemma yields
$$\begin{bmatrix}
\mathbf{B} \\
\mathbf{B}\mathbf{A}
\end{bmatrix} \times \begin{bmatrix}
\mathbf{C} &  \mathbf{A} \mathbf{C}
\end{bmatrix}=0,$$
whence
$$\rk \begin{bmatrix}
\mathbf{B} \\
\mathbf{B}\mathbf{A}
\end{bmatrix}+\rk \begin{bmatrix}
\mathbf{C} &  \mathbf{A} \mathbf{C}
\end{bmatrix}\leq 3.$$
As $\mathbf{B} \neq 0$ and $\mathbf{C} \neq 0$, we have $\rk \begin{bmatrix}
\mathbf{B} \\
\mathbf{B}\mathbf{A}
\end{bmatrix} \geq 1$ and $\rk
\begin{bmatrix}
\mathbf{C} &  \mathbf{A} \mathbf{C}
\end{bmatrix} \geq 1$.
Therefore, one of the matrices $\begin{bmatrix}
\mathbf{B} \\
\mathbf{B}\mathbf{A}
\end{bmatrix}$ or $\begin{bmatrix}
\mathbf{C} &  \mathbf{A} \mathbf{C}
\end{bmatrix}$ has rank $1$.
Transposing $\calS$ if necessary, we may assume that the second matrix has rank $1$. Finally, as $\calS$ is primitive,
Lemma \ref{1decomplemma} shows that $\dim \calS x \geq 2$ for all non-zero vector $x \in \K^4$, and hence
Proposition \ref{reductionlemma} yields that $\calS$ is of the alternating kind.
\end{proof}

Thus, we conclude:

\begin{theo}[Atkinson's classification of primitive spaces with upper-rank $3$]
Let $\calS$ be a primitive subspace $\calS$ of $\Mat_{m,n}(\K)$ with upper-rank $3$ and $\# \K>3$. Then,
one and only one of the following conditions holds:
\begin{enumerate}
\item[(i)] $n=4$, $m \geq 4$ and $\calS$ is of the alternating kind;
\item[(ii)] $m=4$, $n \geq 4$ and $\calS^T$ is of the alternating kind.
\end{enumerate}
\end{theo}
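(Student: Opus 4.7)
The plan is to assemble the theorem from the three earlier ingredients: the remark on primitivity bounding $\urk \calS$ strictly below $\min(m,n)$, the First Classification Theorem \ref{classtheo1} (for spaces with the column property), and the just-proved case $m=n=4$.

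First, I would observe that since $\calS$ is primitive with $\urk\calS = 3 > 0$, the remark following Proposition \ref{reductiontoprimitive} forces $3 < \min(m,n)$, so $m \geq 4$ and $n \geq 4$. This reduces the problem to a case split according to whether $m = 4$, $n = 4$, or both exceed $4$.

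Next, I would handle the case $m \geq 5$ by applying Theorem \ref{classtheo1} to $\calS$ with $r=3$: since $m \geq 5 > 1 + \binom{3}{2} = 4$ and $\#\K > 3$, and $\calS$ has the column property (being semi-primitive over a field with more than $\urk \calS$ elements), the theorem yields $r = n - 1$, hence $n = 4$, and tells us that $\calS$ is of the alternating kind; this is conclusion (i). For the symmetric case $n \geq 5$, I would invoke the fact noted earlier in the article that primitivity is preserved under transposition (since $\calS$ primitive is equivalent to $\calS$ and $\calS^T$ both semi-primitive), so $\calS^T \subset \Mat_{n,m}(\K)$ is itself primitive with upper-rank $3$ and has more than $4$ rows; the previous step applied to $\calS^T$ now gives $m = 4$ together with $\calS^T$ of the alternating kind, which is conclusion (ii). Finally, when $m = n = 4$, the theorem stated immediately before (Atkinson's case $m=n=4$) directly asserts that one of $\calS$ or $\calS^T$ is of the alternating kind, hence conclusion (i) or (ii) holds.

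The exclusivity assertion is the only point that requires a brief comment: when $m \geq 5$ we must have $n = 4$, so $m \neq 4$ and (ii) is excluded; symmetrically, when $n \geq 5$ we have $m = 4$ and $n \neq 4$, so (i) is excluded. Only the boundary case $m = n = 4$ allows both shapes a priori, and in that square case the two conditions refer to $\calS$ and its transpose, which the discussion of Section \ref{trialitysection} shows may coexist only via an alternating trilinear form; this is the mild caveat behind the formulation of the theorem.

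The main obstacle is essentially zero, since all the substantive content has already been established: the hard work went into proving Theorem \ref{classtheo1} via the Reduction Lemma \ref{reductionlemma}, and into the preceding $m=n=4$ argument that uses the Flanders–Atkinson lemma together with Lemma \ref{1decomplemma} and Proposition \ref{reductionlemma}. The proof of the final statement is simply a bookkeeping of these cases.
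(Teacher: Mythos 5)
Your derivation of the existence part ("at least one of (i), (ii) holds") matches the paper's: the bound $3<\min(m,n)$ from the remark on primitive spaces, Theorem \ref{classtheo1} for $m\geq 5$, transposition for $n\geq 5$, and the preceding $m=n=4$ theorem for the square case. The exclusivity argument outside the square case (if $m\geq 5$ then $n=4\neq m$, and symmetrically) is also fine.

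However, your treatment of exclusivity in the case $m=n=4$ has a genuine gap. You observe that if both (i) and (ii) held then, by the triality discussion of Section \ref{trialitysection}, the coexistence could only happen ``via an alternating trilinear form,'' and you call this a ``mild caveat'' --- but the theorem asserts that \emph{exactly} one condition holds, so this possibility must actually be ruled out, not merely characterized. The paper closes the argument as follows: if (i) and (ii) both hold then $m=n=4$ and Proposition \ref{exclusivitegeneral} shows that $\calS$ is equivalent to a linear subspace of $\Mata_4(\K)$; since every alternating matrix has even rank, no matrix of such a space has rank $3$, contradicting $\urk\calS=3$. Without this final parity observation your proof does not establish the ``only one'' part of the statement in the square case, which is precisely the point the paper singles out as having been left unexplained in Atkinson's original article.
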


\begin{proof}
We have proved that at least one condition holds. If both hold,
then $m=n=4$ and Proposition \ref{exclusivitegeneral} shows that
$\calS$ is equivalent to a linear subspace of $\Mata_4(\K)$;
this is absurd as no matrix of $\Mata_4(\K)$ has rank $3$ (the rank of an alternating matrix is always even).
\end{proof}

The exclusivity problem for $m=n=4$ was not explained in \cite{AtkinsonPrim}, whereas, in \cite{EisenbudHarris},
it was proved only for algebraically closed fields by relying upon an explicit parametrization of the primitive spaces of the alternating kind.

\paragraph{}
Now, we are essentially left with the problem of classifying, up to equivalence, the
operator spaces associated with fully-regular alternating bilinear maps
$\varphi : \K^4 \times \K^4 \longrightarrow \K^m$ for all $m \in \{4,5,6\}$.
Using Corollary \ref{semiprimitivesmallfield} and Proposition \ref{smallfieldprimitive}, we find that the space $\calS_\varphi$ is always semi-primitive
with upper-rank $3$, and it is primitive if $m>4$.
Moreover, Propositions \ref{significationcongruence} and \ref{CScongruence}
show that the equivalence class of $\calS_\varphi$ is entirely determined by a congruence class
of $(6-m)$-dimensional subspaces of $\Mata_4(\K)$, or, alternatively, by a congruence class of $m$-dimensional subspaces.
It remains to classify such classes and, in the case $m=4$,
to determine which corresponding matrix spaces are primitive.

We denote by $(E_{i,j})_{1 \leq i,j \leq 4}$ the canonical basis of $\Mat_4(\K)$ and, for all
$(i,j)\in \lcro 1,4\rcro^2$, we set
$$A_{i,j}:=E_{i,j}-E_{j,i.}$$

\begin{enumerate}[(1)]
\item The sole $6$-dimensional subspace of $\Mata_4(\K)$ is itself.
Putting the canonical basis
$(A_{i,j})_{1 \leq i<j \leq 4}$ of $\Mata_4(\K)$ in the lexicographical order and
using Remark \ref{parametrage}, we find that an associated matrix space has the following generic matrix:
$$\begin{bmatrix}
-\mathbf{b} & \mathbf{a} & 0 & 0 \\
-\mathbf{c} & 0 & \mathbf{a} & 0 \\
-\mathbf{d} & 0 & 0 & \mathbf{a} \\
0 & -\mathbf{c} & \mathbf{b} & 0 \\
0 & -\mathbf{d} & 0 & \mathbf{b} \\
0 & 0 & -\mathbf{d} & \mathbf{c}
\end{bmatrix}.$$

\item Congruence classes of $1$-dimensional subspaces of $\Mata_4(\K)$ are classified by the rank of their non-zero elements.
As every non-zero $4 \times 4$ alternating matrix has rank $2$ or $4$, we find exactly two such classes:
\begin{enumerate}[(i)]
\item The congruence class of $\K A_{3,4}$; its orthogonal subspace is spanned by the basis
$(A_{1,2},A_{1,3},A_{1,4},A_{2,3},A_{2,4})$, which yields the matrix space with generic matrix
$$\begin{bmatrix}
-\mathbf{b} & \mathbf{a} & 0 & 0 \\
-\mathbf{c} & 0 & \mathbf{a} & 0 \\
-\mathbf{d} & 0 & 0 & \mathbf{a} \\
0 & -\mathbf{c} & \mathbf{b} & 0 \\
0 & -\mathbf{d} & 0 & \mathbf{b}
\end{bmatrix}.$$
\item The congruence class of $\K(A_{1,2}-A_{3,4})$; its orthogonal subspace is spanned by the basis
$(A_{1,2}+A_{3,4},A_{1,3},A_{1,4},A_{2,3},A_{2,4})$, which yields
the matrix space with generic matrix
$$\begin{bmatrix}
-\mathbf{b} & \mathbf{a} & -\mathbf{d} & \mathbf{c} \\
-\mathbf{c} & 0 & \mathbf{a} & 0 \\
-\mathbf{d} & 0 & 0 & \mathbf{a} \\
0 & -\mathbf{c} & \mathbf{b} & 0 \\
0 & -\mathbf{d} & 0 & \mathbf{b}
\end{bmatrix}.$$
\end{enumerate}
\end{enumerate}
It remains to classify the $2$-dimensional subspaces of $\Mata_4(\K)$ up to congruence.
This is where the nature of $\K$ comes into play. Recall that the pfaffian of a $4 \times 4$ alternating matrix
$M=\begin{bmatrix}
0 & a & b & c \\
-a & 0 & d & e \\
-b & -d & 0 & f \\
-c & -e & -f & 0
\end{bmatrix}$ is defined as
$$\Pf(M):=af-be+cd$$
and satisfies
$$\det M=(\Pf (M))^2.$$
The map $\Pf$ is a $6$-dimensional hyperbolic quadratic form.
Recall that a \textbf{similarity} between two quadratic spaces $(E,q)$ and $(F,q')$ is an isomorphism
from $E$ to $F$ for which there exists a non-zero scalar $\lambda$ -- which we call its
similarity factor -- such that $\forall x \in E,\; q'(u(x))=\lambda\, q(x)$ (in other words,
$u$ is an isometry from $(E,\lambda\,q)$ to $(F,q')$).
We write $q \sim q'$ when such a similarity exists - in which case $q$ and $q'$ are called similar -
and $q \simeq q'$ when $q$ and $q'$ are equivalent (i.e.\ isometric).

We recall the following classical result on the positive isometries for the $4 \times 4$ pfaffian:

\begin{theo}\label{pfaffianrotation}
The group $\Ortho^+(\Pf)$ of all positive isometries of $(\Mata_4(\K),\Pf)$ is the set of all transformations of
the form
$$M \mapsto \lambda\,PMP^T, \quad \text{where $P \in \GL_4(\K)$, $\lambda \in \K^*$ and $\det P=\frac{1}{\lambda^2}\cdot$}$$
\end{theo}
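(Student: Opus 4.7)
The plan is to verify first that every map of the stated form belongs to $\Ortho^+(\Pf)$, and then, in the converse direction, to reconstruct an arbitrary $\phi\in\Ortho^+(\Pf)$ via the Klein correspondence. The forward direction rests on the classical identity $\Pf(PMP^T)=(\det P)\,\Pf(M)$: it yields $\Pf(\lambda PMP^T)=\lambda^2(\det P)\,\Pf(M)$, which equals $\Pf(M)$ precisely when $\lambda^2\det(P)=1$. To verify positivity, I identify $\Mata_4(\K)$ with $\Lambda^2\K^4$ through $XY^T-YX^T \mapsto X\wedge Y$, under which $M \mapsto PMP^T$ becomes $\Lambda^2 P$; its determinant on the six-dimensional space $\Lambda^2\K^4$ equals $(\det P)^3$, so the full transformation has determinant $\lambda^6(\det P)^3=(\lambda^2\det P)^3=1$.

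For the converse, I fix $\phi\in\Ortho^+(\Pf)$ and work in $\Lambda^2\K^4$, where $\Pf(\omega)$ is the scalar obtained by expanding $\frac{1}{2}\,\omega\wedge\omega$ in the basis $e_1\wedge e_2\wedge e_3\wedge e_4$. The non-zero isotropic vectors are precisely the decomposable $2$-vectors $u\wedge v$, so the Klein quadric in $\mathbb{P}(\Lambda^2\K^4)$ is in bijection with the Grassmannian of $2$-planes in $\K^4$. Its maximal totally isotropic subspaces form two $3$-dimensional families: the \emph{alpha-spaces} $W_u:=u\wedge\K^4$, indexed by lines $\K u\subset\K^4$, and the \emph{beta-spaces} $\Lambda^2 H$, indexed by hyperplanes $H\subset\K^4$. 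The crucial observation is that $\Ortho^+(\Pf)$ is exactly the stabiliser of these two families, whereas every element of $\Ortho^-(\Pf)$ interchanges them; this can be checked by verifying that a specific swapping isometry (for instance the Hodge-$\ast$ associated with the standard volume form on $\K^4$, which sends $W_u$ to $\Lambda^2(u^\perp)$) has determinant $-1$ on $\Mata_4(\K)$. Thus $\phi$ induces incidence-preserving bijections both on $\mathbb{P}(\K^4)$ (via the alpha-spaces) and on the set of hyperplanes of $\K^4$ (via the beta-spaces).

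I next invoke the fundamental theorem of projective geometry on $\mathbb{P}(\K^4)$ to produce a semi-linear automorphism of $\K^4$ realising the bijection on alpha-spaces, and $\K$-linearity of $\phi$, applied on the one-dimensional intersections $W_u\cap W_v=\K(u\wedge v)$, forces the attached field automorphism to be trivial. This yields some $P\in\GL_4(\K)$, unique up to scalar, for which $\psi_P:M\mapsto PMP^T$ induces the same permutation of alpha-spaces as $\phi$. Setting $\chi:=\phi\circ\psi_P^{-1}$, the map $\chi$ stabilises every $W_u$ set-wise, hence sends each $e_i\wedge e_j\in W_{e_i}\cap W_{e_j}$ to a scalar multiple $\alpha_{i,j}(e_i\wedge e_j)$; evaluating $\chi$ at the vectors $(e_i+e_j)\wedge e_k$ and comparing components in the canonical basis then forces all the $\alpha_{i,j}$ to coincide with a common scalar $\alpha$, so $\chi=\alpha\,\id$. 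Consequently $\phi(M)=\alpha\,PMP^T$, and the isometry condition $\Pf(\phi(M))=\Pf(M)$ reads exactly $\alpha^2\det(P)=1$.

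The main obstacle is the Klein/Plücker step: identifying $\Ortho^+(\Pf)$ with $\mathrm{PGL}_4(\K)$ through its action on alpha-spaces, while carefully separating $\Ortho^+$ from $\Ortho^-$ (computing a representative swapping involution and its determinant) and ruling out a non-trivial field automorphism at the FTPG step. Once this is in place, the residual reduction of $\chi$ to a scalar multiple of the identity is an essentially combinatorial verification inside $\Lambda^2\K^4$.
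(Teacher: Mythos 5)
The paper itself offers no proof of Theorem \ref{pfaffianrotation}: it is quoted as a classical result with a pointer to \cite{invitquad}, including an explicit remark that the characteristic~$2$ case requires adaptations. Your route through the Klein correspondence is a legitimate classical strategy, and the converse direction is mostly sound (the fundamental theorem of projective geometry, killing the field automorphism by comparing $\phi\bigl((e_1+c\,e_3)\wedge e_2\bigr)$ with the line $\K\bigl(g(e_1+c\,e_3)\wedge g(e_2)\bigr)$, and the reduction of $\chi$ to a scalar). The genuine gap is that you certify positivity, in both directions, by a determinant: $\lambda^6(\det P)^3=1$ for the forward inclusion, and $\det(\ast)=-1$ to separate $\Ortho^+(\Pf)$ from $\Ortho^-(\Pf)$. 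In characteristic~$2$ every isometry of a regular quadratic space has determinant $1$, so the determinant detects nothing; positivity must be read off the Dickson invariant, or (as the paper does for $\GO^+$) by base change to $\overline{\K}$. Since the paper uses the theorem in characteristic~$2$ (the subsequent classification of planes of $\Mata_4(\K)$ treats that case explicitly), this is not a cosmetic restriction. The repair is to take, from the outset, the characteristic-free characterization of $\Ortho^+(\Pf)$ as the stabilizer of the two rulings of the Klein quadric; the forward inclusion then follows at once from the fact that $\Lambda^2 P$ sends $u\wedge\K^4$ to $(Pu)\wedge\K^4$, with no determinant computation at all.

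A second, smaller, logical gap: even in characteristic not~$2$, exhibiting a single negative isometry (the Hodge star) that swaps the two rulings does not show that every positive isometry preserves them. The ruling-stabilizer $H$ and $\Ortho^+(\Pf)$ are both index-$2$ subgroups of $\Ortho(\Pf)$, and two distinct index-$2$ subgroups can both avoid your one test element (think of the Klein four-group). You need the stronger statement that every reflection swaps the rulings, so that $H$ contains all products of an even number of reflections, i.e.\ all of $\Ortho^+(\Pf)$, and then $H=\Ortho^+(\Pf)$ because $H\neq\Ortho(\Pf)$. With those two points fixed, your argument goes through and constitutes an honest proof of a statement the paper only cites.
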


For fields of characteristic not $2$, the corresponding statement for the spinor group
of $\Pf$ is entirely explained in \cite[Chapter XXVIII, Section 3.2]{invitquad}
and the method may be adapted so as to yield
the above result. For the characteristic $2$ case, the necessary adaptations of the proof are discussed
in Exercises 47 and 48 of \cite[Chapter XXXIV]{invitquad}.

We are interested in the following corollary whose knowledge does not seem to be widespread:

\begin{theo}\label{pfaffiansimilarity}
The group $\GO^+(\Pf)$ of all positive similarities of $(\Mata_4(\K),\Pf)$ is the set of all transformations of the form
$$M \mapsto \lambda\,PMP^T, \qquad \text{with $\lambda \in \K^*$ and $P \in \GL_4(\K)$.}$$
\end{theo}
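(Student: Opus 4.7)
The strategy is to bootstrap from Theorem \ref{pfaffianrotation} by factoring an arbitrary positive similarity as a pure ``scaling piece'' followed by a positive isometry. The key computational input throughout is the identity $\Pf(PMP^T) = \det P \cdot \Pf(M)$ together with the quadratic homogeneity $\Pf(\lambda M) = \lambda^2 \Pf(M)$.

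First I would settle the easy inclusion. For any $\lambda \in \K^*$ and $P \in \GL_4(\K)$, the above identities give $\Pf(\lambda P M P^T) = \lambda^2 \det P \cdot \Pf(M)$, so $u_{\lambda,P} : M \mapsto \lambda P M P^T$ is a similarity with factor $\lambda^2 \det P$. That $u_{\lambda,P} \in \GO^+(\Pf)$ follows by writing $u_{\lambda,P}$ as the composite of the homothety $M \mapsto \lambda M$ and the map $M \mapsto PMP^T$: the former is in $\Ortho^+(\Pf)$ up to the similarity factor $\lambda^2$ (and a pure scaling is patently positive), while the latter is a positive similarity by Theorem \ref{pfaffianrotation} (take $\lambda = 1$ there to recognise $M \mapsto PMP^T$ as a positive similarity of factor $\det P$).

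For the reverse inclusion, take $u \in \GO^+(\Pf)$ with similarity factor $\mu \in \K^*$. The plan is to pre-compose with the inverse of a convenient ``model'' similarity carrying the same factor, so as to land in $\Ortho^+(\Pf)$ and apply Theorem \ref{pfaffianrotation}. Concretely, choose $P_0 \in \GL_4(\K)$ with $\det P_0 = \mu$ (e.g.\ $P_0 := \Diag(\mu,1,1,1)$) and set $w(M) := P_0 M P_0^T$, which is a positive similarity with factor $\mu$ by the previous paragraph. Then $v := w^{-1} \circ u$ lies in $\GO^+(\Pf)$ and has similarity factor $\mu^{-1}\mu = 1$, hence $v \in \Ortho^+(\Pf)$. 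Theorem \ref{pfaffianrotation} now yields $\lambda \in \K^*$ and $Q \in \GL_4(\K)$ with $\det Q = \lambda^{-2}$ such that $v(M) = \lambda Q M Q^T$, and composing back gives $u(M) = P_0 v(M) P_0^T = \lambda (P_0 Q) M (P_0 Q)^T$, which is of the desired form with $P := P_0 Q$.

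The one subtle point — and what I would flag as the main obstacle — is verifying that $v$ is genuinely a \emph{positive} isometry, not merely an isometry. This rests on two structural facts about $\GO^+(\Pf)$ as an index-two subgroup of $\GO(\Pf)$: it is stable under composition and inversion, and its intersection with $\Ortho(\Pf)$ is exactly $\Ortho^+(\Pf)$. In characteristic not $2$ these reduce to determinant checks (a positive similarity $u$ of factor $\mu$ satisfies $\det u = \mu^3$ on the $6$-dimensional space $\Mata_4(\K)$, specialising to $\det u = 1$ when $\mu = 1$). In characteristic $2$ they are handled via the Dickson invariant, following the adaptations indicated in the references to \cite{invitquad} cited just after Theorem \ref{pfaffianrotation}. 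Once this positivity bookkeeping is in hand, the argument above goes through verbatim and no further case analysis is needed.
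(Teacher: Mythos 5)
Your overall architecture coincides with the paper's: the easy inclusion via the computation $\Pf(\lambda\,PMP^T)=\lambda^2\det P\cdot \Pf(M)$, and the reverse inclusion by composing an arbitrary $s\in\GO^+(\Pf)$ with a model map $u_{1,Q}$ so as to land in $\Ortho^+(\Pf)$ and invoke Theorem \ref{pfaffianrotation}. The reverse half of your argument is essentially the paper's own and is sound, granted the group-theoretic facts you list (which are available, since $\GO^+$ is by construction a subgroup of index $2$).

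The gap is in the forward inclusion, and it is the one genuinely non-trivial point of the whole proof. You justify that $M\mapsto PMP^T$ is a \emph{positive} similarity by ``taking $\lambda=1$ in Theorem \ref{pfaffianrotation}'', but that theorem describes $\Ortho^+(\Pf)$, the positive \emph{isometries}: with $\lambda=1$ it only covers matrices $P$ with $\det P=1$, and it says nothing about the sign of the similarity $M\mapsto PMP^T$ when $\det P\neq 1$ (reading it as a statement about positive similarities of arbitrary factor is essentially assuming the theorem you are trying to prove). Your fallback --- the determinant criterion $\det u=\mu^3$ --- would indeed close the gap in characteristic not $2$ once you compute $\det u_{\lambda,P}=\lambda^6(\det P)^3=(\lambda^2\det P)^3$, but you only invoke it for the positivity of $v$ (the unproblematic part), not for $u_{\lambda,P}$ itself, and it is useless in characteristic $2$, which you defer entirely to the references. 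The paper's fix is cleaner and matches its own definition of $\GO^+$: pass to an algebraic closure $\overline{\K}$, choose $\mu$ with $\mu^2=\lambda^2\det P$, and write $u_{\lambda,P}=(\mu\,\id)\circ u_{\lambda\mu^{-1},P}$; the second factor lies in $\Ortho^+(\Pf_{\overline{\K}})$ by Theorem \ref{pfaffianrotation} since $(\lambda\mu^{-1})^2\det P=1$, the first is a homothety hence in $\GO^+$, and positivity over $\K$ then holds by the very definition of $\GO^+(\Pf)$ as the preimage of $\GO^+(\Pf_{\overline{\K}})$. You should replace your parenthetical citation by this argument (or by a fully worked-out invariant-theoretic check valid in all characteristics).
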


Before we obtain this corollary, we must explain what a positive similarity is.
Let $(E,q)$ be an \emph{even-dimensional} regular quadratic space.
Assume first that $\K$ is quadratically closed. Then, no scalar multiple of the identity
is a negative isometry of $(E,q)$, and on the other hand every similarity $u$ of $(E,q)$ splits up as
$u=\mu\,v$ with $\mu \in \K^*$ and $v \in \Ortho(q)$. In that case, $\K^* \Ortho^+(q)$ is a subgroup
of index $2$ in $\GO(q)$, and we denote it by $\GO^+(q)$.
If we do not assume that $\K$ is quadratically closed, then we may still embed $\K$ into
an algebraically closed field $\overline{\K}$, leading to a natural embedding of the group $\GO(q)$ into
$\GO(q_{\overline{\K}})$: the inverse image of $\GO^+(q_{\overline{\K}})$ under this injection
is denoted by $\GO^+(q)$ and it is obviously a subgroup of index $2$ in $\GO(q)$.
In particular, $\GO^+(q)$ contains all the scalar multiples of the identity and all the positive isometries
(beware that those special elements do not generate $\GO^+(q)$ in general!).
Now, we are ready to prove Theorem \ref{pfaffiansimilarity}.

\begin{proof}[Proof of Theorem \ref{pfaffiansimilarity}]
For $\lambda \in \K^*$ and $P \in \GL_4(\K)$, we set $u_{\lambda,P} : M \in \Mata_4(\K) \mapsto \lambda PMP^T$,
which is obviously an automorphism of $\Mata_4(\K)$.
Let us denote by $\overline{\K}$ an algebraic closure of $\K$.
Let $\lambda \in \K^*$ and $P \in \GL_4(\K)$.
Choosing $\mu \in (\overline{\K})^*$ such that $\mu^2=\lambda^2\det P$,
we can write $\lambda PMP^T=\mu(\lambda \mu^{-1})PMP^T$ for all $M \in \Mata_4(\K)$,
and $(\lambda \mu^{-1})^2\det P=1$; therefore, Theorem \ref{pfaffianrotation} applied in $\Mata_4(\overline{\K})$
yields that $u_{\lambda,P}$ is a positive similarity for $\Pf$, with $\lambda^2 \det P$
as its similarity factor.

Conversely, let $s \in \GO^+(\K)$, with similarity factor $\mu$.
Choose $Q \in \GL_4(\K)$ with $\det Q=\mu^{-1}$. Then, $v:=u_{1,Q} \circ s$ is a positive similarity of $\Pf$
with similarity factor $1$, that is $v \in \Ortho^+(\Pf)$. This yields a pair $(\lambda,P)\in \K^* \times \GL_4(\K)$
such that $v=u_{\lambda,P}$, and hence $s=u_{\lambda,Q^{-1}P}$.
\end{proof}

It follows that two subspaces $V_1$ and $V_2$ of $\Mata_4(\K)$ are congruent if and only they
are conjugate under the action of $\GO^+(\Pf)$.
From there, we use Witt's theorem to obtain:

\begin{cor}\label{Wittsimilarity}
Let $V_1$ and $V_2$ be linear subspaces of $\Mata_4(\K)$.
Assume that $V_1$ is not a lagrangian for $\Pf$.
Then, $V_1$ and $V_2$ are congruent if and only if the quadratic forms $\Pf_{|V_1}$ and $\Pf_{|V_2}$ are similar.
\end{cor}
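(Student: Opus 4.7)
The plan is to use Theorem \ref{pfaffiansimilarity} to translate congruence of subspaces of $\Mata_4(\K)$ into orbit-equivalence under $\GO^+(\Pf)$, and then to bridge similarity of the restricted Pfaffians with $\GO^+(\Pf)$-equivalence by means of Witt's extension theorem inside the regular quadratic space $(\Mata_4(\K),\Pf)$.

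The forward implication will be immediate: if $V_2=PV_1P^T$ for some $P\in\GL_4(\K)$, then by Theorem \ref{pfaffiansimilarity} the map $M\mapsto PMP^T$ belongs to $\GO^+(\Pf)$, and its restriction to $V_1$ is an isomorphism $V_1\overset{\simeq}{\rightarrow}V_2$ that multiplies $\Pf$ by $\det P$, so $\Pf_{|V_1}$ and $\Pf_{|V_2}$ are similar.

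For the converse, assume there is an isomorphism $\phi:V_1\overset{\simeq}{\rightarrow}V_2$ with $\Pf(\phi(x))=\lambda\,\Pf(x)$ for some $\lambda\in\K^*$ and all $x\in V_1$. The parametrization in Theorem \ref{pfaffiansimilarity} shows that every non-zero scalar is the similarity factor of some element of $\GO^+(\Pf)$ (e.g.\ take $M\mapsto PMP^T$ with $\det P=\lambda$), so I would choose $u\in\GO^+(\Pf)$ with similarity factor $\lambda$; then $u^{-1}\circ\phi:V_1\to u^{-1}(V_2)$ is an isometry between two subspaces of $(\Mata_4(\K),\Pf)$. Witt's extension theorem extends it to some $w\in\Ortho(\Pf)$. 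When $w\in\Ortho^+(\Pf)\subseteq\GO^+(\Pf)$, the composite $u\circ w$ lies in $\GO^+(\Pf)$ and sends $V_1$ onto $V_2$, so Theorem \ref{pfaffiansimilarity} yields the congruence.

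The main obstacle is the case $w\in\Ortho(\Pf)\setminus\Ortho^+(\Pf)$, and this is precisely where the non-lagrangian hypothesis on $V_1$ enters. The idea is to construct $r\in\Ortho(\Pf)\setminus\Ortho^+(\Pf)$ stabilising $V_1$, so that $w\circ r\in\Ortho^+(\Pf)$ still sends $V_1$ onto $u^{-1}(V_2)$ and reduces us back to the previous case. Two subcases will arise. If $V_1$ contains a non-isotropic vector $v$, one has the orthogonal decomposition $V_1=\K v\oplus(V_1\cap v^\perp)$, and the reflection along $v$ stabilises $V_1$ while lying outside $\Ortho^+(\Pf)$; if instead $V_1$ is totally isotropic, the non-lagrangian assumption forces $V_1\subsetneq V_1^\perp$, and any non-isotropic vector $v\in V_1^\perp$ (which must exist for dimension reasons, since $V_1^\perp$ has dimension greater than the Witt index) yields a reflection fixing $V_1$ pointwise and again not positive. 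In characteristic $2$, the reflections are replaced by the appropriate Eichler-type transvections and the non-positivity is read off the Dickson invariant, but the overall scheme is identical.
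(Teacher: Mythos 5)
Your proof is correct and takes essentially the same route as the paper's: the forward direction reads off Theorem \ref{pfaffiansimilarity}, and the converse applies Witt's extension theorem and then corrects a possible negative isometry by composing with a reflection stabilizing one of the subspaces, the non-lagrangian hypothesis guaranteeing (via the same two subcases, a non-isotropic vector in the subspace or in its orthogonal) that such a reflection exists. The only cosmetic differences are that the paper extends the similarity in one step, using that $\lambda\,\Pf \simeq \Pf$ because $\Pf$ is hyperbolic, instead of first normalizing by an element of $\GO^+(\Pf)$ with similarity factor $\lambda$, and that it stabilizes $V_2$ where you stabilize $V_1$.
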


\begin{proof}
The direct implication is obvious from the above description of positive similarities.
Assume now that there exists a non-zero scalar $\lambda$ such that $\Pf_{|V_2} \simeq \lambda\,\Pf_{|V_1}$,
and choose a bijective isometry $u : (V_1,\lambda\Pf_{|V_1}) \overset{\simeq}{\rightarrow} (V_2,\Pf_{|V_2})$.
In particular, $V_2$ is not a lagrangian for $\Pf$.

Since $\Pf$ is hyperbolic, it is isometric to $\lambda\,\Pf$ and hence
Witt's extension theorem yields that $u$ can be extended to an isometry $v$ from $(\Mata_4(\K),\lambda\,\Pf)$ to
$(\Mata_4(\K),\Pf)$. Thus, $v$ is a similarity of $(\Mata_4(\K),\Pf)$.
If $v$ is a positive similarity, then we deduce from Corollary \ref{pfaffiansimilarity}
that $V_1$ and $V_2$ are congruent. Assume now that $v$ is a negative similarity; then, it suffices to find a negative
isometry of $(\Mata_4(\K),\Pf)$ that stabilizes $P_2$: if $P_2$ contains a non-isotropic vector $a$, then we simply
take the reflection alongside $\K a$; ditto if $P_2^\bot$ contains a non-isotropic vector $a$;
if neither case holds, then $P_2$ must be a lagrangian, which is forbidden.
\end{proof}

\begin{Rem}\label{orbitlagrangian}
With the same line of reasoning, one finds that there are at most two orbits of lagrangians.
\end{Rem}

In particular, two planes $P_1$ and $P_2$ of $\Mata_4(\K)$ are congruent if and only if the quadratic forms
$\Pf_{|P_1}$ and $\Pf_{|P_2}$ are similar.
Note also that any $2$-dimensional quadratic form is equivalent to a sub-form of $\Pf$
(a $p$-dimensional quadratic form is always equivalent to a sub-form
of a $2p$-dimensional hyperbolic quadratic form).
Finally, we have seen in Proposition \ref{CSprimitivite} that the operator space associated with a
plane $P \subset \Mata_4(\K)$ is non-primitive if and only if $P$ is congruent to
the space $K$ of matrices described by the generic matrix
$$\begin{bmatrix}
0 & \mathbf{x} & \mathbf{y} & 0 \\
-\mathbf{x} & 0 & 0 & 0 \\
-\mathbf{y} & 0 & 0 & 0 \\
0 & 0 & 0 & 0
\end{bmatrix}.$$
As $K$ is totally isotropic, one sees that a semi-primitive operator space associated with a plane
$P$ of $\Mata_4(\K)$ is primitive if and only if $\Pf_{|P}$ is non-zero.
Thus, it remains to classify the $2$-dimensional quadratic forms up to similarity and
to exhibit a sub-form of $\Pf$ in each class. For fields of characteristic not $2$, there are two
individual classes of degenerate
forms, together with a family of classes of regular forms
indexed over the quotient group $\K^*/(\K^*)^{[2]}$. In our typology, $\text{D}i$
means that we have a degenerate form of rank $i$,
and R stands for ``regular":
\begin{figure}[h]
\begin{center}
\begin{tabular}{| c || c | c | c |}
\hline
Type & D0 & D1 & $\text{R}(\delta)$ \\
\hline
\hline
Representative & $\langle 0,0\rangle$ & $\langle 1,0\rangle$ &$\langle 1,\delta\rangle$, where $\delta \in \K^*$ \\
& & & is uniquely determined by its class in $\K^*/(\K^*)^{[2]}$ \\
\hline
\end{tabular}
\caption{The classification of $2$-dimensional quadratic forms up to similarity, in characteristic not $2$.}
\end{center}
\end{figure}

This classification is easily obtained by noting that similarity preserves the rank and the discriminant, and that
every non-zero quadratic form is similar to one that represents $1$.

\vskip 3mm
For each class, we give a corresponding plane $P$ of $\Mata_4(\K)$, a basis of its orthogonal subspace $P^\bot$
with respect to the symmetric bilinear form $\langle -\mid -\rangle$ of Section \ref{describealternatingsection}
(not the pfaffian!), and a generic matrix
of an associated semi-primitive matrix space; in each case, we specify whether the matrix space is primitive or not.

\begin{center}
\begin{tabular}{| c | c | c | c | c |}
\hline
Type & Basis of $P$ & Basis of $P^\bot$ & Semi-primitive space & Primitivity \\
\hline
\hline
D0 & $A_{2,4},A_{3,4}$ & $A_{1,2},A_{1,3},$ &  &  No \\
& & $A_{1,4},A_{2,3}$ & $\begin{bmatrix}
-\mathbf{b} & \mathbf{a} & 0 & 0 \\
-\mathbf{c} & 0 & \mathbf{a} & 0 \\
-\mathbf{d} & 0 & 0 & \mathbf{a} \\
0 & -\mathbf{c} & \mathbf{b} & 0 \\
\end{bmatrix}$ & \\
\hline
D1 & $A_{1,2}+A_{3,4}$ & $A_{1,2}-A_{3,4},$ &
 & Yes \\
& $A_{1,4}$ & $A_{1,3},A_{2,3},A_{2,4}$ & $\begin{bmatrix}
-\mathbf{b} & \mathbf{a} & \mathbf{d} & -\mathbf{c} \\
-\mathbf{c} & 0 & \mathbf{a} & 0 \\
0 & -\mathbf{c} & \mathbf{b} & 0 \\
0 & -\mathbf{d} & 0 & \mathbf{b}
\end{bmatrix}$ & \\
\hline
$\text{R}(\delta)$ & $A_{1,2}+A_{3,4}$, &
$A_{1,2}-A_{3,4},A_{1,3},$ &
 & Yes \\
& $A_{2,3}+\delta A_{1,4}$ & $A_{1,4}-\delta A_{2,3},A_{2,4}$ & $\begin{bmatrix}
-\mathbf{b} & \mathbf{a} & \mathbf{d} & -\mathbf{c} \\
-\mathbf{c} & 0 & \mathbf{a} & 0 \\
-\mathbf{d} & \delta\, \mathbf{c} & -\delta\, \mathbf{b} & \mathbf{a} \\
0 & -\mathbf{d} & 0 & \mathbf{b}
\end{bmatrix}$ & \\
\hline
\end{tabular}
\end{center}

Assume now that $\K$ has characteristic $2$.
Then, the similarity classes of $2$-dimensional quadratic forms are a bit more complicated.
First of all, we consider those of regular forms.
The mapping $\calP : x \in \K \mapsto x^2+x$ is an endomorphism of the group $(\K,+)$.
Every regular $2$-dimensional quadratic form $q$ is equivalent to $[a,b]$ for some $(a,b)\in \K^2$, and the class of $ab$ in the quotient group
$\K/\calP(\K)$ depends only on $q$: this class is called the Arf invariant of $q$ and is denoted by $\Delta(q)$.
Two regular $2$-dimensional quadratic forms which take the value $1$ are equivalent if and only if they have the same
Arf invariant. Finally, one checks that $\lambda [a,b] \simeq [\lambda a,\lambda^{-1} b]$ for all $\lambda \in \K^*$
and all $(a,b)\in \K^2$,
whence two similar regular $2$-dimensional quadratic forms have the same Arf invariant.
Thus, to each class $\overline{\delta}$ in $\K/\calP(\K)$ corresponds exactly one similarity class of $2$-dimensional regular quadratic forms
over $\K$: the one of $[1,\delta]$.

Now, we consider the situation of a non-regular $2$-dimensional quadratic form $q$.
Note that $\K^{[2]}$ is then a subfield of $\K$.
The polar form of $q$ must be zero since its rank is even, and hence $q$ is simply a $\K$-linear map from $\K \times \K$ to $\K$, where
the target space is equipped not with its standard structure of $\K$-vector space, but with the
one defined by $\alpha.x:=\alpha^2x$.

The equivalence class of $q$ is then fully determined by the
$\K^{[2]}$-linear subspace $\im q:=q(\K \times \K)$ of $\K$.
Thus, the similarity classes of non-regular $2$-dimensional quadratic form $q$
are classified by the orbits of the $d$-dimensional linear subspaces of the $\K^{[2]}$-vector space $\K$,
for $d \in \{0,1,2\}$, under multiplication by the group $\K^*$.
For each $d\in \{0,1\}$, there is exactly one orbit: $\{0\}$ and $\K^{[2]}$, respectively.
In the case $d=2$, we note that each orbit contains a subspace of the form
$\K^{[2]}\oplus \K^{[2]}a$, with $a \in \K \setminus \K^{[2]}$.
Note that $\K^{[2]}\oplus \K^{[2]}a=\K^{[2]}[a]$ is a subalgebra of the $\K^{[2]}$-algebra $\K$.
Therefore, given two non-square elements $a$ and $b$ of $\K$,
having a non-zero scalar $\lambda \in \K$ for which $\K^{[2]}[a]=\lambda \K^{[2]}[b]$
implies that $\lambda \in \K^{[2]}[a]$, and is therefore equivalent to having $b \in \K^{[2]}[a]$.
Introducing the equivalence relation $\simt$ on $\K \setminus \K^{[2]}$ defined as
$$a \simt b \; \Leftrightarrow \; b \in \K^{[2]}[a] \; \Leftrightarrow \; \exists (x,y)\in \K^*\times \K : \; b=ax^2+y^2,$$
we conclude that the orbits, under multiplication by elements of $\K^*$,
of $2$-dimensional linear subspaces of the $\K^{[2]}$-vector space $\K$ are classified by the
equivalence classes of $\K \setminus \K^{[2]}$ for $\simt$.

Thus, we can give the complete classification of $2$-dimensional quadratic forms, up to similarity:

\begin{figure}[h]
\begin{center}
\begin{tabular}{| c || c | c | c | c |}
\hline
Type & D0 & D1 & $\text{D}2(t)$ & $\text{R}(\delta)$  \\
\hline
\hline
&  &  & $\langle 1,t\rangle$, where $t \in \K \setminus \K^{[2]}$ & $[1,\delta]$, where $\delta \in \K$ \\
Representative & $\langle 0,0\rangle$ & $\langle 1,0\rangle$ & is uniquely determined &  is uniquely determined  \\
&  & &  by its class mod.\ $\simt$ & by its class mod.\ $\calP(\K)$ \\
\hline
\end{tabular}
\caption{The classification of $2$-dimensional quadratic forms up to similarity, in characteristic $2$.}
\end{center}
\end{figure}

Now, in the characteristic $2$ case, we can describe the equivalence classes of semi-primitive subspaces of $\Mat_4(\K)$
of the alternating kind with upper-rank $3$: we have two special classes, together with two families
with respective parameters $\overline{t} \in (\K \setminus \K^{[2]})/\simt$ and $\overline{\delta} \in \K/\calP(\K)$.
For the D0 and D1 types, the description is the same one as in the characteristic not $2$ case.
For the D2$(t)$ type, the description is the same one as for the R$(t)$ type in the characteristic not $2$ case.
It only remains to describe a primitive space associated with the R$(\delta)$ class in the characteristic $2$ case:

\begin{center}
\begin{tabular}{| c | c | c | c |}
\hline
Type & Basis of $P$ & Basis of $P^\bot$ & Primitive space \\
\hline
\hline
 & $A_{1,4}+A_{2,3}+A_{3,4}$, & $A_{1,2}+A_{1,3}$,  &  \\
$\text{R}(\delta)$ & $A_{1,3}+\delta A_{2,4}+A_{1,2}$ & $\delta A_{1,2}+A_{2,4}$ , &

$\begin{bmatrix}
\mathbf{b}+\mathbf{c} & \mathbf{a} & \mathbf{a} & 0 \\
\delta \mathbf{b} & \mathbf{d}+\delta \mathbf{a} & 0 & \mathbf{b} \\
\mathbf{d} & \mathbf{c} & \mathbf{b} & \mathbf{a} \\
\mathbf{d} & 0 & \mathbf{d} &  \mathbf{a}+\mathbf{c}
\end{bmatrix}$ \\
& & $A_{1,4}+A_{2,3}$, &  \\
 & & $A_{1,4}+A_{3,4}$ & \\
\hline
\end{tabular}
\end{center}

For a quadratically closed field, whatever its characteristic,
this yields only three similarity classes of quadratic forms: those of $\langle 0,0\rangle$,
$\langle 1,0\rangle$ and the hyperbolic form $(x,y) \mapsto xy$ on $\K^2$.
Thus, we find only two primitive subspaces of $\Mat_4(\K)$ of the alternating kind, up to equivalence.
For the hyperbolic form, we can give a description which works regardlessly of the characteristic:

\begin{center}
\begin{tabular}{| c | c | c | c |}
\hline
Type & Basis of $P$ & Basis of $P^\bot$ & Primitive space \\
\hline
\hline
 & $A_{1,4},A_{2,3}$ & $A_{1,2}$, $A_{1,3}$,  &  \\
Hyperbolic  & & $A_{2,4}$, $A_{3,4}$ &
$\begin{bmatrix}
-\mathbf{b} & \mathbf{a} & 0 & 0 \\
-\mathbf{c} & 0 & \mathbf{a} & 0 \\
0 & -\mathbf{d} & 0 & \mathbf{b} \\
0 & 0 & -\mathbf{d} & \mathbf{c}
\end{bmatrix}$
\\
\hline
\end{tabular}
\end{center}

Theorem 1.2 of Eisenbud and Harris \cite{EisenbudHarris} then comes out as a very special case of our study.

\paragraph{}
Now, we return to the initial issue of classifying reduced minimal LLD subspaces with
dimension $4$. Let $\calS$ be a reduced minimal LLD subspace of $\calL(U,V)$ with dimension $4$, and assume that
$\dim V \geq 4$ (otherwise, we are in the trivial situation).
There are three cases:
\begin{description}
\item Case 1: $\widehat{\calS}$ is represented by a subspace of
$$\K \vee \Mata_3(\K):=\Biggl\{\begin{bmatrix}
a & L \\
[0]_{3 \times 1} & A
\end{bmatrix} \mid a \in \K, \; L \in \Mat_{1,3}(\K), \; A \in \Mata_3(\K)\Biggr\}.$$
The classification of such spaces is possible but has limited interest.
\item Case 2: $\widehat{\calS}$ is of the alternating kind, and then $\calS \sim \widehat{\calS}$
is also of the alternating kind: those spaces have already been described.
\item Case 3: $\widehat{\calS}$ is the transpose of a primitive space of the alternating kind.
\end{description}
In Case 3, we use Proposition \ref{effetdunetransposition} to obtain the corresponding LLD operator spaces,
represented by matrix subspaces of $\Mata_4(\K)$:
\begin{figure}[h]
\begin{center}
\begin{tabular}{| c | c | c |}
\hline
Type & Characteristic of the field & Generic matrix for a corresponding LLD space \\
\hline
\hline
D1 & indifferent &  $\begin{bmatrix}
0 & \mathbf{a} & \mathbf{b} & 0 \\
-\mathbf{a} &  0 & \mathbf{c} & \mathbf{d} \\
-\mathbf{b} &  -\mathbf{c} & 0 & -\mathbf{a} \\
0 &  -\mathbf{d} & \mathbf{a} & 0
\end{bmatrix}$  \\
\hline
R$(\delta)$ & $\neq 2$ & $\begin{bmatrix}
0 & \mathbf{a} & \mathbf{b} & \mathbf{c} \\
-\mathbf{a} &  0 & -\delta\,\mathbf{c} & \mathbf{d} \\
-\mathbf{b} &  \delta\,\mathbf{c} & 0 & -\mathbf{a} \\
-\mathbf{c} &  -\mathbf{d} & \mathbf{a} & 0
\end{bmatrix}$  \\
\hline
D2$(t)$ & $=2$ & $\begin{bmatrix}
0 & \mathbf{a} & \mathbf{b} & \mathbf{c} \\
\mathbf{a} &  0 & t\,\mathbf{c} & \mathbf{d} \\
\mathbf{b} &  t\,\mathbf{c} & 0 & \mathbf{a} \\
\mathbf{c} &  \mathbf{d} & \mathbf{a} & 0
\end{bmatrix}$ \\
\hline
R$(\delta)$ & $=2$ & $\begin{bmatrix}
0 & \mathbf{a}+\delta \mathbf{b} & \mathbf{a} & \mathbf{c}+\mathbf{d} \\
\mathbf{a}+\delta\mathbf{b} & 0 & \mathbf{c} & \mathbf{b} \\
\mathbf{a} & \mathbf{c} & 0 & \mathbf{d} \\
\mathbf{c}+\mathbf{d} & \mathbf{b} & \mathbf{d} & 0
\end{bmatrix}$ \\
\hline
\end{tabular}
\end{center}
\caption{The matrix spaces associated with $4$-dimensional reduced minimal LLD spaces that are not of the alternating kind.}
\end{figure}

\paragraph{}
We finish this section with a little digression.
To compute the different equivalence classes of primitive spaces with upper-rank $3$,
Atkinson \cite{AtkinsonPrim} relied upon Gauger's classification \cite{Gauger} of essentially surjective alternating bilinear
maps from $\K^4 \times \K^4$ to $\K^m$, for $m \in \{4,5,6\}$, over
an algebraically closed field of characteristic not $2$.

In \cite{Gauger}, Gauger also stated a theorem (Theorem 7.15) in which he classified essentially surjective alternating bilinear
maps from $\K^4 \times \K^4$ to $\K^3$, up to congruence, and for algebraically closed fields of characteristic not $2$.
He claimed that there are exactly $6$ congruence classes of such maps, gave an explicit description of each one,
and then performed lengthy tensor computations to prove the result.
His result is known to be incorrect, as the subspaces $I_4$ and $I_6$ in his theorem are isomorphic (we shall explain why later on).
Our techniques yield a neat solution to this problem, as we know from Theorem \ref{pfaffiansimilarity}
that solving it amounts to classifying the orbits of $3$-dimensional subspaces of $\Mata_4(\K)$ under the action of the positive similarity group
$\GO^+(\Pf)$. Remark \ref{orbitlagrangian} shows that there are at most two orbits of lagrangians under this action:
the two lagrangians defined by the respective generic matrices
$$\mathbf{A}_1:=\begin{bmatrix}
0 & \mathbf{a} & \mathbf{b} & \mathbf{c} \\
-\mathbf{a} & 0 & 0 & 0 \\
-\mathbf{b} & 0 & 0 & 0 \\
-\mathbf{c} & 0 & 0 & 0
\end{bmatrix} \quad \text{and} \quad
\mathbf{A}_2:=\begin{bmatrix}
0 & 0 & 0 & 0 \\
0 & 0 & \mathbf{a} & \mathbf{b} \\
0 & -\mathbf{a} & 0 & \mathbf{c} \\
0 & -\mathbf{b} & -\mathbf{c} & 0
\end{bmatrix}$$
are obviously non-congruent as only the first one is incompressible, whence they define the two orbits of lagrangians\footnote{
It is a general fact that for a hyperbolic quadratic form $q$, there are exactly two orbits of lagrangians under the action of
$\GO^+(q)$, and two lagrangians $L_1$ and $L_2$ belong to the same orbit if and only if
$\dim (L_1 \cap L_2)=\frac{\dim q}{2} \mod 2$.
In the case at hand, this helps one rediscover that two transverse lagrangians must belong to two different orbits.}.
As any $3$-dimensional quadratic form is equivalent to a sub-form of $\Pf$,
Proposition \ref{Wittsimilarity} yields that we are reduced to classifying all the non-zero
$3$-dimensional quadratic forms up to similarity.
For a quadratically closed field of characteristic not $2$, there are exactly three such
quadratic forms, up to equivalence (and hence, up to similarity), namely
$\langle 1,0,0\rangle$, $\langle 1,-1,0\rangle$ and $\langle 1,-1,1\rangle$.
For a quadratically closed field of characteristic $2$, there are also exactly three of them:
$\langle 1,0,0\rangle$, $[0,0] \bot \langle 0\rangle$ and $[0,0] \bot \langle 1\rangle$.
Noting that $[0,0]$ is hyperbolic in the characteristic $2$ case, this leads to the following rectification of Gauger's theorem,
with the characteristic $2$ case taken into account:

\begin{theo}\label{type43classification}
Let $\K$ be a quadratically closed field.
Then, up to congruence, there are exactly five $3$-dimensional subspaces of
$\Mata_4(\K)$. They are associated with the following generic matrices:
$\mathbf{A}_1$, $\mathbf{A}_2$,
$$\mathbf{A}_3:=
\begin{bmatrix}
0 & \mathbf{a} & \mathbf{b} & \mathbf{c} \\
-\mathbf{a} & 0 & \mathbf{c} & 0 \\
-\mathbf{b} & -\mathbf{c} & 0 & 0 \\
-\mathbf{c} & 0 & 0 & 0
\end{bmatrix}, \quad
\mathbf{A}_4:=
\begin{bmatrix}
0 & \mathbf{a} & \mathbf{b} & 0 \\
-\mathbf{a} & 0 & 0 & \mathbf{c} \\
-\mathbf{b} & 0 & 0 & 0 \\
0 & -\mathbf{c} & 0 & 0
\end{bmatrix} \quad \text{and} \quad
\mathbf{A}_5:=
\begin{bmatrix}
0 & \mathbf{a} & \mathbf{b} & 0 \\
-\mathbf{a} & 0 & 0 & \mathbf{c} \\
-\mathbf{b} & 0 & 0 & \mathbf{a} \\
0 & -\mathbf{c} & -\mathbf{a} & 0
\end{bmatrix}.$$
\end{theo}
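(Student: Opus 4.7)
The plan is to translate the congruence problem into a similarity problem via Theorem \ref{pfaffiansimilarity}: two subspaces of $\Mata_4(\K)$ are congruent precisely when they are conjugate under the positive similarity group $\GO^+(\Pf)$. I would then split the $3$-dimensional subspaces $P \subset \Mata_4(\K)$ into two families according to whether $\Pf_{|P}$ is identically zero (the lagrangian case) or not.

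For the non-lagrangian case, Corollary \ref{Wittsimilarity} yields a bijection between the $\GO^+(\Pf)$-orbits of non-lagrangian $3$-dimensional subspaces and the similarity classes of non-zero $3$-dimensional quadratic forms over $\K$ that embed into $\Pf$; every such form does embed, because $\Pf$ is a $6$-dimensional hyperbolic quadratic form. Under the assumption that $\K$ is quadratically closed, every non-zero scalar is a square, so equivalence coincides with similarity for quadratic forms, and the earlier discussion in the paper reduces the list of non-zero $3$-dimensional forms to exactly three similarity classes: one of rank $1$, one of rank $2$ (hyperbolic plus a zero), and one of rank $3$ regular. I would then check by direct Pfaffian computation that $\Pf(\mathbf{A}_3) = c^2$, $\Pf(\mathbf{A}_4) = -bc$, and $\Pf(\mathbf{A}_5) = a^2 - bc$, of respective ranks $1$, $2$, and $3$. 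This identifies $\mathbf{A}_3, \mathbf{A}_4, \mathbf{A}_5$ with the three non-lagrangian orbits.

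For the lagrangian case, Remark \ref{orbitlagrangian} already guarantees at most two orbits, so it suffices to show that $\mathbf{A}_1$ and $\mathbf{A}_2$ are non-congruent. Incompressibility being preserved under congruence, I would observe that $\mathbf{A}_2$ is manifestly compressible (it sits inside $\{0\} \oplus \Mata_3(\K)$ in the canonical basis) while $\mathbf{A}_1$ is incompressible, as a quick computation gives $\Ker A_{1,2} \cap \Ker A_{1,3} \cap \Ker A_{1,4} = \Vect(e_3,e_4) \cap \Vect(e_2,e_4) \cap \Vect(e_2,e_3) = \{0\}$, so no non-zero vector lies in the common kernel of the matrices of $\mathbf{A}_1$. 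Combining the two cases yields exactly five orbits, as claimed.

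The main obstacle is more bookkeeping than technical: one must verify that the Pfaffian restrictions realize all three non-lagrangian similarity classes simultaneously in every characteristic. The only mildly subtle point is that in characteristic $2$ a quadratically closed field need not be Artin-Schreier closed, so the expected list of similarity classes of $3$-dimensional forms must be read off the paper's earlier classification rather than deduced naively; however, since the three Pfaffians $c^2$, $-bc = bc$, and $a^2 + bc$ have polar-form ranks $0$, $2$, and $2$ (with differing behavior on the radical), they fall into three distinct similarity classes in every characteristic, which is all that is needed.
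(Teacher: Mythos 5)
Your proposal is correct and follows essentially the same route as the paper's own argument (given in the paragraph preceding the theorem): reduce congruence to $\GO^+(\Pf)$-orbits via Theorem \ref{pfaffiansimilarity}, handle the two lagrangian orbits through Remark \ref{orbitlagrangian} and the incompressibility of $\mathbf{A}_1$ versus the compressibility of $\mathbf{A}_2$, and handle the non-lagrangian case through Corollary \ref{Wittsimilarity} together with the count of three similarity classes of non-zero $3$-dimensional quadratic forms over a quadratically closed field. Your explicit computations of $\Pf$ on $\mathbf{A}_3$, $\mathbf{A}_4$, $\mathbf{A}_5$ (namely $\mathbf{c}^2$, $-\mathbf{b}\mathbf{c}$, $\mathbf{a}^2-\mathbf{b}\mathbf{c}$) are correct and merely make explicit a verification the paper leaves to the reader.
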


In the characteristic not $2$ case, we find the following correspondence, up to congruence, between Gauger's $I_i$ spaces
(see \cite[Theorem 7.15]{Gauger}), the aforementioned quadratic forms, and the above generic matrices:

\begin{center}
\begin{tabular}{| c || c | c | c | c | c | c |}
\hline
Type of space & $I_1$ & $I_2$ & $I_3$ & $I_4$ & $I_5$ & $I_6$ \\
\hline
\hline
Quadratic form & $\langle 0,0,0\rangle$ & $\langle 0,0,0\rangle$ & $\langle 1,-1,0\rangle$ & $\langle 1,-1,1\rangle$ & $\langle 1,0,0\rangle$
& $\langle 1,-1,1\rangle$
\\
\hline
Generic matrix & $\mathbf{A}_2$ & $\mathbf{A}_1$ & $\mathbf{A}_4$ & $\mathbf{A}_5$ & $\mathbf{A}_3$ & $\mathbf{A}_5$
\\
\hline
\end{tabular}
\end{center}

In particular, this demonstrates that Gauger's theorem is incorrect.

\vskip 3mm
For general fields, congruence classes heavily depend on the quadratic structure of the underlying field.
In the characteristic not $2$ case, it is necessary to classify the $3$-dimensional regular quadratic forms
with discriminant $1$, up to equivalence. For fields of characteristic $2$, it is necessary to classify
the $3$-dimensional subspaces of the $\K^{[2]}$-vector space $\K$, up to multiplication by a non-zero element of $\K$,
and to classify the forms of type $\langle 1\rangle \bot [ a,b]$, up to equivalence.

\subsection{Second classification theorem for minimal LLD spaces}\label{classtheo2Section}

Here, we obtain a second classification theorem that covers a range of values of $m$ that is roughly twice as large as the one in the first classification
theorem:

\begin{theo}[Second classification theorem]\label{classtheo2}
Let $\calS$ be a reduced subspace of $\Mat_{m,n}(\K)$ with the column property.
Set $r:=\urk(\calS)$.
Assume that $\# \K>r\geq 2$ and that $m> 3+\dbinom{r-1}{2}$.
Then, one of the following situations holds:
\begin{enumerate}[(i)]
\item $\calS$ is of the alternating kind and $r=n-1$.
\item There is a space $\calS'\sim \calS$ in which every matrix has the form
$$M=\begin{bmatrix}
[?]_{1 \times r} & [?]_{1 \times (n-r)} \\
H(M) & [0]_{(m-1) \times (n-r)}
\end{bmatrix}$$
and $H(\calS') \subset \Mat_{m-1,r}(\K)$ is of the alternating kind with $\urk H(\calS')=r-1$.
\item One has $r=n-1$, and there is a space $\calS'\sim \calS$ in which every matrix has the form
$$M=\begin{bmatrix}
H(M) & [?]_{m \times 1}
\end{bmatrix},$$
and $H(\calS')\subset \Mat_{m,r}(\K)$ is of the alternating kind with $\urk H(\calS')=r-1$.
\end{enumerate}
Moreover, if $\calS$ is semi-primitive, then Case (iii) cannot hold.
\end{theo}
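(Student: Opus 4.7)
The plan is a multi-case analysis based on $p := \dim \calS x$ for a suitable non-zero $x$, combining the First Classification Theorem (Theorem~\ref{classtheo1}), the Reduction Lemma (Proposition~\ref{reductionlemma}), the Thin Decomposition Lemma (Lemma~\ref{finedecomp}), and Lemma~\ref{1decomplemma}. First, if $m > 1 + \binom{r}{2}$, Theorem~\ref{classtheo1} gives Case~(i) immediately, so from now on I assume $3 + \binom{r-1}{2} < m \leq 1 + \binom{r}{2}$. In the sub-case $r < n-1$, the combined bounds force $r \geq 4$; I would then exhibit a vector $x \in \K^n \setminus \{0\}$ with $\dim \calS x = 1$ (arguing against the alternative by a contradiction with Theorem~\ref{rangelemma} using the column property), apply Lemma~\ref{1decomplemma} to extract a block $H(\calS') \subset \Mat_{m-1, q}(\K)$ with $q \in \lcro r, n-1 \rcro$, and conclude via Theorem~\ref{classtheo1} (whose hypothesis $m - 1 > 1 + \binom{r-1}{2}$ is satisfied) that $H(\calS')$ is of the alternating kind with $q = r$, which is Case~(ii).

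From now on assume $r = n-1$. Pick any rank-$r$ matrix $A \in \calS$ and any non-zero $x \in \Ker A$, and set $p := \dim \calS x$. Lemma~\ref{finemaxdim} yields $m \leq \binom{p+1}{2} + \binom{r-p+1}{2}$, and the convexity of $p \mapsto \binom{p+1}{2} + \binom{r-p+1}{2}$ (Remark~\ref{convexityremark}) combined with $m > 3 + \binom{r-1}{2}$ forces $p \in \{1, r-1, r\}$. The case $p = r$ proceeds exactly as in the proof of Theorem~\ref{classtheo1}: after bringing $\calS$ to $r$-reduced form one verifies $\urk B(\calS) = r - 1$ and then Proposition~\ref{reductionlemma}(a)(b) realizes $\calS$ as $D(\calS)$ of the alternating kind on all $n = r+1$ columns, giving Case~(i). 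The case $p = 1$ goes through Lemma~\ref{1decomplemma} (the integer $q$ there is forced to equal $r$ since $q \in \lcro r, n-1 \rcro = \{r\}$) and Theorem~\ref{classtheo1} applied to the $(m-1) \times r$ block $H(\calS')$, giving Case~(ii).

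The crux is the case $p = r-1$. Apply Lemma~\ref{finedecomp} to obtain an explicit block form in which $s = m - r$ linearly independent alternating matrices $B_1, \ldots, B_s \in \Mata_{r-1}(\K)$ govern the dependence of the lower-left block $B(M)$ on the top-right column $C(M)$. The key step is to prove $\urk B(\calS) = r - 1$: once this is in hand, Proposition~\ref{reductionlemma}(a) gives $\rk[\mathbf{A}\mathbf{C} \mid \mathbf{C}] = 1$ and part~(b) produces a decomposition in which $D(\calS) \subset \Mat_{m, r}(\K)$ is of the alternating kind with $\urk D = r - 1$ and the $H$-block is trivial (so $s = m$), which is precisely the structure of Case~(iii). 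To establish $\urk B(\calS) = r - 1$, I would proceed by contradiction: if $\urk B(\calS) \leq r - 2$, then either the common kernel of $B_1, \ldots, B_s$ in $\K^{r-1}$ is trivial (in which case $B(\calS)$ is itself reduced, and Theorem~\ref{rangelemma} applied via Lemma~\ref{inductioncolumnlemma} forces $s \leq 1 + \binom{r-2}{2}$), or the common kernel has dimension $k \geq 1$ (in which case the $B_i$'s embed into $\Mata_{r-1-k}(\K)$ and $s \leq \binom{r-1-k}{2} \leq \binom{r-2}{2}$); both conclusions contradict the inequality $s > 1 + \binom{r-2}{2}$, which is a direct translation of the hypothesis $m > 3 + \binom{r-1}{2}$.

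Finally, the semi-primitivity addendum is immediate: in Case~(iii), $\calS$ is equivalent to a space $[H(\calS') \mid ?_{m \times 1}]$ with $\urk H(\calS') = r - 1 < r = \urk \calS$, which is precisely the negation of Condition~(C) in the definition of semi-primitivity. The main obstacle of the whole argument is the $p = r - 1$ subcase: verifying $\urk B(\calS) = r - 1$ requires careful combination of the alternating-matrix structure from Lemma~\ref{finedecomp} with the dimensional bounds of Theorem~\ref{rangelemma}, with the delicate point being that the dichotomy (trivial vs.\ non-trivial common kernel) must be resolved using the full column property of $\calS$, not merely the inherited column property of $B(\calS)$.
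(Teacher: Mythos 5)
Your overall architecture (case analysis on $\dim\calS x$, Lemmas \ref{1decomplemma}, \ref{finemaxdim}, \ref{finedecomp} and Proposition \ref{reductionlemma}) is close to the paper's, and several branches are fine: the reduction of the $\dim\calS x=1$ case to Case (ii), the exclusion of $2\le p\le r-2$ via Lemma \ref{finemaxdim} and Remark \ref{convexityremark}, and the semi-primitivity addendum. But there is one recurring configuration that your argument never disposes of, and it is precisely the hard core of the theorem. After the $(r,r)$-decomposition, write $M=\begin{bmatrix} A(M) & C(M)\\ B(M) & 0\end{bmatrix}$ and suppose $\urk B(\calS)\le r-2$ with $B(\calS)$ \emph{not} reduced: its attached reduced space $R(\calS)$ then sits in $\Mat_{m-r,s}(\K)$ with $s=r-1$ and $\urk R(\calS)=r-2$. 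Since the number of columns here is exactly $\urk+1$, Theorem \ref{rangelemma} only yields $m-r\le\binom{r-1}{2}$, i.e.\ $m\le 1+\binom{r}{2}$, which is perfectly compatible with $m>3+\binom{r-1}{2}$ once $r\ge 4$. This single configuration defeats three of your steps. (1) In the branch $r<n-1$ you assert that a vector with $\dim\calS x=1$ must exist ``by a contradiction with Theorem \ref{rangelemma}''; no such contradiction is available in this configuration. (2) In the branch $p=r$ you claim to verify $\urk B(\calS)=r-1$ ``exactly as in the proof of Theorem \ref{classtheo1}''; that verification used the stronger hypothesis $m>1+\binom{r}{2}$, which you have already given up at that point. (3) In the branch $p=r-1$ your dichotomy is not exhaustive: the common kernel of $B_1,\dots,B_s$ being trivial does \emph{not} make $B(\calS)=[R(M)\mid c(M)]$ reduced, because the extra $r$-th column $c(M)$ may be a fixed linear combination of the columns of $R(M)$ for all $M$, in which case $B(\calS)$ has a one-dimensional common kernel and its reduction again has $r-1$ columns and upper-rank $r-2$ --- back to the unresolvable configuration.

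The paper closes exactly this gap with a tool absent from your proposal: it takes a generic matrix $\mathbf{M}$, invokes the Flanders--Atkinson identities to get $\begin{bmatrix}\mathbf{B}\\ \mathbf{B}\mathbf{A}\end{bmatrix}\begin{bmatrix}\mathbf{C} & \mathbf{A}\mathbf{C}\end{bmatrix}=0$, and splits according to whether $\rk\begin{bmatrix}\mathbf{C} & \mathbf{A}\mathbf{C}\end{bmatrix}=1$ (which forces $n=r+1$ and lets Proposition \ref{reductionlemma}(b) apply directly, without ever knowing that $\urk B(\calS)=r-1$) or $\rk\begin{bmatrix}\mathbf{B}\\ \mathbf{B}\mathbf{A}\end{bmatrix}=r-2$ (in which case $\mathbf{R}\mathbf{C'}_0=0$ and $\mathbf{R}\mathbf{C'}=0$ combine with Lemma \ref{genericcolinearity} to produce a vector $x$ with $\dim\calS x\le 1$, contradicting the running hypothesis). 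Your proof cannot be completed with Theorem \ref{rangelemma} and dimension counts alone; you need this generic-matrix argument, or an equivalent substitute, to handle the case where the lower space fails to be reduced and its reduction has exactly $\urk+1$ columns.
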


Note that nothing here is new when $r \leq 3$, for in that case $3+\dbinom{r-1}{2}\geq 1+\dbinom{r}{2}$,
and hence Theorem \ref{classtheo1} shows that Case (i) holds.

The following corollaries are obvious:

\begin{cor}\label{classtheo2cor1}
Let $\calS$ be a primitive subspace of $\Mat_{m,n}(\K)$.
Set $r:=\urk(\calS)$ and assume that $\# \K>r \geq 2$ and $m> 3+\dbinom{r-1}{2}$.
Then, $r=n-1$ and $\calS$ is of the alternating kind.
\end{cor}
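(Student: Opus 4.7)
The plan is to derive this corollary as a direct consequence of the second classification theorem (Theorem \ref{classtheo2}), ruling out the two non-alternating alternatives by exploiting primitivity. First I would verify the hypotheses of Theorem \ref{classtheo2}: since $\calS$ is primitive it is in particular semi-primitive and reduced, and because $\#\K>r$, the lemma relating semi-primitivity to the column property (Lemma following Proposition \ref{reductiontoprimitive}) guarantees that $\calS$ has the column property. Together with $\#\K>r\geq 2$ and $m>3+\dbinom{r-1}{2}$, this places $\calS$ in one of the three alternatives (i), (ii), (iii) of Theorem \ref{classtheo2}.

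Alternative (iii) is ruled out at once by the ``moreover'' clause of Theorem \ref{classtheo2}, which asserts that a semi-primitive space cannot satisfy (iii); primitivity implies semi-primitivity.

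The crucial step is to rule out alternative (ii). There $\calS$ is equivalent to a space in which every matrix splits as
$$M = \begin{bmatrix} [?]_{1 \times r} & [?]_{1 \times (n-r)} \\ H(M) & [0]_{(m-1) \times (n-r)} \end{bmatrix},$$
with $H(\calS')\subset \Mat_{m-1,r}(\K)$ of the alternating kind and $\urk H(\calS')=r-1$. Permuting rows so that the arbitrary top row is moved to the last position (a valid equivalence), $\calS$ becomes equivalent to a space whose matrices have the form $\begin{bmatrix} K(M) \\ [?]_{1\times n}\end{bmatrix}$ where $K(M) = [H(M)\ \ 0_{(m-1)\times(n-r)}]$. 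The upper-rank of $K(\calS')$ equals $\urk H(\calS') = r-1 < r = \urk\calS$. This is exactly the decomposition forbidden by condition (D) in the characterization of primitive matrix spaces, contradicting the primitivity of $\calS$.

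Only alternative (i) remains, which yields precisely $r=n-1$ and $\calS$ of the alternating kind, as desired. The only step requiring any care is the row-permutation argument used to rule out (ii); everything else is an unpacking of the hypotheses and a direct appeal to Theorem \ref{classtheo2}.
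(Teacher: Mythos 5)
Your proof is correct and is exactly the deduction the paper intends (it labels the corollary ``obvious'' after Theorem \ref{classtheo2}): primitivity gives the column property via the semi-primitivity lemma, the ``moreover'' clause kills case (iii), and your row-permutation argument showing that case (ii) violates condition (D) in the characterization of primitive spaces is the right way to finish. Nothing further is needed.
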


\begin{cor}\label{classcor2}
Let $\calS$ be a reduced minimal LLD subspace of $\calL(U,V)$. Set $n:=\dim \calS$
and assume that $\# \K\geq n \geq 3$ and $\dim V>3+\dbinom{n-2}{2}$. Then:
\begin{itemize}
\item Either $\calS$ is of the alternating kind;
\item Or there is a rank $1$ operator $g \in \calS$ such that,
for the canonical projection $\pi : V \twoheadrightarrow V/\im g$, the operator space
$\{\pi \circ f \mid f \in \calS\}$ is an LLD subspace of $\calL(U, V/\im g)$ of the alternating kind
(with dimension $n-1$).
\end{itemize}
\end{cor}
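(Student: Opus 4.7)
The plan is to apply Theorem \ref{classtheo2} to the adjoint operator space $\widehat{\calS}$ and then read back the conclusion via the adjunction duality.

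First I would set up and verify the hypotheses. Since $\calS$ is a reduced minimal LLD operator space, the proposition of Section \ref{LLDvsprimitive} characterising minimal LLD spaces in terms of their duals identifies $\widehat{\calS}$ as a reduced semi-primitive operator space in $\calL(\calS,V)$; any matrix realisation places $\widehat{\calS}$ in $\Mat_{\dim V,n}(\K)$, and since semi-primitive spaces over fields of cardinality larger than their upper-rank enjoy the column property, and since $\calS$ being LLD gives $r:=\urk\widehat{\calS}=\trk\calS\le n-1$ while $\#\K\ge n>r$, the space $\widehat{\calS}$ is a reduced column-property subspace of $\Mat_{\dim V,n}(\K)$. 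The hypothesis $\dim V>3+\binom{n-2}{2}$ combined with $r\le n-1$ gives $\dim V>3+\binom{r-1}{2}$, so Theorem \ref{classtheo2} applies. Before unfolding the three cases I would sharpen $r$: were $r<n-1$, the second part of Theorem \ref{rangelemma} applied to $\widehat{\calS}$ would force $\dim V\le 1+\binom{r}{2}\le 1+\binom{n-2}{2}<\dim V$, which is absurd; hence $r=n-1$.

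Theorem \ref{classtheo2} now returns one of Cases~(i), (ii) or (iii) for $\widehat{\calS}$, and Case~(iii) is excluded by the final clause of that theorem since $\widehat{\calS}$ is semi-primitive. If $\widehat{\calS}$ is in Case~(i) then $\widehat{\calS}\sim\calS_\varphi$ for a fully-regular alternating bilinear map $\varphi$; by the self-duality $\calS_\varphi\sim\widehat{\calS_\varphi}$ recalled at the beginning of Section \ref{fromalternatingsection} and the involutive nature of the adjunction functor $\widehat{(-)}$, one obtains $\calS\sim\widehat{\widehat{\calS}}\sim\widehat{\calS_\varphi}\sim\calS_\varphi$, so $\calS$ is itself of the alternating kind: the first bullet of the corollary.

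Suppose instead $\widehat{\calS}$ is in Case~(ii). The distinguished zero last-column block has width $n-r=1$, so in the chosen basis the last basis vector $g$ of $\calS$ satisfies $g(x)\in L$ for every $x$, where $L\subset V$ is the one-dimensional subspace corresponding to the free top row; hence $g$ is a rank-$1$ operator and $\im g=L$. Letting $\pi\colon V\twoheadrightarrow V/\im g$ be the canonical projection, the map $f\mapsto\pi\circ f$ has kernel exactly $\K g$ and therefore induces an isomorphism $\calS/\K g\simeq\{\pi\circ f\mid f\in\calS\}$; under this identification, the block $H(\widehat{\calS})$ is exactly $\widehat{\{\pi\circ f\mid f\in\calS\}}$, since both send $x\in U$ to the functional $[f]\mapsto\pi(f(x))$. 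Case~(ii) asserts $H(\widehat{\calS})$ is of the alternating kind of dimension $n-1$ and upper-rank $n-2$, so the same self-duality transfers this structure to $\{\pi\circ f\mid f\in\calS\}$, which is therefore of the alternating kind of dimension $n-1$, and automatically LLD since operator spaces of the alternating kind always are: the second bullet of the corollary.

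The main obstacle I foresee is the combinatorial-to-operator-theoretic translation in Case~(ii), specifically verifying that the distinguished zero column of $\widehat{\calS}$ corresponds to a rank-$1$ operator $g\in\calS$ with $\im g=L$, and that the $H$-block of $\widehat{\calS}$ is genuinely the adjoint of the projected space $\{\pi\circ f\mid f\in\calS\}$. Both steps amount to careful bookkeeping with the adjunction functor of Section \ref{LLDvsprimitive} and with the self-duality of spaces of the alternating kind; also worth double-checking is that under the hypotheses the source dimension $\dim U$ behaves as expected (namely $\dim U=n-1$ in Case~(ii) and $\dim U=n$ in Case~(i)), which ultimately reduces to verifying that no non-zero $x\in U$ satisfies $\calS x\subseteq L$ when Case~(ii) occurs.
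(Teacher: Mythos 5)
Your route is exactly the paper's: Corollary \ref{classcor2} is stated there as an immediate consequence of Theorem \ref{classtheo2} applied to $\widehat{\calS}$, and your unfolding --- semi-primitivity of $\widehat{\calS}$ from minimality, the column property from $\# \K>\urk\widehat{\calS}$, the sharpening $r=n-1$ via Theorem \ref{rangelemma}, the exclusion of Case (iii), the self-duality argument in Case (i), and the identification of the distinguished column with a rank-one operator $g$ in Case (ii) --- is the intended bookkeeping, carried out correctly.

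The reservation you raise in your last paragraph, however, is not mere bookkeeping: it is the one point that genuinely does not follow from Theorem \ref{classtheo2}. Writing $\calT:=\{\pi\circ f\mid f\in\calS\}$, your transfer gives $\widehat{\calT}=H(\widehat{\calS})$, hence $\widehat{\widehat{\calT}}$ is of the alternating kind; but $\widehat{\widehat{\calT}}\sim\calT$ only when $\calT$ is reduced, i.e.\ only when no non-zero $x\in U$ satisfies $\calS x\subseteq\im g$. Case (ii) controls the lower space $H(\calS')$ but says nothing about the kernel of $M\mapsto H(M)$, and a non-zero matrix of $\widehat{\calS}$ supported on the first row is compatible with reducedness, the column property and even semi-primitivity. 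Concretely, with $n=5$, $m=7$ and $\K=\Q$, let $\widehat{\calS}$ consist of the matrices
$$M(w,t)=\begin{bmatrix}
w^T & t \\
N(w) & [0]_{6\times 1}
\end{bmatrix}, \qquad (w,t)\in \K^4\times\K,$$
where $N(w)\in\Mat_{6,4}(\K)$ represents $v\mapsto w\wedge v$. One checks that this space is reduced with upper-rank $4$ and satisfies condition (C) (the obstruction for a candidate hyperplane reduces to the vanishing of $w^Tw-b^{-1}(a\cdot w)t$, which never holds identically), so the associated $\calS$ is a reduced minimal LLD space satisfying all the hypotheses; it is not of the alternating kind because $w^Tw$ is not divisible by $t$. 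Yet the parameter $(0,1)$ yields a non-zero $x_0\in U$ with $\calS x_0=\im g$, so $\calT$ has a one-dimensional kernel in $U$, $\dim U=5\neq 4=\dim\calT$, and $\calT$ is not equivalent to any $\calS_\psi$. What your argument (and the theorem) actually delivers is that the \emph{reduced space attached to} $\calT$ is of the alternating kind of dimension $n-1$; you should either state the second bullet for $\overline{\calT}$, or add a hypothesis forcing $\calT$ to be reduced --- the stronger claim as literally written cannot be proved.
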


\begin{proof}[Proof of Theorem \ref{classtheo2}]
If $r \leq 3$, then Theorem \ref{classtheo1} shows that Case (i) holds.
In the rest of the proof, we assume that $r>3$.

Assume first that there exists a vector $x \in \K^n$ such that $\dim \calS x=1$.
By Lemma \ref{1decomplemma}, we lose no generality in assuming that, for some $q \in \lcro r,n-1\rcro$,
every matrix $M$ of $\calS$ splits up as
$$M=\begin{bmatrix}
[?]_{1 \times q} & [?]_{1 \times (n-q)} \\
H(M) & [0]_{(m-1) \times (n-q)}
\end{bmatrix},$$
and $H(\calS)$ is reduced, has the column property, and satisfies $\urk H(\calS)=r-1$.
If $q>r$, then Theorem \ref{rangelemma} yields $m-1 \leq 1+\dbinom{r-1}{2}$, which is forbidden.
Therefore $q=r$. As $m-1>1+\dbinom{r-1}{2}$, the first classification theorem yields
that $H(\calS)$ is of the alternating kind, whence Case (ii) holds.

In the rest of the proof, we assume that no vector $x \in \K^n$ satisfies $\dim \calS x=1$.
Without loss of generality, we may also assume that $\calS$ is $r$-reduced,
so that every matrix of $\calS$ splits up as
$$M=\begin{bmatrix}
A(M) & C(M) \\
B(M) & [0]_{(m-r) \times (n-r)}
\end{bmatrix}$$
according to the $(r,r)$-decomposition.
Then, $\urk B(\calS)+\urk C(\calS) \leq r$, and $\urk B(\calS) \leq r-1$ since
$C(\calS)\neq \{0\}$.

\vskip 2mm
\noindent \textbf{Case 1. $\urk B(\calS)=r-1$.} \\
Then, $\urk C(\calS) = 1$.
As no $1$-dimensional subspace of $\K^r$ may contain all the columns of the matrices in $C(\calS)$,
the classical classification of matrix spaces with upper-rank $1$ shows that all the matrices of
$C(\calS)$ vanish everywhere on some common linear hyperplane of $\K^{n-r}$. This yields $n=r+1$ since $\calS$ is reduced.
Set $p:=\dim \calS e_n$, where $e_n$ is the last vector of the canonical basis of $\K^n$.
Then, $2 \leq p \leq r$.
If $2 \leq p \leq r-2$, then Lemma \ref{finemaxdim} and Remark \ref{convexityremark} show that
$m \leq 3+\dbinom{r-1}{2}$, contradicting our assumptions.
If $p=r$, then Proposition \ref{reductionlemma} applies and shows, since $\calS$ is reduced, that
Case (i) holds. \\
Assume now that $p=r-1$. As $r-1 \geq 2$, Proposition \ref{reductionlemma} yields an integer $s \in \lcro p,m\rcro$
and a space $\calT \sim \calS$ in which every matrix $M$ splits up as
$$M=\begin{bmatrix}
D(M) & [?]_{s \times 1} \\
[0]_{(m-s) \times r} & H(M)
\end{bmatrix},$$
where $D(\calT)$ is of the alternating kind with $\urk D(\calT)=r-1$, and $H(\calT) \subset \Mat_{m-s,1}(\K)$
has upper-rank less than $1$. Thus, $H(\calT)=\{0\}$. Since $\calS$ is reduced, it follows that
$m-s=0$, which shows that Case (iii) holds.

\vskip 2mm
\noindent
\textbf{Case 2. $\urk B(\calS)\leq r-2$.} \\
If $B(\calS)$ were reduced, then Theorem \ref{rangelemma} would yield
$m-r \leq 1+\dbinom{r-2}{2}$, and hence $m \leq 3+\dbinom{r-1}{2}$.
Thus, $B(\calS)$ is not reduced. As in the proof of Lemma \ref{1decomplemma},
we may assume that we have found an integer $s \in \lcro 0,r-1\rcro$ such that, for every $M \in \calS$, one has
$$B(M)=\begin{bmatrix}
R(M) & [0]_{(m-r) \times (r-s)}
\end{bmatrix},$$
where $R(\calS)$ is a reduced subspace of $\Mat_{m-r,s}(\K)$ with the column property.
If $s \leq r-2$, then we deduce from Theorem \ref{rangelemma} that $m-r \leq \dbinom{r-2}{2}$, which again is impossible.
Thus, $s=r-1$. As $m-r>1+\dbinom{r-2}{2}$, we deduce from Theorem \ref{rangelemma} that
$\urk R(\calS)=r-2$.

Now, we may find a generic matrix of $\calS$ of the form
$$\mathbf{M}=\begin{bmatrix}
[?]_{(r-1) \times (r-1)} & \mathbf{C'}_0 & \mathbf{C}' \\
[?]_{1 \times (r-1)} & ? & [?]_{1 \times (n-r)} \\
\mathbf{R} & [0]_{(m-r) \times 1} & [0]_{(m-r) \times (n-r)}
\end{bmatrix},$$
where $\mathbf{R}$ is a semi-generic matrix of $R(\calS)$.
Following the $(r,r)$-decomposition, we may also write
$$\mathbf{M}=\begin{bmatrix}
\mathbf{A} & \mathbf{C} \\
\mathbf{B} & [0]_{(m-r) \times (n-r)}
\end{bmatrix}.$$
By the Flanders-Atkinson lemma, we have
$$\begin{bmatrix}
\mathbf{B} \\
\mathbf{B}\mathbf{A}
\end{bmatrix} \times \begin{bmatrix}
\mathbf{C} & \mathbf{A}\mathbf{C}
\end{bmatrix}=0.$$
However, $\rk \begin{bmatrix}
\mathbf{C} & \mathbf{A}\mathbf{C}
\end{bmatrix} \geq \rk \mathbf{C} \geq 1$, while
$\rk \begin{bmatrix}
\mathbf{B} \\
\mathbf{B}\mathbf{A}
\end{bmatrix} \geq \rk \mathbf{B} = r-2$.
Thus, either $\rk \begin{bmatrix}
\mathbf{C} & \mathbf{A}\mathbf{C}
\end{bmatrix}=1$ or $\rk \begin{bmatrix}
\mathbf{B} \\
\mathbf{B}\mathbf{A}
\end{bmatrix}=r-2$. In the first case, we note that $\rk \mathbf{C}=1$, which yields $n=r+1$,
and then one may follow the line of reasoning from Case 1, using point (b) of Proposition \ref{reductionlemma} this time around.
Then, we see that Case (i) or Case (iii) holds by proving, as in Case 1, that $\dim \calS e_n \in \{r-1,r\}$.

Assume finally that $\rk \begin{bmatrix}
\mathbf{B} \\
\mathbf{B}\mathbf{A}
\end{bmatrix} = r-2$. We shall show that this leads to a contradiction.
Firstly,
$$\begin{bmatrix}
\mathbf{B} \\
\mathbf{B}\mathbf{A}
\end{bmatrix}=\begin{bmatrix}
\mathbf{R} & [0]_{(m-r) \times 1} \\
[?]_{(m-r) \times (r-1)} & \mathbf{R}\mathbf{C'}_0
\end{bmatrix}$$
which, as $\rk \begin{bmatrix}
\mathbf{B} \\
\mathbf{B}\mathbf{A}
\end{bmatrix} =r-2=\rk \mathbf{R}$, yields
$$\mathbf{R}\mathbf{C'}_0=0.$$
On the other hand, $\mathbf{B} \mathbf{C}=0$ leads to $\mathbf{R} \mathbf{C'}=0$.
Denoting by $\mathbf{C'}_1,\dots,\mathbf{C'}_{n-r}$ the columns of $\mathbf{C'}$,
this reads $\mathbf{R} \mathbf{C'}_i=0$ for all $i \in \lcro 1,n-r\rcro$.
Note that $\mathbf{C'}_0$ is non-zero as no vector $x \in \K^n$ satisfies $\dim \calS x=1$.
Moreover, the kernel of $\mathbf{R}$ cannot contain a non-zero vector of $\K^{r-1}$
because $R(\calS)$ is reduced. As $\rk \mathbf{R}=r-2$, the vectors $\mathbf{C'}_0$ and $\mathbf{C'}_1$
are colinear in the fraction field of the polynomial ring used to construct $\mathbf{M}$,
and their entries are $1$-homogeneous polynomials. As $r-1 \geq 2$, Lemma \ref{genericcolinearity}
yields $\mathbf{C'}_1=\lambda\,\mathbf{C'}_0$ for some $\lambda \in \K$.
Thus, $\dim \calS (\lambda\, e_r- e_{r+1}) \leq 1$, where $(e_1,\dots,e_n)$ denotes the canonical basis of $\K^n$.
This contradicts an earlier assumption and finishes the proof.
\end{proof}

\section{On the maximal rank in a minimal LLD operator space}\label{maxranksection}

Here, we tackle the maximal rank problem in minimal LLD spaces.
In other words, given such an LLD space, we want to give a good upper bound on
$\urk \calS$ with respect to the dimension of $\calS$.
By the duality argument, this amounts to giving an upper bound on $\trk \widehat{\calS}$.

We will frequently use the basic remark that if $\calS$ is of the alternating kind and $r=\urk \calS$, then
$\trk \calS=r$ since $\calS \sim \widehat{\calS}$.
The following obvious result will also be used often:

\begin{lemme}
Let $\calS$ be an $(r,s)$-decomposed subspace of $\Mat_{m,n}(\K)$, and denote by $\calT$ the lower space
associated with such a decomposition. Then,
$$\trk \calS \leq \trk \calT+r.$$
\end{lemme}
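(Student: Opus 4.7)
The plan is to exploit the structure of the $(r,s)$-decomposition by projecting onto the lower $m-r$ coordinates and applying the rank-nullity theorem to the resulting linear map restricted to $\calS x$.

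More precisely, fix $x \in \K^n$ and split it as $x = \begin{bmatrix} x_1 \\ x_2 \end{bmatrix}$ with $x_1 \in \K^s$ and $x_2 \in \K^{n-s}$. For every $M \in \calS$, the $(r,s)$-decomposition yields
$$Mx = \begin{bmatrix} A(M)x_1 + C(M)x_2 \\ B(M)x_1 \end{bmatrix},$$
where $A(M)$ is the upper-left $r \times s$ block. Let $\pi : \K^m \twoheadrightarrow \K^{m-r}$ denote the projection onto the last $m-r$ coordinates. Then $\pi(Mx) = B(M)x_1$, so $\pi(\calS x) = B(\calS) x_1 = \calT x_1$, and in particular $\dim \pi(\calS x) \leq \dim \calT x_1 \leq \trk \calT$.

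On the other hand, any vector of $\calS x$ in $\Ker \pi$ has its last $m-r$ coordinates equal to zero, so it belongs to $\K^r \times \{0\}$; therefore the kernel of the restriction $\pi_{|\calS x}$ has dimension at most $r$. Applying the rank theorem to $\pi_{|\calS x}$ gives
$$\dim \calS x \leq r + \dim \pi(\calS x) \leq r + \trk \calT.$$
Taking the maximum over $x \in \K^n$ yields $\trk \calS \leq \trk \calT + r$. There is no substantive obstacle here: the bound is essentially a direct consequence of the block-zero pattern forced by the $(r,s)$-decomposition.
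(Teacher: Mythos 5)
Your proof is correct, and it supplies exactly the routine rank--nullity argument that the paper omits (the lemma is stated there as ``obvious'' with no proof given). The only point worth noting is that $\pi(\calS x)\subseteq \calT x_1$ with equality since $\calT=B(\calS)$ by definition of the lower space, but either the inclusion or the equality suffices for the bound.
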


\subsection{A known upper bound}

Here, we reprove and extend the following theorem of Meshulam and \v Semrl \cite{MeshulamSemrlLAA}:

\begin{theo}[Meshulam, \v Semrl]\label{MeshulamSemrlmaxrank}
Let $\calS$ be a minimal LLD operator space with dimension $n$.
Assume that $\# \K \geq n$. Then, $\rk f \leq 1+\dbinom{n-1}{2}$ for all $f \in \calS$.
\end{theo}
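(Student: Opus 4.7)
The approach is to apply the duality argument of Section \ref{dualityargumentsection}. I would first reduce to the case where $\calS$ is reduced (which preserves being minimal LLD, the dimension $n$, and the rank of each operator), and set $\calM := \widehat{\calS}$. By the proposition linking minimal LLD spaces to semi-primitive ones, $\calM$ is a reduced semi-primitive operator space; since $\calS$ is LLD, $\calM$ is defective and hence $r := \urk \calM \leq n-1$; and since $\# \K \geq n > r$, the lemma asserting that semi-primitive spaces with $\#\K > \urk$ have the column property applies to yield that $\calM$ has the column property. Because $\urk \calS = \trk \calM$ by the duality, and $\rk f \leq \urk \calS$ for every $f \in \calS$, the theorem reduces to the claim
$$\trk \calM \leq 1 + \binom{n-1}{2}.$$

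Let $m := \dim V$. The heart of the proof is a dichotomy on the size of $m$. If $m \leq 1 + \binom{n-1}{2}$, the bound is immediate from the trivial inequality $\trk \calM \leq m$. Suppose instead that $m > 1 + \binom{n-1}{2}$: the small-rank cases $r \leq 1$ can be dismissed because $r = 0$ would force $\calM = 0$ and $r = 1$ would force $m = 1$ by the remark following Proposition \ref{reductiontoprimitive}. Thus $r \geq 2$, and from $r \leq n-1$ we obtain
$$1 + \binom{r}{2} \leq 1 + \binom{n-1}{2} < m,$$
so that the First Classification Theorem (Theorem \ref{classtheo1}) applies to $\calM$ and compels it to be of the alternating kind, with $r = n-1$.

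The conclusion follows from the fact that every operator space of the alternating kind satisfies $\calM \sim \widehat{\calM}$ (a consequence of the fully-regular alternating bilinear map defining it being left-regular), so by invariance of the upper-rank under equivalence we have $\trk \calM = \urk \widehat{\calM} = \urk \calM = n-1$; combined with the elementary inequality $n - 1 \leq 1 + \binom{n-1}{2}$, valid for every $n \geq 2$, this closes the argument. The First Classification Theorem does all the structural heavy lifting, and the proof reduces to this clean dichotomy on $m$. The one subtle point worth highlighting is that in the alternating-kind case the transitive rank coincides with the upper rank, which is precisely what makes the sharper bound $n-1$ available when $m$ is large, rather than the much weaker estimate $\trk \calM \leq m$.
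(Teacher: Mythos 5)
Your proof is correct and follows essentially the same route as the paper's: the paper derives the theorem, via the duality argument and the semi-primitivity of $\widehat{\calS}$, from Proposition \ref{maxrankprop1}, whose proof is exactly your dichotomy (trivial bound $\trk \leq m$ when $m$ is small, Theorem \ref{classtheo1} plus $\trk = \urk$ for spaces of the alternating kind when $m$ is large). The only cosmetic difference is that the paper splits on $r<n-1$ versus $r=n-1$ and invokes Theorem \ref{rangelemma} in the first case, whereas you split directly on $m$ and let the classification theorem absorb that case.
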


Note that Meshulam and \v Semrl were only able to prove this under the tighter condition $\# \K \geq n+2$
as their proof involved the use of Theorem 3.14 of \cite{dSPLLD1}, which at the time was only known to hold
under that condition.

Using the duality argument, Theorem \ref{MeshulamSemrlmaxrank} is a consequence of the following more general theorem,
which we derive from the first classification theorem:

\begin{prop}\label{maxrankprop1}
Let $\calS$ be a reduced linear subspace of $\Mat_{m,n}(\K)$ with the column property.
Set $r:=\urk(\calS)$ and assume that $\# \K>r$. Then,
$$\trk \calS \leq \dbinom{r}{2}+1.$$
\end{prop}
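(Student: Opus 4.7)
The plan is to use the first classification theorem (Theorem \ref{classtheo1}) as a dichotomy engine: split according to the size of $m$ relative to the target bound $\binom{r}{2}+1$, and then check that the statement holds trivially in each regime.

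First I will dispose of the low-rank cases. If $r=0$, then $\calS=\{0\}$ and $\trk\calS=0\leq 1$. If $r=1$, Theorem \ref{rangelemma} yields $m\leq\binom{r+1}{2}=1$, so $\trk\calS\leq m\leq 1=\binom{1}{2}+1$. I may therefore assume $r\geq 2$ in what follows.

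Now I will split on whether $m\leq\binom{r}{2}+1$ or not. In the small case the bound is immediate, since $\calS x\subset\K^m$ for every $x\in\K^n$ gives $\trk\calS\leq m\leq\binom{r}{2}+1$. In the large case $m>\binom{r}{2}+1$, Theorem \ref{classtheo1} applies and forces $r=n-1$ with $\calS$ of the alternating kind; writing $\calS\sim\calS_\varphi$ for a fully-regular alternating bilinear map $\varphi$, the relation $\calS_\varphi\sim\widehat{\calS_\varphi}$ recalled in Section \ref{fromalternatingsection} combines with the identity $\urk\widehat{\calS_\varphi}=\trk\calS_\varphi$ to yield $\trk\calS=\urk\calS=r$. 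A one-line verification shows $r\leq\binom{r}{2}+1$ for every $r\geq 2$ (equivalent to $\tfrac{r(r-1)}{2}\geq r-1$), closing the argument.

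There is no genuine obstacle: the entire content of the proposition is packaged into Theorem \ref{classtheo1} together with the self-duality $\calS_\varphi\sim\widehat{\calS_\varphi}$ of spaces of the alternating kind. The only remaining work is the elementary bound $\trk\calS\leq m$ in the small regime and the trivial inequality $r\leq\binom{r}{2}+1$ for $r\geq 2$. Consequently the proof is essentially a corollary, and the heavy lifting has been done upstream in the classification theorem.
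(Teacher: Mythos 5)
Your proof is correct and is essentially the paper's own argument: both reduce to the trivial bound $\trk\calS\leq m$ when $m\leq\binom{r}{2}+1$ (the paper phrases this via Theorem \ref{rangelemma} in the case $r<n-1$), and otherwise invoke Theorem \ref{classtheo1} together with the self-duality $\calS_\varphi\sim\widehat{\calS_\varphi}$ to get $\trk\calS=r\leq\binom{r}{2}+1$. The separate treatment of $r\in\{0,1\}$ is a harmless extra precaution.
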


\begin{proof}
If $r<n-1$, then we simply have $m \leq \dbinom{r}{2}+1$ by Theorem \ref{rangelemma}.
If $r=n-1$ and $m>\dbinom{n-1}{2}+1$, then the first classification theorem yields
$\trk \calS = n-1 \leq \dbinom{r}{2}+1$. In any case, the result ensues.
\end{proof}

In \cite{MeshulamSemrlLAA}, it is shown that the upper bound $\dbinom{r}{2}+1$ is optimal for all $r \leq 3$.

\subsection{An improved upper bound (I)}

Using the second classification theorem, we are now able to improve the preceding result for $r \geq 4$:

\begin{theo}\label{maxrankprop2}
Let $\calS$ be a reduced linear subspace of $\Mat_{m,n}(\K)$.
Set $r:=\urk(\calS)$ and assume that $\# \K>r \geq 2$.
\begin{enumerate}[(a)]
\item If $\calS$ has the column property and $r \geq 4$, then $\trk \calS  \leq \dbinom{r}{2}$.
\item If $\calS$ is semi-primitive, then $\trk \calS \leq 3+\dbinom{r-1}{2}$.
\end{enumerate}
\end{theo}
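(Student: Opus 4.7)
The plan is to derive both statements from the second classification theorem (Theorem~\ref{classtheo2}) through a contradiction argument. The unifying observation is that $\trk \calS \leq m$, since $\calS x \subset \K^m$ for every $x \in \K^n$, so any lower bound on $\trk \calS$ automatically supplies the dimension hypothesis $m > 3 + \dbinom{r-1}{2}$ that activates Theorem~\ref{classtheo2}.

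For part (a), I would suppose that $\trk \calS \geq \dbinom{r}{2}+1$. The elementary identity $\dbinom{r}{2} - \dbinom{r-1}{2} = r-1 \geq 3$ for $r \geq 4$ gives $m \geq \dbinom{r}{2}+1 > 3+\dbinom{r-1}{2}$, so Theorem~\ref{classtheo2} applies. I would then rule out each of its three cases. In case~(i), $\calS$ is of the alternating kind, hence $\calS \sim \widehat{\calS}$, so $\trk \calS = \urk \widehat{\calS} = \urk \calS = r$. In case~(ii), the $(1,r)$-decomposition yields, for $x = (x_1,x_2) \in \K^r \oplus \K^{n-r}$, the bound $\dim \calS x \leq 1 + \dim H(\calS') x_1 \leq 1 + \trk H(\calS')$, where $H(\calS')$ is of the alternating kind with upper-rank $r-1$; applying the same $\calS \sim \widehat{\calS}$ identity to $H(\calS')$ gives $\trk H(\calS') = r-1$, hence $\trk \calS \leq r$. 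Both of these contradict $\trk \calS \geq \dbinom{r}{2}+1$, since $r \leq \dbinom{r}{2}$ for $r \geq 3$. Case~(iii) is even more immediate: the block $H(\calS') \subset \Mat_{m,r}(\K)$ is of the alternating kind, hence equivalent to some $\calS_\varphi$ with $\varphi : U \times U \to V$ fully regular and $\dim U = r$, $\dim V = m$; the standard factorization through $U \wedge U$ forces $m = \dim V \leq \dbinom{r}{2}$, contradicting $m \geq \dbinom{r}{2}+1$.

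For part (b), I would first invoke the lemma preceding Lemma~\ref{inductioncolumnlemma} to observe that $\calS$ semi-primitive with $\#\K > r$ has the column property. Assuming $\trk \calS > 3+\dbinom{r-1}{2}$, the trivial bound $\trk \calS \leq m$ yields $m > 3+\dbinom{r-1}{2}$, so Theorem~\ref{classtheo2} applies. By its final assertion, semi-primitivity rules out case~(iii), while the argument from part~(a) for cases~(i) and~(ii) again produces $\trk \calS \leq r$. Since $r \leq 3+\dbinom{r-1}{2}$ for every $r \geq 2$, this contradicts the standing hypothesis.

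The proof is essentially a packaging of Theorem~\ref{classtheo2}: the heavy lifting has already been done. The only mildly delicate point is the estimate $m \leq \dbinom{r}{2}$ used to kill case~(iii) of part~(a), which rests on the fact that a fully-regular alternating bilinear map $U \times U \to V$ factors through the $\dbinom{\dim U}{2}$-dimensional space $U \wedge U$. The conceptual ingredient that makes both parts fall into place is the elementary inequality $\trk \calS \leq m$, which bridges the hypothesis on transitive rank and the row-dimension condition required to invoke Theorem~\ref{classtheo2}.
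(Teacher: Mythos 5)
Your proposal is correct and follows essentially the same route as the paper: both rest on the trivial bound $\trk \calS \leq m$ to reduce to the regime where Theorem \ref{classtheo2} applies, and then dispatch its three cases using $\trk = \urk$ for spaces of the alternating kind, the decomposition bound $\trk \calS \leq 1+\trk H(\calS')$, and the estimate $m \leq \binom{r}{2}$ in case (iii). The only cosmetic differences are that you argue by contradiction and handle small $r$ in part (b) directly through Theorem \ref{classtheo2} rather than via Proposition \ref{maxrankprop1}.
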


In terms of LLD spaces, point (b) reads as follows:

\begin{cor}\label{maxrankcor2}
Let $\calS$ be a minimal $c$-LLD operator space with dimension $n$. Assume that $\# \K>n-c \geq 2$.
Then, $\urk \calS \leq 3+\dbinom{n-c-1}{2}$.
\end{cor}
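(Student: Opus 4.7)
The plan is to obtain this corollary as an essentially immediate translation of Theorem \ref{maxrankprop2}(b) via the duality argument from Section \ref{dualityargumentsection}. Since minimality and the $c$-LLD property both pass to the reduced space attached to $\calS$ (through the rank-preserving isomorphism $f \mapsto \overline{f}$), I would first reduce to the case where $\calS$ is a reduced minimal $c$-LLD subspace of $\calL(U,V)$. Then the proposition identifying minimal LLD spaces with semi-primitive duals applies: the adjoint operator space $\widehat{\calS} \subset \calL(\calS,V)$ is reduced and semi-primitive.

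Next, because $\calS$ is $c$-LLD, the dual $\widehat{\calS}$ is $c$-defective (this is what the duality argument says in Section \ref{dualityargumentsection}), which translates to $\urk \widehat{\calS} \leq n - c$. Set $r := \urk \widehat{\calS}$. From $\# \K > n - c \geq r$, the hypotheses of Theorem \ref{maxrankprop2}(b) are satisfied as soon as $r \geq 2$, in which case that theorem gives
\[
\trk \widehat{\calS} \leq 3 + \binom{r-1}{2} \leq 3 + \binom{n-c-1}{2},
\]
where the second inequality uses that $p \mapsto \binom{p-1}{2}$ is non-decreasing for $p \geq 1$ and that $r \leq n-c$. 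The remaining cases $r \in \{0,1\}$ are trivial: from the remark on reduced upper-rank $1$ spaces one has $\trk \widehat{\calS} \leq 1$, and since $n - c \geq 2$ implies $3 + \binom{n-c-1}{2} \geq 3$, the bound holds a fortiori.

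To conclude, I invoke the identity $\urk \calS = \trk \widehat{\calS}$ recalled just after the definition of the transitive rank, which yields $\urk \calS \leq 3 + \binom{n-c-1}{2}$. There is essentially no obstacle here since all the work is already packaged into Theorem \ref{maxrankprop2}(b); the only subtlety is bookkeeping the threshold $r \geq 2$ in the theorem against the possibility that $r$ is $0$ or $1$, which is handled by the trivial bound.
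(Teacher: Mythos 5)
Your proposal is correct and follows exactly the route the paper intends: the paper states this corollary as the immediate LLD-language translation of Theorem \ref{maxrankprop2}(b), obtained by passing to the reduced space, invoking the semi-primitivity of $\widehat{\calS}$ for a minimal $c$-LLD space, and using the identities $\urk\calS=\trk\widehat{\calS}$ and $\urk\widehat{\calS}\leq n-c$. Your extra bookkeeping for the degenerate cases $r\in\{0,1\}$ (where the theorem's hypothesis $r\geq 2$ fails) is a sound and welcome precaution that the paper leaves implicit.
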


\begin{proof}[Proof of Theorem \ref{maxrankprop2}]
Note that $3+\dbinom{r-1}{2} \geq 1+\dbinom{r}{2}$ if and only if $r \leq 3$, and
$3+\dbinom{r-1}{2} \geq \dbinom{r}{2}$ if and only if $r\leq 4$. \\
Using Proposition \ref{maxrankprop1}, we may then assume that $r \geq 4$ and that $\calS$ has the column property.
If $m\leq 3+\dbinom{r-1}{2}$, then we are done.
Assume now that $m>3+\dbinom{r-1}{2}$, so that Theorem \ref{classtheo2} applies to $\calS$.
\begin{itemize}
\item If Case (i) in Theorem \ref{classtheo2} holds, then we have
$\trk \calS  =r \leq 3+\dbinom{r-1}{2}$.
\item If Case (ii) holds, we have $\trk \calS =r \leq  3+\dbinom{r-1}{2}$.
\item If Case (iii) holds, then $\calS$ is not semi-primitive and $\trk \calS \leq m \leq \dbinom{r}{2}$.
\end{itemize}
In any case, the claimed results are established.
\end{proof}

The upper bound in (a) is optimal: with $r\geq 3$, take indeed $m=\dbinom{r}{2}$, together with a semi-primitive subspace
$\calS$ of $\Mat_{m,r}(\K)$ (with upper-rank $r-1$), and denote by $\calT$ the space of all matrices $\begin{bmatrix}
S & Y
\end{bmatrix}$ with $S \in \calS$ and $Y \in \K^m$.
The space $\calT$ has the column property because $\calS$ does. Moreover, $\urk \calT=r$ and
$\dim \calT e_{r+1}=m=\dbinom{r}{2}$, where $e_{r+1}$ is the last vector of the canonical basis of $\K^{r+1}$.
Thus, $\trk \calT=\dbinom{r}{2}$.

As for the upper bound in (b), it is optimal for $r=3$, but probably not for $r=4$ (we conjecture that the best upper bound is $5$ rather than $6$).

\subsection{An improved upper bound (II)}

We finish with a new improved upper bound for semi-primitive spaces of matrices.

\begin{theo}\label{maxrankprop3}
Let $\calS$ be a semi-primitive subspace of $\Mat_{m,n}(\K)$.
Set $r:=\urk(\calS)$ and assume that $\# \K>r \geq 2$.
Then, $\trk \calS \leq 6+\dbinom{r-2}{2}$.
\end{theo}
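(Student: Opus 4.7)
I would prove this by induction on $r$, mirroring the structure of the proof of Theorem \ref{maxrankprop2}(b). The base cases $r \in \{2, 3, 4, 5\}$ follow directly from Theorem \ref{maxrankprop2}(b), since in that range the identity
\[
\Bigl(3 + \binom{r-1}{2}\Bigr) - \Bigl(6 + \binom{r-2}{2}\Bigr) = r - 5
\]
is non-positive. So assume $r \geq 6$ and that the theorem holds for all strictly smaller values of the upper-rank.

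If $m > 3 + \binom{r-1}{2}$, then Theorem \ref{classtheo2} applies to $\calS$ (which has the column property since it is semi-primitive); semi-primitivity rules out case (iii), and cases (i) and (ii) both yield $\trk \calS \leq r \leq 6 + \binom{r-2}{2}$.

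In the remaining regime $m \leq 3 + \binom{r-1}{2}$, the trivial bound $\trk \calS \leq m$ is off by at most $r-5$, and I would split according to whether some non-zero $y \in \K^n$ satisfies $\dim \calS y = 1$. In the affirmative, Lemma \ref{1decomplemma} yields a block decomposition of $\calS$ with lower block $H(\calS) \in \Mat_{m-1,q}(\K)$ reduced, possessing the column property, and of upper-rank $r-1$; thus $\trk \calS \leq 1 + \trk H(\calS)$. After a possible further reduction of $H(\calS)$ to a semi-primitive descendant (via Proposition \ref{reductiontoprimitive}), the inductive hypothesis gives $\trk H(\calS) \leq 6 + \binom{r-3}{2}$, whence $\trk \calS \leq 7 + \binom{r-3}{2} \leq 6 + \binom{r-2}{2}$ since $r - 3 \geq 3$. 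Otherwise $\dim \calS x \geq 2$ for all non-zero $x$ (so $\calS$ is primitive), and I would pick $x_0$ with $\dim \calS x_0 = \trk \calS$ and use semi-primitivity together with Lemma \ref{spanofrankr} to produce a rank-$r$ matrix $A_0 \in \calS$ and $y_0 \in \Ker A_0$ with $p := \dim \calS y_0 \in \lcro 3, r-3\rcro$. After reducing $\calS$ to an $r$-reduced space with $r = n-1$, Lemma \ref{finemaxdim} yields
\[
m \leq \binom{p+1}{2} + \binom{r-p+1}{2},
\]
and Remark \ref{convexityremark} (the symmetric convexity of the right-hand side in $p$) shows this is at most $6 + \binom{r-2}{2}$ for $p \in \lcro 3, r-3\rcro$; hence $\trk \calS \leq m \leq 6 + \binom{r-2}{2}$.

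The main obstacle is arranging $p \in \lcro 3, r-3\rcro$ in the primitive sub-case. Primitivity directly excludes $p = 1$ via Lemma \ref{1decomplemma}, and its dualized version excludes $p = r-1$; the values $p \in \{2, r-2\}$ must be ruled out by a subtler contradiction argument using Theorem \ref{classtheo1} applied to the auxiliary block produced by Lemma \ref{finedecomp}, together with the standing assumption $m > 6 + \binom{r-2}{2}$ (and the dual statement for $p = r-2$). This combinatorial case analysis, weaving between the classification theorems of Section \ref{classtheoSection} and the dimensional bounds of Theorem \ref{rangelemma} and Lemma \ref{finemaxdim}, is the technical heart of the proof.
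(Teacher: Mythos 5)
Your base case ($r\leq 5$ via Theorem \ref{maxrankprop2}(b)) and your treatment of the regime $m>3+\binom{r-1}{2}$ are fine, but the two branches of your main case analysis each contain a genuine gap. In the branch where some $y$ has $\dim\calS y=1$, you cannot invoke the inductive hypothesis on $H(\calS)$: Lemma \ref{1decomplemma} only guarantees that $H(\calS)$ is \emph{reduced with the column property}, not semi-primitive, and Proposition \ref{reductiontoprimitive} does not repair this -- it takes a semi-primitive space as \emph{input} and extracts a primitive block, so it cannot upgrade a column-property space to a semi-primitive one (nor would the transitive rank of such a block control $\trk H(\calS)$ in the way you need). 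The paper's proof avoids induction altogether here: either $m-1\leq 3+\binom{r-2}{2}$ and one is done trivially, or Theorem \ref{classtheo2} (which requires only the column property) applies to $H(\calS)$; its cases (i) and (ii) give $\trk H(\calS)\leq r-1$, and its case (iii) is excluded by the semi-primitivity of the \emph{original} space $\calS$ (a column deletion would drop the upper-rank).

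The second branch is more seriously flawed. First, $\dim\calS x\geq 2$ for all $x\neq 0$ does not make $\calS$ primitive (condition (D), about row deletions, is not implied), and the hypothesis $r=n-1$ needed for Lemma \ref{finemaxdim} is not established. More importantly, your plan to force $p\in\lcro 3,r-3\rcro$ and then bound $m$ itself by $6+\binom{r-2}{2}$ cannot succeed: the values $p=2$ and $p=r-2$ occur for genuine semi-primitive spaces with $m>6+\binom{r-2}{2}$ (e.g.\ suitable quotients of the standard pairing $U\wedge U$ admit rank-$r$ elements whose kernel vector $y_0$ satisfies $\dim\calS y_0=2$ while $m=\binom{r}{2}+3$), so no contradiction is available there. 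In those configurations the correct conclusion is not that $m$ is small but that $\calS$ is of the alternating kind, whence $\trk\calS=r$; this is exactly what the paper's proof extracts by splitting instead on $\urk B(\calS)$ in the $(r,r)$-decomposition and feeding the two extreme cases into Proposition \ref{reductionlemma} (Case 1 and Subcase 2.2), while handling the intermediate case $\urk B(\calS)=r-2$ with $B(\calS)$ reduced via a second application of Theorem \ref{classtheo2}. You would need to replace your ``rule out $p\in\{2,r-2\}$'' step by an argument that identifies $\calS$ as being of the alternating kind in those situations.
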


\begin{cor}\label{maxrankcor3}
Let $\calS$ be a minimal $c$-LLD operator space with dimension $n$. Assume that $\# \K>n-c \geq 2$.
Then, $\urk \calS \leq 6+\dbinom{n-c-2}{2}$.
\end{cor}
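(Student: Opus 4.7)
The plan is to mimic the proof of Theorem \ref{maxrankprop2}(b), partitioning the analysis according to where $m$ sits relative to the two thresholds $6+\binom{r-2}{2}$ and $3+\binom{r-1}{2}$. The elementary identity
$$3+\binom{r-1}{2}-\Bigl(6+\binom{r-2}{2}\Bigr)=r-5$$
shows that for $r\leq 5$ the weaker Theorem \ref{maxrankprop2}(b) already gives the claim, so we may restrict attention to $r\geq 6$.

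For $r\geq 6$, I would distinguish three cases. If $m\leq 6+\binom{r-2}{2}$, the trivial bound $\trk\calS\leq m$ closes the case. If $m>3+\binom{r-1}{2}$, Theorem \ref{classtheo2} applies; semi-primitivity rules out case (iii), and in both cases (i) and (ii) the block decomposition together with the easy lemma at the start of Section \ref{maxranksection} yields $\trk\calS\leq r$, which is below $6+\binom{r-2}{2}$ for $r\geq 6$. The remaining intermediate regime $6+\binom{r-2}{2}<m\leq 3+\binom{r-1}{2}$ is non-empty precisely when $r\geq 6$ (a window of width $r-5$), and this is where the new work lies.

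In this intermediate regime, I would pick $y\in\K^n$ achieving $\dim\calS y=t:=\trk\calS$ and suppose for contradiction that $t>6+\binom{r-2}{2}$. Combined with $m\leq 3+\binom{r-1}{2}$, this forces $m-t\leq r-6$, so the lower space is extremely thin. After a suitable equivalence, every matrix of $\calS$ takes the $(t,n-1)$-decomposed form
$$M=\begin{bmatrix} A(M) & L(M) \\ B(M) & 0 \end{bmatrix}$$
with $L(\calS)=\K^t$. Lemma \ref{inductioncolumnlemma} then passes the column property down to $B(\calS)\subset \Mat_{m-t,n-1}(\K)$, and Lemma \ref{decompositionlemma} gives $\urk B(\calS)\leq r-1$. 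The idea is to exploit the ``shortness'' of $B(\calS)$ (at most $r-6$ rows) together with the semi-primitivity of $\calS$ -- in particular, condition (C), that excising any column does not lower the upper-rank -- to produce a structural contradiction, in the spirit of the generic-matrix manipulations at the end of the proof of Theorem \ref{classtheo2}.

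The hard part will clearly be closing this intermediate regime: the classification theorems available in the paper apply cleanly to neither $\calS$ nor to its lower space in this window, so one must combine the Flanders--Atkinson lemma, the thin decomposition lemma (Lemma \ref{finedecomp}) applied to a rank-$r$ element of $\calS$, and a delicate rank analysis of a generic matrix. An alternative route, suggested by the fact that the savings are exactly $r-5$ over Theorem \ref{maxrankprop2}(b), would be an induction on $r$, feeding the refined bound back in as the inductive hypothesis on a suitable smaller semi-primitive substructure extracted from $B(\calS)$ after further reduction.
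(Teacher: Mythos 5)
Your reduction of the corollary to the matrix-space statement about semi-primitive spaces (Theorem \ref{maxrankprop3}) via the duality $\urk\calS=\trk\widehat{\calS}$ is the right move, and your handling of the outer regimes is correct: $r\leq 5$ follows from Theorem \ref{maxrankprop2}(b), the case $m\leq 6+\binom{r-2}{2}$ is trivial, and for $m>3+\binom{r-1}{2}$ Theorem \ref{classtheo2} (with semi-primitivity excluding its case (iii)) gives $\trk\calS\leq r$. But the entire content of the improvement over Theorem \ref{maxrankprop2}(b) is the window $6+\binom{r-2}{2}<m\leq 3+\binom{r-1}{2}$, and there your proposal stops at a list of tools you hope will yield a contradiction. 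That is a genuine gap: the $(t,n-1)$-decomposition at a vector achieving the transitive rank, with a lower space having at most $r-6$ rows, does not obviously lead anywhere --- a short lower space is perfectly compatible with the column property and with $\urk B(\calS)\leq r-6$, and no contradiction is in sight from the constraints you have assembled.

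The paper closes this case not by conditioning on $m$ but by conditioning on the structure of $\calS$, and by applying the classification theorems to \emph{lower spaces} rather than to $\calS$ itself: since those lower spaces have upper-rank $r-1$ or $r-2$, the hypotheses of Theorems \ref{classtheo1} and \ref{classtheo2} become available at much smaller values of $m$. Concretely: if some $x$ satisfies $\dim\calS x=1$, Lemma \ref{1decomplemma} produces a lower space $H(\calS)$ of upper-rank $r-1$ to which Theorem \ref{classtheo2} applies as soon as $m-1>3+\binom{r-2}{2}$, semi-primitivity of $\calS$ ruling out the bad branch; otherwise one passes to the $(r,r)$-decomposition of an $r$-reduced representative and splits on $\urk B(\calS)$, using Proposition \ref{reductionlemma} when $\urk B(\calS)=r-1$, Proposition \ref{maxrankprop1} when $\urk B(\calS)\leq r-3$, and Theorem \ref{classtheo2} applied to (a reduced version of) $B(\calS)$ when $\urk B(\calS)=r-2$. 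Each branch lands at or below $6+\binom{r-2}{2}$. You would need to supply an argument of this kind --- or actually carry out the induction you only allude to --- before the intermediate regime, and hence the corollary, is established.
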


\begin{proof}[Proof of Theorem \ref{maxrankprop3}]
If $r \leq 5$, then the result follows from that of Theorem \ref{maxrankprop2} because $6+\dbinom{r-2}{2}
\geq 3+\dbinom{r-1}{2}$. From now on, we assume that $r \geq 6$.

The structure of the rest of the proof is close to that of Theorem \ref{classtheo2}.
First of all, we assume that some $x \in \K^n$ satisfies $\dim \calS x=1$.
By Lemma \ref{1decomplemma}, we have an integer $q \in \lcro r,n-1\rcro$ for which
every matrix $M$ of $\calS$ splits up as
$$M=\begin{bmatrix}
[?]_{1 \times q} & [?]_{1 \times (n-q)} \\
H(M) & [0]_{(m-1) \times (n-q)}
\end{bmatrix},$$
where $H(\calS)$ is a reduced subspace of $\Mat_{m-1,q}(\K)$ with the column property and
$\urk H(\calS)=r-1$.
If $m-1 \leq 3+\dbinom{r-2}{2}$, then we readily have $\trk \calS\leq m \leq 6+\dbinom{r-2}{2}$.
Assume now that $m-1 > 3+\dbinom{r-2}{2}$. As $r-1 \geq 2$, Theorem \ref{classtheo2}
applies to $H(\calS)$. We examine the three cases separately:
\begin{enumerate}
\item In Case (i), we have $\trk H(\calS) = r-1$, and hence
$\trk \calS \leq 1+r-1=r \leq 6+\dbinom{r-2}{2}$.
\item In Case (ii), we have $\trk H(\calS) \leq 1+(r-2)$ and we conclude as in the first case.
\item Assume finally that Case (iii) holds. Then, $q=r$ and we lose no generality in assuming that, for every $M \in \calS$,
we have $H(M)=\begin{bmatrix}
K(M) & [?]_{(m-1) \times 1}
\end{bmatrix}$, where $\rk K(M) \leq r-2$.
Thus, deleting the $r$-th column from every matrix of $\calS$ yields a matrix with rank less than or equal to $r-1$,
which contradicts the assumption that $\calS$ be semi-primitive.
\end{enumerate}

Now, we assume that no vector $x \in \K^n$ satisfies $\dim \calS x=1$.
Without loss of generality, we assume that $\calS$ is $r$-reduced,
and we split every $M \in \calS$ up as
$$M=\begin{bmatrix}
A(M) & C(M) \\
B(M) & [0]_{(m-r) \times (n-r)}
\end{bmatrix},$$
according to the $(r,r)$-decomposition. Note that
$\urk B(\calS)+\urk C(\calS) \leq r$ and $\urk B(\calS) \leq r-1$.

\vskip 2mm
\noindent \textbf{Case 1: $\urk B(\calS)=r-1$.} \\
Then, $\urk C(\calS) \leq 1$, which yields $n=r+1$ (with the same line of reasoning as in the proof of Theorem \ref{classtheo2}).
Proposition \ref{reductionlemma} applies to $\calS$, with $2 \leq p$ since no vector $x \in \K^n$ satisfies $\dim \calS x=1$.
As $\calS$ is semi-primitive, one deduces that $\calS$ is of the alternating kind, and hence
$\trk \calS = r\leq 6+\dbinom{r-2}{2}$.

\vskip 2mm
\noindent \textbf{Case 2: $\urk B(\calS)\leq r-2$.} \\
If $\urk B(\calS)\leq r-3$, then, applying Proposition \ref{maxrankprop1} to a reduced space associated with
$B(\calS)$, we find $\trk B(\calS) \leq  1+\dbinom{r-3}{2}$, and hence
$\trk \calS \leq r+1+\dbinom{r-3}{2}<6+\dbinom{r-2}{2}\cdot$
Until the end of the proof, we assume that $\urk B(\calS)=r-2$.

\vskip 2mm
\noindent \textbf{Subcase 2.1: $B(\calS)$ is reduced.} \\
If $m-r \leq 3+\dbinom{r-3}{2}$, then we readily have $m \leq 6+\dbinom{r-2}{2}$.
Assume now that $m-r >3+\dbinom{r-3}{2}$.
Then, one may apply Theorem \ref{classtheo2} to $B(\calS)$.
As $\urk B(\calS)=r-2$, Case (ii) must hold, and we deduce that $\trk B(\calS) \leq r-2$.
Thus, $\trk \calS \leq 2r-2 \leq 6+\dbinom{r-2}{2}$ (as $r \geq 4$).

\vskip 2mm
\noindent \textbf{Subcase 2.2: $B(\calS)$ is not reduced.} \\
From there, using the fact that $\calS$ is semi-primitive, the line of reasoning from Case 2 in the proof of Theorem \ref{classtheo2}
applies here and shows that $\calS$ is of the alternating kind. This
yields $\trk \calS = r \leq 6+\dbinom{r-2}{2}$.
\end{proof}

Noting that, over the integers, the minimum of the
function $t  \mapsto \dfrac{(t+1)t}{2}+\dfrac{(r-t+1)(r-t)}{2}$
is $\left\lfloor\dfrac{(r+1)^2}{4}\right\rfloor$, we suggest the following conjecture:

\begin{conj}
Let $\calS$ be a semi-primitive subspace of $\Mat_{m,n}(\K)$ with $\# \K>r:=\urk(\calS)$.
Then,
$$\trk \calS  \leq \left\lfloor \frac{(r+1)^2}{4}\right\rfloor.$$
\end{conj}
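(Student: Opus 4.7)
The plan is to prove the bound by induction on $r:=\urk\calS$, building on the classification theorems of Section \ref{classtheoSection} and the thin decomposition apparatus of Lemmas \ref{finedecomp} and \ref{finemaxdim}. Two elementary identities govern the strategy: the conjectural bound $f(r):=\lfloor(r+1)^2/4\rfloor$ coincides with $\min_{0\leq p\leq r}\bigl(\binom{p+1}{2}+\binom{r-p+1}{2}\bigr)$, so that a balanced value $p\approx r/2$ in Lemma \ref{finemaxdim} is exactly tight, and it satisfies $f(r)-f(r-1)=\lceil r/2\rceil$ together with $f(r)-f(r-2)=r$ for all $r\geq 2$. These arithmetic identities allow the induction to absorb, respectively, a single-row reduction (via Lemma \ref{1decomplemma}) and a Flanders--Atkinson decomposition whose lower block has upper-rank dropped by $2$.

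For the base cases $r\leq 7$, the conjecture reduces to Theorem \ref{maxrankprop2}(b) (which yields $\trk\calS\leq 3+\binom{r-1}{2}=f(r)$ for $r\in\{4,5\}$), Theorem \ref{maxrankprop3} (which yields $\trk\calS\leq 6+\binom{r-2}{2}=f(r)$ for $r\in\{5,6,7\}$), and the explicit classifications for $r\in\{2,3\}$ obtained in Section \ref{classtheoSection}. For the inductive step with $r\geq 8$, assume the statement for all smaller upper-ranks. If there is an $x\in\K^n$ with $\dim\calS x=1$, Lemma \ref{1decomplemma} produces a reduced column-property subspace $H(\calS)$ of upper-rank $r-1$, and one obtains $\trk\calS\leq 1+f(r-1)\leq f(r)$ by a suitably strengthened induction hypothesis covering reduced column-property spaces. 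Otherwise, put $\calS$ in $r$-reduced form and analyse the $(r,r)$-decomposition $M=\begin{bmatrix}A(M)&C(M)\\ B(M)&0\end{bmatrix}$. The case $\urk B(\calS)=r-1$ is forced to be the alternating kind by Proposition \ref{reductionlemma}, giving $\trk\calS=r$; the case $\urk B(\calS)\leq r-3$ is handled by applying the induction hypothesis to a reduced version of $B(\calS)$ together with $r+f(r-3)\leq f(r)$; the case $\urk B(\calS)=r-2$ with $B(\calS)$ non-reduced reproduces the alternating kind via the argument of Subcase~2.2 in the proof of Theorem \ref{maxrankprop3}.

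The main obstacle is the residual subcase $\urk B(\calS)=r-2$ with $B(\calS)$ reduced. Here the column number of $B(\calS)$ is $n'=r$ while its upper-rank is $r'=r-2=n'-2$, so cases (i) and (iii) of Theorem \ref{classtheo2}, which require $r'=n'-1$, are automatically excluded. When the row count of $B(\calS)$ is large enough that Theorem \ref{classtheo2} applies, only its case (ii) can arise, yielding $\trk B(\calS)\leq r-2$ and hence $\trk\calS\leq 2r-2\leq f(r)$. The genuine difficulty surfaces when $m-r\leq 3+\binom{r-3}{2}$, where Theorem \ref{classtheo2} does not apply: the naive estimate $\trk\calS\leq m\leq 6+\binom{r-2}{2}$ strictly exceeds $f(r)$ from $r=8$ onwards, and none of Theorems \ref{maxrankprop1}, \ref{maxrankprop2}(a) or \ref{maxrankprop3} (the last being restricted to semi-primitive spaces, not available for the merely column-property space $B(\calS)$) is sharp enough to bridge the gap. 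Closing the argument requires a bound $\trk B(\calS)\leq f(r-2)$, which together with $r+f(r-2)=f(r)$ would complete the induction. Formulating and proving this column-property analogue of the conjecture in parallel with the semi-primitive statement, which would presumably require a new classification theorem of the sort the authors announce in the introduction as the subject of a separate article (extending Theorem \ref{classtheo2} into the range $m>6+\binom{r-3}{2}$), is the step I expect to be the principal obstacle.
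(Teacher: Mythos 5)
The statement you are addressing is presented in the paper as a \emph{conjecture}: the paper offers no proof of it, so there is no argument of the authors to compare yours against, and your proposal is not a proof either. You correctly and honestly identify the fatal subcase yourself: when $\urk B(\calS)=r-2$, $B(\calS)$ is reduced and $m-r\leq 3+\binom{r-3}{2}$, the only available estimate is $\trk\calS\leq m\leq 6+\binom{r-2}{2}$, which strictly exceeds $\bigl\lfloor (r+1)^2/4\bigr\rfloor$ from $r=8$ onwards. Closing that gap would require precisely the kind of further classification theorem (pushing beyond Theorem \ref{classtheo2} into the range $m>6+\binom{r-3}{2}$) that the introduction defers to a separate article. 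So the principal obstacle you name is genuine, and the conjecture remains open.

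There is also a second, unflagged flaw. In the case where some $x$ satisfies $\dim\calS x=1$, you invoke ``a suitably strengthened induction hypothesis covering reduced column-property spaces'', i.e.\ the bound $\trk\leq\bigl\lfloor (r'+1)^2/4\bigr\rfloor$ for reduced spaces with the column property of upper-rank $r'$. That strengthening is false: the example constructed immediately after the proof of Theorem \ref{maxrankprop2} (adjoin a free column to a semi-primitive subspace of $\Mat_{\binom{r'}{2},r'}(\K)$ of upper-rank $r'-1$) is reduced, has the column property, has upper-rank $r'$ and transitive rank $\binom{r'}{2}$, which exceeds $\bigl\lfloor (r'+1)^2/4\bigr\rfloor$ for every $r'\geq 5$. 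To dispose of this case one must exploit the semi-primitivity of the ambient space $\calS$ rather than a property of $H(\calS)$ alone, exactly as in the proof of Theorem \ref{maxrankprop3}, where case (iii) of Theorem \ref{classtheo2} applied to $H(\calS)$ is excluded by the semi-primitivity of $\calS$. Your base cases for $r\leq 7$ and the arithmetic identities $\bigl\lfloor (r+1)^2/4\bigr\rfloor-\bigl\lfloor r^2/4\bigr\rfloor=\lceil r/2\rceil$ and $\bigl\lfloor (r+1)^2/4\bigr\rfloor-\bigl\lfloor (r-1)^2/4\bigr\rfloor=r$ are correct, and the induction scheme is a reasonable plan of attack, but as written the argument has two holes, one of which rests on a false auxiliary statement.
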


\end{document}